\newtheorem{mytheorem}{Theorem}
\newtheorem{mylem}[mytheorem]{Lemma}
\newtheorem{mydef}[mytheorem]{Definition}
\newtheorem{myproposition}[mytheorem]{Proposition}
\newtheorem{remark}[mytheorem]{Remark}
\numberwithin{mytheorem}{section}
\numberwithin{equation}{section}
\def\Yint#1{\mathchoice
    {\YYint\displaystyle\textstyle{#1}}%
    {\YYint\textstyle\scriptstyle{#1}}%
    {\YYint\scriptstyle\scriptscriptstyle{#1}}%
    {\YYint\scriptscriptstyle\scriptscriptstyle{#1}}%
      \!\iint}
\def\YYint#1#2#3{{\setbox0=\hbox{$#1{#2#3}{\iint}$}
    \vcenter{\hbox{$#2#3$}}\kern-.51\wd0}}
\def\longdash{{-}\mkern-3.5mu{-}} 
\def\fiint{\Yint\longdash}
\def\Xint#1{\mathchoice
{\XXint\displaystyle\textstyle{#1}}%
{\XXint\textstyle\scriptstyle{#1}}%
{\XXint\scriptstyle\scriptscriptstyle{#1}}%
{\XXint\scriptscriptstyle\scriptscriptstyle{#1}}%
\!\int}
\def\XXint#1#2#3{{\setbox0=\hbox{$#1{#2#3}{\int}$ }
\vcenter{\hbox{$#2#3$ }}\kern-0.555\wd0}}
\def\fint{\Xint-}
\DeclareRobustCommand*{\bfseries}{%
  \not@math@alphabet\bfseries\mathbf
  \fontseries\bfdefault\selectfont
  \boldmath
}
\DeclareMathOperator*{\esssup}{ess\,sup}
\DeclareMathOperator*{\essosc}{ess\,osc}
\DeclareMathOperator\supp{spt}
\DeclareMathOperator\dist{dist}
\DeclareMathOperator\divv{div}
\DeclareMathOperator\loc{loc}
\DeclareMathOperator\p{p}
\newcommand{\foo}[1]{\mathbf{#1}}
\newcommand{\vertiii}[1]{{\left\vert\kern-0.25ex\left\vert\kern-0.25ex\left\vert #1 
    \right\vert\kern-0.25ex\right\vert\kern-0.25ex\right\vert}}
\newcommand{\bigchi}{\scalebox{1.3}{$\chi$}}
\renewcommand{\d}{\mathrm{d}}
\newcommand{\dx}{\mathrm{d}x}
\newcommand{\dy}{\mathrm{d}y}
\newcommand{\dt}{\mathrm{d}t}
\newcommand{\ds}{\mathrm{d}s}
\newcommand{\dtau}{\mathrm{d}\tau}
\newcommand{\R}{\mathbb{R}}
\newcommand{\N}{\mathbb{N}}
\renewcommand{\epsilon}{\varepsilon}
\newcommand{\critical}{\frac{2n}{n+2}}
\newcommand{\uproman}[1]{\uppercase\expandafter{\romannumeral#1}}
\pgfplotsset{compat=1.11}
\subjclass[2020]{35B65, 35B45, 35K55, 35K65}
\keywords{Doubly nonlinear parabolic PDE, Schauder estimates, Gradient regularity}
\begin{document}
\title[Gradient regularity for parabolic equations]{Gradient regularity for a class of doubly nonlinear parabolic partial differential equations}
\date{\today}

\author[M. Strunk]{Michael Strunk}
\address{Michael Strunk\\
Fachbereich Mathematik, Universit\"at Salzburg\\
Hellbrunner Str. 34, 5020 Salzburg, Austria}
\email{michael.strunk@plus.ac.at}

\begin{abstract} 
In this paper, we study the local gradient regularity of non-negative weak solutions to doubly nonlinear parabolic partial differential equations of the type
\begin{align*}
    \partial_t u^q - \divv A(x,t,Du)=0
    \qquad\mbox{in~$\Omega_T$},
\end{align*}
with~$q>0$,~$\Omega_T\coloneqq\Omega\times(0,T)\subset\R^{n+1}$ a space-time cylinder, and~$A=A(x,t,\xi)$ a vector field satisfying standard~$p$-growth conditions. Our main result establishes the local Hölder continuity of the spatial gradient of non-negative weak solutions in the super-critical fast diffusion regime 
$$0<p-1<q<\frac{n(p-1)}{(n-p)_+}.$$
This result is achieved by utilizing a time-insensitive Harnack inequality and Schauder estimates that are developed for equations of parabolic~$p$-Laplacian type. Additionally, we establish a local $L^\infty$-bound for the spatial gradient.
\end{abstract}
\maketitle
\tableofcontents
\section{Introduction and main results} \label{sec:introduction}
The parabolic partial differential equation
\begin{align} \label{pde}
    \partial_t u^q - \divv{A(x,t,D u)}=0
    \qquad\mbox{in~$\Omega_T$}, 
\end{align}
 with an arbitrary exponent~$q>0$, represents a generalized version of the prototype equation which involves the~$p$-Laplacian as the diffusion part and that is given by
\begin{align} \label{pdemitplaplace}
    \partial_t u^q - \divv{(|Du|^{p-2}Du)} = 0\qquad\text{in $\Omega_T$},
\end{align}
where~$p>1$. Here and in the following,~$\Omega_T=\Omega\times(0,T)$ denotes a space-time cylinder taken over a bounded domain~$\Omega\subset\R^n$ with~$n\geq 2$ and a finite time~$T>0$. Treating the general equation~\eqref{pde}, we assume the vector field~$A$ to satisfy the standard~$p$-growth structure conditions~\eqref{voraussetzungen} introduced in Section~\ref{subsec:assumptions} below. Based on a nonlinear behaviour in both, the term involving the time derivative as well well as the term that incorporates the spatial derivative, equations of type~\eqref{pde} are referred to as doubly nonlinear. For the prototype equation~\eqref{pdemitplaplace} some well known special cases arise. Only with the specific choice~$q=1$ and~$p=2$, it is linear and yields the heat equation. If~$q=p-1$, it is homogeneous with respect to multiplication and is sometimes called Trudinger's equation. In the case~$p=2$, we obtain the porous medium equation, whereas the case~$q=1$ yields the parabolic~$p$-Laplace equation.\,\\
The aim of this paper is to extend the regularity results obtained for the prototype equation~\eqref{pdemitplaplace} to the broader scope of the doubly nonlinear parabolic equation~\eqref{pde}, where the vector field~$A$ is assumed to satisfy standard~$p$-growth structure conditions. 
Indeed, this generalization turns out to hold true, resulting in a Lipschitz estimate for weak solutions within the space-time domain~$\Omega_T$ based on a pointwise value of the solution. Additionally, we obtain a Hölder estimate for the spatial gradient of weak solutions in~$\Omega_T$, where the Hölder exponent~$\alpha$, which lies within the range of~$(0,1)$, is dependent on the provided data. However, it is important to note that we are only able to provide statements regarding the regularity of the spatial gradient of solutions to~\eqref{pde}, as our notion of solution according to Definition~\ref{definitionglobal} does not encompass any weak time derivatives. 
\subsection{Literature overview} \label{subsec:literatur} 
In order to offer a comprehensive summary of existing findings regarding the regularity of solutions and their spatial derivative to~\eqref{pde} and~\eqref{pdemitplaplace}, we aim to provide a concise overview. Extensive research has been conducted on the local boundedness and local Hölder continuity of weak solutions to the prototype doubly nonlinear equation~\eqref{pdemitplaplace}. The investigation of local boundedness of solutions has first been started by Ivanov in his work~\cite{ivanov1997maximumloesungen}. In relation to Hölder regularity of weak solutions, the well-studied cases include the homogeneous case where~$q-1=p$, as well as the doubly degenerate range when~$q<p-1$ and~$p>2$. Additionally, the doubly singular range is examined when~$q>p-1$ and~$1<p<2$. For further details on known results in these cases, we recommend referring to~\cite{bogelein2021holder,bogelein2022holderpartii,ivanov1989holder,ivanov1995holder,ivanov1997holder,kuusi2012local,naianleahpartiii,porzio1993holder,urbano1930method,vespri2022extensive}. However, the state of affairs regarding gradient properties of weak solutions is currently quite fragmented, with limited knowledge available at present. In the realm of this subject, the parabolic~$p$-Laplace equation is the special case that has been thoroughly comprehended, as efforts to examine the gradient regularity of weak solutions to this equation were initiated by DiBenedetto and Friedman in their celebrated works~\cite{dibenedetto1985holder,dibenedetto1985addendum}. Indeed, solutions have been proven to possess local~$C^{1,\alpha}$ regularity. Despite extensive research, the porous medium equation remains considerably less understood in comparison. Regarding findings on the regularity of the gradient of solutions to the latter, we refer the reader to~\cite{aronson1969regularity,aronson1986optimal,benilan1983strong,caffarelli1987lipschitz,dibenedetto1979,dibenedetto1985holder,dibenedetto1985addendum,dibenedetto1991local,gianazza2023local,jin2022regularity}. For a more comprehensive overview of this subject, we also suggest referring to the introductory section in~\cite{gradientholder}, where further explanation on this topic is provided. \,\\

At this point in time, only few attempts concerning gradient regularity of weak solutions to equation~\eqref{pdemitplaplace}, and in particular to~\eqref{pde}, have been made in a wider parameter range. More recently though, new findings on the gradient regularity for weak solutions to~\eqref{pdemitplaplace} have been achieved in~\cite{gradientholder}. Initiated by Ivanov and Mkrtychyan, investigations into the local gradient boundedness have been made in~\cite{ivanov1992weighted}. A few years later, Ivanov also examined the local boundedness of the spatial gradient in~\cite[Theorem~6.1]{ivanov1999gradient} and the correction demonstrated in~\cite[\S5]{ivanov2000regularity}, where he treated non-negative weak solutions to the prototype equation~\eqref{pdemitplaplace} for a certain range of the parameters~$p$ and~$q$. In their work~\cite{savare1994asymptotic}, Savar\'{e} and Vespri introduced a gradient estimate that is similar but slightly weaker than the one presented in~\cite[Theorem~$1.1$]{gradientholder}, which we also derive in form of our Theorem~\ref{hauptresultat}.
However, the notions of a weak solution used in the aforementioned articles~\cite{ivanov1999gradient,ivanov2000regularity,ivanov1992weighted,savare1994asymptotic} all differ from our definition presented in~\eqref{definitionglobal}, thereby making it difficult to directly compare their investigations with ours. The most recent advancements in the study of gradient regularity of solutions to doubly nonlinear parabolic equations can be found in the paper by~\cite{gradientholder}. This article serves as the basis for the novel findings presented in our current work. Notably, the authors derived time-insensitive Harnack inequalities in the~\textit{super-critical fast diffusion regime}, where the parameters satisfy the relation~$0 < p-1 < q < \frac{n(p-1)}{(n-p)_+}$. This range of parameters aligns with the one previously examined by Savar\'{e} and Vespri. We recall that weak solutions to~\eqref{pdemitplaplace} display an infinite speed of propagation in the general fast diffusion regime~$q>p-1$, reminiscent of the behavior exhibited by the heat equation. These findings, stated in \cite[Theorem 1.10 \& Theorem~1.11]{gradientholder}, are particularly noteworthy because they do not comprise the typical waiting time phenomenon that is commonly observed in the parabolic setting.
Therefore, both Harnack inequalities possess an elliptic nature, which is a unique characteristic in this specific range of parameters. Additionally, by utilizing the aforementioned Harnack inequality in conjunction with appropriate Schauder estimates, the authors were able to establish a local~$L^{\infty}$-gradient bound. Moreover, they obtained the local Hölder continuity of the gradient of non-negative weak solutions to~\eqref{pdemitplaplace}. Consequently, it is natural to inquire whether their main regularity theorem is solely applicable to weak solutions of the prototype equation~\eqref{pdemitplaplace} or if it continues to hold true in a more general context of equation~\eqref{pde}.

\subsection{Structure conditions}\label{subsec:assumptions} 
Throughout this paper, we consistently assume the following set of structure conditions to hold true. We consider a vector field $$ A\colon \Omega_T\times\mathbb{R}^n \to \mathbb{R}^n,$$
such that the mapping~$\xi\mapsto A(x,t,\xi)$ is differentiable. Moreover, the maps~$A(x,t,\xi)$ as well as~$\partial_\xi A(x,t,\xi)$ are assumed to be Carath\'eodory functions, i.e.
\begin{align*}
    (x,t) &\mapsto A(x,t,\xi),\,\,(x,t)\mapsto\partial_{\xi}A(x,t,\xi)\,\, \mbox{are measurable}
\end{align*}
for every $\xi\in\R^n$, and
\begin{align*}
    \xi &\mapsto A(x,t,\xi),\,\,\xi\mapsto\partial_{\xi}A(x,t,\xi)\,\, \mbox{are continuous}
\end{align*}
for a.e.~$(x,t)\in \Omega_T$. It is notable that we do not impose any higher regularity than measurability for the mapping~$t\mapsto A(x,t,\xi)$. Furthermore, the vector field~$A(x,t,\xi)$ is assumed to satisfy the following standard~$p$-growth and ellipticity conditions
\begin{align}
    \left\{
    \begin{array}{l}
    | A(x,t,\xi)| + (\mu^2 + |\xi|^2 )^{\frac{1}{2}}|\partial_\xi A(x,t,\xi)| \leq C_1 (\mu^2 + |\xi |^{2})^{\frac{p-1}{2}} \\[3pt]
    \langle \partial_{\xi}A(x,t,\xi)\eta, \eta \rangle \geq C_2 (\mu^2 + |\xi|^2 )^{\frac{p-2}{2}}|\eta|^2 \\[3pt]
    |A_i (x,t,\xi)-A_i(y,t,\xi)| \leq C_3|x-y|^\alpha ( \mu^2 + |\xi|^2)^{\frac{p-1}{2}}
    \end{array}
    \right. \label{voraussetzungen}
\end{align}
for a.e.~$(x,t) \in \Omega_T$,~$i\in\{1,...,n\}$, any~$\eta, \xi \in\R^n$, and an Hölder exponent~$\alpha\in(0,1)$. Here, we let~$p>1$,~$\mu \in [0,1]$, while~$C_1, C_2, C_3$ represent positive constants. The parameter~$\mu\in[0,1]$ serves as a regularizing quantity that distinguishes between the degenerate resp.~singular case if~$\mu=0$ and the non-degenerate resp. non-singular case when~$\mu \in (0,1]$. Note that for the standard example we have in mind, which is the~$p$-Laplacian, conditions~$\eqref{voraussetzungen}_1$ and ~$\eqref{voraussetzungen}_2$ are satisfied in the case where~$\mu=0$ with constants~$C_1=C_1(p)=1+|p-2|$,~$C_2=C_2(p)= 1-(2-p)_+$, while condition~$\eqref{voraussetzungen}_3$ is satisfied regardless with~$C_3=0$. Moreover, the vector field incorporated in the~$p$-Laplacian may also contain some bounded coefficients~$a\in L^\infty(\Omega_T,\R)$ satisfying $\gamma^{-1}_1\leq a(x,t)\leq \gamma_1$ and~$a(x,t)-a(y,t)|\leq \gamma_2 |x-y|^\alpha$ a.e. in~$\Omega_T$, where~$\gamma_1,\gamma_2$ denote some positive constants and~$\alpha\in(0,1)$ is the Hölder exponent of~$a$ with respect to the spatial variable~$x$, i.e.~$A(x,t,\xi)=a(x,t)|\xi|^{p-2}\xi$. 

\subsection{Main result} 
Imposing the standard~$p$-growth structure conditions~\eqref{voraussetzungen} on the vector field~$A$, our main theorem establishes a local gradient bound as well as a local gradient Hölder estimate in a generic compact subset~$\mathcal{K} \Subset \Omega_T$. Both estimates are stated quantitatively in form of the following theorem, representing the main result of this article. 

\begin{mytheorem}\label{hölderstetigkeitohnegrößer0}
 Let ~$0<p-1<q<\frac{n(p-1)}{(n-p)_+}$ and~$u$ be a non-negative weak solution to~\eqref{pde} under assumptions~\eqref{voraussetzungen} with~$\mu=0$. Then, there exist~$\alpha_1=\alpha_1(n,p,q,C_1,C_2,\alpha)\in(0,1)$ and~$C=C(n,p,q,C_1,C_2,C_3,\alpha)>1$, such that: for any compact subset~$\mathcal{K}_1\Subset \Omega_T$, let
 \begin{align*}
     \rho_1 \coloneqq \inf_{\substack{(x,t)\in \mathcal{K}_1\\
    (y,s) \in\partial (\Omega_T)}}\big( |x-y| + |t-s|^{\frac{1}{p}}\big)>0.
 \end{align*}
 Moreover, we set
 \begin{align*}
     K_1 \coloneqq \max\Big\{1,\esssup\limits_{\mathcal{K}_1} u\Big\}, \quad
     K_2 \coloneqq \max\Big\{1, \esssup\limits_{\mathcal{K}_2} u\Big\},
 \end{align*}
 where~$\mathcal{K}_2\Supset\mathcal{K}_1$ is given by 
 \begin{align*}
     \mathcal{K}_2\coloneqq\Big\{(x,t)\in\Omega_T:~\inf\limits_{(y,s)\in\partial(\Omega_T)}  \big(|x-y|+|t-s|^{\frac{1}{p}}\big)\geq \frac{\rho_1}{2}\Big\}\Subset\Omega_T.
 \end{align*}
    Then, there hold the gradient bound
    \begin{align}\label{korollargradientbound}
        \esssup\limits_{\mathcal{K}_1}|Du| \leq C \frac{K_1 K_2^{\frac{q+1-p}{p}}}{\rho_1}
    \end{align}
    as well as the gradient Hölder estimate
        \begin{align}\label{korollarhölder}
 |Du(z_1)-Du(z_2)| \leq C\frac{ K_1 K_2 ^{\frac{q+1-p}{p}}}{\rho_1}\Bigg[K_2^{\frac{q+1-p}{p}}\frac{|x_1-x_2|}{\rho_1}+ \Big(\frac{K_2}{K_1}\Big)^{\frac{q+1-p}{2}} \sqrt{\frac{|t_1-t_2|}{\rho_1 ^p}}\,\Bigg]^{\alpha_1}
    \end{align}
    for any~$z_1=(x_1,t_1)$,~$z_2 = (x_2,t_2)\in \mathcal{K}_1$.
\end{mytheorem}


\begin{remark} \upshape
Our central result, Theorem~\ref{hölderstetigkeitohnegrößer0}, will be established in Section~\ref{sec:höldercontinuityofthegradient} as a consequence of Proposition~\ref{hauptresultat}. It is important to note that Proposition~\ref{hauptresultat} solely holds true within the super-critical fast diffusion regime, which is characterized by the condition~$0 < p - 1 < q < \frac{n(p-1)}{(n-p)_+}$. Consequently, our main regularity result, Theorem~\ref{hölderstetigkeitohnegrößer0}, is also valid specifically within this range of parameters. The Figure~\ref{fig:figure1} serves to highlight the region of parameters where Theorem~\ref{hölderstetigkeitohnegrößer0} is applicable.
\end{remark}


\begin{remark} \upshape
    According to~\cite[Corollary~1.7]{gradientholder}, weak solutions to equation~\eqref{pde} under our structure conditions~\eqref{voraussetzungen} are locally bounded in~$\Omega_T$, yielding that both quantities~$K_1$ and~$K_2$ in the previous Theorem~\ref{hölderstetigkeitohnegrößer0} are indeed finite.
\end{remark}


\begin{remark} \upshape
  We emphasize that the regularity result in Theorem~\ref{hölderstetigkeitohnegrößer0} is of local nature. Future research should focus on establishing the global regularity of weak solutions to~\eqref{pde} in~$\overline{\Omega}_T$, that is, regularity up to the lateral boundary~$\partial\Omega\times(0,T)$ of the domain. Currently, the only available boundary regularity result for solutions to~\eqref{pde} in a general doubly nonlinear setting is due to Gianazza and Jesus~\cite{gianazzaboundary}. They considered non-negative weak solutions to the model equation~\eqref{pdemitplaplace} and proved Hölder continuity up to the lateral boundary of the domain in the super-critical fast diffusion regime~$0 < p - 1 < q < \frac{n(p-1)}{(n-p)_+}$, assuming a smooth domain. Furthermore, they obtained a Carleson estimate and a boundary Harnack inequality for non-negative weak solutions to~\eqref{pdemitplaplace}. However, global gradient regularity remains elusive, and the situation for general doubly nonlinear equations of the form~\eqref{pde} is entirely unknown. We also note the global regularity result by Jin and Xiong~\cite{jin2022regularity}, who demonstrated global smoothness of non-negative weak solutions to the porous medium equation in the fast diffusion regime, given by the choice of parameters as~$p=2,q>1$.
\end{remark}


\begin{center}
\begin{figure}
\begin{tikzpicture}[scale=1.25]
\draw [<->,thick] (0,5.1) 
|- (6,0);

\draw[black,dotted,thick] (3,0.05)--(3,2.98);
\draw[black,dotted,thick] (3,4.82)--(3,5.1);

\fill [gray] (0,0).. controls (1,1.6) and (1.5,2.4)  
.. (2.2,4.8) -- (3,4.8);
        .. (3,4.8) -- (3,3);
        .. (3,3) -- (0,0);
\fill [gray] (3.00001,4.8)--(3.00001,3)--(2.2,3)--(2.2,4.8);
\fill [gray, opacity=0.3] (2.99999,2.99999)--(4.800002,4.800002)--(2.99999,4.800002)--(2.99999,2.99999);
\fill [gray] (0,0)--(3,3)--(3,4.8)--(1,1.67)--(0,0);
\draw (-0.05,0.3) node [left] {\footnotesize $q=0$};
\draw (0.5,-0.05) node [below] {\footnotesize $p=1$};
\draw (3,-0.05) node [below] {\footnotesize $p=n$};
\draw (4.75,4.2) node [below] {\footnotesize $q=p-1$};
\draw (2.0,4.3) node [left] {\footnotesize $q=\frac{n(p-1)}{(n-p)}$};
  \draw (5.9,-0.05) node [below] {$p$};
\draw (-0.05,5.0) node [left] {$q$};
 \draw[gray,opacity=0.0001,thin] (0,0)--(4.8,4.8);
\end{tikzpicture}
\caption{\label{fig:figure1}}
\end{figure}
\end{center}
\subsection{Strategy of the proof} 
Regarding the overall course of this paper, a few words are in order. The main focus is to establish the main regularity Theorem~\ref{hölderstetigkeitohnegrößer0}. One essential tool that we exploit on our path towards the main regularity result is the Harnack inequality of elliptic nature mentioned above that solely holds true in the super-critical fast diffusion regime~$0<p-1<q<\frac{n(p-1)}{(n-p)_+}$. This inequality allows us to properly handle the nonlinearity present in the evolution term of~\eqref{pde}. To specify, by employing the transformation~$v\coloneqq u^q$ in~\eqref{pde}, there holds~$Du = \frac{1}{q} v^{\frac{1-q}{q}} Dv$ a.e. in~$\Omega_T$. Now, due to the Harnack inequality mentioned above, there exists a universal constant~$\gamma>1$ that depends on~$n,p,q,C_1,C_2$, such that on an intrinsic cylinder~$\mathcal{Q}\subset\Omega_T$ not further specified at this point,~$v$ is bounded below by~$\gamma^{-1}$ and bounded above by~$\gamma$. Therefore, the coefficients~$a(x,t)\coloneqq \textstyle{\frac{1}{q}}v^{\frac{1-q}{q}}$ satisfy the bounds
$$C^{-1}\leq a(x,t)\leq C \quad\mbox{for a.e.~$(x,t) \in \mathcal{Q}$}$$
with~$C=C(n,p,q,C_1,C_2)$. As a result, we achieve that~$v$ is a weak solution to a parabolic equation of~$p$-Laplacian type and we are able to break down the original equation into the following version
\begin{equation} \label{transformpde}
\partial_t v - \divv \Tilde{A}(x,t,Dv)= 0 \quad\mbox{in~$\mathcal{Q}$},
\end{equation} 
where the vector field~$\Tilde{A}$ is given by
\begin{equation*}
    \Tilde{A}(x,t,\xi)\coloneqq A(x,t,a(x,t)\xi)
\end{equation*}
for a.e.~$(x,t)\in\mathcal{Q}$,~$\xi\in\R^n$. Later on in Section~\ref{sec:höldercontinuityofthegradient}, it will turn out that the vector field~$\Tilde{A}$ again satisfies the~$p$-growth conditions~\eqref{voraussetzungen} with~$\mu=0$, and with positive constants~$\Tilde{C}_1,\Tilde{C}_2,\Tilde{C}_3$ and an Hölder exponent~$\Tilde{\alpha}\in(0,1)$, where~$\Tilde{C}_1,\Tilde{C}_2$ depend on~$n,p,q,C_1,C_2$, while~$\Tilde{C}_3$ exhibits the same dependence but additionally also depends on~$C_3$. The Hölder exponent~$\Tilde{\alpha}\in(0,1)$ depends on~$n,p,q,C_1,C_2,\alpha$ but not on the structure constant~$C_3$. In particular, the evolutionary term no longer exhibits nonlinearity, which leads us to focus on developing Schauder estimates for weak solutions to parabolic~$p$-Laplacian type equations. As an initial step, our aim in Section~\ref{sec:gradientbound} is to demonstrate that any weak solution to equations of the form~\eqref{transformedpde} exhibits a spatial derivative that is locally bounded in~$\Omega_T$, i.e. satisfies~$|Du|\in L^\infty_{\loc}(\Omega_T)$. Throughout Section~\ref{sec:gradientbound}, structure assumption~$\eqref{voraussetzungen}_3$ is replaced by assuming that~$A$ is differentiable with respect to the spatial variable~$x$, allowing us to differentiate the equation. In the case where~$p>\critical$, previous works have established this result for the super-quadratic case~$p\geq 2$ in~\cite[Theorem 1.2]{bogeleinduzaarmarcellini} and for the sub-quadratic-super-critical case~$\critical<p<2$ in~\cite[Theorem 1.3]{singer2015parabolic}. However, at this point in time, the sub-critical range~$p\leq \critical$ remains unaddressed to our knowledge. Our results yield the very same quantitative estimates as those in~\cite[Proposition 9.1 \& Proposition 9.2]{gradientholder}, differing from the ones in~\cite[Theorem 1.2]{bogeleinduzaarmarcellini} and~\cite[Theorem 1.3]{singer2015parabolic}. Subsequently, we aim to establish an~\textit{a priori} gradient estimate for weak solutions of a more regular version of equation~\eqref{transformedpde}, where the vector field~$A$ is independent of the spatial variable~$x$, i.e.~$A=A(t,\xi)$. It should be noted that the authors in~\cite{gradientholder} also utilized a similar \textit{a priori} gradient regularity result, which they derived from~\cite[Theorem~1.3]{degeneratesystems}. However, unfortunately we are unable to employ the mentioned estimate from~\cite[Theorem 1.3]{degeneratesystems} in our setting as it applies to systems of Uhlenbeck structure. On the contrary, we consider equations without Uhlenbeck structure in general, and rather utilize the complete machinery involving both the non-degenerate and degenerate regime in order to ultimately obtain a similar~\textit{a priori} gradient estimate. This process necessitates a careful treatment and constitutes one of the major contributions of this article. However, it should be highlighted that in~\cite[Theorem~3.3]{kuusi2013gradient} in the sub-quadratic case and in~\cite[Theorem~3.2]{kuusi2014wolff} in the super-quadratic case, comparable~\textit{a priori} gradient regularity results were obtained using a different approach compared to our method for handling the degenerate regime. Instead of employing a De Giorgi-type expansion of positivity as in Section~\ref{subsubsec:degenerateregime}, the authors utilized a Harnack inequality for super-solutions to linear parabolic equations in the sub-quadratic case in~\cite{kuusi2013gradient}, whereas in the super-quadratic case they employed arguments based on logarithmic type arguments in~\cite{kuusi2014wolff}, leading to~\cite[Proposition~3.11]{kuusi2013gradient} resp.~\cite[Proposition~3.4]{kuusi2014wolff}. Furthermore, these approaches both necessitate an additional argument regarding the sign of~$D_i u$, where~$i=1,...,n$. In contrast, we were able to effectively treat the quantities~$|D_i u|$ in a cohesive manner by exploiting the fact that the function
\begin{equation*}
    \Big(|D_i u|^2 - \frac{\lambda^2_\mu}{4} \Big)^2_+
\end{equation*}
is a weak sub-solution to a linear parabolic equation and, up to a re-scaling of the function and the equation, consequently belongs to a parabolic De Giorgi class. Once the~\textit{a priori} gradient estimate result has been established, the gradient regularity result is then transferred to weak solutions of more general structure later on, leveraging the Hölder continuity of~$A$ with respect to the spatial variable~$x$. In particular, we subsequently consider vector fields~$A$ that potentially depend on the spatial variable~$x$. To specify, this process of transferring regularity is demonstrated in Section~\ref{subsec:schauderplaplacetype} through the estimates provided in equations~\eqref{comparisonabscheins} -- \eqref{schrankecomparisonfunction}. In order to conclude the Schauder estimates for parabolic equations of~$p$-Laplacian type, an approximation procedure is required to address the case when~$\mu=0$. Furthermore, the vector field~$A$ is mollified to ensure differentiability with respect to the spatial variable~$x$. \,\\
Throughout the paper, energy estimates involving second order spatial derivatives, which are commonly referred to as Caccioppoli type inequalities, are crucial. It should be noted that the majority of our energy estimates are only valid when~$\mu>0$, where~$\mu\in[0,1]$ represents the regularizing parameter from the set of assumptions~\eqref{voraussetzungen}. In the case where~$\mu=0$, the existence of second order spatial derivatives cannot be ensured in general, as the ellipticity of the equation breaks down at points~$\xi\in\R^n$ equal to zero. Due to the lack of structure in the vector field~$A$ in our setting, we are unable to exploit known results obtained for equations with Uhlenbeck structure, as established in~\cite[Proposition~4.1]{degeneratesystems}. Instead, we deviate from~\cite[Proposition~4.1]{degeneratesystems} and utilize the energy estimates derived in~\cite{bogeleinduzaarmarcellini,singer2015parabolic}. This approach naturally arises, as energy estimates in both of these previous articles provide counterparts to~\cite[Proposition~4.1]{degeneratesystems} in the context of parabolic~$p$-Laplacian type equations, where the Uhlenbeck structure is replaced by the standard~$p$-growth assumptions~$\eqref{voraussetzungen}_1$ --~$\eqref{voraussetzungen}_2$ and the assumed differentiability with respect to the spatial variable~$x$ of the vector field~$A$, stated in form of structure condition~$\eqref{voraussetzungendiffbarkeit}_3$. As a result, we obtain the same inequality as~\cite[Proposition~9.4]{gradientholder}, as well as energy estimates comparable to those in~\cite[Proposition~3.2]{degeneratesystems}.

\subsection{Plan of the paper} 
The paper is organized as follows: in Section~\ref{sec:prelim}, we will commence by presenting the notation and framework, which includes our notion of weak solutions to~\eqref{pde} and additional supplementary material required later on. As a preliminary step towards Theorem~\ref{hölderstetigkeitohnegrößer0}, we first treat weak solutions to the parabolic~$p$-Laplacian equation~\eqref{pdeqgleich1}, which does not exhibit a nonlinearity in the evolution term. Additionally, we assume that the vector field~$A$ is differentiable with respect to the spatial variable~$x$, in contrast to the general structural conditions~\eqref{voraussetzungen}. Under these assumptions~\eqref{voraussetzungendiffbarkeit} on the vector field~$A$, we obtain that any weak solution~$u$ to~\eqref{pdeqgleich1} admits a locally bounded spatial derivative in~$\Omega_T$, i.e. there holds~$|Du|\in L^\infty_{\loc}(\Omega_T)$. Furthermore, we are able to acquire local quantitative estimates for the essential supremum of the spatial derivative. The majority of this article is hereinafter dedicated to Schauder estimates for weak solutions to the parabolic~$p$-Laplacian type equation~\eqref{pdeqgleich1} treated in Section~\ref{sec:schauderestimates}. Thereby, we obtain an \textit{a priori} gradient Hölder estimate for weak solutions to~\eqref{pdeqgleich1}. Subsequently, employing comparison arguments together by taking the~\textit{a priori} estimate into account, we obtain appropriate Schauder estimates for weak solutions to~\eqref{pdeqgleich1}. The final Section~\ref{sec:höldercontinuityofthegradient} is then devoted to the proof of Proposition~\ref{hauptresultat} and subsequently also to the main regularity Theorem~\ref{hölderstetigkeitohnegrößer0}, where the time-insensitive Harnack inequality mentioned above is applied, followed by an exploitation of the Schauder estimates from the previous Section~\ref{sec:schauderestimates}. \,\\

\textbf{Acknowledgements.}
The author would like to express sincere gratitude to Professor Verena Bögelein for her guidance throughout the development of this article, and also to Professor Frank Duzaar for many valuable recommendations. In addition, the author would like to thank Naian Liao for his contributions in finalizing this article.  \\
This research was funded in whole or in part by the Austrian Science Fund (FWF) [10.55776/P31956]. For open access purposes, the author has applied a CC BY public copyright license to any author accepted manuscript version arising from this submission.

\section{Preliminaries}\label{sec:prelim}

\subsection{Notation} \label{subsec:notation} 
Throughout this article,~$\Omega_T = \Omega\times (0,T)$ denotes a space-time cylinder where~$\Omega\subset\mathbb{R}^n$ is a bounded domain and~$(0,T)$ represents a time interval for a certain time~$T>0$. The parabolic boundary of~$\Omega_T$ will be denoted by
$$
    \partial_p \Omega_T 
    = 
    \big(\overline\Omega\times\{0\}\big) \cup \big(\partial\Omega\times(0,T)\big).
$$
Given a function~$f\in L^1(\Omega_T)\cong L^1(0,T;L^1(\Omega))$, we shall also sometimes write~$f(t)$ instead of~$f(\cdot,t)$ when it is convenient. The expressions~\textit{gradient}, \textit{spatial gradient}, and \textit{spatial derivative} will all be used interchangeably to refer to the weak derivative~$Du$ with respect to the spatial variable~$x$, without distinguishing between them. According to this notion, the weak partial derivative of any weakly differentiable function~$u\colon\Omega_T\to\R$ with respect to~$i\in\{1,...,n\}$ will be denoted by~$D_i u$. To distinguish between spatial and time derivatives, the latter will be denoted as~$\partial_t u$, given that~$u\colon\Omega_T\to\R$ admits a weak derivative with respect to the time variable~$t$. It is worth noting that no distinction between classical and weak derivatives will be made, with the context clarifying the intended meaning.
Moreover, we will regard the Euclidean norm~$|\cdot|$ on $\mathbb{R}^n$ for~$n\geq 2$ and the absolute value~$|\cdot|$ on~$\R$ as interchangeable. In this paper, both will be referred to as~$|\cdot|$ with the specific context providing clarity regarding its denotation. At certain stages we will also employ the maximum norm~$|x|_{\infty}\coloneqq \max\limits_{i=1,...,n}|x_i|$ on~$\R^n$ and recall the equivalence of norms
$$|x|_\infty \leq |x|\leq \sqrt{n}|x|_\infty$$
for any~$x\in\R^n$. The standard scalar product on~$\R^n$ will be denoted by~$\displaystyle\langle \cdot\rangle$ and the dyadic product of two vectors~$\xi,\eta \in\R^n$ by~$\xi \otimes \eta$. The positive part of a real quantity~$a\in\R$ is denoted as~$a_+ = \max\{a,0\}$, while the negative part is denoted as~$a_- = \max\{-a,0\}$. Constants are consistently represented as~$C$ or~$C(\cdot)$ and their dependence is described solely on their variables, not their specific values. However, it is possible for constants to vary from one line to another without further clarification.
\,\\

We define the open ball in~$\mathbb{R}^n$ with radius~$\rho>0$ and center~$x_0\in\mathbb{R}^n$ as 
$$B_\rho(x_0) \coloneqq\{x\in\mathbb{R}^n\colon~|x-x_0|<\rho\}.$$
By a point~$z_0\in\R^{n+1}$, we always refer to~$z_0 = (x_0,t_0)$ where~$x_0\in\R^n$ and~$t_0\in\R_{\geq 0}$. If~$x_0=0$, it will be common to simplify notation by writing~$B_\rho$ instead of~$B_\rho(x_0)$.  
Additionally, we may omit the vertex of the cylinder for convenience. The \textit{general (backward) parabolic cylinder} with vertex~$z_0\in\R^{n+1}$ is defined as 
$$Q_{R,S}(z_0) \coloneqq B_R(x_0)\times (t_0-S,t_0],$$
 with the standard parabolic cylinder given by
$$Q_\rho(z_0)\coloneqq B_\rho(x_0)\times (t_0-\rho^2,t_0].$$
The following~\textit{(backward) intrinsic parabolic cylinders} lead to homogeneous estimates that incorporate the spatial gradient~$Du$. Given~$\lambda>0$, we set
$$Q^{(\lambda)}_\rho(z_0) \coloneqq B_\rho(x_0) \times \Lambda^{(\lambda)}_\rho (t_0) $$
with
$$\Lambda^{(\lambda)}_\rho (t_0) \coloneqq (t_0-\lambda^{2-p}\rho^2,t_0].$$
Next, we will introduce different concepts of distances. Consider two points~$z_1=(x_1,t_1)$,~$z_2 = (x_2,t_2) \in \Omega_T$. The \textit{parabolic distance} between~$z_1$ and~$z_2$ is defined as
$$d_{p}(z_1,z_2) \coloneqq |x_1-x_2| + \sqrt{|t_1-t_2|},$$
while the \textit{intrinsic parabolic distance} is denoted by
$$d^{(\lambda)}_{p}(z_1,z_2) \coloneqq |x_1-x_2| + \sqrt{\lambda^{p-2}|t_1-t_2|}.$$
Finally, for a subset~$E\subset \Omega_T$, the parabolic distance from~$E$ to the parabolic boundary~$\partial_p \Omega_T$ is defined as 
$$\dist_{p}(E,\partial_p \Omega_T) \coloneqq \inf_{\substack{z_1\in E\\ z_2 \in\partial_p \Omega_T}}d_{\p}(z_1,z_2).$$
Since our notion of solution according to Definition~\eqref{definitionglobal} does not involve spatial weak derivatives of second order, it becomes useful to utilize the customary difference quotient technique. In the case of $\mu\in(0,1]$, this approach will yield the existence and~$p$-integrability of second order spatial derivatives. Let~$f\in L^1_{\loc}(\Omega_T,\R^k)$,~$i\in\{1,...,n\}$,~$k\in\N$, and~$h\neq 0$. We define the difference quotient of~$f$ for a.e.~$(x,t)\in\Omega_T$ in the spatial direction~$e_i$ by
\begin{align}\Delta_h^{(i)}f(x,t) \coloneqq \frac{f(x+he_i,t)-f(x,t)}{h},\label{differenzenquotient}
\end{align}
where~$|h|>0$ is chosen sufficiently small, such that~$(x+he_i)\in\Omega_T$ holds true. Let~$\mu\in[0,1]$. The following functions will turn out to be expedient in the course of this paper: 
\begin{align} \label{expfunctions}
h_\mu(s) &\coloneqq (\mu^2+s^2)^{\frac{p-2}{2}},\\ 
\mathcal{V}_\mu^{(p)}(\xi) &\coloneqq(\mu^2+|\xi|^2)^{\frac{p-2}{4}}\xi = h_{\mu}(|\xi|)^{\frac{1}{2}}\xi, \nonumber \\
\mathcal{D}_{\mu}^{(i)}(h)(x,t) &\coloneqq \mu^2 + |Du(x,t) |^2+|Du(x+he_i,t) |^2 \nonumber
\end{align}
for~$\xi\in\R^n$,~$s\in\R_{\geq 0}$, $(x,t)\in\Omega_T$, and~$|h|>0$ small enough. Further, for~$\xi,\eta,\zeta\in\R^n$ the following bilinear form on~$\R^n$ will be employed
\begin{equation} \label{bilineardef}
    \foo{\mathcal{B}}(\cdot,\xi)(\eta,\zeta)\coloneqq \langle \partial_\xi A(\cdot,\xi)\eta,\zeta\rangle.
\end{equation}
For any~$f\in L^1(\Omega_T,\R^k)$, we use the common abbreviation for the average integral of~$f$. Let~$A\subset\Omega$ such that~$\mathcal{L}^n(A)>0$. Then, we define the slice-wise mean~$(f)_A\colon (0,T)\to\R^n$ as follows
\begin{align*} (f)_A(t) \coloneqq \displaystyle\fint_A f(\cdot,t),\dx,\quad\mbox{for a.e.~$t\in(0,T)$}.
\end{align*}
Since most of the functions in this paper are continuous with respect to the time variable, that is,~$f\in C(0,T;L^1(\Omega))$, the slice-wise mean value is well-defined for all~$t\in(0,T)$. In a similar fashion, we denote the mean value of~$f$ on some measurable set~$E\subset \Omega_T$, such that~$\mathcal{L}^{n+1}(E)>0$, by
\begin{align*}
     (f)_E \coloneqq \displaystyle\fiint_E f(x,t)\,\dx\dt.
\end{align*}
If~$E$ is an intrinsic parabolic cylinder of the form~$Q^{(\lambda)}_{\rho}(z_0)$, then we abbreviate the mean value of~$f$ on~$Q^{(\lambda)}_{\rho}(z_0)$ by~$(f)^{(\lambda)}_{z_0,\rho}$, meaning
\begin{align*}
     (f)^{(\lambda)}_{z_0,\rho} \coloneqq \displaystyle\fiint_{Q^{(\lambda)}_{\rho}(z_0)} f(x,t)\,\dx\dt.
\end{align*}
If it is evident from the context, the vertex~$z_0$ may sometimes be omitted to simplify notation.

\subsection{Definition of weak solution}\label{subsec:def-weak} 
While it is standard, we subsequently state the employed definition of a weak solution in our paper.
\begin{mydef}[Weak solution]\label{definitionglobal}
Let~$q>0$,~$p>1$. A non-negative measurable function~$u\colon\Omega_T\to \R_{\ge 0}$ in the class  
$$
    u\in C\big([0,T];L^{q+1}(\Omega)\big) \cap L^{p}\big(0,T;W^{1,p}(\Omega)\big)
$$ 
under assumptions~\eqref{voraussetzungen} is a non-negative weak sub(super)-solution of \eqref{pde}, if 
\begin{align}\label{lösungglobal}
    \iint_{\Omega_T}\big[-u^q\partial_t\phi + \langle A(x,t,Du), D\phi\rangle \big]\,\dx\dt  
    \leq (\geq)\, 0 
\end{align}
for any non-negative function
$$
    \phi \in W^{1,q+1}_{0}\big(0,T;L^{q+1}(\Omega)\big) \cap L^p\big(0,T;W^{1,p}_0(\Omega)\big).
$$
A non-negative function~$u$ is a non-negative weak solution of \eqref{pde}, if it is both, a weak sub-solution and a weak super-solution. 
\end{mydef}
This ensures the convergence of all the integrals in~\eqref{lösungglobal}. According to our Definition~\ref{definitionglobal}, weak sub/super-solutions belong to the space
$$C\big([0,T];L^{q+1}(\Omega)\big) \cap L^{p}\big(0,T;W^{1,p}(\Omega)\big),$$
meaning they are continuous functions with respect to the time variable for a.e.~$x\in\Omega$. However, this property is not restrictive, as pointed out in~\cite[Proposition~4.9]{gradientholder}. We will now state the notion of a weak solution to the following Cauchy-Dirichlet problem of equation~\eqref{pde}:
\begin{align} \label{cauchydirichletproblem}
    \begin{cases}
        \partial_t u^q - \divv A(x,t,Du) = 0 & \mbox{in $\Omega_T$},\\
        u=g & \mbox{on $\partial\Omega\times(0,T)$},\\
        u(\cdot,0) = u_0 & \mbox{in $\Omega$},
    \end{cases}
\end{align}
where~$g\in L^p\big(0,T;W^{1,p}(\Omega)\big)$ with~$\partial_t g \in L^{p'}\big(0,T;W^{-1,p'}(\Omega)\big)$ and~$u_0\in L^{q+1}(\Omega)$. Here,~$p' \coloneqq \frac{p}{p-1}$ in the usual manner denotes the Hölder conjugate of~$p$.
\begin{mydef}[Weak solution to the Cauchy-Dirichlet problem]\label{definitioncauchydirichletproblem}
Let~$q>0$,~$p>1$. A non-negative measurable function~$u\colon\Omega_T\to \R_{\ge 0}$ in the class  
$$
    u\in C\big([0,T];L^{q+1}(\Omega) \big) \cap\big(g+ L^{p}\big(0,T;W^{1,p}_0(\Omega)\big)\big)
$$ 
under assumptions~\eqref{voraussetzungen} is a non-negative weak solution to the Cauchy-Dirichlet problem~\eqref{cauchydirichletproblem}, if 
\begin{align}\label{lösungcauchydirichletproblem}
    \iint_{\Omega_T}\big[(u_0^q-u^q)\partial_t\phi + \langle A(x,t,Du), D\phi\rangle \big]\,\dx\dt  
    = 0 
\end{align}
for any non-negative function
$$
    \phi \in W^{1,q+1}\big(0,T;L^{q+1}(\Omega)\big) \cap L^p\big(0,T;W^{1,p}_0(\Omega)\big),
$$
such that~$\phi(\cdot,T)\equiv 0$.
\end{mydef}
According to this definition, if~$u$ is a weak solution to the Cauchy-Dirichlet problem~\eqref{cauchydirichletproblem}, then it can be inferred from~\cite[Proposition~4.9]{gradientholder} that~$u(\cdot,0)=u_0$ a.e. in~$\Omega$. The existence of weak solutions to the Cauchy-Dirichlet problem~\eqref{cauchydirichletproblem} has been established in~\cite{alt1983quasilinear}.
\subsection{Mollification in time} \label{subsec:mollificationintime} 
According to our definition, weak solutions may not necessarily exhibit weak differentiability with respect to the time variable. To address this challenge, we commonly resort to specific regularization techniques. In this regard, we will utilize Steklov-averages, referring to~\cite{steklovproperties} for further insights into their properties. For a function~$f \in L^1(\Omega_T)$ and~$0<h<T$, we define its {\it Steklov-average}~$[f]_{ h}$ by 
\begin{equation}\label{def-stek-right}
	[f]_{h}(x,t) 
	\coloneqq
	\left\{
	\begin{array}{cl}
		\displaystyle{\frac{1}{h} \int_{t}^{t+h} f(x,\tau) \,\d\tau ,} 
		& t\in (0,T-h) , \\[9pt]
		0, 
		& t\in [T-h,T)
\,.
	\end{array}
	\right.
\end{equation}
Rewriting inequality~\eqref{lösungglobal} in terms of Steklov-means~$[u]_h$ of~$u$, yields 
\begin{align}
    \int_{\Omega\times\{t\}}\big[\partial_t [u^q]_h \phi + \langle [A(x,t,Du)]_h, D\phi\rangle\big]\,\dx
    \leq (\geq)\, 0 \label{lösung-steklov}
\end{align}
for any non-negative function~$\phi \in W^{1,p}_0(\Omega)$ and any~$t\in(0,T)$. 

\subsection{Auxiliary material}\label{subsec:lemmas} 
 The aim of this section is to present several useful preliminary results. As the majority of the subsequent material is commonly known, we will largely omit providing proofs. 
 \subsubsection{Difference quotients} \label{subsubsec:diffquot}\,\\
 The following initial lemmas recall standard estimates for the difference quotients, as introduced in~\eqref{differenzenquotient}. To state a reference, we refer the reader to~\cite[Chapter~5.8,~Theorem~3]{evans2022partial}.
 \begin{mylem} \label{differenzenquotientlemmaeins}
      Let~$1<p<\infty$,~$i\in\{1,...,n\}$,~$0<r<\rho$, and~$D_iu\in L^p(B_\rho)$. Then, there holds
      $$\lim\limits_{\rho-r>|h|\downarrow 0}\big\|\Delta_h^{(i)}u - D_iu \big\|_{L^p(B_r)} = 0.$$
 \end{mylem}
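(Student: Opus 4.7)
The plan is to reduce to the smooth case by mollification, where the statement becomes a direct consequence of the fundamental theorem of calculus, and then transfer the conclusion to the general setting using a contraction estimate for difference quotients.

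First, I would establish the key contraction estimate: for any function $v$ on $B_\rho$ admitting a weak partial derivative $D_i v\in L^p(B_\rho)$, and for $|h|<\rho-r$, there holds
\begin{equation*}
    \|\Delta_h^{(i)} v\|_{L^p(B_r)} \le \|D_i v\|_{L^p(B_\rho)}.
\end{equation*}
This rests on the integral representation $\Delta_h^{(i)} v(x) = \tfrac{1}{h}\int_0^h D_i v(x+s e_i)\,\d s$, which holds for a.e.~$x\in B_r$ (valid for smooth $v$ by the fundamental theorem, and in general because weakly differentiable functions are absolutely continuous on almost every line parallel to $e_i$); Jensen's inequality applied to the average and Fubini then give the claim, using $B_r+[0,h]e_i\subset B_\rho$.

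Next, for $v\in C^1(B_\rho)$ with $D_i v\in L^p(B_\rho)$, the identity
\begin{equation*}
    \Delta_h^{(i)} v(x) - D_i v(x) = \frac{1}{h}\int_0^h \bigl[D_i v(x+s e_i) - D_i v(x)\bigr]\,\d s
\end{equation*}
combined with Minkowski's integral inequality and the continuity of translation in $L^p(B_r)$ (applied to $D_i v$ extended by zero outside $B_\rho$) yields $\|\Delta_h^{(i)} v - D_i v\|_{L^p(B_r)}\to 0$ as $|h|\downarrow 0$. Now for the general $u$, let $\eta_\varepsilon$ be a standard mollifier and set $u_\varepsilon\coloneqq u\ast\eta_\varepsilon$; on $B_{\rho-\varepsilon}$ this is smooth and satisfies $D_i u_\varepsilon = (D_i u)\ast\eta_\varepsilon\to D_i u$ in $L^p(B_{\rho'})$ for every $B_{\rho'}\Subset B_\rho$. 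Using the triangle inequality
\begin{equation*}
    \|\Delta_h^{(i)} u - D_i u\|_{L^p(B_r)}
    \le \|\Delta_h^{(i)}(u-u_\varepsilon)\|_{L^p(B_r)}
    + \|\Delta_h^{(i)} u_\varepsilon - D_i u_\varepsilon\|_{L^p(B_r)}
    + \|D_i u_\varepsilon - D_i u\|_{L^p(B_r)},
\end{equation*}
the contraction estimate from the first step bounds the first term by $\|D_i u-D_i u_\varepsilon\|_{L^p(B_{r+|h|})}$, which together with the third term can be made arbitrarily small by fixing $\varepsilon$ small enough (uniformly in $h$ as long as $r+|h|<\rho$). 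Then, with $\varepsilon$ fixed, the smooth case handles the middle term as $|h|\downarrow 0$.

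The only delicate point is justifying the integral representation of the difference quotient in the non-smooth case, which underlies the contraction estimate applied to $u - u_\varepsilon$; once this is settled via the ACL-characterization of Sobolev functions (or, equivalently, by first verifying the contraction estimate for the smooth mollifications $u_\varepsilon$ and passing to the limit), the rest of the argument is routine.
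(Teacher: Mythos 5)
The paper does not prove this lemma but simply cites Evans, Chapter 5.8, Theorem 3; your argument is correct and is essentially the standard proof found in such references: contraction estimate for difference quotients of Sobolev functions, translation continuity in $L^p$ for the smooth case, and mollification with the triangle inequality to transfer to general $u$. Nothing to flag.
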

  \begin{mylem} \label{differenzenquotientlemmazwei}
     Let~$1<p<\infty$,~$i\in\{1,...,n\}$,~$0<r<\rho$, and~$u\in L^p(B_\rho)$. Furthermore, suppose there exists a constant~$K\geq 0$, such that
     $$\displaystyle\int_{B_r}\big|\Delta_h^{(i)}u \big|^p\,\dx \leq K,\quad \mbox{for any $|h|<\rho-r$}.$$
     Then, there holds~$D_i u \in L^p(B_\rho)$ including the quantitative estimate
     $$\displaystyle\int_{B_\rho}|D_iu|^p\,\dx \leq K.$$
 \end{mylem}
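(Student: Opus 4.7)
The plan is to exploit the reflexivity of $L^p$ for $1<p<\infty$ in order to extract a weak limit of the difference quotients and identify it with the weak derivative $D_i u$, after which the claimed bound follows from the weak lower semicontinuity of the $L^p$-norm; the extension from $B_r$ to $B_\rho$ is then accomplished by exhausting $B_\rho$ with balls of radius strictly smaller than $\rho$.

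In detail, I would fix an arbitrary $r'\in(0,\rho)$ and choose a null sequence $h_k\to 0$ with $0<|h_k|<\rho-r'$ for all $k$. By the standing hypothesis applied with $r$ replaced by $r'$ (note the hypothesis is stable under shrinking $r$ since $B_{r'}\subset B_r$ if $r'\le r$, and otherwise one uses the same bound on $B_{r'}$ which is even easier), the sequence $\{\Delta_{h_k}^{(i)}u\}_k$ is bounded in $L^p(B_{r'})$ by $K^{1/p}$. By reflexivity of $L^p(B_{r'})$ and the Banach–Alaoglu theorem, there exist a subsequence (not relabelled) and a function $v\in L^p(B_{r'})$ such that $\Delta_{h_k}^{(i)}u \rightharpoonup v$ weakly in $L^p(B_{r'})$.

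Next I would identify $v$ with the weak partial derivative $D_i u$ on $B_{r'}$. For any test function $\varphi\in C_c^{\infty}(B_{r'})$, the elementary ``discrete integration by parts'' identity
\begin{align*}
\int_{B_{r'}} \varphi\,\Delta_{h_k}^{(i)}u\,\dx = -\int_{B_{r'}} u\,\Delta_{-h_k}^{(i)}\varphi\,\dx
\end{align*}
holds for $|h_k|$ sufficiently small, since $\varphi$ is compactly supported. Because $\varphi$ is smooth, $\Delta_{-h_k}^{(i)}\varphi\to D_i\varphi$ uniformly, hence strongly in $L^{p'}(B_{r'})$, so the right-hand side converges to $-\int_{B_{r'}} u\,D_i\varphi\,\dx$. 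The left-hand side, by weak convergence, converges to $\int_{B_{r'}}\varphi\, v\,\dx$. Consequently $v$ is precisely the weak partial derivative $D_i u$ on $B_{r'}$, and by the weak lower semicontinuity of the $L^p$-norm,
\begin{align*}
\int_{B_{r'}}|D_i u|^p\,\dx \le \liminf_{k\to\infty}\int_{B_{r'}}|\Delta_{h_k}^{(i)}u|^p\,\dx \le K.
\end{align*}

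Finally, to promote the bound from $B_{r'}$ to $B_\rho$, I would choose a sequence $r_j\nearrow\rho$ with $r_j>r$. The argument above, applied to each $r_j$ in place of $r'$, yields a weak derivative $D_i u\in L^p(B_{r_j})$ with $\int_{B_{r_j}}|D_i u|^p\,\dx\le K$; since the weak derivative is unique where it exists, these patch together to a single $D_i u\in L^p_{\loc}(B_\rho)$. Monotone convergence then gives $\int_{B_\rho}|D_i u|^p\,\dx\le K$, as claimed. I do not anticipate any genuine obstacle: the only point requiring a small amount of care is the passage from an interior ball to the full ball $B_\rho$, for which the exhaustion argument just described suffices.
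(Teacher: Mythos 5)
Your weak-compactness argument — reflexivity of $L^p$ for $1<p<\infty$, extraction of a weakly convergent subsequence of difference quotients, identification of the weak limit with $D_iu$ via discrete integration by parts against $C_c^\infty$ test functions, and weak lower semicontinuity of the $L^p$-norm — is correct and is precisely the proof behind the cited result in Evans (Chapter~5.8, Theorem~3), which the paper invokes without reproducing. On the inner ball $B_r$ this cleanly yields $D_iu\in L^p(B_r)$ together with $\int_{B_r}|D_iu|^p\,\dx\le K$.

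The final exhaustion step, however, contains a genuine gap. Your parenthetical — ``the hypothesis is stable under shrinking $r$ since $B_{r'}\subset B_r$ if $r'\le r$, and otherwise one uses the same bound on $B_{r'}$ which is even easier'' — is false in the case $r'>r$: then $B_r\subsetneq B_{r'}$, so $\int_{B_{r'}}\bigl|\Delta_h^{(i)}u\bigr|^p\,\dx\ge\int_{B_r}\bigl|\Delta_h^{(i)}u\bigr|^p\,\dx$, and the hypothesis furnishes no bound whatsoever on the larger ball. Consequently you cannot extract a weak limit on $B_{r_j}$ for $r_j>r$, and the exhaustion of $B_\rho$ collapses. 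In fact the statement as printed cannot hold: a function that is smooth on a neighbourhood of $\overline{B_r}$ but badly behaved near $\partial B_\rho$ (say, of unbounded variation there in the $e_i$-direction) satisfies the hypothesis yet fails the conclusion. The intended conclusion — consistent with the cited theorem in Evans, which asserts regularity only on the inner set, and with every application of this lemma later in the paper, where the difference-quotient bound and the resulting gradient bound live on the \emph{same} cylinder — is $D_iu\in L^p(B_r)$ with $\int_{B_r}|D_iu|^p\,\dx\le K$; the appearance of $B_\rho$ in the conclusion is a misprint for $B_r$. For that correct version your argument, with the final exhaustion paragraph deleted, is complete.
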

 \begin{mylem} \label{differenzenquotientlemmadrei}
     Let~$1<p<\infty$,~$i\in\{1,...,n\}$,~$0<r<\rho$,~$|h|<\rho-r$, and~$u\in L^p(B_\rho)$. Then, there holds
     $$\displaystyle\int_{B_r}|u(x+he_i)|^p\,\dx \leq \displaystyle\int_{B_\rho}|u|^p\,\dx.$$
     Furthermore, if~$D_iu\in L^p(B_\rho)$, then
     $$\displaystyle\int_{B_r}\big|\Delta_h^{(i)}u \big|^p\,\dx \leq \displaystyle\int_{B_\rho}|D_i u|^p\,\dx.$$
 \end{mylem}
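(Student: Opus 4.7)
Both estimates are instances of very standard difference-quotient machinery; the key geometric fact is that the hypothesis $|h|<\rho-r$ guarantees $B_r+he_i\subset B_\rho$, and in fact $B_r+the_i\subset B_\rho$ for every $t\in[0,1]$.

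For the first bound I would simply apply translation invariance of the Lebesgue measure. Writing $y=x+he_i$ gives
\begin{equation*}
\int_{B_r}|u(x+he_i)|^p\,\dx=\int_{B_r+he_i}|u(y)|^p\,\dy\le\int_{B_\rho}|u(y)|^p\,\dy,
\end{equation*}
where the inclusion $B_r+he_i\subset B_\rho$ comes from $|x+he_i|\le|x|+|h|<r+(\rho-r)=\rho$ for $x\in B_r$.

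For the second bound I would first establish the inequality for smooth $u\in C^1(\overline{B_\rho})$ and then transfer it to the Sobolev case by approximation. For $u\in C^1(\overline{B_\rho})$ the fundamental theorem of calculus yields
\begin{equation*}
\Delta_h^{(i)}u(x)=\frac{1}{h}\int_0^h D_iu(x+se_i)\,\ds=\int_0^1 D_iu(x+the_i)\,\dt.
\end{equation*}
Jensen's inequality (applicable since $p>1$) gives $|\Delta_h^{(i)}u(x)|^p\le\int_0^1|D_iu(x+the_i)|^p\,\dt$. Integrating over $B_r$, interchanging the order of integration via Fubini, and using the translation argument from the first step for each fixed $t\in[0,1]$ (which works because $|th|\le|h|<\rho-r$, so $B_r+the_i\subset B_\rho$), I obtain
\begin{equation*}
\int_{B_r}\bigl|\Delta_h^{(i)}u\bigr|^p\,\dx\le\int_0^1\int_{B_r+the_i}|D_iu|^p\,\dy\,\dt\le\int_{B_\rho}|D_iu|^p\,\dy.
\end{equation*}

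For general $u$ with $D_iu\in L^p(B_\rho)$, the passage to the limit is routine: choose a mollification $u_\varepsilon\in C^\infty$ on slightly smaller balls with $u_\varepsilon\to u$ and $D_iu_\varepsilon\to D_iu$ in $L^p$, apply the smooth inequality to $u_\varepsilon$, and send $\varepsilon\to 0$; the left-hand side converges by continuity of translations in $L^p$ and the right-hand side by $L^p$-convergence of the partial derivatives. There is essentially no obstacle here — the only technical care required is ensuring that the auxiliary smaller ball on which one mollifies still contains all the translated configurations $B_r+the_i$, which is guaranteed by slightly shrinking $\rho$ and using $|h|<\rho-r$ as a strict inequality.
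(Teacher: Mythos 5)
Your proof is correct and follows the standard route: translation invariance for the first bound, and the fundamental theorem of calculus plus Jensen plus Fubini plus translation invariance for the second, with the Sobolev case handled by mollification. The paper does not supply its own proof of this lemma but refers to \cite[Chapter~5.8,~Theorem~3]{evans2022partial}, which is precisely the argument you give.
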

  
The subsequent lemma presents a sort of discrete integration by parts.
\begin{mylem}\label{diskretePI}
    Let~$k\in\N$. Further, let~$F\in L^1(\Omega,\R^k)$,~$\phi \in C^1_0(\Omega,\R^k)$, and~$0<|h|<\dist(\supp\phi,\partial\Omega)$. For any~$i\in\{1,...,n\}$, there holds
    \begin{align*}
        \displaystyle\int_\Omega \big\langle \Delta_h^{(i)} F, \phi\big\rangle\,\dx = - \displaystyle\int_\Omega\int_0^1 \langle F(x+hs e_i), D_i\phi(x)\rangle \,\ds\dx.
    \end{align*}
\end{mylem}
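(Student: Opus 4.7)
The plan is to transfer the discrete derivative from $F$ onto the smooth test function $\phi$ by means of a translation, and then to replace the resulting discrete difference of $\phi$ with an integral of $D_i\phi$ via the fundamental theorem of calculus. Since $F$ is only assumed integrable, no derivative of $F$ is at our disposal, so the $C^1$-regularity of $\phi$ must carry the whole burden of turning a discrete difference into a continuous one.

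First I would write
\begin{align*}
\int_\Omega \langle \Delta_h^{(i)}F, \phi\rangle\,\dx = \frac{1}{h}\int_\Omega \langle F(x+he_i), \phi(x)\rangle\,\dx - \frac{1}{h}\int_\Omega \langle F(x), \phi(x)\rangle\,\dx
\end{align*}
and perform the spatial translation $y=x+he_i$ in the first integral. This is admissible because $\phi$ is supported in a compact set $K\subset\Omega$ with $\dist(K,\partial\Omega)>|h|$, so that $K-he_i\subset\Omega$ and no boundary contribution arises. Combining the two integrals yields
\begin{align*}
\int_\Omega \langle \Delta_h^{(i)}F, \phi\rangle\,\dx = -\int_\Omega \Big\langle F(y),\frac{\phi(y)-\phi(y-he_i)}{h}\Big\rangle\,\dy.
\end{align*}

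Next, the $C^1$-regularity of $\phi$ allows the fundamental theorem of calculus to be applied along the smooth curve $s\mapsto \phi(y-(1-s)he_i)$, giving
\begin{align*}
\phi(y)-\phi(y-he_i)=h\int_0^1 D_i\phi(y-(1-s)he_i)\,\ds.
\end{align*}
Inserting this representation into the preceding identity and invoking Fubini's theorem — legitimate since $F\in L^1(\Omega,\R^k)$ and $D_i\phi$ is continuous with compact support — the expression becomes
\begin{align*}
\int_\Omega \langle \Delta_h^{(i)}F, \phi\rangle\,\dx = -\int_0^1\int_\Omega \langle F(y), D_i\phi(y-(1-s)he_i)\rangle\,\dy\,\ds.
\end{align*}

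Finally, one more spatial change of variable $x=y-(1-s)he_i$ in the inner integral — again admissible by the bound $|h|<\dist(\supp\phi,\partial\Omega)$ — followed by the trivial parameter substitution $s\mapsto 1-s$ on $[0,1]$, transforms the expression into the right-hand side of the statement. The only delicate points are the admissibility of the two translations and of Fubini's theorem, all of which are immediate from the stated hypotheses; no substantive obstacle is anticipated.
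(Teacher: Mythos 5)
Your proof is correct, and it takes a genuinely different route from the paper. The paper first assumes $F\in C^1(\Omega,\R^k)$, writes $\Delta_h^{(i)}F(x)=\int_0^1 D_iF(x+hse_i)\,\ds$ by the fundamental theorem of calculus applied to $F$, then passes the derivative onto $\phi$ via integration by parts, and finally handles general $F\in L^1$ through an approximation by $C^1$ functions with a separate estimate to pass to the limit. You instead shift the difference quotient onto $\phi$ by a translation change of variables, apply the fundamental theorem of calculus to $\phi$ (which is already $C^1$), and then undo the translation. Because the regularity burden falls entirely on $\phi$, no approximation of $F$ is needed — you work directly with $F\in L^1$ throughout, which makes the argument shorter and cleaner. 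Your justification for the changes of variables via $|h|<\dist(\supp\phi,\partial\Omega)$ and for Fubini via $F\in L^1$ and $D_i\phi$ bounded with compact support is correct. The one point worth stating explicitly (you gesture at it) is that after the translation the integrand is supported in $\supp\phi\cup(\supp\phi+he_i)$, which by the distance hypothesis stays inside $\Omega$, so the domain of integration is unchanged.
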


\begin{proof}
First, we consider~$F\in C^1(\Omega,\R^k)$. For~$x\in\supp \phi$, we obtain
\begin{align*}
    \Delta^{(i)}_h F(x) &= \frac{1}{h}\displaystyle\int_{0}^{1}\frac{\d}{\ds} F(x+hse_i)\,\ds = \displaystyle\int_{0}^{1} D_i F(x+hse_i)\,\ds.
\end{align*}
This implies
\begin{align*}
    \displaystyle\int_{\Omega} \big\langle \Delta^{(i)}_h F(x),\phi(x)\big\rangle\,dx &= \displaystyle\int_{\Omega}\int_{0}^{1} \langle D_i F(x+hse_i), \phi(x)\rangle\,\ds\dx \\
    &= \displaystyle\int_{0}^{1}\int_{\Omega}\langle D_i F(x+hse_i), \phi(x)\rangle\,\dx\ds \\
    &= -\displaystyle\int_{0}^{1}\int_{\Omega}\langle F(x+hse_i), D_i\phi(x)\rangle\,\dx\ds \\
    &= -\displaystyle\int_{\Omega}\int_{0}^{1}\langle F(x+hse_i), D_i\phi(x)\rangle\,\ds\dx.
\end{align*}
Given~$F\in L^1(\Omega,\R^k)$, we consider a sequence~$(F_k)_{k\in\N}\subset C^1(\Omega,\R^k)$ with
$$\lim\limits_{k\to\infty}\displaystyle\int_{\Omega}|F-F_k|\,\dx = 0.$$
Then, there hold
\begin{align*}
    \lim\limits_{k\to\infty}\bigg|\displaystyle\int_{\Omega} \big\langle \Delta^{(i)}_h F, \xi\big\rangle\,dx - \displaystyle\int_{\Omega} \big\langle \Delta^{(i)}_h F_k, \xi\big\rangle\,dx\bigg| \leq \lim\limits_{k\to\infty} \frac{2\left\|\phi\right\|_\infty}{|h|} \displaystyle\int_{\Omega} |F-F_k|\,\dx = 0
\end{align*}
as well as
\begin{align*}
    \lim\limits_{k\to\infty}&\bigg|\displaystyle\int_{\Omega} \left\langle F(x+hse_i), D_i \right\rangle\xi\,dx - \displaystyle\int_{\Omega} \langle F_k(x+hse_i), D_i\xi \rangle\,dx\bigg| \\
    &\leq \lim\limits_{k\to\infty} \left\|D\phi\right\|_\infty \displaystyle\int_{\Omega} |F-F_k |\,\dx = 0.
\end{align*}
This finishes the proof.
\end{proof}

\subsubsection{Structure estimates} \label{subsubsec:mu} \,\\
In this section, we will present several commonly used estimates that primarily incorporate the regularizing parameter~$\mu\in[0,1]$. For the next lemma, we refer to~\cite[Lemma~2.1]{giaquinta1986partial} in the case~$\alpha\geq 0$, and to~\cite[Lemma~2.1]{acerbi1989regularity} in the case~$-\frac{1}{2}<\alpha<0$.
\begin{mylem} \label{integralungleichung}
    Let~$-\frac{1}{2}<\alpha$ and~$\mu\in [0,1]$. Then, there holds
    \begin{align*}
        \frac{1}{C}(\mu^2 + |\xi|^2 + |\eta|^2 )^\alpha \leq \displaystyle\int_{0}^{1}(\mu^2 + |\xi + s(\eta-\xi) |^2 )^\alpha\,\ds \leq C(\mu^2 + |\xi|^2 + |\eta|^2 )^\alpha
    \end{align*}
    for any~$\xi,\eta\in\R^n$ with~$C=C(\alpha)>0$.
\end{mylem}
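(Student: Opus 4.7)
Denote $\xi_s := (1-s)\xi + s\eta$ for $s\in[0,1]$. The two-sided inequality swaps sides as $\alpha$ changes sign, so the plan is to treat the cases $\alpha \geq 0$ and $-\tfrac{1}{2} < \alpha < 0$ separately. The \emph{easy direction}, i.e., the upper bound when $\alpha \geq 0$ and the lower bound when $\alpha < 0$, follows from the triangle inequality $|\xi_s| \leq (1-s)|\xi| + s|\eta| \leq \sqrt{|\xi|^2 + |\eta|^2}$, which yields $\mu^2 + |\xi_s|^2 \leq \mu^2 + |\xi|^2 + |\eta|^2$ for every $s\in[0,1]$. Raising to the power $\alpha$ preserves this inequality when $\alpha \geq 0$ and reverses it when $\alpha < 0$, and integration in $s$ produces the corresponding bound with constant $C = 1$.

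For the \emph{reverse direction when $\alpha \geq 0$}, I assume without loss of generality that $|\eta|\geq|\xi|$. For $s\in[\tfrac{3}{4},1]$ the reverse triangle inequality gives $|\xi_s|\geq s|\eta|-(1-s)|\xi|\geq (2s-1)|\eta|\geq \tfrac{1}{2}|\eta|$, hence
$$\mu^2+|\xi_s|^2 \,\geq\, \mu^2 + \tfrac{1}{4}|\eta|^2 \,\geq\, \tfrac{1}{8}\bigl(\mu^2+|\xi|^2+|\eta|^2\bigr).$$
Raising to $\alpha\geq 0$ and integrating only over the subinterval $[\tfrac{3}{4},1]$ produces the lower bound with a constant depending only on $\alpha$.

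The \emph{reverse direction when $-\tfrac{1}{2}<\alpha<0$} is the delicate point and, in fact, the core of the lemma: the integrand can blow up when $\xi_s$ approaches the origin, and integrability is ensured precisely by the assumption $\alpha > -\tfrac{1}{2}$. Completing the square in $s$ yields the geometric identity $|\xi_s|^2 = d^2 + |\eta-\xi|^2(s-s^*)^2$, where $d \geq 0$ is the Euclidean distance from the origin to the affine line through $\xi$ and $\eta$ and $s^*\in\R$ parametrizes the foot of the perpendicular. After the substitution $\sigma=|\eta-\xi|(s-s^*)/\sqrt{\mu^2+d^2}$, the integral reduces to
$$\frac{(\mu^2+d^2)^{\alpha+1/2}}{|\eta-\xi|}\int_I (1+\sigma^2)^\alpha\,\d\sigma \,\leq\, C(\alpha)\,\frac{(\mu^2+d^2)^{\alpha+1/2}}{|\eta-\xi|},$$
the finiteness of $\int_\R(1+\sigma^2)^\alpha\,\d\sigma$ being precisely the threshold condition $\alpha>-\tfrac{1}{2}$. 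One concludes by a short case distinction: if $|\eta-\xi|\gtrsim\sqrt{|\xi|^2+|\eta|^2}$, the above combined with $d^2\leq\min(|\xi|^2,|\eta|^2)$ and $\alpha+\tfrac{1}{2}>0$ directly yields the bound; in the complementary case $\eta$ lies close to $\xi$, so $|\xi_s|$ is uniformly comparable to $|\xi|$ on $[0,1]$ and a pointwise estimate suffices.

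The main obstacle is precisely this last case: the interplay between the integrability threshold $\alpha>-\tfrac{1}{2}$ and the geometric reduction sketched above constitutes the genuine content of the lemma, whereas the other three parts reduce to elementary convexity arguments. The sharpness of the threshold is visible in the logarithmic divergence of $\int_\R(1+\sigma^2)^{-1/2}\,\d\sigma$, which is the reason why the statement fails at $\alpha=-\tfrac{1}{2}$.
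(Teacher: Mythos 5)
The easy directions and the lower bound for $\alpha\geq 0$ are fine. The treatment of the upper bound for $-\frac{1}{2}<\alpha<0$, however, contains a genuine error at exactly the step you single out as the ``core of the lemma''.

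You claim that $\int_I(1+\sigma^2)^\alpha\,\mathrm{d}\sigma\leq C(\alpha)$ because $\int_\mathbb{R}(1+\sigma^2)^\alpha\,\mathrm{d}\sigma$ is finite precisely when $\alpha>-\frac{1}{2}$. This is backwards: $(1+\sigma^2)^\alpha\sim|\sigma|^{2\alpha}$ at infinity, so $\int_\mathbb{R}(1+\sigma^2)^\alpha\,\mathrm{d}\sigma<\infty$ if and only if $2\alpha<-1$, i.e.\ $\alpha<-\frac{1}{2}$. In the regime $-\frac{1}{2}<\alpha<0$ that you are treating, the integral over $\mathbb{R}$ diverges, and since the length of $I$ is $|\eta-\xi|/\sqrt{\mu^2+d^2}$, which can be arbitrarily large, the bound $\int_I(1+\sigma^2)^\alpha\,\mathrm{d}\sigma\leq C(\alpha)$ fails. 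Concretely, take $\mu=0$, $\xi=(-L,1)$, $\eta=(L,1)$, so that $d=1$ and $|\eta-\xi|=2L$. A direct computation gives $\int_0^1(\mu^2+|\xi_s|^2)^\alpha\,\mathrm{d} s\gtrsim L^{2\alpha}/(2\alpha+1)$ for large $L$, whereas your claimed bound $C(\mu^2+d^2)^{\alpha+1/2}/|\eta-\xi|=C/(2L)$ tends to zero much faster (indeed $L^{2\alpha}\gg L^{-1}$ because $2\alpha+1>0$). In the fully degenerate situation $\mu=d=0$ the substitution is undefined altogether, and there the right-hand side of your claimed estimate vanishes while the left-hand side does not.

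The genuine reason the threshold $\alpha>-\frac{1}{2}$ enters is local, not global, integrability: after completing the square one has $\mu^2+|\xi_s|^2\geq|\eta-\xi|^2(s-s^*)^2$, and when $\mu=d=0$ the integrand behaves like $|s-s^*|^{2\alpha}$ near $s=s^*$, which is integrable on $[0,1]$ exactly when $2\alpha>-1$. A correct self-contained argument therefore has to split off this local singularity rather than rely on decay at infinity; in the standard treatment (Acerbi--Fusco, to which the paper simply refers for this range of $\alpha$) one bounds $\int_0^1|s-s^*|^{2\alpha}\,\mathrm{d} s\leq C(\alpha)$ and combines this with the crude pointwise bound $(\mu^2+|\xi_s|^2)^\alpha\leq(\mu^2+d^2)^\alpha$, distinguishing the regimes $\mu\gtrsim|\eta|$ and $\mu\lesssim|\eta|$, together with the reduction $|\eta-\xi|\gtrsim|\eta|$ versus $|\xi|\sim|\eta|\sim|\xi_s|$ that you already set up. Your geometric reduction is the right start, but the asserted uniform bound on $\int_I(1+\sigma^2)^\alpha$ is simply false, and the final case distinction inherits this gap; in its present form the argument does not prove the singular half of the lemma.
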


As an immediate consequence of the preceding Lemma~\ref{integralungleichung}, we obtain the following structure properties for the vector field~$A$.
\begin{mylem} \label{AbschätzungenfürAeins}
Let~$p>1$,~$\mu\in(0,1]$, and~$A\colon\Omega_T\times\R^n\to\R^n$ satisfy~$\eqref{voraussetzungen}_1 - \eqref{voraussetzungen}_2$. For any~$(x,t)\in\Omega_T$ and~$\xi,\eta\in\R^n$, there hold
\begin{align*}
    | A(x,t,\xi)-A(x,t,\eta)| \leq C (\mu^2 + |\xi|^2 + |\eta|^2 )^{\frac{p-2}{2}}|\xi-\eta|
\end{align*}
with~$C=C(p,C_1)$, and
\begin{align*}
    \langle A(x,t,\xi)-A(x,t,\eta), \xi-\eta\rangle \geq \frac{1}{C} (\mu^2 + |\xi|^2 + |\eta|^2 )^{\frac{p-2}{2}}|\xi-\eta|^2
\end{align*}
with~$C=C(p,C_2)$. The estimates remain valid, if either~$\xi$ or~$\eta$ is non-zero, even when~$\mu$ equals zero and~$1<p<2$.
\end{mylem}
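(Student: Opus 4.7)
The standard strategy is to represent the differences via the fundamental theorem of calculus and then apply Lemma~\ref{integralungleichung} with $\alpha=\tfrac{p-2}{2}$. Since $p>1$, we have $\alpha>-\tfrac12$, so Lemma~\ref{integralungleichung} is available.

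Concretely, for fixed $(x,t)\in\Omega_T$ and $\xi,\eta\in\mathbb{R}^n$, the differentiability of $\xi\mapsto A(x,t,\xi)$ together with the chain rule yields
\begin{align*}
    A(x,t,\xi)-A(x,t,\eta)=\int_{0}^{1}\partial_\xi A\bigl(x,t,\eta+s(\xi-\eta)\bigr)(\xi-\eta)\,\d s.
\end{align*}
For the first estimate, I would take absolute values inside the integral, invoke the bound
$|\partial_\xi A(x,t,\zeta)|\leq C_1(\mu^2+|\zeta|^2)^{\frac{p-2}{2}}$ implied by $\eqref{voraussetzungen}_1$, and then apply the upper bound from Lemma~\ref{integralungleichung} to the $s$-integral of $(\mu^2+|\eta+s(\xi-\eta)|^2)^{(p-2)/2}$, which is controlled by $C(p)(\mu^2+|\xi|^2+|\eta|^2)^{(p-2)/2}$. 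Multiplying by $|\xi-\eta|$ yields the claim with $C=C(p,C_1)$. For the second estimate, I would instead test the integrand with $\xi-\eta$, use the ellipticity $\eqref{voraussetzungen}_2$ pointwise to pull out a factor $C_2(\mu^2+|\eta+s(\xi-\eta)|^2)^{(p-2)/2}|\xi-\eta|^2$, and then apply the \emph{lower} bound in Lemma~\ref{integralungleichung} to obtain $\tfrac{1}{C}(\mu^2+|\xi|^2+|\eta|^2)^{(p-2)/2}|\xi-\eta|^2$ with $C=C(p,C_2)$.

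The only subtle point is the addendum concerning $\mu=0$ together with $1<p<2$: then $\alpha=(p-2)/2\in(-\tfrac12,0)$ and the integrand can become singular along the segment $[\eta,\xi]$ whenever $\eta+s(\xi-\eta)=0$ for some $s\in[0,1]$. However, Lemma~\ref{integralungleichung} explicitly permits $\mu\in[0,1]$, and once either $\xi\neq 0$ or $\eta\neq 0$, the right-hand side $(|\xi|^2+|\eta|^2)^{\alpha}$ is finite, so the same argument applies unchanged; the segment passes through the origin on a set of measure zero in $s$, and the singularity remains integrable. I therefore expect no serious obstacle: the proof is a direct consequence of $\eqref{voraussetzungen}_{1,2}$ combined with Lemma~\ref{integralungleichung}, and the main (minor) care is simply to justify the pointwise use of $\partial_\xi A$ along the segment in the limiting case $\mu=0$, $p<2$, which is handled by the integrability afforded by $\alpha>-\tfrac12$.
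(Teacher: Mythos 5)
Your proposal matches the paper's proof essentially verbatim: both write $A(x,t,\xi)-A(x,t,\eta)$ as the integral of $\partial_\xi A$ along the segment $[\eta,\xi]$, then invoke $\eqref{voraussetzungen}_1$ (resp.\ $\eqref{voraussetzungen}_2$) pointwise and close with the upper (resp.\ lower) bound in Lemma~\ref{integralungleichung}. Your remark about the integrability of the singularity when $\mu=0$ and $1<p<2$ is correct and is exactly the reason the final sentence of the lemma holds.
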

\begin{proof}
    If~$\xi=\eta$, there is nothing to prove. Hence, we assume~$\xi\neq \eta$. We start with the first estimate. By utilizing~$\eqref{voraussetzungen}_{1}$ and Lemma~\ref{integralungleichung}, we obtain
    \begin{align*}
        | A(x,t,\xi)-A(x,t,\eta)| &\leq \displaystyle\int_{0}^{1}\Big|\frac{\d}{\ds} A(x,t,\eta+s(\xi-\eta))\Big|\,\ds \\ 
        & = \displaystyle\int_{0}^{1}|\partial_{\xi} A(x,t,\eta+s(\xi-\eta))||\xi-\eta|\,\ds \\
                &\leq C_1 \displaystyle\int_{0}^{1}(\mu^2 + |\eta+s(\xi-\eta)|^2)^{\frac{p-2}{2}}\,\ds \, |\xi-\eta|\\
        &\leq C (\mu^2 + |\xi|^2 + |\eta|^2 )^{\frac{p-2}{2}}|\xi-\eta|.
    \end{align*}
The second inequality follows from~$\eqref{voraussetzungen}_{2}$ and again Lemma~\ref{integralungleichung}
     \begin{align*}
        \langle A(x,t,\xi)-A(x,t,\eta), \xi-\eta \rangle &= \displaystyle\int_{0}^{1}\Big\langle\frac{\d}{\ds} A(x,t,\eta+s(\xi-\eta)),\xi-\eta\Big\rangle\,\ds \\
        & = \displaystyle\int_{0}^{1}\langle\partial_\xi A(x,t,\eta+s(\xi-\eta))(\xi-\eta),\xi-\eta\rangle\,\ds  \\
        &\geq C_2\displaystyle\int_{0}^{1}(\mu^2 + |\eta+s(\xi-\eta)|^2)^{\frac{p-2}{2}}\,\ds\, |\xi-\eta|^2 \\
        &\geq \frac{1}{C} (\mu^2 + |\xi|^2 + |\eta|^2 )^{\frac{p-2}{2}}|\xi-\eta|^2.
    \end{align*}
\end{proof}
Exploiting condition~$\eqref{voraussetzungen}_{3}$, we derive another useful lemma.

\begin{mylem} \label{AbschätzungenfürAzwei}
Let~$p>1$,~$\mu\in[0,1]$, and~$A\colon\Omega_T\times\R^n\to\R^n$ satisfy
$$|\partial_x A(x,t,\xi)|\leq C (\mu^2+|\xi|^2)^{\frac{p-1}{2}}$$
for a.e.~$(x,t)\in\Omega_T$,~$\xi\in\R^n$, with~$C\geq 0$. For any~$t\in(0,T)$,~$x,y\in\Omega$ with~$[x,y]\subset\Omega$, where
\begin{equation*}
    [x,y]\coloneqq\{v\in\Omega\colon~v=sx + (1-s)y,\,s\in[0,1] \},
\end{equation*}
and any~$\xi,\eta\in\R^n$, there holds
\begin{align*}
    |\langle A(x,t,\xi)-A(y,t,\xi),\eta \rangle| \leq C  (\mu^2 + |\xi|^2 )^{\frac{p-1}{2}}|x-y| |\eta|
\end{align*}
with~$C=C(n,C_3)$.
\end{mylem}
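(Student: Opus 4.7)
The plan is to reduce the difference~$A(x,t,\xi)-A(y,t,\xi)$ to a line integral of~$\partial_x A$ along the segment~$[x,y]$ via the fundamental theorem of calculus, and then invoke the given pointwise bound on~$|\partial_x A|$. Since~$[x,y]\subset\Omega$ by assumption, the map~$s\mapsto A(y+s(x-y),t,\xi)$ is differentiable on~$[0,1]$ (with~$t,\xi$ held fixed), so
\begin{equation*}
    A(x,t,\xi) - A(y,t,\xi) = \int_0^1 \partial_x A(y+s(x-y),t,\xi)(x-y)\,\ds,
\end{equation*}
where~$\partial_x A(\cdot)(x-y) = \sum_{j=1}^n \partial_{x_j} A(\cdot)(x_j-y_j) \in \R^n$. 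Pairing with~$\eta$, moving the inner product inside the integral, and applying Cauchy--Schwarz then yields
\begin{equation*}
    |\langle A(x,t,\xi)-A(y,t,\xi),\eta\rangle| \leq \int_0^1 \big|\partial_x A(y+s(x-y),t,\xi)(x-y)\big|\,|\eta|\,\ds.
\end{equation*}

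Next I would estimate~$|\partial_x A(\cdot)(x-y)|\leq c(n)\,|\partial_x A(\cdot)|\,|x-y|$, where the dimensional factor originates from the componentwise bound~$|\partial_x A\cdot(x-y)|\leq \sum_j|\partial_{x_j}A||x_j-y_j|$ together with the norm equivalence~$|x|_\infty\leq|x|\leq\sqrt{n}\,|x|_\infty$ recalled in Section~\ref{subsec:notation}. Since the hypothesis~$|\partial_x A(z,t,\xi)|\leq C_3(\mu^2+|\xi|^2)^{(p-1)/2}$ is uniform in the base point~$z$, the integrand becomes independent of~$s$, and integration produces
\begin{equation*}
    |\langle A(x,t,\xi)-A(y,t,\xi),\eta\rangle|\leq C(n,C_3)\,(\mu^2+|\xi|^2)^{\frac{p-1}{2}}\,|x-y|\,|\eta|,
\end{equation*}
which is precisely the stated claim.

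The proof contains no substantive obstacle: it is a textbook application of the fundamental theorem of calculus along the segment~$[x,y]$, followed by Cauchy--Schwarz and the pointwise bound on~$|\partial_x A|$. The only mild care needed concerns the norm convention for the spatial Jacobian~$\partial_x A$, which determines the precise dimensional constant and hence the stated~$n$-dependence of~$C$.
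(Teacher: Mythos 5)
Your proof is correct and follows essentially the same route as the paper: express the spatial increment via a derivative of~$A$ along the segment~$[x,y]$, pair with~$\eta$, apply Cauchy--Schwarz, and invoke the uniform bound on~$|\partial_x A|$. The only difference is cosmetic: you use the integral form (fundamental theorem of calculus) where the paper invokes the mean value theorem; your version is actually slightly cleaner since the mean value theorem does not produce a single intermediate point for a vector-valued map, whereas the line-integral identity holds verbatim.
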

\begin{proof}
    An application of the mean value theorem yields the existence of~$z\in[x,y]$ with
    $$A(x,t,\xi)-A(y,t,\xi)=\langle \partial_x A(z,t,\xi),x-y\rangle$$
    for a.e.~$t\in(0,T)$,~$\xi\in\R^n$. Employing~$\eqref{voraussetzungen}_{3}$ and the Cauchy-Schwarz inequality, we thus immediately obtain the claimed estimate
    \begin{align*}
        | \langle A(x,t,\xi)-A(y,t,\xi),\eta\rangle| &\leq |A(x,t,\xi)-A(y,t,\xi)| |\eta| \\
        &\leq | \partial_x A(z,t,\xi)||x-y||\eta| \\
        &\leq n C_3 (\mu^2 + |\xi|^2 )^{\frac{p-1}{2}} |x-y||\eta|.
    \end{align*}
\end{proof}
A further beneficial lemma is the following, which can be found in~\cite[Lemma~2.4]{singer2015parabolic}.
\begin{mylem} \label{lemmafurenergyestimate}
    Let~$p>1$ and~$\mu\in[0,1]$. For any~$\xi,\eta\in\R^n$, there holds
    \begin{align*}
       (\mu^2+|\xi|^2)^{\frac{p}{2}} \leq C (\mu^2+|\eta|^2)^{\frac{p}{2}} + C(\mu^2 + |\xi|^2 + |\eta|^2 )^{\frac{p-2}{2}}|\xi-\eta|^2
    \end{align*}
    with~$C=C(n,p)$.
\end{mylem}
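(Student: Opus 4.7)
The plan is to split the argument into the super-quadratic case $p\ge 2$ and the sub-quadratic case $1<p<2$, since the factor $(\mu^2+|\xi|^2+|\eta|^2)^{(p-2)/2}$ appearing on the right-hand side is monotone-increasing in the first case and monotone-decreasing in the second, so the two regimes need slightly different bookkeeping. In both cases the starting point is the same: write $|\xi|^2\le 2|\eta|^2+2|\xi-\eta|^2$, so that $\mu^2+|\xi|^2\le 2(\mu^2+|\eta|^2)+2|\xi-\eta|^2$, and apply an elementary $p/2$-power inequality to obtain
\begin{equation*}
    (\mu^2+|\xi|^2)^{p/2} \le C(p)\,(\mu^2+|\eta|^2)^{p/2} + C(p)\,|\xi-\eta|^p.
\end{equation*}
(For $p\ge 2$ this uses convexity $(a+b)^{p/2}\le 2^{p/2-1}(a^{p/2}+b^{p/2})$, and for $1<p<2$ the subadditivity $(a+b)^{p/2}\le a^{p/2}+b^{p/2}$.) What is then left is to absorb $|\xi-\eta|^p=|\xi-\eta|^{p-2}|\xi-\eta|^2$ into the right-hand side of the claim.

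For $p\ge 2$, the exponent $p-2\ge 0$ is non-negative, so the map $t\mapsto t^{(p-2)/2}$ is non-decreasing. Since $|\xi-\eta|^2\le 2(\mu^2+|\xi|^2+|\eta|^2)$, monotonicity immediately gives $|\xi-\eta|^{p-2}\le 2^{(p-2)/2}(\mu^2+|\xi|^2+|\eta|^2)^{(p-2)/2}$, and multiplying by $|\xi-\eta|^2$ yields exactly the remaining term on the right-hand side of the lemma. This case is therefore essentially automatic.

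For $1<p<2$ I would handle the residual $|\xi-\eta|^p$ through a short dichotomy. If $|\xi-\eta|^2\le \mu^2+|\eta|^2$, then $|\xi-\eta|^p\le (\mu^2+|\eta|^2)^{p/2}$ is absorbed directly by the first term on the right. Otherwise $|\xi-\eta|^2>\mu^2+|\eta|^2$, in which case the inequality $\mu^2+|\xi|^2\le 2(\mu^2+|\eta|^2)+2|\xi-\eta|^2\le 4|\xi-\eta|^2$ yields $\mu^2+|\xi|^2+|\eta|^2\le 5|\xi-\eta|^2$; now $p-2<0$, so this comparison \emph{reverses} under the exponent and gives $|\xi-\eta|^{p-2}\le 5^{(2-p)/2}(\mu^2+|\xi|^2+|\eta|^2)^{(p-2)/2}$, after which multiplying by $|\xi-\eta|^2$ again produces the desired term. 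The one genuinely delicate point of the whole proof is precisely this sign change in the sub-quadratic regime: one must be careful that a \emph{smallness} of $|\xi-\eta|$ (relative to $\mu^2+|\eta|^2$) is absorbed by the first term on the right, while only in the complementary \emph{largeness} regime is the factor $(\mu^2+|\xi|^2+|\eta|^2)^{(p-2)/2}$ genuinely comparable to $|\xi-\eta|^{p-2}$. Beyond this dichotomy, no real analytic input is required, and the constant depends only on $p$ (the dependence on $n$ in the statement is nominal).
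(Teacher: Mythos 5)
Your proof is correct and complete. The paper does not supply its own argument for this lemma---it simply cites~\cite[Lemma~2.4]{singer2015parabolic}---so there is no in-paper derivation to compare against; nevertheless your route is a clean, self-contained one. The initial decomposition $\mu^2+|\xi|^2\le 2(\mu^2+|\eta|^2)+2|\xi-\eta|^2$ followed by a $p/2$-power inequality is sound (convexity of $t\mapsto t^{p/2}$ for $p\ge 2$, subadditivity for $1<p<2$), and the reduction to absorbing $|\xi-\eta|^p$ is exactly the right reformulation. Your dichotomy for $1<p<2$ is the essential device: when $|\xi-\eta|^2\le\mu^2+|\eta|^2$ the residual term is swallowed by the first summand, and when $|\xi-\eta|^2>\mu^2+|\eta|^2$ you correctly obtain $\mu^2+|\xi|^2+|\eta|^2\le 5|\xi-\eta|^2$ so that the \emph{negative} exponent $(p-2)/2$ reverses the comparison in the needed direction. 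A more classical alternative, which stays closer to the machinery already present in the paper, is to apply the fundamental theorem of calculus to $t\mapsto(\mu^2+|\eta+t(\xi-\eta)|^2)^{p/2}$, bound the resulting integrand with Lemma~\ref{integralungleichung}, and then use Young's inequality together with an absorption step; that argument avoids any case distinction on $p$ but borrows the integral comparison lemma. Both yield $C=C(p)$, which is admissible since the stated $C(n,p)$ permits a constant independent of $n$.
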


The subsequent lemma originates from~\cite[Lemma~2.2]{acerbi1989regularity}.
\begin{mylem}\label{Vabschätzung}
 Let~$p\in(1,2)$ and~$\mu\in(0,1]$. There holds
\begin{align*}
    \frac{1}{C} \frac{|\xi-\eta|}{(\mu^2+|\xi|^2+|\eta|^2)^{\frac{2-p}{4}}} \leq \big|\mathcal{V}_{\mu}^{(p)}(\xi)-\mathcal{V}_{\mu}^{(p)}(\eta) \big| \leq C \frac{|\xi-\eta|}{(\mu^2+|\xi|^2+|\eta|^2)^{\frac{2-p}{4}}}
\end{align*}
  for any~$\xi,\eta\in\R^n$, and~$C=C(n,p)$. The estimate remains valid, if either~$\xi\neq 0$ or~$\eta\neq 0$, even when~$\mu=0$.
\end{mylem}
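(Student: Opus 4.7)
The strategy is the standard one for $\mathcal{V}$-type estimates: represent the difference $\mathcal{V}^{(p)}_\mu(\xi)-\mathcal{V}^{(p)}_\mu(\eta)$ via the fundamental theorem of calculus along the segment $[\eta,\xi]$, obtain pointwise bounds on $D\mathcal{V}^{(p)}_\mu$, and then conclude by applying Lemma~\ref{integralungleichung} with exponent $\alpha=\frac{p-2}{4}$. Note that $\alpha>-\tfrac12$ is equivalent to $p>0$, so Lemma~\ref{integralungleichung} is indeed applicable throughout the range $p\in(1,2)$.

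First, I would assume $\mu>0$ so that $\mathcal{V}^{(p)}_\mu$ is smooth. A direct computation gives
\begin{equation*}
    D\mathcal{V}^{(p)}_\mu(\zeta) = (\mu^2+|\zeta|^2)^{\frac{p-2}{4}} \mathrm{I} + \tfrac{p-2}{2}(\mu^2+|\zeta|^2)^{\frac{p-6}{4}} \zeta\otimes\zeta.
\end{equation*}
From this, for any $\omega\in\R^n$ the upper bound
\begin{equation*}
    |D\mathcal{V}^{(p)}_\mu(\zeta)\omega| \leq C(p)\,(\mu^2+|\zeta|^2)^{\frac{p-2}{4}}|\omega|
\end{equation*}
follows by the triangle inequality (using $|\zeta|^2\leq \mu^2+|\zeta|^2$). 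For the lower bound on the quadratic form, I compute
\begin{equation*}
    \langle D\mathcal{V}^{(p)}_\mu(\zeta)\omega,\omega\rangle = (\mu^2+|\zeta|^2)^{\frac{p-6}{4}}\Big[(\mu^2+|\zeta|^2)|\omega|^2 + \tfrac{p-2}{2}\langle \zeta,\omega\rangle^2\Big],
\end{equation*}
and since $\tfrac{p-2}{2}<0$, estimating $\langle\zeta,\omega\rangle^2\leq |\zeta|^2|\omega|^2$ and using $1+\tfrac{p-2}{2}=\tfrac{p}{2}>\tfrac12$ gives
\begin{equation*}
    \langle D\mathcal{V}^{(p)}_\mu(\zeta)\omega,\omega\rangle \geq \tfrac{p}{2}(\mu^2+|\zeta|^2)^{\frac{p-2}{4}}|\omega|^2.
\end{equation*}

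With these two pointwise bounds in hand, set $\gamma(s)\coloneqq \eta + s(\xi-\eta)$ and write
\begin{equation*}
    \mathcal{V}^{(p)}_\mu(\xi)-\mathcal{V}^{(p)}_\mu(\eta) = \int_0^1 D\mathcal{V}^{(p)}_\mu(\gamma(s))(\xi-\eta)\,\ds.
\end{equation*}
The upper bound of the lemma is then immediate by taking absolute values and invoking Lemma~\ref{integralungleichung} with $\alpha=\tfrac{p-2}{4}$. For the lower bound, I would take the scalar product of the identity above with $\xi-\eta$, apply the coercivity estimate on $D\mathcal{V}^{(p)}_\mu$ pointwise in $s$, again use Lemma~\ref{integralungleichung}, and then conclude by Cauchy--Schwarz:
\begin{equation*}
    |\mathcal{V}^{(p)}_\mu(\xi)-\mathcal{V}^{(p)}_\mu(\eta)|\,|\xi-\eta| \geq \langle \mathcal{V}^{(p)}_\mu(\xi)-\mathcal{V}^{(p)}_\mu(\eta),\xi-\eta\rangle \geq \tfrac{1}{C}(\mu^2+|\xi|^2+|\eta|^2)^{\frac{p-2}{4}}|\xi-\eta|^2,
\end{equation*}
which yields the desired lower bound upon dividing by $|\xi-\eta|$ (the case $\xi=\eta$ being trivial).

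The remaining point is to cover the case $\mu=0$ with $\xi\neq 0$ or $\eta\neq 0$. For this I would argue by continuity: the map $\zeta\mapsto \mathcal{V}^{(p)}_\mu(\zeta)$ depends continuously on $\mu\in[0,1]$ on any compact set avoiding the origin when $\mu=0$ (and in fact even globally for $p>1$, since $\mathcal{V}^{(p)}_0(\zeta)=|\zeta|^{(p-2)/2}\zeta$ is continuous at $0$). Applying the already-proven inequalities with $\mu_k\downarrow 0$ and passing to the limit in both sides, one recovers the claimed estimate, where the constant $C$ is independent of $\mu$. The only subtlety will be that in the denominator of the right-hand side the quantity $(\mu^2+|\xi|^2+|\eta|^2)^{(2-p)/4}$ remains bounded away from zero precisely because at least one of $\xi,\eta$ is non-zero, so the limit is well-defined. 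No genuinely hard step appears; the main care is in checking the sign in the coercivity computation, where the hypothesis $p>1$ enters crucially to keep $\tfrac{p}{2}>0$.
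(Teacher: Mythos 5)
Your proof is correct. The paper does not spell out an argument for this lemma; it simply cites~\cite[Lemma~2.2]{acerbi1989regularity}, so there is no in-paper proof to compare against. What you wrote is the standard derivative/FTC route: compute $D\mathcal{V}^{(p)}_\mu$, establish the pointwise upper bound $|D\mathcal{V}^{(p)}_\mu(\zeta)\omega|\le C(p)(\mu^2+|\zeta|^2)^{\frac{p-2}{4}}|\omega|$ and the coercivity $\langle D\mathcal{V}^{(p)}_\mu(\zeta)\omega,\omega\rangle\ge \tfrac{p}{2}(\mu^2+|\zeta|^2)^{\frac{p-2}{4}}|\omega|^2$, integrate along the segment, and reduce to Lemma~\ref{integralungleichung} with $\alpha=\tfrac{p-2}{4}>-\tfrac12$. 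The coercivity computation is the only place where care is needed, and you handled it correctly: the coefficient $1+\tfrac{p-2}{2}=\tfrac{p}{2}$ is positive precisely because $p>1$, and the factor $(\mu^2+|\zeta|^2)^{\frac{p-6}{4}}$ was extracted correctly. The Cauchy--Schwarz step converting the monotonicity bound into a lower bound on $|\mathcal{V}^{(p)}_\mu(\xi)-\mathcal{V}^{(p)}_\mu(\eta)|$ is fine, and the approximation argument for $\mu=0$ is standard and works because every constant produced is independent of $\mu$. No gap.
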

The following lemma presents a version of Kato's inequality.
\begin{mylem} \label{lem-kato}
    Let~$k\in \N$. For any~$u\in W^{2,1}_{\loc}(\Omega,\R^k)$ there holds
    \begin{equation*}
        |D|Du||\leq |D^2 u|\quad\mbox{a.e. in $\Omega$}.
    \end{equation*}
\end{mylem}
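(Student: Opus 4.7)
The strategy is to view the statement as a generalized Kato inequality applied to the vector-valued Sobolev function $Du \in W^{1,1}_{\loc}(\Omega,\R^{k\times n})$. Since the chain rule fails for the Euclidean norm at the origin, the natural approach is to regularize, differentiate, apply Cauchy--Schwarz, and then pass to the limit, treating the zero set of $|Du|$ separately.

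First, for each $\epsilon>0$ I would introduce the strictly positive smooth approximation
\begin{equation*}
    f_\epsilon(x) \coloneqq \sqrt{|Du(x)|^2+\epsilon^2},
\end{equation*}
so that the map $\xi\mapsto \sqrt{|\xi|^2+\epsilon^2}$ is globally Lipschitz and $C^\infty$. Since $Du\in W^{1,1}_{\loc}(\Omega,\R^{k\times n})$, the standard chain rule for Sobolev functions yields $f_\epsilon\in W^{1,1}_{\loc}(\Omega)$ together with the almost everywhere identity
\begin{equation*}
    D_j f_\epsilon \;=\; \frac{1}{f_\epsilon}\sum_{\alpha=1}^{k}\sum_{i=1}^{n} D_i u^{\alpha}\, D_j D_i u^{\alpha}, \qquad j=1,\ldots,n.
\end{equation*}

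Second, I would apply Cauchy--Schwarz in the double index $(i,\alpha)$ for each fixed $j$, obtaining
\begin{equation*}
    (D_j f_\epsilon)^2 \;\leq\; \frac{1}{f_\epsilon^2}\Big(\sum_{i,\alpha}(D_i u^{\alpha})^2\Big)\Big(\sum_{i,\alpha}(D_j D_i u^{\alpha})^2\Big) \;=\; \frac{|Du|^2}{f_\epsilon^2}\sum_{i,\alpha}(D_j D_i u^{\alpha})^2.
\end{equation*}
Summing over $j$ and invoking the trivial bound $|Du|^2/f_\epsilon^2\leq 1$ then produces the pointwise inequality $|Df_\epsilon|\leq |D^2 u|$ a.e.\ in $\Omega$.

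Finally, I would pass to the limit $\epsilon\downarrow 0$. On the open set $\{|Du|>0\}$ one has $f_\epsilon\to |Du|$ pointwise and the chain rule gives $Df_\epsilon\to D|Du|$ a.e., yielding the desired bound there. On the level set $\{|Du|=0\}$ one invokes Stampacchia's lemma applied componentwise to $Du\in W^{1,1}_{\loc}$: for a.e.\ point of this set all second partial derivatives of $u$ vanish, so $|D^2 u|=0$ there, and by the same lemma $D|Du|=0$ a.e.\ on the level set, so the inequality becomes a trivial $0\leq 0$. The combination of both cases produces the claim. The main subtlety is precisely this handling of the zero set: the chain rule is unavailable at points where $|Du|$ vanishes, which is the reason for introducing $f_\epsilon$ in the first place, and which is resolved only via the general fact that Sobolev functions have vanishing derivative almost everywhere on each of their level sets.
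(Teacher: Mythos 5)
The paper states this lemma as a known version of Kato's inequality and gives no proof, so there is no in-paper argument to compare against; I therefore assess your proof directly. Your overall strategy—regularize with $f_\epsilon=\sqrt{|Du|^2+\epsilon^2}$, extract $|Df_\epsilon|\le|D^2u|$ via Cauchy--Schwarz in the double index, and pass to the limit treating the zero set of $|Du|$ separately—is sound, and the Cauchy--Schwarz computation is carried out correctly. Two points deserve attention. First, $\{|Du|>0\}$ is only a measurable set, not an open one: $Du$ is merely $W^{1,1}_{\loc}$ and need not have a continuous representative, so the word \emph{open} should be dropped (the argument only uses pointwise a.e.\ convergence of $Df_\epsilon$ on that set, so nothing is lost). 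Second, and more substantively, your limit step silently assumes $|Du|\in W^{1,1}_{\loc}(\Omega)$—needed both to speak of $D|Du|$ and to invoke Stampacchia on $\{|Du|=0\}$—and the identification of $\lim_{\epsilon\downarrow 0}Df_\epsilon$ with $D|Du|$ on $\{|Du|>0\}$ is precisely the chain rule for the Lipschitz map $\xi\mapsto|\xi|$ composed with the Sobolev function $Du$, a nontrivial fact that should be cited or proved. You can recover $|Du|\in W^{1,1}_{\loc}$ either from this Lipschitz chain rule directly, or from your own uniform bound $|Df_\epsilon|\le|D^2u|\in L^1_{\loc}$ together with a weak $L^1$ compactness argument (Dunford--Pettis), but one of these routes must be made explicit. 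Worth noting, too, is that once you allow yourself the Lipschitz chain rule and Stampacchia, the $\epsilon$-regularization becomes redundant: on $\{Du\neq 0\}$ the chain rule and Cauchy--Schwarz give $|D_j|Du||=|Du|^{-1}|\langle Du,D_jDu\rangle|\le |D_jDu|$ directly, and on $\{Du=0\}$ Stampacchia applied componentwise to $Du$ yields $D^2u=0$ and $D|Du|=0$ a.e., so both sides vanish. Your proof is thus correct, but the smoothing, introduced to dodge the nonsmooth chain rule, is ultimately circumvented by invoking that very chain rule in the final step.
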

We refer to~\cite{degeneratesystems} for the subsequent lemma.
\begin{mylem} \label{Vfunktionableitung}
    Let~$p>1$ and~$\mu\in(0,1]$. For any~$u\in W^{2,2}_{\loc}(B_R(z_0),R^k)$ and any~$B_R(z_0)\subset \R^n$, there holds
    \begin{align*}
        C_1(p) h_\mu(|Du|)|D^2u|^2 \leq \big| D\mathcal{V}^{(p)}_{\mu}(Du)\big|^2 \leq C_2(p) h_\mu(|Du|)|D^2u|^2\quad\mbox{a.e. in $B_R(z_0)$}.
    \end{align*}
\end{mylem}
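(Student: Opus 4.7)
The plan is to differentiate $\mathcal{V}^{(p)}_\mu$ explicitly, derive a pointwise spectral estimate for its Jacobian, and then combine this with the chain rule applied to $Du$.

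From the definition $\mathcal{V}^{(p)}_\mu(\xi)=(\mu^2+|\xi|^2)^{(p-2)/4}\xi$ a direct computation gives the Jacobian factorization
$$\partial_{\xi_j}\bigl(\mathcal{V}^{(p)}_\mu\bigr)_i(\xi)=h_\mu(|\xi|)^{\frac12}M(\xi)_{ij},\qquad M(\xi):=I+\tfrac{p-2}{2}\tfrac{\xi\otimes\xi}{\mu^2+|\xi|^2},$$
which isolates the weight $h_\mu^{1/2}$ that must appear in the final estimate. The symmetric matrix $M(\xi)$ is a rank-one perturbation of the identity, so its eigenvalues are $1$ on $\xi^\perp$ (with multiplicity $n-1$) and $1+\tfrac{p-2}{2}|\xi|^2/(\mu^2+|\xi|^2)$ in the direction of $\xi$. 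Because $|\xi|^2/(\mu^2+|\xi|^2)\in[0,1]$ and $p>1$, this second eigenvalue lies in the positive interval $[\min\{1,p/2\},\max\{1,p/2\}]$, so there exist constants $\lambda(p),\Lambda(p)>0$ with $\lambda(p)|\eta|\leq|M(\xi)\eta|\leq\Lambda(p)|\eta|$ uniformly in $\xi\in\R^n$ and $\mu\in(0,1]$.

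Since $u\in W^{2,2}_{\loc}(B_R(z_0),\R^k)$, the chain rule applies a.e.\ and gives, for each $k\in\{1,\dots,n\}$,
$$D_k\bigl(\mathcal{V}^{(p)}_\mu(Du)\bigr)=h_\mu(|Du|)^{\frac12}M(Du)\,D_kDu.$$
Invoking the spectral bounds on $M(Du)$ yields, pointwise a.e.,
$$\lambda(p)^2\,h_\mu(|Du|)|D_kDu|^2\leq\bigl|D_k\bigl(\mathcal{V}^{(p)}_\mu(Du)\bigr)\bigr|^2\leq\Lambda(p)^2\,h_\mu(|Du|)|D_kDu|^2,$$
and summing over $k=1,\dots,n$ together with $\sum_k|D_kDu|^2=|D^2u|^2$ delivers the claim with $C_1(p)=\lambda(p)^2$ and $C_2(p)=\Lambda(p)^2$. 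The vector-valued case is handled identically once $|\xi|^2$ is interpreted as the Frobenius norm squared and $M$ is viewed as a rank-one perturbation of the identity on $\R^{k\times n}$ with perturbation direction $\xi$ (vectorized), for which the same eigenvalue count applies.

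The computation is essentially routine; the only delicate point is the spectral lower bound in the sub-quadratic regime $1<p<2$, where $M(\xi)$ can have an eigenvalue as small as $p/2$. The hypothesis $p>1$ is exactly what keeps this eigenvalue bounded away from zero by a constant depending only on $p$, and is therefore the structural reason behind the range of validity of the lemma.
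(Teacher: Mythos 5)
Your proof is correct but follows a genuinely different route from the paper's. You compute the Jacobian of $\mathcal{V}^{(p)}_\mu$ in closed form, factor out $h_\mu^{1/2}$, and observe that the remaining symmetric factor $M(\xi)$ is a rank-one perturbation of the identity with eigenvalues $1$ (multiplicity $n-1$) and $1+\tfrac{p-2}{2}\tfrac{|\xi|^2}{\mu^2+|\xi|^2}\in[\min\{1,p/2\},\max\{1,p/2\}]$; since $p>1$ keeps the smallest eigenvalue bounded away from zero, the chain rule together with the two-sided spectral bound yields both inequalities at once, with $C_1(p)=\min\{1,p/2\}^2$ and $C_2(p)=\max\{1,p/2\}^2$. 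The paper instead expands $|D_i\mathcal{V}^{(p)}_\mu(Du)|^2$ pointwise into $h_\mu|D_iDu|^2$ plus a correction proportional to $|D_i|Du||^2$ with a bracket involving $h'_\mu$; it cites a reference for the upper bound and proves the lower bound via a case split, using Kato's inequality $|D|Du||\le|D^2u|$ in the sub-quadratic regime $p<2$ to absorb the negative bracket and then bounding the resulting rational expression in $|Du|$ below by $1/4$. Your spectral argument is more symmetric and self-contained (no appeal to Kato's inequality or an external reference for the upper bound, and the constants are visibly sharp), while the paper's expansion produces exactly the algebraic form in $h_\mu,h'_\mu$ that it reuses elsewhere in its energy estimates. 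One small point to tighten in a final write-up: in the vector-valued setting ($u$ with values in $\R^k$) the factorization $D_j\big(\mathcal{V}^{(p)}_\mu(Du)\big)=h_\mu^{1/2}M(Du)D_jDu$ should be stated with $M(Du)$ acting as a linear map on $\R^{k\times n}\cong\R^{kn}$ equipped with the Frobenius inner product, the rank-one term being $\xi\otimes\xi/(\mu^2+|\xi|^2)$ with $\xi=Du$ read as a vector in $\R^{kn}$; the eigenvalue count and hence the bounds are unchanged, so the argument does carry over as you indicate.
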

\begin{proof}
The second inequality follows from~\cite[Lemma~2.3]{degeneratesystems}. Hence, we only establish the first inequality. For any~$i\in\{1,...,n \}$ we compute
\begin{align*}
    |D_i\mathcal{V}^{(p)}_\mu(Du)|^2 &= \bigg| D_i Du \sqrt{h_\mu(|Du|)} + D_i|Du| Du \frac{h'_\mu(|Du|)}{2\sqrt{h_\mu(|Du|)}} \bigg|^2 \\
    &= |D_i Du|^2 h_\mu(|Du|) + |D_i|Du||^2\bigg[h'_\mu(|Du|)|Du|+\frac{h'_\mu(|Du|)^2|Du|^2}{4h_\mu(|Du|)} \bigg]
\end{align*}
a.e.~in $B_R(z_0)$. After summing over~$i=1,...,n$, we obtain the following
\begin{equation*}
    |D\mathcal{V}^{(p)}_\mu(Du)|^2 = h_\mu(|Du|)|D^2u|^2 + |D|Du||^2\bigg[h'_\mu(|Du|)|Du|+\frac{h'_\mu(|Du|)^2|Du|^2}{4h_\mu(|Du|)} \bigg].
\end{equation*}
Now, if~$p\geq 2$, the quantity in brackets is positive and we have established the desired inequality. In the case~$p<2$, the term in brackets may be negative. If the term in brackets in negative, by utilizing Kato's inequality from Lemma~\ref{lem-kato}, we estimate
\begin{equation*}
    |D\mathcal{V}^{(p)}_\mu(Du)|^2 \geq h_\mu(|Du|)|D^2u|^2 \bigg[1+ \frac{h'_\mu(|Du|)^2|Du|}{h_\mu(|Du|)}+\frac{h'_\mu(|Du|)^2|Du|^2}{4h_\mu(|Du|)^2}\bigg].
\end{equation*}
Further, the term in brackets in the preceding estimate may be bounded below by exploiting the fact that~$1<p<2$, which yields
\begin{align*}
    & 1+\frac{h'_\mu(t)t}{h_\mu(t)} + \frac{h'_\mu(t)^2t^2}{4h_\mu(t)^2} = \frac{4p\mu^2 t^2 + 4\mu^4 + p^2 t^4}{4(\mu^2+t^2)^2} \\
    &\quad \geq 1- \frac{\mu^2 t^2 +\frac{3}{4}t^4}{(\mu^2+t^2)^2} 
     = \frac{\mu^4 + \mu^2 t^2 + \frac{1}{4}t^4}{(\mu^2+t^2)^2} \geq \frac{1}{4}
\end{align*}
for any~$t\geq 0$. Thus, the inequality also holds true in this case.
\end{proof}

The lemma hereinafter is an immediate consequence of the definition of the bilinear form~$\foo{\mathcal{B}}$ defined in Section~\ref{subsec:notation}, taking into account assumptions~$\eqref{voraussetzungen}_1$ and~$\eqref{voraussetzungen}_2$.
\begin{mylem} \label{bilinearformeins}
    Let~$p>1$ and~$\mu\in(0,1]$. There holds
    \begin{align}\label{bilinearformeinsestimate}
        \begin{cases}
            |\foo{\mathcal{B}}(\cdot,\xi)(\eta,\zeta)| \leq C_1 h_\mu(|\xi|)|\eta||\zeta| &\\
            \foo{\mathcal{B}}(\cdot,\xi)(\eta,\eta) \geq C_2 h_\mu(|\xi|)|\eta|^2
        \end{cases}
    \end{align}
    for any~$\xi,\eta,\zeta\in \R^n$.
\end{mylem}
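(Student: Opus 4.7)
The statement is essentially a direct repackaging of the structure assumptions $\eqref{voraussetzungen}_1$ and $\eqref{voraussetzungen}_2$ in the bilinear-form notation introduced in~\eqref{bilineardef}, once one identifies $(\mu^2+|\xi|^2)^{(p-2)/2}$ with $h_\mu(|\xi|)$ as defined in~\eqref{expfunctions}. The plan is therefore to treat each of the two inequalities separately, extracting the relevant pointwise bound on $\partial_\xi A(\cdot,\xi)$ from the assumptions and then inserting it into the definition $\foo{\mathcal{B}}(\cdot,\xi)(\eta,\zeta)=\langle\partial_\xi A(\cdot,\xi)\eta,\zeta\rangle$.

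For the upper bound, I would start from $\eqref{voraussetzungen}_1$ and, since $\mu\in(0,1]$ ensures that $(\mu^2+|\xi|^2)^{1/2}>0$, divide both sides by this quantity to obtain the operator-norm estimate
\[
|\partial_\xi A(\cdot,\xi)|\leq C_1(\mu^2+|\xi|^2)^{(p-2)/2}=C_1\,h_\mu(|\xi|).
\]
Applying Cauchy--Schwarz in $\R^n$ to $\langle\partial_\xi A(\cdot,\xi)\eta,\zeta\rangle$ and using the usual operator-norm bound $|\partial_\xi A(\cdot,\xi)\eta|\leq |\partial_\xi A(\cdot,\xi)||\eta|$ then yields the first inequality in~\eqref{bilinearformeinsestimate} with the same constant $C_1$ as in $\eqref{voraussetzungen}_1$.

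For the lower bound, the argument is even shorter: by the very definition of $\foo{\mathcal{B}}$ and $h_\mu$, condition $\eqref{voraussetzungen}_2$ rewrites verbatim as
\[
\foo{\mathcal{B}}(\cdot,\xi)(\eta,\eta)=\langle\partial_\xi A(\cdot,\xi)\eta,\eta\rangle\geq C_2(\mu^2+|\xi|^2)^{(p-2)/2}|\eta|^2=C_2\,h_\mu(|\xi|)|\eta|^2,
\]
which is the second estimate in~\eqref{bilinearformeinsestimate}. There is no genuine obstacle here; the only mild point to keep in mind is that the assumption $\mu\in(0,1]$ is used to guarantee that $(\mu^2+|\xi|^2)^{1/2}$ never vanishes, so that the division leading to the operator-norm bound is legitimate for every $\xi\in\R^n$ including $\xi=0$ (in the degenerate/singular case $\mu=0$, the bounds would still hold away from $\xi=0$, but the clean statement of the lemma requires $\mu>0$).
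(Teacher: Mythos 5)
Your proof is correct and matches the paper's intent exactly: the paper itself states (just before the lemma) that it is an immediate consequence of the definition of $\foo{\mathcal{B}}$ together with assumptions $\eqref{voraussetzungen}_1$ and $\eqref{voraussetzungen}_2$, and offers no further argument. Your two-step reading — divide $\eqref{voraussetzungen}_1$ by $(\mu^2+|\xi|^2)^{1/2}$ to get the operator-norm bound $|\partial_\xi A(\cdot,\xi)|\le C_1 h_\mu(|\xi|)$ and then Cauchy--Schwarz, and observe that $\eqref{voraussetzungen}_2$ is verbatim the second estimate — is precisely the intended one-liner.
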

\subsubsection{Additional material} 
\,\\
The technique of reabsorbing certain quantities in estimates will prove highly beneficial on numerous occasions. This is achieved by utilizing the subsequent iteration lemma, as showcased in~\cite[Lemma~6.1]{giusti2003direct}.
  \begin{mylem} \label{iterationlemma}
      Let~$\phi(\rho)$ be a bounded, non-negative function on~$0\leq R_0\leq \rho\leq R_1$ and assume that for any~$R_0\leq \rho < r \leq R_1$ there holds
      $$\phi(\rho) \leq \eta \phi(r) + \frac{A}{(r-\rho)^{\alpha}} + \frac{B}{(r-\rho)^{\beta}} + C$$
      for some constants~$A,B,C,\alpha\geq \beta\geq 0$, and~$\eta\in(0,1)$. Then, there exists a constant~$C=C(\eta,\alpha)$, such that for all~$R_0\leq \rho_0<r_0\leq R_1$ there holds
      $$\phi(\rho_0) \leq C \bigg(\frac{A}{(r_0-\rho_0)^{\alpha}} + \frac{A}{(r_0-\rho_0)^{\alpha}} + C \bigg).$$
  \end{mylem}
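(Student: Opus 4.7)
The plan is to iterate the hypothesized inequality along a carefully chosen geometric sequence of radii between $\rho_0$ and $r_0$, which is the standard trick for this family of Giusti-type iteration lemmas.

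First I would pick a parameter $\tau\in(0,1)$ satisfying $\eta\tau^{-\alpha}<1$, i.e.\ $\tau^{\alpha}>\eta$, which is possible since $\eta<1$. Then I would define the increasing sequence
$$\rho_i \coloneqq \rho_0 + (1-\tau^i)(r_0-\rho_0),\qquad i\in\N_0,$$
so that $\rho_i\nearrow r_0$ and $\rho_{i+1}-\rho_i=(1-\tau)\tau^i(r_0-\rho_0)$. Applying the hypothesis with the admissible pair $(\rho_i,\rho_{i+1})$ in place of $(\rho,r)$ yields, for each $i\geq 0$,
$$\phi(\rho_i)\leq \eta\,\phi(\rho_{i+1}) + \frac{A\,\tau^{-i\alpha}}{(1-\tau)^{\alpha}(r_0-\rho_0)^{\alpha}} + \frac{B\,\tau^{-i\beta}}{(1-\tau)^{\beta}(r_0-\rho_0)^{\beta}} + C.$$

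Next I would iterate this inequality $k$ times. The term $\phi(\rho_0)$ picks up a factor $\eta^k$ on $\phi(\rho_k)$ plus geometric sums of the inhomogeneous pieces:
$$\phi(\rho_0)\leq \eta^k\phi(\rho_k) + \sum_{i=0}^{k-1}\eta^i\left[\frac{A\,\tau^{-i\alpha}}{(1-\tau)^{\alpha}(r_0-\rho_0)^{\alpha}} + \frac{B\,\tau^{-i\beta}}{(1-\tau)^{\beta}(r_0-\rho_0)^{\beta}} + C\right].$$
The decisive observation is that the ratios $\eta\tau^{-\alpha}$ and $\eta\tau^{-\beta}$ are both in $(0,1)$: the first by choice of $\tau$, the second because $0\leq\beta\leq\alpha$ and $\tau\in(0,1)$ force $\tau^{-\beta}\leq\tau^{-\alpha}$. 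Hence all three geometric series $\sum(\eta\tau^{-\alpha})^i$, $\sum(\eta\tau^{-\beta})^i$, $\sum\eta^i$ converge, with sums controlled by $\eta$ and $\alpha$ alone. Boundedness of $\phi$ on $[R_0,R_1]$ together with $\eta^k\to 0$ lets me send $k\to\infty$ and drop the first term, arriving at the claimed estimate with $C=C(\eta,\alpha)$.

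There is no real obstacle here: the argument is a textbook iteration. The only point requiring a moment of care is to verify that a single choice of $\tau$ handles both the $A$-term and the $B$-term simultaneously, which is why the hypothesis $\beta\leq\alpha$ is imposed and why the final constant depends only on $\eta$ and $\alpha$, not on $\beta$.
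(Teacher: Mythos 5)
Your proof is correct and is exactly the standard iteration argument (geometric sequence of radii, telescoping with the factor $\eta^i$, convergence of the geometric series after choosing $\tau$ with $\eta\tau^{-\alpha}<1$, and then letting $k\to\infty$ using boundedness of $\phi$). The paper does not prove this lemma itself; it cites Giusti's textbook, and the proof there follows the same scheme, so there is nothing to contrast. You also correctly identify why a single choice of $\tau$ handles both power terms simultaneously ($\beta\le\alpha$ and $\tau<1$ give $\eta\tau^{-\beta}\le\eta\tau^{-\alpha}<1$ and $(1-\tau)^{-\beta}\le(1-\tau)^{-\alpha}$), which is exactly the reason the final constant depends only on $\eta$ and $\alpha$; incidentally, this also makes it clear that the second summand in the paper's stated conclusion should read $B/(r_0-\rho_0)^{\beta}$ rather than a repeated $A/(r_0-\rho_0)^{\alpha}$, which is evidently a typo.
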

The following version of the dominated convergence theorem will be useful and originates from~\cite[Chapter~1.3,~Theorem~4]{evans2018measure}.
\begin{mylem} \label{dominatedconvergence}
    Let~$g,(g_k)_{k\in\N}\in L^1(\Omega)$ and~$f,(f_k)_{k\in\N}$ be Lebesgue measurable. Suppose that~$|f_k|\leq g_k$ for any~$k\in\N$, both~$f_k\to f$ and~$g_k\to g$ a.e. in~$\Omega$ as~$k\to\infty$, and
    $$\lim\limits_{k\to\infty}\displaystyle\int_{\Omega}g_k\,\dx = \displaystyle\int_{\Omega}g\,\dx.$$
    Then, there holds
    $$\lim\limits_{k\to\infty}\displaystyle\int_{\Omega}|f_k-f|\,\dx = 0.$$
\end{mylem}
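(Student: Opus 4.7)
The plan is to mimic the classical proof of Lebesgue's dominated convergence theorem, but with Fatou's lemma applied to a cleverly chosen non-negative sequence that absorbs the variability of the dominating functions $g_k$. This is the standard extension sometimes attributed to Pratt, and the hypothesis $\int g_k \to \int g$ is precisely what makes the argument work even though the dominant function is no longer fixed.

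First I would verify the pointwise bound $|f|\leq g$ almost everywhere in $\Omega$, which follows immediately by passing to the limit $k\to\infty$ in $|f_k|\leq g_k$ using the a.e.\ convergence of both sides. Consequently, the triangle inequality yields
\begin{equation*}
    |f_k - f| \leq |f_k| + |f| \leq g_k + g \qquad \text{a.e.\ in } \Omega,
\end{equation*}
so the sequence
\begin{equation*}
    h_k \coloneqq g_k + g - |f_k - f|
\end{equation*}
is non-negative a.e.\ in $\Omega$ for every $k\in\N$, and by the hypotheses it converges a.e.\ to $2g$ as $k\to\infty$.

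Next I would apply Fatou's lemma to the non-negative sequence $(h_k)_{k\in\N}$, which gives
\begin{equation*}
    \int_\Omega 2g \, \dx = \int_\Omega \liminf_{k\to\infty} h_k \, \dx \leq \liminf_{k\to\infty} \int_\Omega h_k \, \dx = \int_\Omega 2g \, \dx - \limsup_{k\to\infty} \int_\Omega |f_k - f| \, \dx,
\end{equation*}
where in the last equality I have used the linearity of the integral together with the assumption $\int_\Omega g_k \, \dx \to \int_\Omega g \, \dx$. Since $g\in L^1(\Omega)$, the quantity $\int_\Omega 2g \, \dx$ is finite and may be cancelled from both sides, yielding
\begin{equation*}
    \limsup_{k\to\infty} \int_\Omega |f_k - f| \, \dx \leq 0.
\end{equation*}
Because the integrand on the left is non-negative, this forces $\lim_{k\to\infty} \int_\Omega |f_k - f| \, \dx = 0$, which is the desired conclusion.

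There is essentially no obstacle: the only subtle point is making sure that $|f|\leq g$ a.e.\ (so that $|f|$ is integrable and the rearrangement of Fatou's inequality is justified) and that the finiteness of $\int_\Omega 2g\,\dx$ is used to legitimately subtract this quantity from both sides. Both are immediate from the hypotheses $g_k, g \in L^1(\Omega)$ and the pointwise a.e.\ bounds.
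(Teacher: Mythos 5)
Your proof is correct and complete; this is the standard argument for Pratt's generalization of the dominated convergence theorem, applying Fatou's lemma to the non-negative auxiliary sequence $h_k = g_k + g - |f_k - f|$ and using $\int_\Omega g_k\,\dx \to \int_\Omega g\,\dx$ together with the finiteness of $\int_\Omega g\,\dx$ to cancel after Fatou. The paper itself does not supply a proof for this lemma but instead cites Evans and Gariepy, so there is nothing in the paper's proof to compare against; your argument is precisely the textbook one and all the subsidiary points you flag (the a.e.\ bound $|f|\le g$, the integrability of $f$, and the legitimacy of subtracting $\int_\Omega 2g\,\dx$ from both sides) are handled correctly.
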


In order to deal with the nonlinearity of equation~\eqref{pde}, the lemma below will be convenient and may be found in \cite[Lemma 2.2]{giaquinta1986partial}. 
\begin{mylem}\label{lem:a-b}
Let~$k\in\mathbb N$. For any~$\alpha>1$, there exists a constant~$C=C(\alpha)$, such that
$$
    \frac{1}{C}\big||a|^{\alpha-1}a - |b|^{\alpha-1}b\big|
	\le
	\big(|a|^{\alpha-1} + |b|^{\alpha-1}\big)|a-b|
	\le
	C \big||a|^{\alpha-1}a - |b|^{\alpha-1}b\big|
$$
for all~$a,b\in\R^k$.
\end{mylem}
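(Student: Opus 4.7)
The plan is to prove both inequalities by writing the vector field $f(x) \coloneqq |x|^{\alpha-1}x$ as the difference along a line segment and then invoking Lemma~\ref{integralungleichung} (with $\mu=0$) to convert integrals over the segment into expressions in $|a|$ and $|b|$. The function $f$ is $C^1$ on $\mathbb R^k$ for $\alpha>1$, with Jacobian $Df(x) = |x|^{\alpha-1}\mathrm{Id} + (\alpha-1)|x|^{\alpha-3}\, x\otimes x$ and operator norm bounded by $\alpha|x|^{\alpha-1}$.

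For the upper bound on $|f(a)-f(b)|$ (which yields the left-hand inequality of the claim), I would use the fundamental theorem of calculus
\begin{equation*}
f(a)-f(b) \;=\; \int_0^1 Df\big(b+s(a-b)\big)(a-b)\,\ds
\end{equation*}
to deduce $|f(a)-f(b)| \le \alpha |a-b|\int_0^1 |b+s(a-b)|^{\alpha-1}\ds$. Applying Lemma~\ref{integralungleichung} with exponent $(\alpha-1)/2>-1/2$ gives $\int_0^1 |b+s(a-b)|^{\alpha-1}\ds \le C(|a|^2+|b|^2)^{(\alpha-1)/2}$. Since $\alpha-1>0$, the elementary estimate $(|a|^2+|b|^2)^{(\alpha-1)/2} \le C(\alpha)\bigl(|a|^{\alpha-1}+|b|^{\alpha-1}\bigr)$ (consider the cases $\alpha\ge 2$ and $1<\alpha<2$ separately, each handled by monotonicity) yields $|f(a)-f(b)|\le C(\alpha)(|a|^{\alpha-1}+|b|^{\alpha-1})|a-b|$.

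For the lower bound, I would test the derivative identity against $a-b$ to exploit the monotonicity of $f$:
\begin{equation*}
\langle f(a)-f(b),a-b\rangle = \int_0^1\!\Big[|b+s(a-b)|^{\alpha-1}|a-b|^2 + (\alpha-1)|b+s(a-b)|^{\alpha-3}\langle b+s(a-b),a-b\rangle^2\Big]\ds.
\end{equation*}
Since $\alpha>1$ the second summand in the integrand is non-negative, so the whole integral is bounded below by $\int_0^1|b+s(a-b)|^{\alpha-1}\,\ds\cdot|a-b|^2$. Invoking Lemma~\ref{integralungleichung} again from below with exponent $(\alpha-1)/2$, together with the reverse comparison $(|a|^2+|b|^2)^{(\alpha-1)/2}\ge \tfrac{1}{2}(|a|^{\alpha-1}+|b|^{\alpha-1})$ (which follows from $|a|^2+|b|^2\ge \max\{|a|^2,|b|^2\}$ and $\alpha-1>0$), this gives $\langle f(a)-f(b),a-b\rangle \ge C^{-1}(|a|^{\alpha-1}+|b|^{\alpha-1})|a-b|^2$. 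A single application of Cauchy--Schwarz then produces $(|a|^{\alpha-1}+|b|^{\alpha-1})|a-b|\le C|f(a)-f(b)|$, which is the right-hand inequality of the claim.

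There is no real obstacle here; the argument is routine once Lemma~\ref{integralungleichung} is in hand. The only mildly delicate points are the elementary equivalence between $(|a|^2+|b|^2)^{(\alpha-1)/2}$ and $|a|^{\alpha-1}+|b|^{\alpha-1}$ for $\alpha>1$, and the need to observe that the second summand in $\langle Df\,\xi,\xi\rangle$ has the favourable sign so that it may be discarded; both are straightforward but must be noted explicitly to make the chain of estimates rigorous.
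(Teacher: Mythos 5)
Your proof is correct, but the paper does not actually prove this lemma at all: it simply cites it to Giaquinta and Modica \cite[Lemma~2.2]{giaquinta1986partial}. You have therefore supplied a self-contained argument where the paper has none.

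Your route is an attractive one in the present context because it runs entirely off Lemma~\ref{integralungleichung}, which the paper has already set up for exactly this kind of averaged-line-segment estimate. The computation of $Df(x) = |x|^{\alpha-1}\foo{I}_k + (\alpha-1)|x|^{\alpha-3}\,x\otimes x$ is correct, and the two facts you extract from it, namely the operator-norm bound $|Df(x)|\le\alpha|x|^{\alpha-1}$ and the rank-one positivity $\langle Df(x)\xi,\xi\rangle\ge|x|^{\alpha-1}|\xi|^2$ for $\alpha>1$, are precisely what is needed to feed the two sides of Lemma~\ref{integralungleichung}. The elementary two-sided comparison $(|a|^2+|b|^2)^{(\alpha-1)/2}\asymp|a|^{\alpha-1}+|b|^{\alpha-1}$ is also stated correctly, with the right case split at $\alpha=2$ for the upper bound and with $\max$ for the lower. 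The only cosmetic point is that when passing from $\langle f(a)-f(b),a-b\rangle\ge C^{-1}(|a|^{\alpha-1}+|b|^{\alpha-1})|a-b|^2$ to the claimed inequality via Cauchy--Schwarz, you divide by $|a-b|$, so the case $a=b$ should be noted as trivially true; and one should remark that $f$ is genuinely $C^1$ at the origin when $\alpha>1$, since $|x|^{\alpha-3}x\otimes x=O(|x|^{\alpha-1})\to 0$, which justifies the fundamental-theorem-of-calculus step unconditionally. Both are one-line observations. Compared with the paper's treatment, which outsources the lemma to the literature, your version has the merit of making the appendix of auxiliary results internally consistent: Lemma~\ref{integralungleichung} already sits in the paper, and you show that Lemma~\ref{lem:a-b} follows from it in a few lines, so no additional external reference is strictly needed.
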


\section{Gradient bound for parabolic~\texorpdfstring{$p$}{}-Laplacian type equations}\label{sec:gradientbound}

In this section, we establish local boundedness of the spatial gradient of weak solutions to parabolic~$p$-Laplacian type equations. Moreover, we provide quantitative estimates for the whole parameter range~$p>1$. Specifically, we consider weak solutions to equations of the type
\begin{align}
        \partial_t u - \divv A(x,t,Du)=0
    \qquad\mbox{in $\Omega_T$,} \label{pdeqgleich1}
\end{align} 
where~$A$ is assumed to be differentiable with respect to the spatial variable~$x$ and satisfy the following structure conditions
\begin{align}
    \left\{
    \begin{array}{l}
    | A(x,t,\xi)| + (\mu^2 + |\xi|^2 )^{\frac{1}{2}}|\partial_\xi A(x,t,\xi)| \leq C_4 (\mu^2 + |\xi |^{2})^{\frac{p-1}{2}} \\[3pt]
    \langle \partial_{\xi}A(x,t,\xi)\eta, \eta \rangle \geq C_5 (\mu^2 + |\xi|^2 )^{\frac{p-2}{2}}|\eta|^2 \\[3pt]
     |\partial_x A_i(x,t,\xi)| \leq C_6 (\mu^2+|\xi|^2)^{\frac{p-1}{2}}  
    \end{array}
    \right. \label{voraussetzungendiffbarkeit}
\end{align}
in the case~$\mu\in(0,1]$ for a.e.~$(x,t) \in \Omega_T$,~$i\in\{1,...,n\}$, any~$\eta, \xi \in\R^n$, with positive constants~$C_4, C_5, C_6$. It should be noted that assumptions~$\eqref{voraussetzungendiffbarkeit}_1 - \eqref{voraussetzungendiffbarkeit}_2$ correspond to the first two general assumptions in~$\eqref{voraussetzungen}$, while condition~$\eqref{voraussetzungendiffbarkeit}_3$ deviates from its counterpart in~\eqref{voraussetzungen}. The differentiability of~$A$ with respect to~$x$ allows us to differentiate the weak form of~$\eqref{pdeqgleich1}$, making it a valuable tool in order to derive Caccioppoli type inequalities. In particular, equation~\eqref{pdeqgleich1} does not exhibit a nonlinearity in the evolution part anymore. It is notable that within this section, weak solutions are not required to be non-negative, but rather they may be signed. The local gradient boundedness of weak solutions to equation~\eqref{pdeqgleich1} under the assumptions~\eqref{voraussetzungendiffbarkeit} in the domain~$\Omega_T$ has already been established in the parameter range~$p>\critical$.~The case~$p\geq 2$ is covered by~\cite[Theorem~1.2]{bogeleinduzaarmarcellini}, while the range~$\critical<p<2$ is addressed in~\cite[Theorem~3.4]{singer2015parabolic}. In this article, we focus on the missing sub-critical case~$p\leq \critical$. In addition to establishing local boundedness of the gradient, we also obtain quantitative estimates that will be utilized in Section~\ref{sec:schauderestimates}. In order to prove gradient boundedness, energy estimates involving second order spatial derivatives are crucial. Since our notion of solution does not include higher order spatial derivatives, it is necessary to utilize the method of difference quotients. Generally, local existence of second order spatial derivatives can only be ensured when~$\mu\in(0,1]$. Additionally, in the sub-quadratic range~$1<p<2$, the local square-integrability of the gradient, i.e.~$|Du|\in L^2_{\loc}(\Omega_T)$, is a necessary condition. To establish the latter, the technique of difference quotients again proves to be beneficial. Once~$|Du|\in L^2_{\loc}(\Omega_T)$ has been established, the higher integrability result in the sub-critical range~$p\leq\critical$ stated in Lemma~\ref{higherintegrabilitylemma} and the local gradient boundedness, combined with the quantitative estimates from Propositions~\ref{gradientboundednesssubcritical} and~\ref{gradientboundednesssupercritical}, are immediate consequences of~\cite[Chapter~9]{gradientholder}. It should be noted that Proposition~\ref{gradientboundednesssubcritical} is formulated specifically for locally bounded solutions, as weak solutions in the sub-critical range~$p\leq\critical$ may be unbounded. However, in the super-critical case~$p>\critical$, it has been proven in~\cite[Chapter V, Theorem 3.1]{dibenedetto1993degenerate} that weak solutions are locally bounded. Indeed, it is straightforward to verify the assumptions~(B$_1$) - ~(B$_6$) of~\cite[Chapter V, Theorem 3.1]{dibenedetto1993degenerate} by exploiting growth condition~$\eqref{voraussetzungendiffbarkeit}_1$ and utilizing Lemma~\ref{AbschätzungenfürAeins}. Therefore, the additional boundedness assumption is not necessary in the later treatment of the case where~$p> 2$, established in Proposition~\ref{gradientboundednesssupercritical}.

\begin{myproposition}\label{gradientboundednesssubcritical}
     Let~$1<p\leq 2$,~$\mu\in(0,1]$, and~$u$ be a locally bounded weak solution to~\eqref{pdeqgleich1} under assumptions~\eqref{voraussetzungendiffbarkeit}. Then, there holds~$|Du|\in L^\infty_{\loc}(\Omega_T)$. Furthermore, for any~$\epsilon\in(0,1]$ and any cylinder~$Q^{(\lambda)}_{2\rho}(z_0)\Subset\Omega_T$, we have the quantitative estimate
     \begin{align}\label{gradientboundednesssubcriticalestimate}
         \esssup\limits_{Q^{(\lambda)}_{\frac{\rho}{2}}(z_0)}|Du| \leq C\epsilon\lambda &+ \frac{C\lambda^{\frac{1}{2}}}{\epsilon^ \theta}\bigg[\Big(\frac{\omega}{\rho \lambda} \Big)^{\frac{2}{p}} + \frac{\omega}{\rho \lambda} + \frac{\mu}{\lambda} \bigg]^{\frac{n(2-p)+2p}{4p}} \\
         &\cdot \bigg[\displaystyle\fiint_{Q^{(\lambda)}_{2\rho}(z_0)}(\mu^2 + |Du|^2)^{\frac{p}{2}}\,\dx\dt \bigg]^{\frac{1}{2p}} \nonumber
     \end{align}
     with~$C=C(n,p,C_4,C_5,C_6)$,~$\theta=\theta(n,p)$, and~$\omega\coloneqq \essosc\limits_{Q^{(\lambda)}_{2\rho}(z_0)}u$.
\end{myproposition}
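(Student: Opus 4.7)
The plan is to reduce the proposition to the general framework of~\cite[Chapter~9]{gradientholder} for the sub-critical regime. As the author notes, once local $L^2$-integrability of $Du$ and existence of second-order spatial weak derivatives are secured, the remainder (the higher-integrability Lemma~\ref{higherintegrabilitylemma} together with an intrinsic Moser iteration) proceeds along the lines of~\cite{gradientholder}. Consequently, the essential new task is to establish $|Du|\in L^2_{\loc}(\Omega_T)$ and $|D^2u|\in L^2_{\loc}(\Omega_T)$ in the sub-critical range, where the natural energy only provides $|Du|\in L^p_{\loc}$ with $p<2$.

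First, I would apply the Steklov-averaged formulation~\eqref{lösung-steklov} with a test function built from spatial difference quotients, namely $\Delta^{(i)}_{-h}\bigl[\eta^2\Delta^{(i)}_h[u]_h\bigr]$ for a cutoff $\eta\in C^1_0(B_\rho(x_0))$. After discrete integration by parts (Lemma~\ref{diskretePI}) and passing to Steklov limits, the monotonicity provided by Lemma~\ref{AbschätzungenfürAeins} controls the leading diffusion term from below, while the $x$-dependence of $A$ yields a lower-order contribution that is absorbed via condition~$\eqref{voraussetzungendiffbarkeit}_3$ and Lemma~\ref{AbschätzungenfürAzwei}. Sending $h\to 0$ and invoking Lemmas~\ref{differenzenquotientlemmaeins}--\ref{differenzenquotientlemmadrei} together with Lemmas~\ref{Vabschätzung} and~\ref{Vfunktionableitung}, one obtains $|D\mathcal{V}^{(p)}_\mu(Du)|\in L^2_{\loc}(\Omega_T)$, and hence $|D^2u|\in L^2_{\loc}(\Omega_T)$ together with $|Du|\in L^2_{\loc}(\Omega_T)$.

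With these regularity properties secured, one would differentiate the equation in the direction $x_k$ and test with $\eta^2 D_k u \, \phi(|Du|^2)$ for suitable truncations $\phi$ in order to derive a Caccioppoli-type inequality for $|Du|$ on the intrinsic cylinders $Q^{(\lambda)}_\rho(z_0)$. A Moser iteration on the super-level sets $\{|Du|>k_j\}$ with geometrically increasing $k_j$ would then produce the $L^\infty$-bound. The sub-critical range $p\leq\critical$ is precisely where the standard parabolic Sobolev embedding is insufficient, and one must invoke the higher-integrability Lemma~\ref{higherintegrabilitylemma}---whose right-hand side naturally carries the oscillation $\omega$ via the local boundedness of $u$---to restore the self-improving character of the iteration and produce the characteristic exponent $\frac{n(2-p)+2p}{4p}$ appearing in~\eqref{gradientboundednesssubcriticalestimate}.

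The principal obstacle is balancing the intrinsic scaling at level $\lambda$ against the sub-critical integrability deficit: each step of the iteration loses a factor that must be compensated by the combined use of Lemma~\ref{higherintegrabilitylemma} and the oscillation bound, and the scaling of $\omega$, $\mu$, $\rho$, and $\lambda$ must be tracked precisely to arrive at the stated exponent. The free parameter $\epsilon\in(0,1]$ ultimately serves to reabsorb the term $C\epsilon\lambda$ on a larger intrinsic cylinder by an application of the iteration Lemma~\ref{iterationlemma}, thereby converting a telescoping supremum into the claimed estimate on $Q^{(\lambda)}_{\rho/2}(z_0)$.
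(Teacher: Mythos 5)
Your high-level strategy matches the paper's: establish $L^2$-integrability of $Du$ and second-order differentiability, then defer to~\cite[Chapter~9]{gradientholder} for the intrinsic Moser iteration (this corresponds to Lemmas~\ref{DuinL2},~\ref{energyestimatezweiteableitung}, Proposition~\ref{intrinsicgrößereexponenten}, and Lemma~\ref{higherintegrabilitylemma}). However, there is a genuine gap in your treatment of the crucial first step, establishing $|Du|\in L^2_{\loc}(\Omega_T)$. You claim that testing with difference quotients and sending $h\downarrow 0$ directly yields $|D\mathcal{V}^{(p)}_\mu(Du)|\in L^2_{\loc}$ and hence $|Du|\in L^2_{\loc}$, but the Caccioppoli inequality produced by that procedure (the paper's Lemma~\ref{energyestimatefürduinl2}, inequality~\eqref{energyestimatefürduinl2inequality}) carries the term $\frac{C}{(r-\sigma)^2}\iint_{Q_r}|\Delta_h^{(i)} u|^2\,\dx\dt$ on its right-hand side, which a priori is controlled only by $\iint|D_iu|^2$ --- precisely the quantity you need to prove finite. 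For $p\leq\critical$ neither the natural $L^p$-energy nor the parabolic Sobolev embedding reaches the exponent $2$, so there is no self-improving mechanism and your chain of implications is circular.

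The paper's Lemma~\ref{DuinL2} breaks this circle with a distinct interpolation: it bounds $\iint|\Delta_h^{(i)} u|^2$ via a discrete integration by parts (Lemma~\ref{diskretePI}) against the $L^\infty$-bound $L=\|u\|_{L^\infty}$ of the locally bounded solution, giving $\iint_{Q_\sigma}|\Delta_h^{(i)} u|^2\leq L\iint|D_i(\phi^2\Delta_h^{(i)} u)|$; the resulting term $\iint|\Delta_h^{(i)} Du|$ is then treated by Hölder's inequality and Lemma~\ref{Vabschätzung} to reduce to the $\mathcal{V}$-quantity controlled by~\eqref{energyestimatefürduinl2inequality}, and the residual $\iint|\Delta_h^{(i)} u|^2$ that reappears is reabsorbed via the iteration Lemma~\ref{iterationlemma}. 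This explicit use of the pointwise boundedness of $u$ --- not only its oscillation, and not only at the later Moser step --- is the essential new ingredient making the sub-critical case tractable, and your proposal does not address it. A secondary imprecision: for $1<p<2$ one obtains $|D^2u|\in L^p_{\loc}$ and $\mathcal{V}^{(p)}_\mu(Du)\in L^2_{\loc}(W^{1,2}_{\loc})$, not $|D^2u|\in L^2_{\loc}$ as you state, since the weight $h_\mu(|Du|)$ in Lemma~\ref{Vfunktionableitung} prevents passing from $L^2$-bounds on $D\mathcal{V}^{(p)}_\mu(Du)$ to $L^2$-bounds on $D^2u$ without a gradient sup bound, which is exactly what is yet to be proven.
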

\begin{myproposition}\label{gradientboundednesssupercritical}
         Let~$p>2$,~$\mu\in(0,1]$,~and $u$ be a weak solution to~\eqref{pdeqgleich1} under assumptions~\eqref{voraussetzungendiffbarkeit}. Then, there holds~$|Du|\in L^\infty_{\loc}(\Omega_T)$. Furthermore, for any~$\epsilon\in(0,1]$ and any cylinder~$Q^{(\lambda)}_{2\rho}(z_0)\Subset\Omega_T$, we have the quantitative estimate
\begin{equation}\label{gradientboundednesssupercriticalestimate}
         \esssup\limits_{Q^{(\lambda)}_{\frac{\rho}{2}}(z_0)}|Du| \leq C\epsilon\lambda + \frac{C}{\epsilon^\theta} \bigg[ 
\lambda^{2-p} \displaystyle\fiint_{Q^{(\lambda)}_{2\rho}(z_0)}(\mu^2 + |Du|^2)^{\frac{p}{2}}\,\dx\dt \bigg]^{\frac{1}{2}} 
     \end{equation}
     with~$C=C(n,p,C_4,C_5,C_6)$,~$\theta=\theta(n,p)$, and~$\omega\coloneqq \essosc\limits_{Q^{(\lambda)}_{2\rho}(z_0)}u$.
\end{myproposition}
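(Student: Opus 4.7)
The plan is to adapt the classical DiBenedetto–Friedman gradient-bound scheme, as employed in~\cite[Theorem~1.2]{bogeleinduzaarmarcellini}, and extract the quantitative $\epsilon$-parameterized form matching~\cite[Proposition~9.2]{gradientholder}. Since $p > 2$, weak solutions are locally bounded by~\cite[Chapter~V, Theorem~3.1]{dibenedetto1993degenerate}, and the difference-quotient arguments of Section~\ref{subsubsec:diffquot}, combined with $\mu > 0$, yield $D^2 u \in L^2_{\loc}(\Omega_T)$, legitimizing the spatial differentiation of the weak form of~\eqref{pdeqgleich1}.

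First, I would differentiate the weak equation with respect to each $x_i$ to obtain a linear parabolic equation for $D_i u$ whose principal part is governed by the bilinear form $\foo{\mathcal{B}}(\cdot, Du)$ from~\eqref{bilineardef} and whose inhomogeneity is $D_j \partial_{x_i} A_j(\cdot, Du)$. Testing with $\zeta^{2p}(|Du|^2 - \ell^2)_+ D_i u$, summing over $i \in \{1,\ldots,n\}$, and exploiting the ellipticity from Lemma~\ref{bilinearformeins} together with Young's inequality to absorb the inhomogeneous contribution via $\eqref{voraussetzungendiffbarkeit}_3$, yields a Caccioppoli-type energy estimate on the super-level sets $\{|Du| > \ell\}$, in the spirit of~\cite[Proposition~9.4]{gradientholder}. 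Here $\zeta$ is a standard product cutoff adapted to concentric intrinsic cylinders $Q^{(\lambda)}_{r}(z_0)$, $r \in [\rho/2, 2\rho]$, and the bound $h_\mu(|Du|) \geq |Du|^{p-2}$, valid for $p > 2$, is what keeps the whole analysis uniform in $\mu$.

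Next, I would couple the Caccioppoli estimate with the parabolic Sobolev embedding on the intrinsic cylinders to produce a reverse-Hölder inequality for $(|Du| - \ell)_+$, and then run a De~Giorgi iteration in the levels $\ell_j = (1 - 2^{-j})K$ and the radii $r_j \searrow \rho/2$ simultaneously. Choosing $K$ as the smallest level for which the iterated quantity vanishes produces the baseline bound
\begin{equation*}
    \esssup_{Q^{(\lambda)}_{\rho/2}(z_0)} |Du|^2 \leq C\, \lambda^{2-p} \fiint_{Q^{(\lambda)}_{\rho}(z_0)} (\mu^2 + |Du|^2)^{\frac{p}{2}} \,\dx\dt,
\end{equation*}
which, after taking square roots, already yields~\eqref{gradientboundednesssupercriticalestimate} in the special case $\epsilon = 1$. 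For the sharper $\epsilon$-parameterized form, I would package the Moser iteration into a self-improving inequality for $\Psi(r) \coloneqq \esssup_{Q^{(\lambda)}_r(z_0)} |Du|$ of the shape $\Psi(r_1) \leq \tfrac{1}{2}\Psi(r_2) + C(r_2 - r_1)^{-\beta}[\,\cdot\,]$ on concentric radii $\rho/2 \leq r_1 < r_2 \leq 2\rho$, and apply Lemma~\ref{iterationlemma}; the $C\epsilon\lambda$ summand then emerges from a Young-type split that trades a stray $\lambda$-factor coming from the intrinsic scaling against the free parameter $\epsilon$.

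The main obstacle lies in carefully tracking, and eventually reabsorbing, the spurious $\lambda$-powers generated by the intrinsic scaling of $Q^{(\lambda)}_r$, while keeping all constants uniform in $\mu \in (0,1]$. Because the cylinders are calibrated by the external parameter $\lambda$ rather than by $\esssup|Du|$ itself, each iteration step produces a $\lambda$-dependent error that must be controlled via a parameter-sensitive Young split; this is precisely what the exponent $\theta = \theta(n,p)$ encodes. Uniformity in $\mu$ is automatic for $p > 2$ thanks to $h_\mu \geq h_0$ on the degeneracy set, and the $\partial_x A$-contribution is absorbed without loss owing to the matching $p$-growth in $\eqref{voraussetzungendiffbarkeit}_3$.
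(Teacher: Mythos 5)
Your proposal reaches the statement by a genuinely different route from the paper. The paper's proof delegates the $L^\infty$ bound to a \emph{Moser-type} power iteration: the key energy estimate is Proposition~\ref{intrinsicgrößereexponenten}, proved by testing with $\Phi = T_k\circ\Phi_\alpha$, $\Phi_\alpha(s)=(\mu^2+s)^\alpha$, acting \emph{component-wise} on $|D_i u|^2$, and the proof of Proposition~\ref{gradientboundednesssupercritical} then simply iterates in the exponent~$\alpha$, deferring the details to \cite[Chapter~9]{gradientholder}. You propose instead the De~Giorgi / DiBenedetto--Friedman super-level-set route: test the differentiated equation with $\zeta^{2p}(|Du|^2-\ell^2)_+D_i u$, obtain a Caccioppoli estimate on $\{|Du|>\ell\}$ involving the \emph{full} gradient, and run a simultaneous iteration in levels $\ell_j$ and radii $r_j$. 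Both are classical and both succeed; yours is closer to the original DiBenedetto--Friedman scheme and to \cite{bogeleinduzaarmarcellini}, whereas the paper's power-type Caccioppoli is reused verbatim as the ingredient for the higher-integrability Lemma~\ref{higherintegrabilitylemma} needed in the companion sub-quadratic result, which gives a cleaner overall architecture. One caveat: the precise $\epsilon$-parameterized form \eqref{gradientboundednesssupercriticalestimate}, with the explicit exponent $\theta=\theta(n,p)$, is delicate, and the paper simply inherits that bookkeeping from \cite[Chapter~9]{gradientholder}; in your De~Giorgi route the truncation base and starting radius would have to be coupled to $\epsilon\lambda$ before Lemma~\ref{iterationlemma} can be invoked, and you would still need to check that the resulting $\theta$ depends only on $(n,p)$. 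The remaining ingredients you identify --- local boundedness of $u$ for $p>2$ via \cite[Chapter~V, Theorem~3.1]{dibenedetto1993degenerate}, $\mu$-uniformity from $h_\mu\ge h_0$ in the super-quadratic range, and absorption of $\partial_x A$ through $\eqref{voraussetzungendiffbarkeit}_3$ --- are all correctly singled out.
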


\begin{remark}\upshape
   As demonstrated in~\cite[Chapter~9]{gradientholder}, both estimates~\eqref{gradientboundednesssubcriticalestimate} and~\eqref{gradientboundednesssupercriticalestimate} remain stable in the limit~$p\to 2$ and result in slightly different bounds for the spatial derivative. In the upcoming section on Schauder estimates, we will utilize the quantitative estimates~\eqref{gradientboundednesssubcriticalestimate} and~\eqref{gradientboundednesssupercriticalestimate} that have already been established for intrinsic parabolic cylinders. Corresponding gradient boundedness results for standard parabolic cylinders have also been derived: in the super-quadratic case~$p\geq 2$, we refer to~\cite[Theorem~1.2]{bogeleinduzaarmarcellini}, and in the sub-quadratic-super-critical case~$\critical<p<2$, we refer to~\cite[Theorem~3.4]{singer2015parabolic}.
\end{remark}

We will begin by treating the local square-integrability of the spatial derivative of weak solutions to equation~\eqref{pdeqgleich1}. Note that the property~$|Du| \in L^2_{\loc}(\Omega_T)$ is not included in Definition~\ref{definitionglobal} in the parameter range~$p\leq \critical$, where~$\critical<2$ if~$n\geq 3$. To prove~$|Du|\in L^2_{\loc}(\Omega_T)$, we need to establish the following energy estimate of Caccioppoli type. 

\begin{mylem}
\label{energyestimatefürduinl2}
 Let~$1<p\leq 2$,~$\mu\in(0,1]$,~and $u$ be a weak solution to~\eqref{pdeqgleich1} under assumptions~\eqref{voraussetzungendiffbarkeit}. Further, let~$Q_\rho\Subset\Omega_T$, such that~$\frac{\rho}{2}\leq\sigma<r\leq \rho-h_0$, where~$|h|<h_0<\frac{\rho}{2}$. Then, for any~$i\in\{1,...,n\}$, there holds the energy estimate
    \begin{align}\label{energyestimatefürduinl2inequality}
   & \esssup\limits_{t\in(t_0-\sigma^2,t_0]} \displaystyle\int_{B_\sigma(x_0)}\big|\Delta_h^{(i)}u \big|^2\,\dx + \displaystyle\iint_{Q_{\sigma}(z_0)}\big|\Delta_h^{(i)} \mathcal{V}_\mu^{(p)}(Du)\big|^2 \,\dx\dt \\ \nonumber
    & \leq C\Big(1+\frac{1}{(r-\sigma)^2}\Big) \displaystyle\iint_{Q_{\rho}(z_0)} (\mu^2+|Du|^2 )^{\frac{p}{2}} \,\dx\dt + \frac{C}{(r-\sigma)^2} \displaystyle\iint_{Q_{r}(z_0)} \big|\Delta_h^{(i)} u \big|^2 \,\dx\dt \nonumber
\end{align}
with~$C=C(n,p,C_4,C_5,C_6)$.
\end{mylem}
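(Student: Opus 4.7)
The plan is to derive a standard Caccioppoli-type inequality for the spatial difference quotient $\Delta_h^{(i)} u$ by testing an appropriately shifted-and-subtracted version of the weak formulation of \eqref{pdeqgleich1} against a localized test function built from $\Delta_h^{(i)} u$ itself. Select a cut-off $\eta\in C_c^\infty(B_r(x_0))$ with $\eta\equiv 1$ on $B_\sigma(x_0)$ and $|D\eta|\leq C/(r-\sigma)$, together with a Lipschitz time cut-off $\zeta=\zeta_\tau$ supported in $(t_0-r^2,t_0]$, equal to $1$ near a time $\tau\in(t_0-\sigma^2,t_0]$ chosen to realize the essential supremum, with $|\partial_t\zeta|\leq C/(r-\sigma)^2$. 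Starting from the Steklov-averaged weak form~\eqref{lösung-steklov}, evaluating at $x$ and at $x+he_i$, subtracting and dividing by $h$, I would test against $\phi=\eta^2\zeta\,\Delta_h^{(i)}[u]_k$ and pass to the Steklov limit $k\to 0$ via Lemmas~\ref{differenzenquotientlemmaeins}--\ref{differenzenquotientlemmadrei}. The parabolic term, treated by integration by parts in time, produces the essential supremum quantity on the LHS together with, from $\partial_t\zeta$, the contribution $\frac{C}{(r-\sigma)^2}\iint_{Q_r(z_0)}|\Delta_h^{(i)} u|^2\,dx\,dt$.

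\textbf{Elliptic decomposition.} For the spatial term, I would decompose
\[
\Delta_h^{(i)} A(\cdot,\cdot,Du) \;=\; \mathcal{M}_h\,\Delta_h^{(i)} Du + \mathbf{B}_h,
\]
where $\mathcal{M}_h(x,t):=\int_0^1\partial_\xi A\big(x+he_i,t,Du(x,t)+s(Du(x+he_i,t)-Du(x,t))\big)\,ds$ is the ``$\xi$-difference'' matrix and $\mathbf{B}_h(x,t):=h^{-1}\big[A(x+he_i,t,Du(x,t))-A(x,t,Du(x,t))\big]$ is the ``$x$-difference'' residual. By Lemma~\ref{integralungleichung} combined with \eqref{voraussetzungendiffbarkeit}$_1$--\eqref{voraussetzungendiffbarkeit}$_2$, the matrix $\mathcal{M}_h$ is symmetric and obeys the pinched bounds
\[
c\,(\mathcal{D}_\mu^{(i)}(h))^{(p-2)/2}|\xi|^2 \leq \langle\mathcal{M}_h\xi,\xi\rangle,\qquad |\mathcal{M}_h| \leq C\,(\mathcal{D}_\mu^{(i)}(h))^{(p-2)/2},
\]
while \eqref{voraussetzungendiffbarkeit}$_3$ together with the mean value theorem gives $|\mathbf{B}_h|\leq C_6(\mu^2+|Du|^2)^{(p-1)/2}$. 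Expanding $D\phi = 2\eta\zeta D\eta\,\Delta_h^{(i)} u + \eta^2\zeta\,\Delta_h^{(i)} Du$, the elliptic integral splits into the principal coercive term — bounded below by $c\iint\eta^2\zeta\,|\Delta_h^{(i)} \mathcal{V}_\mu^{(p)}(Du)|^2\,dx\,dt$ thanks to Lemma~\ref{Vabschätzung}, which remains valid in the sub-quadratic regime — and three remainder integrals.

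\textbf{Young's assembly and main obstacle.} Each of the three remainders is handled by Young's inequality with a small parameter $\epsilon$, feeding $\epsilon\,|\Delta_h^{(i)} \mathcal{V}_\mu^{(p)}(Du)|^2\eta^2$ back to the LHS. The $\mathbf{B}_h$-versus-$\eta^2\zeta\Delta_h^{(i)} Du$ cross term, estimated via Young with exponents $p'$ and $p$ and using $(p-1)p'/2 = p/2$, contributes the $(r-\sigma)$-independent piece $C\iint_{Q_\rho}(\mu^2+|Du|^2)^{p/2}$ — this is the source of the $1$ in the prefactor $1+1/(r-\sigma)^2$. The $\mathbf{B}_h$-versus-$D\eta\,\Delta_h^{(i)} u$ cross term, after another Young split and a shift $y=x+he_i$ (legitimate since $|h|<h_0$ and $r\leq\rho-h_0$ keep the shifted ball inside $B_\rho$), contributes to the $1/(r-\sigma)^2$ prefactor. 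The delicate point is the $\mathcal{M}_h\Delta_h^{(i)} Du$-versus-$D\eta\,\Delta_h^{(i)} u$ cross term in the sub-quadratic range $1<p\leq 2$: a naive Young's inequality leaves a residual $(\mathcal{D}_\mu^{(i)}(h))^{(p-2)/2}|D\eta|^2|\Delta_h^{(i)} u|^2$ whose weight has non-positive exponent and is not uniformly bounded above. To remove this apparent $\mu$-dependence, I would use the pointwise bound $|\Delta_h^{(i)} u(x,t)|^2\leq\int_0^1|Du(x+hse_i,t)|^2\,ds$, change variables $y=x+hse_i$, and split the shifted integrand according to whether $|Du(y)|^2$ dominates the shifted weight $\mathcal{D}_\mu^{(i)}(h)(y-hse_i)$: in the first subregion the product bounds by $(\mu^2+|Du(y)|^2)^{p/2}$, while in the second it bounds by the shifted version of $(\mathcal{D}_\mu^{(i)}(h))^{p/2}$, both integrating into $\iint_{Q_\rho}(\mu^2+|Du|^2)^{p/2}$. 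Collecting all contributions and choosing $\epsilon$ small enough to absorb the gradient-difference pieces into the coercive LHS term delivers~\eqref{energyestimatefürduinl2inequality}.
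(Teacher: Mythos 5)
Your setup (Steklov averaging, time term, and the elliptic decomposition $\Delta_h^{(i)} A(\cdot,Du)=\mathcal{M}_h\,\Delta_h^{(i)} Du+\mathbf{B}_h$ yielding the coercive left-hand side via Lemma~\ref{Vabschätzung}) parallels the paper's use of the starting inequality from \cite[(3.1)]{singer2015parabolic} with the choice~$g\equiv 1$. However, the two arguments diverge at the cross term, and yours has a genuine gap there.

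The paper does \emph{not} difference~$A$ on the right-hand side. It handles the cross term
$-2\iint\phi\chi\langle\Delta_h^{(i)} A(\cdot,Du),D\phi\rangle\Delta_h^{(i)} u$
via the discrete integration by parts of Lemma~\ref{diskretePI}, which moves the difference quotient off of~$A$ and onto $\Psi=2\chi\phi D\phi\,\Delta_h^{(i)} u$. The undifferenced factor $A(x+she_i,t,Du(x+she_i,t))$ is then controlled directly by the growth condition~$\eqref{voraussetzungendiffbarkeit}_1$, carrying the benign weight $(\mu^2+|Du|^2)^{\frac{p-1}{2}}$; the singular weight $\mathcal{D}_\mu^{(i)}(h)^{\frac{p-2}{2}}$ only resurfaces attached to $\big|\Delta_h^{(i)}(Du)\big|$ in $|D_i\Psi|$, where it is reabsorbed into the coercive term via Hölder with exponents $\big(\tfrac{2}{2-p},\tfrac{2}{p}\big)$. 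Your proposal instead applies the decomposition $\mathcal{M}_h\,\Delta_h^{(i)} Du+\mathbf{B}_h$ on the right-hand side as well, which is precisely what Remark~\ref{cabhängigvonmu} warns against: the $\mathcal{M}_h$-versus-$D\eta\,\Delta_h^{(i)} u$ cross term leaves, after Young, a residual $\iint\mathcal{D}_\mu^{(i)}(h)^{\frac{p-2}{2}}|D\eta|^2\big|\Delta_h^{(i)} u\big|^2$ whose weight has negative exponent in the sub-quadratic range.

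The ``pinch-and-split'' you propose to repair this does not close the gap. After $|\Delta_h^{(i)} u(x)|^2\leq\int_0^1|D_iu(x+hse_i)|^2\,ds$ and the split on whether $|Du(y)|^2$ dominates $\mathcal{D}_\mu^{(i)}(h)(x)$ (with $y=x+hse_i$), the regime $|Du(y)|^2\geq\mathcal{D}_\mu^{(i)}(h)(x)$ gives, since $\tfrac{p-2}{2}<0$,
\begin{equation*}
\mathcal{D}_\mu^{(i)}(h)(x)^{\frac{p-2}{2}}|Du(y)|^2\ \geq\ \big(|Du(y)|^2\big)^{\frac{p-2}{2}}|Du(y)|^2\ =\ |Du(y)|^p,
\end{equation*}
which is a \emph{lower} bound rather than the claimed upper bound $\lesssim(\mu^2+|Du(y)|^2)^{\frac{p}{2}}$; the inequality runs the wrong way precisely because the weight is raised to a negative power. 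The obstruction is structural, not cosmetic: $\mathcal{D}_\mu^{(i)}(h)(x)$ sees $Du$ only at the segment endpoints $x$ and $x+he_i$, while $\Delta_h^{(i)} u$ averages $D_iu$ over the full segment. When $Du$ vanishes at the endpoints but is large at an interior point, $\mathcal{D}_\mu^{(i)}(h)^{\frac{p-2}{2}}\approx\mu^{p-2}$ while $\big|\Delta_h^{(i)} u\big|$ is large, and the product cannot be bounded uniformly in $\mu$ by any integral of $(\mu^2+|Du|^2)^{\frac{p}{2}}$. To obtain the $\mu$-independent constant in~\eqref{energyestimatefürduinl2inequality} you genuinely need the discrete integration by parts mechanism of Lemma~\ref{diskretePI}; the pointwise pinch cannot substitute for it.
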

\begin{proof}
The starting point is inequality~\cite[(3.1)]{singer2015parabolic}, which yields the following energy estimate
\begin{align*}
   \frac{1}{2}\displaystyle\int_{B_{r}(x_0)} & \phi^2  \int_{0}^{|\Delta^{(i)}_h u(x,\tau)|^2} g(s)\,\ds\dx \\
   &+ 3 \displaystyle\iint_{B_{r}(x_0)\times(t_0-r^2,\tau]} \phi^2 \chi \big\langle \Delta^{(i)}_h A(\cdot,Du), D\big[\Delta^{(i)}_h(u) g(\big|\Delta^{(i)}_h u \big|^2)\big] \big\rangle\,\dx\dt \\ 
   &\leq -2 \displaystyle\iint_{B_{r}(x_0)\times(t_0-r^2,\tau]} \phi \chi \big\langle \Delta^{(i)}_h A(\cdot,Du), D \big[\phi g(\big|\Delta^{(i)}_h u \big|^2)\big] \big\rangle  \Delta^{(i)}_h u \,\dx\dt \\ 
   & \quad + \frac{1}{2}\displaystyle\iint_{B_{r}(x_0)\times(t_0-r^2,\tau]} \phi^2 \partial_t\chi \int_{0}^{|\Delta^{(i)}_h u|^2} g(s)\,\ds\dx\dt 
\end{align*}
for a.e.~$\tau\in(t_0-{r}^2,t_0]$. Here,~$g\in W^{1,\infty}(\R)$ denotes an arbitrary non-negative, bounded and increasing Lipschitz function,~$\phi\in C^1_0(B_{r}(x_0),[0,1])$ is a spatial cut-off function satisfying~$\phi\equiv 1$ on~$B_{\sigma}(x_0)$ and~$|D\phi| \leq \frac{2}{r-\sigma}$, whereas~$\chi\in W^{1,\infty}([t_0-r^2,t_0],[0,1])$ denotes an increasing cut-off function in time with~$\chi(t_0-r^2)=0$,~$\chi\equiv 1$ on~$(t_0-\sigma^2,t_0]$, and~$|\partial_t \chi| \leq \frac{2}{r^2-\sigma^2}$. We apply the preceding estimate with the admissible choice~$g\equiv 1$. In turn, this yields the inequality
\begin{align}\label{singerinequalitywithggleicheins}
   \frac{1}{2}\displaystyle\int_{B_{r}(x_0)} & \phi^2 \big| \Delta^{(i)}_h u(x,\tau)\big|^2\,\dx \\ \nonumber
   &+ 3 \displaystyle\iint_{B_{r}(x_0)\times(t_0-r^2,\tau]} \phi^2 \chi \big\langle \Delta^{(i)}_h A(\cdot,Du), \Delta^{(i)}_h(Du) \big\rangle\,\dx\dt \\ \nonumber
   &\leq -2 \displaystyle\iint_{B_{r}(x_0)\times(t_0-r^2,\tau]} \phi \chi \big\langle \Delta^{(i)}_h A(\cdot,Du), D \phi \big\rangle  \Delta^{(i)}_h u \,\dx\dt \\ \nonumber
   &  \quad+ \frac{C}{(r-\sigma)^2}\displaystyle\iint_{Q_{r}(z_0)}\big|\Delta^{(i)}_h u \big|^2\,\dx\dt \nonumber
\end{align}
for a.e.~$\tau\in(t_0-{r}^2,t_0]$, and $C=C(n,p)$. In the last term of the preceding inequality, we exploited the fact that~$r^2-\sigma^2 \geq (r-\sigma)^2$ and bounded the spatial cut-off function~$\phi$ by~$1$. In particular, the second term on the left-hand side and the first quantity on the right-hand side of the preceding inequality require a careful treatment. We will start by bounding the second term on the left-hand side from below. Due to~$|(x+he_i)-x|=|h|$, Lemma~\ref{AbschätzungenfürAeins} and Lemma~\ref{AbschätzungenfürAzwei} yield for a.e.~$(x,t)\in Q_{r}(z_0)$ the following estimate
\begin{align}\label{lefthandside}
    \big\langle  & \Delta^{(i)}_h A(\cdot,Du), \Delta^{(i)}_h(Du) \big\rangle(x,t) \\ \nonumber
    & = \frac{1}{h^2}\langle A(x+h e_i,t,Du(x+he_i,t)),Du(x+he_i,t)-Du(x,t)\rangle \\ \nonumber
    & \quad- \frac{1}{h^2} \langle A(x+he_i,t,Du(x,t)), Du(x+he_i,t)-Du(x,t)\rangle \\ \nonumber
    & \quad+ \frac{1}{h^2}\langle A(x+h e_i,t,Du(x,t))-A(x,t,Du(x,t)), Du(x+he_i,t)-Du(x,t)\rangle \\ \nonumber
    &\geq \frac{1}{\Tilde{C}} \mathcal{D}^{(i)}_\mu(h)^{\frac{p-2}{2}}\big|\Delta^{(i)}_h(Du) \big|^2 - C\mathcal{D}^{(i)}_\mu(h)^{\frac{p-1}{2}}\big|\Delta^{(i)}_h(Du)\big| \\ \nonumber
    &= \frac{1}{\Tilde{C}} \mathcal{D}^{(i)}_\mu(h)^{\frac{p-2}{2}}\big|\Delta^{(i)}_h(Du) \big|^2 - C\mathcal{D}^{(i)}_\mu(h)^{\frac{p}{4}} \mathcal{D}^{(i)}_\mu(h)^{\frac{p-2}{4}} \big|\Delta^{(i)}_h(Du)\big| \\ \nonumber
    &\geq \frac{1}{\Tilde{C}} \mathcal{D}^{(i)}_\mu(h)^{\frac{p-2}{2}}\big|\Delta^{(i)}_h(Du) \big|^2 - \frac{1}{2\Tilde{C}}\mathcal{D}^{(i)}_\mu(h)^{\frac{p-2}{2}}\big|\Delta^{(i)}_h(Du) \big|^2 - C\mathcal{D}^{(i)}_\mu(h)^{\frac{p}{2}} \\ \nonumber
    &= \frac{1}{\Tilde{C}} \mathcal{D}^{(i)}_\mu(h)^{\frac{p-2}{2}}\big|\Delta^{(i)}_h(Du) \big|^2 - C\mathcal{D}^{(i)}_\mu(h)^{\frac{p}{2}} \nonumber
\end{align}
with~$C=C(n,p,C_5,C_6)$, and~$\Tilde{C}=\Tilde{C}(n,p,C_5,C_6)$. In the last inequality of the preceding estimate, we utilized Young's inequality and a suitable choice of constant to reabsorb the first quantity. Next, we will treat the first term on the right-hand side of~\eqref{singerinequalitywithggleicheins} and abbreviate~$\Psi = 2 \chi \phi D\phi  \Delta^{(i)}_h u$. By applying Lemma~\ref{diskretePI}, utilizing Hölder's inequality, and considering the growth condition~$\eqref{voraussetzungendiffbarkeit}_1$, we deduce the estimate
\begin{align} \label{righthandside}
    -2 & \displaystyle\iint_{B_r(x_0)\times(t_0-r^2,\tau]} \phi \chi \big\langle \Delta^{(i)}_h A(\cdot,Du), D \phi \big\rangle \Delta^{(i)}_h u \,\dx\dt \\ \nonumber
    &= - \displaystyle\iint_{B_r(x_0)\times(t_0-r^2,\tau]} \big\langle \Delta^{(i)}_h A(\cdot,Du), \Psi \big\rangle\,\dx\dt \\ \nonumber
    &= \displaystyle\iint_{B_r(x_0)\times(t_0-r^2,\tau]}\int_{0}^{1} \langle A(x+she_i,t,Du(x+she_i,t)), D_i\Psi\rangle\,\ds\dx\dt \\ \nonumber
    &\leq \bigg[\displaystyle\iint_{Q_r(z_0)}\int_{0}^{1}|A(x+she_i,t,Du(x+she_i,t)) |^{\frac{p}{p-1}}\,\ds\dx\dt \bigg]^{\frac{p-1}{p}}  \\ \nonumber
    &  \quad\cdot \bigg[\displaystyle\iint_{B_r(x_0)\times(t_0-r^2,\tau]}|D_i\Psi|^p\,\dx\dt \bigg]^{\frac{1}{p}} \\\nonumber
    & \leq \bigg[\displaystyle\iint_{Q_{\rho}(z_0)}(\mu^2 + |Du|^2 )^{\frac{p}{2}}\,\dx\dt \bigg]^{\frac{p-1}{p}} \bigg[\displaystyle\iint_{B_r(x_0)\times(t_0-r^2,\tau]}|D_i\Psi|^p\,\dx\dt \bigg]^{\frac{1}{p}} \nonumber
\end{align}
for a.e.~$\tau\in(t_0-r^2,t_0]$. Next, we further investigate the quantity involving~$D_i \Psi$. The assumptions on~$\phi$ and~$\chi$ yield
\begin{align*}
    |D_i\Psi| & = 2 \chi \big|D_i\phi D\phi \Delta^{(i)}_h u + \phi D_iD\phi \Delta^{(i)}_h u + \phi \big\langle D\phi, \Delta^{(i)}_h (D_iu) \big\rangle \big| \\
    & \leq 2 \chi \big[(|D\phi|^2 + \phi|D^2\phi|)\big|\Delta^{(i)}_h u \big| + \phi |D\phi| \big| \Delta^{(i)}_h (Du)\big| \big] \\
    &\leq C \chi \bigg[ \bigchi_{B_r(x_0)} \frac{\big|\Delta^{(i)}_h u \big|}{(r-\sigma)^2} + \phi\frac{\big|\Delta^{(i)}_h (Du) \big|}{r-\sigma}  \bigg]
\end{align*}
with~$C>0$ depending on~$n,p,C_4,C_5$. Therefore, we obtain in~\eqref{righthandside} 
\begin{align*}
    \bigg[\displaystyle\iint&_{B_r(x_0)\times(t_0-r^2,\tau]}|D_i\Psi|^p\,\dx\dt \bigg]^{\frac{1}{p}} \\
    &\leq  \frac{C}{(r-\sigma)^2}\bigg[\displaystyle\iint_{Q_r(z_0)}\big|\Delta^{(i)}_h u\big|^p\,\dx\dt \bigg]^{\frac{1}{p}} \\ \nonumber
    &  \quad+ \frac{C}{r-\sigma} \bigg[\displaystyle\iint_{B_r(x_0)\times(t_0-r^2,\tau]}\chi^p\phi^p \big|\Delta^{(i)}_h (Du)\big|^p\,\dx\dt \bigg]^{\frac{1}{p}} \\ \nonumber
    &= \foo{I}+\foo{II}.
\end{align*}
Bounding~$\foo{I}$ further below with Lemma~\ref{differenzenquotientlemmadrei}, we achieve
\begin{align*}
    \foo{I} \leq \frac{C}{(r-\sigma)^2} \bigg[\displaystyle\iint_{Q_{\rho}(z_0)}(\mu^2 + |Du|^2)^\frac{p}{2}\,\dx\dt \bigg]^{\frac{1}{p}}.
\end{align*}
To estimate~$\foo{II}$, we utilize Hölder's inequality with exponents~$(\frac{2}{2-p},\frac{2}{p})$ and obtain
\begin{align*}
\foo{II} &= \frac{C}{r-\sigma} \\
& \quad\cdot  \bigg[\displaystyle\iint_{B_r(x_0)\times(t_0-r^2,\tau]}  \chi^{\frac{p}{2}} \mathcal{D}^{(i)}_\mu(h)^{\frac{(2-p)p}{4}} \big(\chi^{\frac{1}{2}}\phi \mathcal{D}^{(i)}_\mu(h)^{\frac{p-2}{4}} \big|\Delta^{(i)}_h (Du) \big|\big)^p\,\dx\dt \bigg]^{\frac{1}{p}} \\
&\leq \frac{C}{r-\sigma} \bigg[\displaystyle\iint_{Q_r(z_0)}\mathcal{D}^{(i)}_\mu(h)^{\frac{p}{2}}\,\dx\dt \bigg]^{\frac{2-p}{2p}} \\
& \quad\cdot \bigg[\displaystyle\iint_{B_r(x_0)\times(t_0-r^2,\tau]}\chi \phi^2 \mathcal{D}^{(i)}_\mu(h)^{\frac{p-2}{2}}\big|\Delta^{(i)}_h (Du) \big|^2  \,\dx\dt \bigg]^{\frac{1}{2}} \\
&\leq \frac{C}{r-\sigma} \bigg[\displaystyle\iint_{Q_{\rho}(z_0)}(\mu^2 + |Du|^2 )^{\frac{p}{2}}\,\dx\dt \bigg]^{\frac{2-p}{2p}} \\
& \quad\cdot \bigg[\displaystyle\iint_{B_r(x_0)\times(t_0-r^2,\tau]}\chi \phi^2 \mathcal{D}^{(i)}_\mu(h)^{\frac{p-2}{2}}\big|\Delta^{(i)}_h (Du) \big|^2  \,\dx\dt \bigg]^{\frac{1}{2}}.
\end{align*}
The preceding estimates and an application of Young's inequality with exponent~$2$ yield the following bound in inequality~\eqref{righthandside}
\begin{align*}
    -2 & \displaystyle\iint_{B_r(x_0)\times(t_0-r^2,\tau]} \phi \chi \big\langle \Delta^{(i)}_h A(\cdot,Du), D \phi \big\rangle \big|\Delta^{(i)}_h u \big|^2\,\dx\dt \\ \nonumber
    &\leq \frac{C}{(r-\sigma)^2} \displaystyle\iint_{Q_{\rho}(z_0)}(\mu^2 + |Du|^2 )^{\frac{p}{2}}\,\dx\dt  \\
    & \quad+ \frac{C}{r-\sigma} \bigg[\displaystyle\iint_{Q_{\rho}(z_0)}(\mu^2 + |Du|^2 )^{\frac{p}{2}}\,\dx\dt \bigg]^{\frac{1}{2}}\\
    & \quad\cdot \bigg[\displaystyle\iint_{B_r(x_0)\times(t_0-r^2,\tau]}\chi \phi^2 \mathcal{D}^{(i)}_\mu(h)^{\frac{p-2}{2}}\big|\Delta^{(i)}_h (Du) \big|^2  \,\dx\dt \bigg]^{\frac{1}{2}} \\
    &\leq \frac{C}{(r-\sigma)^2} \displaystyle\iint_{Q_{\rho}(z_0)}(\mu^2 + |Du|^2 )^{\frac{p}{2}}\,\dx\dt  \\
    & \quad+ \frac{1}{2\Tilde{C}}\displaystyle\iint_{B_r(x_0)\times(t_0-r^2,\tau]} \chi \phi^2 \mathcal{D}^{(i)}_\mu(h)^{\frac{p-2}{2}}\big|\Delta^{(i)}_h (Du) \big|^2  \,\dx\dt,
\end{align*}
where~$\Tilde{C}=\Tilde{C}(n,p,C_5,C_6)$ denotes the very same constant from inequality~\eqref{lefthandside} and~$C=C(n,p,C_4,C_5,C_6)$. Thus, after bounding the left-hand side of~\eqref{singerinequalitywithggleicheins} further below by exploiting~\eqref{lefthandside}, we are able to reabsorb the last term in the preceding estimate into the left-hand side. Overall, we end up with
\begin{align*}
   \frac{1}{2}&\displaystyle\int_{B_r(x_0)}  \phi^2 \big| \Delta^{(i)}_h u(x,\tau)\big|^2\,\dx \\ \nonumber
   &+  \displaystyle\iint_{B_r(x_0)\times(t_0-r^2,\tau]} \phi^2 \chi \mathcal{D}^{(i)}_\mu(h)^{\frac{p-2}{2}}\big|\Delta^{(i)}_h (Du) \big|^2\,\dx\dt \\ \nonumber
   &\leq C\displaystyle\iint_{Q_{\rho}(z_0)}(\mu^2+|Du|^2 )^{\frac{p}{2}}\,\dx\dt + \frac{C}{(r-\sigma)^2}    \displaystyle\iint_{Q_{\rho}(z_0)}(\mu^2 + |Du|^2 )^{\frac{p}{2}}\,\dx\dt 
   \\ \nonumber
   &  \quad+ \frac{C}{(r-\sigma)^2}\displaystyle\iint_{Q_r(z_0)}\big|\Delta^{(i)}_h u \big|^2\,\dx\dt \nonumber
\end{align*}
for a.e.~$\tau\in(t_0-r^2,t_0]$, and $C=C(n,p,C_4,C_5,C_6)$. Finally, bounding the second term on the left-hand side in the preceding inequality further below with Lemma~\ref{Vabschätzung}, exploiting the assumptions made on the cut-off functions~$\phi$ and~$\chi$, together with taking the essential supremum over~$\tau\in (t_0-\sigma^2,t_0]$ and passing to the limit~$\tau\uparrow t_0$, we obtain the claimed inequality
\begin{align*}
    &\esssup\limits_{t\in(t_0-\sigma^2,t_0]} \displaystyle\int_{B_\sigma(x_0)}\big|\Delta_h^{(i)}u \big|^2\,\dx + \displaystyle\iint_{Q_{\sigma}(z_0)}\big|\Delta_h^{(i)} \mathcal{V}_\mu^{(p)}(Du)\big|^2 \,\dx\dt \\
    & \leq C\Big(1+\frac{1}{(r-\sigma)^2}\Big) \displaystyle\iint_{Q_{\rho}(z_0)} (\mu^2+|Du|^2 )^{\frac{p}{2}} \,\dx\dt+ \frac{C}{(r-\sigma)^2} \displaystyle\iint_{Q_{r}(z_0)} \big|\Delta_h^{(i)} u \big|^2 \,\dx\dt \nonumber
\end{align*}
with~$C=C(n,p,C_4,C_5,C_6)$. 
\end{proof}
\begin{remark} \label{cabhängigvonmu} \upshape
The necessity of the Caccioppoli type estimate~\eqref{energyestimatefürduinl2inequality} becomes evident when comparing it to the energy estimates given in~\cite[(3.6)]{bogeleinduzaarmarcellini} and~\cite[(3.7)]{singer2015parabolic}. The main issue with these estimates is the presence of the term~$\mathcal{D}^{(i)}_\mu(h)^{\frac{p-2}{2}}$, which is challenging to handle when~$1<p\leq\critical$. Furthermore, it is crucial to avoid introducing an additional dependence on the parameter~$\mu$ in the constants. Therefore, we choose not to use~\cite[(3.6)]{bogeleinduzaarmarcellini} or~\cite[(3.7)]{singer2015parabolic}, but instead rely on our Caccioppoli type estimate~\eqref{energyestimatefürduinl2inequality}.
\end{remark}
We are now in position to prove the claimed local square-integrability of~$Du$ in~$\Omega_T$.
\begin{mylem}\label{DuinL2}
Let~$1<p\leq\critical$,~$\mu\in(0,1]$, and~$u$ be a locally bounded weak solution to~\eqref{pdeqgleich1} under assumptions~\eqref{voraussetzungendiffbarkeit}. Then, there holds
$$| Du|\in L^2_{\loc}(\Omega_T).$$
Moreover, for any cylinder~$Q_\rho(z_0) \Subset \Omega_T$ we have the quantitative estimate
\begin{align} \label{duinl2ungleichung}
   \displaystyle\fiint_{Q_{\frac{\rho}{2}}(z_0)} |Du|^2\,\dx\dt &\leq \frac{C L^2}{\rho^2} \bigg[\displaystyle\fiint_{Q_{\rho}(z_0)}(\mu^2 +|Du |^2)^{\frac{p}{2}}\,\dx\dt\bigg]^{\frac{2-p}{p}}  \\ \nonumber
    & \quad + CL\Big(1 + \frac{1}{\rho}\Big) \bigg[\displaystyle\fiint_{Q_{\rho}(z_0)}(\mu^2 + |Du |^2)^{\frac{p}{2}}\,\dx\dt\bigg]^{\frac{1}{p}} \nonumber
\end{align}
with~$L\coloneqq \| u\|_{L^\infty(Q_{\rho}(z_0))}$, and~$C=C(n,p,C_4,C_5,C_6)$.
\end{mylem}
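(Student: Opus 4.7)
The starting point is the Caccioppoli-type energy estimate \eqref{energyestimatefürduinl2inequality} from Lemma \ref{energyestimatefürduinl2}, applied at two concentric radii $\rho/2\le\sigma<r\le\rho-h_0$. It controls both $\sup_t\int|\Delta_h^{(i)}u|^2\,\dx$ and $\iint|\Delta_h^{(i)}\mathcal V_\mu^{(p)}(Du)|^2\,\dx\,\dt$ in terms of $\iint(\mu^2+|Du|^2)^{p/2}\,\dx\,\dt$ plus the \emph{bad} term $(r-\sigma)^{-2}\iint_{Q_r}|\Delta_h^{(i)}u|^2\,\dx\,\dt$. The core of the proof consists in estimating this bad term in an $h$-uniform way so that one can pass to the limit $h\downarrow 0$.

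To this end the plan is to exploit the local boundedness of $u$. With $L=\|u\|_{L^\infty(Q_\rho(z_0))}$, the pointwise bound $|\Delta_h^{(i)}u|\le 2L/|h|$ combined with the identity $|\Delta_h^{(i)}u|^2=|\Delta_h^{(i)}u|^{2-p}|\Delta_h^{(i)}u|^p$ yields
\begin{align*}
\iint_{Q_r(z_0)}|\Delta_h^{(i)}u|^2\,\dx\,\dt
\le\bigl(\tfrac{2L}{|h|}\bigr)^{2-p}\iint_{Q_r(z_0)}|\Delta_h^{(i)}u|^p\,\dx\,\dt
\le(2L)^{2-p}|h|^{p-2}\iint_{Q_\rho(z_0)}|Du|^p\,\dx\,\dt,
\end{align*}
where the last step uses Lemma \ref{differenzenquotientlemmadrei}. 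Substituting this into the Caccioppoli estimate and combining with the elementary inequality $\iint_{Q_\sigma}|\Delta_h^{(i)}u|^2\le\sigma^2\sup_t\int_{B_\sigma}|\Delta_h^{(i)}u|^2$ produces an estimate for $\iint_{Q_\sigma}|\Delta_h^{(i)}u|^2$ whose right-hand side is the sum of a benign term proportional to $\iint(\mu^2+|Du|^2)^{p/2}$ and a second term proportional to $L^{2-p}|h|^{p-2}(r-\sigma)^{-2}\iint|Du|^p$.

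The central technical difficulty is the apparent blow-up of the factor $|h|^{p-2}$ as $h\downarrow 0$, which is a genuinely sub-quadratic phenomenon since $p\le\tfrac{2n}{n+2}<2$. I would resolve it by choosing $|h|$ as a suitable negative power of $r-\sigma$, equivalently applying Young's inequality with conjugate exponents $(2/p,\,2/(2-p))$ to the product $L^{2-p}\bigl(\iint|Du|^p\bigr)$, so that the $|h|$-dependence is absorbed into the geometric factor $(r-\sigma)^{-2}$. The resulting inequality takes the form $\phi(\sigma)\le\tfrac{C\sigma^2}{(r-\sigma)^2}\bigl(\mathrm{data}+\phi(r)\bigr)$ on a chain of nested radii, and the iteration Lemma \ref{iterationlemma} is then invoked to close the absorption and obtain an $h$-uniform bound for $\iint_{Q_{\rho/2}(z_0)}|\Delta_h^{(i)}u|^2$.

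Once this $h$-uniform control is available, Lemma \ref{differenzenquotientlemmazwei} yields $D_i u\in L^2_{\mathrm{loc}}(\Omega_T)$ for each $i\in\{1,\ldots,n\}$. The quantitative bound \eqref{duinl2ungleichung} then follows by normalizing through division by $|Q_{\rho/2}(z_0)|$; the specific form of the right-hand side, with coefficients $L^2/\rho^2$ and $L(1+1/\rho)$ multiplying the respective powers $(2-p)/p$ and $1/p$ of the average of $(\mu^2+|Du|^2)^{p/2}$, emerges exactly from the Young's inequality splitting used to eliminate the $|h|^{p-2}$ factor. The hardest step, and the main obstacle, is precisely this balancing of the $|h|^{p-2}$ blow-up against the geometric factor $(r-\sigma)^{-2}$; once handled, the remaining manipulations are routine bookkeeping.
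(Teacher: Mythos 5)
Your proposal contains a genuine gap that invalidates the argument, and the gap is precisely at the step you identify as the hardest.

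The pointwise bound $|\Delta_h^{(i)}u|\le 2L/|h|$ combined with the interpolation $|\Delta_h^{(i)}u|^2=|\Delta_h^{(i)}u|^{2-p}|\Delta_h^{(i)}u|^{p}$ does give $\iint_{Q_r}|\Delta_h^{(i)}u|^2\le(2L)^{2-p}|h|^{p-2}\iint|Du|^p$, but since $p\le\frac{2n}{n+2}<2$ the factor $|h|^{p-2}$ diverges as $|h|\downarrow 0$. To conclude $D_iu\in L^2$ via Lemma~\ref{differenzenquotientlemmazwei} you need a bound on $\iint_{Q_{\rho/2}}|\Delta_h^{(i)}u|^2$ that is \emph{uniform} in $|h|$; a bound that blows up as $|h|\downarrow 0$ gives nothing. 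The two rescue mechanisms you propose do not remove this divergence. Choosing $|h|$ as a negative power of $r-\sigma$ is not available: the radii $\sigma,r$ are fixed and the supremum over all small $|h|$ must still be finite, so tying $|h|$ to the geometry yields a bound for one value of $|h|$, not a uniform one. Young's inequality with exponents $(2/p,\,2/(2-p))$ splits a product into a sum of two powers, but there is no second factor against which the $|h|^{p-2}$ can be traded away; whatever you pair it with, a positive power of $|h|^{-1}$ survives on one side. In addition, feeding your substitution into the Caccioppoli estimate and then into $\iint_{Q_\sigma}\le\sigma^2\sup_t\int_{B_\sigma}$ produces $\phi(\sigma)\le\frac{C\sigma^2}{(r-\sigma)^2}\phi(r)+\ldots$, and the coefficient $\frac{C\sigma^2}{(r-\sigma)^2}$ is $\ge1$ rather than a fixed $\eta\in(0,1)$, so Lemma~\ref{iterationlemma} does not apply either.

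The paper sidesteps both problems by using the $L^\infty$ bound on $u$ \emph{dually}, via the discrete integration by parts of Lemma~\ref{diskretePI}: writing $\iint\phi^2|\Delta_h^{(i)}u|^2=-\iint\int_0^1\langle u(x+hse_i),D_i(\phi^2\Delta_h^{(i)}u)\rangle\,\ds\,\dx\dt\le L\iint|D_i(\phi^2\Delta_h^{(i)}u)|$. This transfers one difference quotient onto the test function and leaves a first-order quantity that stays bounded as $|h|\downarrow 0$. The piece involving $\Delta_h^{(i)}Du$ is then handled through Hölder's inequality, Lemma~\ref{Vabschätzung}, and the Caccioppoli estimate for $|\Delta_h^{(i)}\mathcal{V}_\mu^{(p)}(Du)|^2$, after which Young's inequality (applied to a genuine product) produces the coefficient $\frac{1}{2}$ in front of $\iint_{Q_r}|\Delta_h^{(i)}u|^2$, so Lemma~\ref{iterationlemma} closes the argument. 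The crucial difference is that in the paper's route no negative power of $|h|$ ever appears; your pointwise use of $\|u\|_\infty$ is what introduces it, and once introduced it cannot be removed.
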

\begin{proof}
    Let us fix~$\frac{\rho}{2}\leq \sigma<\hat{r}<r\leq\rho-h_0$, such that~$\hat{r}=\frac{1}{2}(\sigma+r)$, where~$|h|<h_0<\frac{\rho}{2}$. Consider a smooth spatial cut-off function~$\phi \in C^1_0(B_{\hat{r}}(x_0),[0,1])$, such that~$\phi\equiv 1$ on~$B_\sigma(x_0)$ and~$|D\phi| \leq \frac{2}{\hat{r}-\sigma}$. Rather than directly investigating~$|Du|\in L^2_{\loc}(\Omega_T)$, we initially consider the difference quotient of~$u$. This approach enables us to employ the discrete integration by parts technique introduced in Lemma~\ref{diskretePI}. We obtain
\begin{align*}
    \quad&\displaystyle\iint_{Q_\sigma(z_0)} \big|\Delta_h^{(i)} u\big|^2\,\dx\dt \leq \displaystyle\iint_{Q_{\hat{r}}(z_0)} \big|\Delta_h^{(i)} u\big|^2 \phi^2\,\dx\dt \\
    & = \displaystyle \int_{t_0-\hat{r}^2}^{t_0} \int_{B_{\hat{r}} (x_0)} \big\langle \Delta_h^{(i)} u, \phi^2 \Delta_h^{(i)} u\big\rangle \,\dx\dt \\
    & = \displaystyle -\int_{t_0-\hat{r}^2}^{t_0} \int_{B_{\hat{r}} (x_0)}\int_0^1 \big\langle u(x+hse_i,t), D_i\big(\phi^2 \Delta_h^{(i)} u \big)(x,t)\big\rangle\,\ds\dx\dt \\
    &\leq L \displaystyle\iint_{Q_{\hat{r}}(z_0)} \big|D_i\big(\phi^2 \Delta_h^{(i)} u \big) \big|\,\dx\dt \\
    &\leq L \displaystyle\iint_{Q_{\hat{r}}(z_0)} \big[2\phi |D_i\phi| \big|\Delta_h^{(i)} u \big| + \phi^2 \big|D_i\big(\Delta_h^{(i)} u \big)  \big| \big]\,\dx\dt \\
    &\leq \frac{4L}{\hat{r}-\sigma} \displaystyle\iint_{Q_{\hat{r}}(z_0)} \big|\Delta_h^{(i)} u \big|\,\dx\dt + L \displaystyle\iint_{Q_{\hat{r}}(z_0)} \big|\Delta_h^{(i)}Du \big|\,\dx\dt \\
    & = \foo{I}+\foo{II}.
\end{align*}
In the last step of the preceding estimate, we first utilized the commutativity of weak derivatives~$D_i$ with difference quotients~$\Delta_h^{(i)}$, and subsequently passed to the full derivative~$Du$ in the very last term. The initial quantity~$\foo{I}$ presents no difficulties, as it may be bounded above by utilizing Lemma~\ref{differenzenquotientlemmadrei} and the Cauchy-Schwarz inequality, which yield
\begin{align}\label{termeins}
    \foo{I} &\leq \frac{4L}{\hat{r}-\sigma} \displaystyle\iint_{Q_{r}(z_0)} |D u|\,\dx\dt \\ \nonumber
    &\leq \frac{C L}{r-\sigma}\bigg[\displaystyle \fiint_{Q_{\rho}(z_0)} (\mu^2+|Du |^2 )^{\frac{p}{2}}\,\dx\dt \bigg]^{\frac{1}{p}}|Q_{\rho}.(z_0)| \nonumber
\end{align}
However, the second term~$\foo{II}$ requires a more delicate treatment. Applying Hölder's inequality twice and subsequently Lemma~\ref{Vabschätzung}, we achieve
\begin{align} \label{mugleich1}
    &\quad\quad\displaystyle\iint_{Q_{\hat{r}}(z_0)} \big|\Delta_h^{(i)}(Du) \big|\,\dx\dt = \displaystyle\iint_{Q_{\hat{r}}(z_0)} \mathcal{D}_{\mu}^{(i)}(h)^{\frac{p-2}{4}} \big|\Delta_h^{(i)}(Du) \big|\mathcal{D}_{\mu}^{(i)}(h)^{\frac{2-p}{4}}\,\dx\dt \\ \nonumber
    &\leq \bigg[\displaystyle\iint_{Q_{\hat{r}}(z_0)}\mathcal{D}_{\mu}^{(i)}(h)^{\frac{p-2}{2}} \big|\Delta_h^{(i)}(Du) \big|^2\,\dx\dt \bigg]^{\frac{1}{2}} \bigg[\displaystyle\iint_{Q_{\hat{r}}(z_0)} \mathcal{D}_{\mu}^{(i)}(h)^{\frac{2-p}{2}}\,\dx\dt \bigg]^{\frac{1}{2}} \\ \nonumber
    &\leq C \bigg[\displaystyle\iint_{Q_{\hat{r}}(z_0)}\big|\Delta_h^{(i)} \mathcal{V}_{\mu}^{(p)}(Du)\big|^2 \,\dx\dt \bigg]^{\frac{1}{2}} \bigg[\displaystyle\iint_{Q_{\hat{r}}(z_0)} \mathcal{D}_{\mu}^{(i)}(h)^{\frac{2-p}{2}}\,\dx\dt \bigg]^{\frac{1}{2}} \\ \nonumber
    & \leq C\bigg[\displaystyle\iint_{Q_{\hat{r}}(z_0)}\big|\Delta_h^{(i)} \mathcal{V}_{\mu}^{(p)}(Du)\big|^2 \,\dx\dt \bigg]^{\frac{1}{2}} \bigg[\displaystyle\iint_{Q_{\hat{r}}(z_0)} \mathcal{D}_{\mu}^{(i)}(h)^{\frac{p}{2}}\,\dx\dt \bigg]^{\frac{2-p}{2p}}|Q_\rho(z_0)|^{\frac{p-1}{p}}.
\end{align}
In order to continue, we utilize the Caccioppoli type inequality from Lemma~\ref{energyestimatefürduinl2}. After replacing~$\sigma$ by~$\hat{r}$ in~\eqref{energyestimatefürduinl2inequality} and due to our choice of~$r,\hat{r},\sigma$, there holds
\begin{align}\label{singerinequality}
  \esssup\limits_{t\in(t_0-\hat{r}^2,t_0]} \displaystyle&\int_{B_\sigma(x_0)}\big|\Delta_h^{(i)}u \big|^2\,\dx + \displaystyle\iint_{Q_{\hat{r}}(z_0)}\big|\Delta_h^{(i)} \mathcal{V}_\mu^{(p)}(Du)\big|^2 \,\dx\dt \\ \nonumber
    & \leq C\Big(1+\frac{1}{(r-\hat{r})^2}\Big) \displaystyle\iint_{Q_{\rho}(z_0)} (\mu^2+|Du|^2)^{\frac{p}{2}} \,\dx\dt \\ \nonumber
    &  \quad+ \frac{C}{(r-\hat{r})^2} \displaystyle\iint_{Q_{r}(z_0)} \big|\Delta_h^{(i)} u \big|^2 \,\dx\dt \nonumber \\ \nonumber
    &\leq C\Big(1+\frac{1}{(r-\sigma)^2}\Big) \displaystyle\iint_{Q_{\rho}(z_0)} (\mu^2+|Du|^2 )^{\frac{p}{2}} \,\dx\dt \\ \nonumber
    &  \quad+ \frac{C}{(r-\sigma)^2} \displaystyle\iint_{Q_{r}(z_0)} \big|\Delta_h^{(i)} u \big|^2 \,\dx\dt. \nonumber
\end{align}
For the second quantity~$\foo{II}$, the inequalities above yield the following
\begin{align*}
    \foo{II} &\leq  \bigg[ CL \bigg[\Big(1+\frac{1}{(r-\sigma)^2}\Big)\displaystyle\iint_{Q_{\rho}(z_0)}(\mu^2 +|Du|^2)^{\frac{p}{2}}\,\dx\dt \\
    &  \quad+ \frac{1}{(r-\sigma)^2}\displaystyle\iint_{Q_r(z_0)}\big|\Delta_h^{(i)} u\big|^2 \,\dx\dt \bigg]^{\frac{1}{2}}+CL \bigg[\displaystyle\iint_{Q_{\rho}(z_0)}(\mu^2 +|Du |^2)^{\frac{p}{2}}\,\dx\dt\bigg]^{\frac{1}{2}} \bigg] \\
    &  \quad\cdot\bigg[\displaystyle\iint_{Q_{\hat{r}}(z_0)} \mathcal{D}_{\mu}^{(i)}(h)^{\frac{p}{2}}\,\dx\dt \bigg]^{\frac{2-p}{2p}}  |Q_\rho(z_0)|^{\frac{p-1}{p}} \\
    &\leq CL\Big(1+\frac{1}{(r-\sigma)^2}\Big)^{\frac{1}{2}}\bigg[\displaystyle\iint_{Q_{\rho}(z_0)}(\mu^2 + |Du|^2)^{\frac{p}{2}}\,\dx\dt\bigg]^{\frac{1}{p}}|Q_\rho(z_0)|^{\frac{p-1}{p}} \\
    &  \quad+\frac{C L |Q_\rho(z_0)|^{\frac{p-1}{p}}}{r-\sigma} \bigg[\displaystyle\iint_{Q_r(z_0)}\big|\Delta_h^{(i)} u\big|^2 \,\dx\dt \bigg]^{\frac{1}{2}} \bigg[\displaystyle\iint_{Q_{\rho}(z_0)}(\mu^2 + |Du|^2)^{\frac{p}{2}}\,\dx\dt\bigg]^{\frac{2-p}{2p}} \\
    &  \quad+ C L \big|Q_\rho(z_0)\big|^{\frac{p-1}{p}} \bigg[\displaystyle\iint_{Q_{\rho}(z_0)}(\mu^2 + |Du |^2)^{\frac{p}{2}}\,\dx\dt\bigg]^{\frac{1}{p}}.
\end{align*}
 An application of Young's inequality with exponent~$2$ further implies
\begin{align}\label{termzwei}
    \foo{II} &\leq \frac{1}{2} \displaystyle\iint_{Q_r(z_0)}\big|\Delta_h^{(i)} u\big|^2 \,\dx\dt + 
    C L |Q_\rho(z_0)|^{\frac{p-1}{p}}\bigg[\displaystyle\iint_{Q_{\rho}(z_0)}(\mu^2 + |Du|^2)^{\frac{p}{2}}\,\dx\dt\bigg]^{\frac{1}{p}} \\ \nonumber
    & \quad+   C L |Q_\rho(z_0)|^{\frac{p-1}{p}}\Big(1+\frac{1}{r-\sigma}\Big)\bigg[\displaystyle\iint_{Q_{\rho}(z_0)}(\mu^2 + |Du |^2)^{\frac{p}{2}}\,\dx\dt\bigg]^{\frac{1}{p}}  \\ \nonumber
    & \quad+ \frac{C L^2 |Q_\rho(z_0)|^{\frac{2(p-1)}{p}}}{(r-\sigma)^2} \bigg[\displaystyle\iint_{Q_{\rho}(z_0)}(\mu^2 + |Du |^2)^{\frac{p}{2}}\,\dx\dt\bigg]^{\frac{2-p}{p}}  \\ \nonumber
    &= \frac{1}{2} \displaystyle\iint_{Q_r(z_0)}\big|\Delta_h^{(i)} u\big|^2 \,\dx\dt + \frac{C L^2 |Q_\rho(z_0)|}{(r-\sigma)^2} \bigg[\displaystyle\fiint_{Q_{\rho}(z_0)}(\mu^2 +|Du |^2)^{\frac{p}{2}}\,\dx\dt\bigg]^{\frac{2-p}{p}} \\  \nonumber
    & \quad + C L \Big(1+\frac{1}{r-\sigma}\Big) |Q_\rho(z_0)|\bigg[\displaystyle\fiint_{Q_{\rho}(z_0)}(\mu^2 + |Du |^2)^{\frac{p}{2}}\,\dx\dt\bigg]^{\frac{1}{p}}. \nonumber
\end{align}
Combining both estimates~\eqref{termeins} and~\eqref{termzwei}, and further defining
$$\phi(s) \coloneqq \displaystyle\iint_{Q_s(z_0)} \big|\Delta_h^{(i)} u\big|^2\,\dx\dt,$$
yields the estimate
\begin{align*}
    \phi(\sigma) &\leq \frac{1}{2}\phi(r) + \frac{C L^2}{(r-\sigma)^2} \bigg[\displaystyle\fiint_{Q_{\rho}(z_0)}(\mu^2 + |Du |^2)^{\frac{p}{2}}\,\dx\dt\bigg]^{\frac{2-p}{p}} |Q_\rho(z_0)| \\
    & \quad + CL \Big(1+\frac{1}{r-\sigma}\Big) \bigg[\displaystyle\fiint_{Q_{\rho}(z_0)}(\mu^2 + |Du|^2)^{\frac{p}{2}}\,\dx\dt\bigg]^{\frac{1}{p}}|Q_\rho(z_0)|. \nonumber
\end{align*}
At this point, we are in position to apply the geometric decay Lemma~\ref{iterationlemma} with the choice~$\sigma=\frac{\rho}{2}$ to obtain
\begin{align} \label{differenzabschätzung}
\displaystyle\iint_{Q_{\frac{\rho}{2}}(z_0)} \big|\Delta_h^{(i)} u\big|^2\,\dx\dt &\leq \frac{C L^2}{\rho^2} \bigg[\displaystyle\fiint_{Q_{\rho}(z_0)}(\mu^2 +|Du |^2)^{\frac{p}{2}}\,\dx\dt\bigg]^{\frac{2-p}{p}} |Q_\rho(z_0)| \\ \nonumber
    &  \quad + CL\Big(1+\frac{1}{\rho}\Big) \bigg[\displaystyle\fiint_{Q_{\rho}(z_0)}(\mu^2 + |Du|^2)^{\frac{p}{2}}\,\dx\dt\bigg]^{\frac{1}{p}}|Q_\rho(z_0)| . \nonumber
\end{align}
Since the right-hand side of the above inequality is independent of the parameter~$h \neq 0$, we may apply Lemma~\ref{differenzenquotientlemmazwei} and divide both sides by~$|Q_\rho(z_0)|$.~This yields the claimed quantitative estimate
\begin{align*}
\displaystyle\fiint_{Q_{\frac{\rho}{2}}(z_0)}|D_i u|^2\,\dx\dt &\leq \frac{C L^2}{\rho^2} \bigg[\displaystyle\fiint_{Q_{\rho}(z_0)}(\mu^2 + |Du |^2)^{\frac{p}{2}}\,\dx\dt\bigg]^{\frac{2-p}{p}}  \\
    & \quad + CL\Big(1+\frac{1}{\rho}\Big) \bigg[\displaystyle\fiint_{Q_{\rho}(z_0)}(\mu^2 + |Du |^2)^{\frac{p}{2}}\,\dx\dt\bigg]^{\frac{1}{p}}
    \end{align*}
    with~$C=C(n,p,C_4,C_5,C_6)$.
\end{proof}

The next lemma presents the existence and~$p$-integrability of second-order spatial derivatives of weak solutions to equation~\eqref{pdeqgleich1} in the case where~$\mu\in(0,1]$. Respective results have already been established in a similar manner in the super-critical regime~$\critical<p$. For the super-quadratic range~$p\geq 2$, we refer to Lemma 3.1 in~\cite{bogeleinduzaarmarcellini}, and for the sub-quadratic range above the critical exponent~$\frac{2n}{n+2}<p<2$, we refer to Lemma~3.1 in~\cite{singer2015parabolic}. The energy estimate below is derived utilizing the Caccioppoli type inequality~\eqref{energyestimatefürduinl2} and the quantitative estimate~\eqref{differenzabschätzung} for difference quotients, which was obtained in the proof of Lemma~\ref{DuinL2}.
\begin{mylem}[Existence of second order spatial derivatives] \label{energyestimatezweiteableitung} Let~$1<p\leq\critical$,~$\mu \in (0,1]$, and~$u$ be a locally bounded weak solution to~\eqref{pdeqgleich1} under assumptions~\eqref{voraussetzungendiffbarkeit}. Then, there hold
\begin{equation*}
    |\mathcal{V}_\mu^{(p)}(Du)|\in L^2_{\loc}\big(0,T;W^{1,2}_{\loc}(\Omega) \big)
\end{equation*}
and
\begin{equation*}
    |Du|\in L^\infty_{\loc}\big(0,T;L^2_{\loc}(\Omega) \big)\cap L^p_{\loc}\big(0,T;W^{1,p}_{\loc}(\Omega)\big).
\end{equation*}
Moreover, for any~$i\in\{1,...,n\}$, and any cylinder~$Q_\rho(z_0) \Subset \Omega_T$, we have the quantitative estimate
\begin{align}\label{energyzweiteableitung}
    &\esssup\limits_{t\in(t_0-(\frac{\rho}{4})^2,t_0]}\displaystyle\,\fint_{B_{\frac{\rho}{4}}(x_0)}|D_i u(x,t) |^2\, \dx + \rho^2\displaystyle\fiint_{Q_{\frac{\rho}{4}}(z_0)}\big|D_i \mathcal{V}_{\mu}^{(p)}(Du) \big|^2 + |D_i Du|^p\,\dx\dt \\ \nonumber
    & \leq C(1+\rho^2) \displaystyle\fiint_{Q_{\frac{\rho}{2}}(z_0)} (\mu^2+|Du|^2 )^{\frac{p}{2}} \,\dx\dt + \frac{CL^2}{\rho^{2}} \bigg[\displaystyle\fiint_{Q_{\rho}(z_0)}(\mu^2 + |Du| ^2)^{\frac{p}{2}}\,\dx\dt\bigg]^{\frac{2-p}{p}} \\ \nonumber
    &  \quad+ CL \Big(1+\frac{1}{\rho} \Big) \bigg[\displaystyle\fiint_{Q_{\rho}(z_0)}(\mu^2 + |Du|^2)^{\frac{p}{2}}\,\dx\dt\bigg]^{\frac{1}{p}} \nonumber
\end{align}
with~$L\coloneqq \left\| u\right\|_{L^\infty(Q_{\rho}(z_0))}$, and~$C=C(n,p,C_4,C_5,C_6)$. 
\end{mylem}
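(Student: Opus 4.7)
The plan is to combine the Caccioppoli-type inequality \eqref{energyestimatefürduinl2inequality} with the quantitative difference quotient bound \eqref{differenzabschätzung} obtained inside the proof of Lemma~\ref{DuinL2}, and then to pass to the limit $h\to 0$ via Lemmas~\ref{differenzenquotientlemmaeins}--\ref{differenzenquotientlemmazwei}.

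First, I would apply \eqref{energyestimatefürduinl2inequality} on the base cylinder $Q_{\rho/2}(z_0)$ with the choices $\sigma=\rho/4$ and $r=\rho/2-h_0$ for $0<|h|<h_0<\rho/8$ sufficiently small. This produces, for each $i\in\{1,\dots,n\}$,
\begin{align*}
\esssup_{t\in(t_0-(\rho/4)^2,t_0]}\int_{B_{\rho/4}(x_0)}|\Delta_h^{(i)}u|^2\,\dx &+\iint_{Q_{\rho/4}(z_0)}|\Delta_h^{(i)}\mathcal{V}_\mu^{(p)}(Du)|^2\,\dx\dt \\
&\leq C\Big(1+\tfrac{1}{\rho^{2}}\Big)\iint_{Q_{\rho/2}(z_0)}(\mu^2+|Du|^2)^{p/2}\,\dx\dt \\
&\quad+\frac{C}{\rho^{2}}\iint_{Q_{\rho/2}(z_0)}|\Delta_h^{(i)}u|^2\,\dx\dt.
\end{align*}
The last term is then replaced by inserting \eqref{differenzabschätzung}, which is stated precisely at scale $\rho/2$ and contributes the two $L$-dependent terms appearing on the right-hand side of \eqref{energyzweiteableitung} up to the factor $|Q_\rho|$. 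After dividing through by $|Q_{\rho/4}|$ and multiplying by the appropriate powers of $\rho$, the first two pieces of \eqref{energyzweiteableitung} follow, at the level of difference quotients.

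Second, since the resulting right-hand side is independent of $h$, Lemma~\ref{differenzenquotientlemmazwei} yields $D_i\mathcal{V}_\mu^{(p)}(Du)\in L^2_{\loc}(Q_{\rho/4})$, and hence $|\mathcal{V}_\mu^{(p)}(Du)|\in L^2_{\loc}(0,T;W^{1,2}_{\loc}(\Omega))$. The strong convergence $\Delta_h^{(i)}u\to D_iu$ in $L^2_{\loc}$ from Lemma~\ref{differenzenquotientlemmaeins} combined with Fatou's lemma passes the $\esssup$-term to the limit and produces the claimed bound for $\fint_{B_{\rho/4}}|D_iu|^2$, which in particular gives $|Du|\in L^\infty_{\loc}(0,T;L^2_{\loc}(\Omega))$.

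Third, for the $L^p$-piece of \eqref{energyzweiteableitung}, I would mimic the Hölder trick employed in \eqref{mugleich1}: by Hölder's inequality with conjugate exponents $(2/p,2/(2-p))$ and Lemma~\ref{Vabschätzung},
\begin{align*}
\iint_{Q_{\rho/4}}|\Delta_h^{(i)}Du|^p\,\dx\dt
&\leq C\Big(\iint \mathcal{D}_\mu^{(i)}(h)^{(p-2)/2}|\Delta_h^{(i)}Du|^2\,\dx\dt\Big)^{p/2} \\
&\quad\cdot\Big(\iint \mathcal{D}_\mu^{(i)}(h)^{p/2}\,\dx\dt\Big)^{(2-p)/2} \\
&\leq C\Big(\iint |\Delta_h^{(i)}\mathcal{V}_\mu^{(p)}(Du)|^2\,\dx\dt\Big)^{p/2}\Big(\iint_{Q_{\rho/2}}(\mu^2+|Du|^2)^{p/2}\,\dx\dt\Big)^{(2-p)/2}.
\end{align*}
The first factor is controlled $h$-uniformly by Step~1, while the second is trivially bounded in $h$ by enlarging the cylinder. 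Letting $h\to 0$ and invoking Lemma~\ref{differenzenquotientlemmazwei} once more yields $D_iDu\in L^p_{\loc}$ with the corresponding quantitative bound; collecting the three contributions and translating to averages gives \eqref{energyzweiteableitung}. The main obstacle is the bookkeeping in Step~1 and Step~3: the sub-quadratic exponent $p\leq\critical$ forbids a direct Poincaré-type control of $\iint|\Delta_h^{(i)}u|^2$, which is precisely why \eqref{differenzabschätzung} is needed and why the two $L$-dependent terms in \eqref{energyzweiteableitung} appear with the distinct exponents $(2-p)/p$ and $1/p$.
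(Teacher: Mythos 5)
Your proposal is correct and follows essentially the same route as the paper's proof: apply the Caccioppoli estimate~\eqref{energyestimatefürduinl2inequality} with $\sigma=\rho/4$ and $r\approx\rho/2$, insert~\eqref{differenzabschätzung} to remove the remaining difference-quotient term, use Lemma~\ref{Vabschätzung} together with the exponent pair $(2/p,2/(2-p))$ to control $\iint|\Delta_h^{(i)}Du|^p$, and finally pass to the limit $h\to 0$ via Lemma~\ref{differenzenquotientlemmazwei} after taking mean values. The only cosmetic deviation is that you use Hölder's inequality to split the integral into two factors before (implicitly) applying Young, whereas the paper applies Young's inequality directly at the integrand level; both produce the same additive form of~\eqref{energyzweiteableitung}.
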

\begin{proof}
    Similarly to the proof of the previous lemma, let us fix radii $\frac{\rho}{4}= \sigma < r = \frac{1}{2}\rho\leq \rho-h_0$, where $|h|<h_0<\frac{\rho}{2}$. Once again, we employ the Caccioppoli type energy estimate~\eqref{energyestimatefürduinl2inequality} from Lemma~\ref{energyestimatefürduinl2}, which yields
    \begin{align}\label{energyestimateaufkleineremzylinder}
   &\esssup\limits_{t\in(t_0-(\frac{\rho}{4})^2,t_0]} \displaystyle \int_{B_{\frac{\rho}{4}}(x_0)}\big|\Delta_h^{(i)}u \big|^2\,\dx + \displaystyle\iint_{Q_{\frac{\rho}{4}}(z_0)}\big|\Delta_h^{(i)} \mathcal{V}_\mu^{(p)}(Du)\big|^2 \,\dx\dt \\ \nonumber
    & \leq C\Big(1+\frac{1}{\rho^2}\Big) \displaystyle\iint_{Q_{\rho}(z_0)} (\mu^2+|Du|^2 )^{\frac{p}{2}} \,\dx\dt+ \frac{C}{\rho^2} \displaystyle\iint_{Q_{\frac{\rho}{2}}(z_0)} \big|\Delta_h^{(i)} u \big|^2 \,\dx\dt. \nonumber
\end{align}
 Next, we exploit inequality~\eqref{differenzabschätzung} from the proof of Lemma~\ref{DuinL2} to further estimate the second integral in the preceding estimate. This way, we achieve
\begin{align}\label{ersteungleichung}
  &\esssup\limits_{t\in(t_0-(\frac{\rho}{4})^2,t_0]} \displaystyle\int_{B_{\frac{\rho}{4}}(x_0)}\big|\Delta_h^{(i)}u \big|^2\,\dx + \displaystyle\iint_{Q_{\frac{\rho}{4}}(z_0)}\big|\Delta_h^{(i)} \mathcal{V}_\mu^{(p)}(Du)\big|^2 \,\dx\dt \\ \nonumber
    & \leq C\Big(1+\frac{1}{\rho^2}\Big) \displaystyle\iint_{Q_{\rho}(z_0)} (\mu^2+|Du|^2 )^{\frac{p}{2}} \,\dx\dt+ \frac{CL^2}{\rho^{2-n}} \bigg[\displaystyle\fiint_{Q_{\rho}(z_0)}(\mu^2 + |Du|^2)^{\frac{p}{2}}\,\dx\dt\bigg]^{\frac{2-p}{p}} \\ \nonumber
    &  \quad+ \frac{CL}{\rho^{-n}} \Big(1+\frac{1}{\rho} \Big) \bigg[\displaystyle\fiint_{Q_{\rho}(z_0)}(\mu^2 + |Du |^2)^{\frac{p}{2}}\,\dx\dt\bigg]^{\frac{1}{p}}. \nonumber
\end{align}
  To obtain the term involving second order spatial derivatives in~\eqref{energyzweiteableitung}, we apply Lemma~\ref{Vabschätzung} and take~$p$-th power, which implies
$$\big|\Delta_{h}^{(i)}(Du)\big|^p \leq C(n,p)\big|\Delta_h^{(i)}\mathcal{V}^{(p)}_{\mu}(Du)\big|^p \mathcal{D}_{\mu}^{(i)}(h)^{\frac{(2-p)p}{4}}.$$
Integrating and applying Young's inequality with exponents~$(\frac{2}{p},\frac{2}{2-p})$ then yields
\begin{align}\label{zweiteungleichung}
    \displaystyle\iint_{Q_{\frac{\rho}{4}}(z_0)}\big|\Delta_h^{(i)}(Du)\big|^p\,\dx\dt &\leq C \displaystyle\iint_{Q_{\frac{\rho}{4}}(z_0)}\big|\Delta_h^{(i)}\mathcal{V}_\mu^{(p)}(Du)\big|^2\,\dx\dt \\
    &  \quad+ C\displaystyle\iint_{Q_{\rho}(z_0)}(\mu^2+|Du|^2 )^{\frac{p}{2}}\,\dx\dt.\nonumber
\end{align}
By combining estimates~\eqref{ersteungleichung} and~\eqref{zweiteungleichung}, we obtain
\begin{align*}
    \esssup\limits_{t\in(t_0-(\frac{\rho}{4})^2,t_0]}\displaystyle&\int_{B_{\frac{\rho}{4}}(x_0)}\big|\Delta^{(i)}_h u \big|^2\, \dx + \displaystyle\iint_{Q_{\frac{\rho}{4}}(z_0)}\big|\Delta^{(i)}_h \mathcal{V}_{\mu}^{(p)}(Du) \big|^2 + \big|\Delta^{(i)}_h Du\big|^p\,\dx\dt \\
    & \leq C\Big(1+\frac{1}{\rho^2}\Big) \displaystyle\iint_{Q_{\frac{\rho}{2}}(z_0)} (\mu^2+|Du|^2)^{\frac{p}{2}} \,\dx\dt \\ \nonumber
    &  \quad+ \frac{CL^2}{\rho^{2-n}} \bigg[\displaystyle\fiint_{Q_{\rho}(z_0)}(\mu^2 + |Du|^2)^{\frac{p}{2}}\,\dx\dt\bigg]^{\frac{2-p}{p}} \\ \nonumber
    &  \quad+ \frac{CL}{\rho^{-n}} \Big(1+\frac{1}{\rho} \Big) \bigg[\displaystyle\fiint_{Q_{\rho}(z_0)}(\mu^2 + |Du|^2)^{\frac{p}{2}}\,\dx\dt\bigg]^{\frac{1}{p}} \nonumber
\end{align*}
with~$C=C(n,p,C_4,C_5,C_6)$. The claimed energy estimate~\eqref{energyzweiteableitung} now follows by an application of Lemma~\ref{differenzenquotientlemmazwei} and by taking mean values on both sides of the preceding inequality.
\end{proof}
\begin{remark} \upshape
    As mentioned in Remark~\ref{cabhängigvonmu}, it is possible to obtain inequality~\eqref{energyzweiteableitung} comprising a weaker upper bound by employing the energy estimate~\cite[(3.7)]{singer2015parabolic} instead of~\eqref{energyestimatefürduinl2inequality} along with the rough bound~$\mathcal{D}^{(i)}_\mu(h)^{\frac{p-2}{2}} \leq \mu^{2-p}$. However, in this case, the constant~$C$ also depends on the parameter~$\mu$, i.e.~$C = C(n,p,C_4,C_5,C_6,\mu)$. In particular, as~$\mu$ approaches zero,~$C$ and furthermore the right-hand side of~\eqref{energyzweiteableitung} diverges.
\end{remark}
In order to establish local boundedness of the gradient, we need to prove an intermediate result that shows the higher integrability of~$|Du|$ in the sub-quadratic case~$1<p< 2$. To achieve the latter, we exploit energy estimates that incorporate second order spatial derivatives. Since the existence and~$p$-integrability of the spatial derivative have already been established in Lemma~\ref{energyzweiteableitung}, we may proceed without utilizing difference quotients. Moreover, in the range~$p\geq 2$ (according to \cite[Lemma~3.1]{bogeleinduzaarmarcellini}) and in the range~$\critical<p<2$ (according to \cite[Lemma~3.1]{singer2015parabolic}), the existence of second order spatial derivatives has already been established for the case~$\mu\in(0,1]$. Therefore, we consider the full parameter range~$p>1$ in the subsequent proposition. Unlike the assertions in both~\cite[Lemma~3.1]{bogeleinduzaarmarcellini} and~\cite[Lemma~3.1]{singer2015parabolic}, we state the proposition for intrinsic parabolic cylinders instead of standard parabolic cylinders.
\begin{myproposition} \label{intrinsicgrößereexponenten}
Let~$p>1$,~$\mu \in (0,1]$, and~$u$ be a weak solution to~\eqref{pdeqgleich1} under assumptions~\eqref{voraussetzungendiffbarkeit}. Furthermore, assume that~$|Du|\in L^{p+2\alpha}_{\loc}(\Omega_T) \cap L^{2+2\alpha}_{\loc}(\Omega_T)$ for some~$\alpha\geq0$, and in the case~$1<p\leq \critical$ let~$u$ be locally bounded. 
Then, for any~$Q^{(\lambda)}_S(z_0)\Subset \Omega_T$, such that~$\frac{1}{2}S \leq R<S\leq 1$, there holds
 \begin{align} \label{intrinsicenergygrößereexponentenungleichung}
    \esssup\limits_{t\in\Lambda^{(\lambda)}_R(t_0)}& 
\frac{\lambda^{p-2}}{(1+\alpha)R^2} \displaystyle\fint_{B_R(x_0)}(\mu^2 + |Du|^2)^{1+\alpha}\,dx \\ \nonumber
    &+ \displaystyle\fiint_{Q^{(\lambda)}_R(z_0)}(\mu^2 + |Du|^2)^{\frac{p-2}{2}+\alpha}|D^2u|^2\,\dx\dt \\ \nonumber 
    & \leq \frac{C(\alpha +1)}{(S-R)^2}\displaystyle\fiint_{Q^{(\lambda)}_S(z_0)}(\mu^2 + |Du|^2 )^{\frac{p+2\alpha}{2}}\,\dx\dt  \\ \nonumber
&  \quad + \frac{C\lambda^{p-2}}{S^2-R^2}\displaystyle\fiint_{Q^{(\lambda)}_S(z_0)} (\mu^2 + |Du|^2)^{1+\alpha}\,\dx\dt \nonumber
    \end{align}
    with~$C=C(n,p,C_4,C_5,C_6)$.
\end{myproposition}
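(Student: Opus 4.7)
The approach is to differentiate the weak form of~\eqref{pdeqgleich1} in the spatial direction $e_i$ (using Steklov regularization in time to bypass the absence of a weak time derivative in Definition~\ref{definitionglobal}) and to test with $\varphi_i \coloneqq \eta^2\chi\, D_i u\, (\mu^2+|Du|^2)^\alpha$, summing over $i=1,\ldots,n$. Here $\eta\in C^1_0(B_S(x_0),[0,1])$ is a spatial cut-off equal to $1$ on $B_R(x_0)$ with $|D\eta|\le 2/(S-R)$, and $\chi\in W^{1,\infty}(\R,[0,1])$ is an increasing time cut-off supported in $\Lambda^{(\lambda)}_S(t_0)$, equal to $1$ on $\Lambda^{(\lambda)}_R(t_0)$, with $|\partial_t\chi|\le 2\lambda^{p-2}/(S^2-R^2)$; this latter bound, dictated by the intrinsic time-length $\lambda^{2-p}S^2$ of $Q^{(\lambda)}_S(z_0)$, is precisely what generates the $\lambda^{p-2}$ prefactor on the right-hand side of~\eqref{intrinsicenergygrößereexponentenungleichung}. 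Since $\mu\in(0,1]$, the second order spatial derivatives needed to justify this procedure are provided by Lemma~\ref{energyestimatezweiteableitung} in the range $p\le\critical$, and by the corresponding results in~\cite{bogeleinduzaarmarcellini,singer2015parabolic} in the super-critical range; the integrability hypothesis $|Du|\in L^{p+2\alpha}_{\loc}\cap L^{2+2\alpha}_{\loc}$ ensures that every term in the computation below is finite.

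After $x_i$-differentiation the equation reads, for each admissible test function $\varphi$,
\begin{align*}
\iint_{\Omega_T}\big[-D_iu\,\partial_t\varphi+\big\langle\partial_\xi A(x,t,Du)D_i(Du)+\partial_{x_i}A(x,t,Du),D\varphi\big\rangle\big]\,\dx\dt=0.
\end{align*}
Inserting $\varphi=\varphi_i$, summing over $i$, and using the chain-rule identity
\begin{align*}
\sum_{i=1}^{n}\partial_t D_iu\cdot D_iu\,(\mu^2+|Du|^2)^\alpha=\tfrac{1}{2(1+\alpha)}\partial_t(\mu^2+|Du|^2)^{1+\alpha},
\end{align*}
the time term, after integration against $\eta^2\chi$ and with $\chi$ vanishing at the lower endpoint, produces the supremum quantity on the left-hand side of~\eqref{intrinsicenergygrößereexponentenungleichung} together with the error $\lambda^{p-2}(S^2-R^2)^{-1}\fiint(\mu^2+|Du|^2)^{1+\alpha}$ coming from $|\partial_t\chi|$. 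The principal ellipticity term is controlled from below via $\eqref{voraussetzungendiffbarkeit}_2$:
\begin{align*}
\sum_{i=1}^{n}\foo{\mathcal{B}}(\cdot,Du)(D_i(Du),D_i(Du))(\mu^2+|Du|^2)^\alpha\eta^2\chi\geq C_5(\mu^2+|Du|^2)^{\frac{p-2}{2}+\alpha}|D^2u|^2\eta^2\chi,
\end{align*}
which reproduces the second integral on the left of~\eqref{intrinsicenergygrößereexponentenungleichung}.

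The remaining contributions split into three groups. First, derivatives of the factor $(\mu^2+|Du|^2)^\alpha$ generate additional quadratic expressions in $|D^2u|$ of the same structural form as the principal term and, using $\eqref{voraussetzungendiffbarkeit}_1$ together with Young's inequality with a sufficiently small constant, are absorbed on the left. Second, derivatives hitting $\eta^2$, combined with $\eqref{voraussetzungendiffbarkeit}_1$ and Young's inequality, contribute an error of order $(S-R)^{-2}\fiint(\mu^2+|Du|^2)^{(p+2\alpha)/2}$, matching the first term on the right of~\eqref{intrinsicenergygrößereexponentenungleichung}. Third, the inhomogeneity term is handled through $\eqref{voraussetzungendiffbarkeit}_3$: the factorisation
\begin{align*}
(\mu^2+|Du|^2)^{\frac{p-1}{2}+\alpha}|D^2u|=(\mu^2+|Du|^2)^{\frac{p-2}{4}+\frac{\alpha}{2}}|D^2u|\cdot(\mu^2+|Du|^2)^{\frac{p}{4}+\frac{\alpha}{2}}
\end{align*}
followed by Young's inequality splits it into an absorbable multiple of the principal term plus a further contribution of $\fiint(\mu^2+|Du|^2)^{(p+2\alpha)/2}$. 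The main technical obstacle lies in the bookkeeping of signs and coefficients when summing over $i$, and in the careful Steklov regularization that legitimizes differentiating the weak form; no iteration argument in the style of Lemma~\ref{iterationlemma} is required, since the dependence on the radii $R,S$ enters only through the explicit factors $(S-R)^{-2}$ and $(S^2-R^2)^{-1}$, and the assertion~\eqref{intrinsicenergygrößereexponentenungleichung} then follows directly upon taking the essential supremum in time over $\Lambda^{(\lambda)}_R(t_0)$.
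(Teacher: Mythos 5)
Your proposal departs from the paper's argument in a way that opens a genuine gap. You differentiate the equation directly and test with the \emph{untruncated} weight $(\mu^2+|Du|^2)^\alpha$, whereas the paper starts from the difference-quotient inequality~\cite[(3.4)]{singer2015parabolic}, inserts the \emph{truncated} weight $\Phi=T_k\circ\Phi_\alpha$ with $T_k(s)=\min\{k,s\}$ and $\Phi_\alpha(s)=(\mu^2+s)^\alpha$, first passes $|h|\downarrow 0$ using Lemma~\ref{dominatedconvergence} and Lemma~\ref{differenzenquotientlemmadrei}, and only then lets $k\to\infty$ via Fatou. The truncation is not cosmetic. Each of your absorption steps moves a small multiple of $\iint\eta^2\chi\,(\mu^2+|Du|^2)^{\frac{p-2}{2}+\alpha}|D^2u|^2\,\dx\dt$ from the right-hand side to the left, and subtraction is only legitimate when that quantity is known \emph{a priori} to be finite. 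It is not: Lemma~\ref{energyestimatezweiteableitung} and its analogues in~\cite{bogeleinduzaarmarcellini,singer2015parabolic} give $|D^2u|\in L^p_{\loc}$ and $(\mu^2+|Du|^2)^{\frac{p-2}{2}}|D^2u|^2\in L^1_{\loc}$, i.e.\ only the case $\alpha=0$, while the hypothesis $|Du|\in L^{p+2\alpha}_{\loc}\cap L^{2+2\alpha}_{\loc}$ controls $|Du|$ but says nothing about $|D^2u|$ weighted by a positive power of $|Du|$. Your claim that this integrability hypothesis ``ensures that every term in the computation below is finite'' is therefore incorrect, and without a $T_k$-type truncation of the weight (or an equivalent cut-off), followed by a monotone-convergence passage $k\to\infty$, the argument does not close.

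Two subsidiary remarks. Your weight depends on the full $|Du|^2$, whereas the paper applies $\Phi_\alpha$ to $|D_iu|^2$ component-wise and only recovers $(\mu^2+|Du|^2)^{1+\alpha}$ after summing over $i$ and invoking equivalence of norms; your choice has the merit of producing $\tfrac{1}{2(1+\alpha)}\partial_t(\mu^2+|Du|^2)^{1+\alpha}$ immediately via the chain rule, and the resulting cross-coupling term
$2\alpha\,(\mu^2+|Du|^2)^{\alpha-1}\foo{\mathcal{B}}(\cdot,Du)\big(\tfrac12 D|Du|^2,\tfrac12 D|Du|^2\big)$
is non-negative by $\eqref{voraussetzungendiffbarkeit}_2$ and may simply be discarded rather than absorbed, so this part of the computation does work. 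Finally, the paper retains the difference-quotient formulation not for mere hygiene: the hypotheses $|Du|\in L^{p+2\alpha}_{\loc}\cap L^{2+2\alpha}_{\loc}$ are used precisely to justify the $|h|\downarrow 0$ limit, a step your sketch suppresses by invoking Steklov regularization in time alone. If you insert the $T_k$ truncation into your test function and pass $k\to\infty$ at the end, the direct-differentiation route can be made rigorous.
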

\begin{proof}
    Due to~$Q_{\rho}\Subset\Omega_T$, we find~$|h|>0$ small enough, such that~$Q^{(\lambda)}_{S+2|h|}\subset\Omega_T$ holds true. The starting point is inequality~\cite[(3.4)]{singer2015parabolic}. Importantly, note that~\cite[(3.4)]{singer2015parabolic} is valid for any~$p>1$. Hence, we are indeed able to treat the full parameter range~$p>1$. By adapting the geometry and applying~\cite[(3.4)]{singer2015parabolic}, we obtain for a.e.~$\tau\in(0,R]$ the following estimate
\begin{align} \label{singerungleichung}
  &\displaystyle\int_{B_S(x_0)} \phi^2(x)\chi(\tau) \displaystyle \int_{0}^{|\Delta^{(i)}_h u(x,\tau) |^2} \Phi(s)\,\ds\dx \\
  &+ \displaystyle\iint_{B_S \times \Lambda^{(\lambda)}_\tau(t_0)} \phi^2 \chi \mathcal{D}^{(i)}_\mu(h)^{\frac{p-2}{2}} \big|\Delta^{(i)}_h (Du) \big|^2 \Phi\big(\big|\Delta^{(i)}_h u \big|^2\big)\,\dx\dt \nonumber \\
  & \leq C \displaystyle\iint_{B_S \times \Lambda^{(\lambda)}_\tau(t_0)} \chi \Phi\big(\big|\Delta^{(i)}_h u \big|^2\big)\big[ \phi^2 \mathcal{D}^{(i)}_\mu(h)^{\frac{p}{2}} +|D\phi|^2 \mathcal{D}^{(i)}_\mu(h)^{\frac{p-2}{2}}   \big|\Delta^{(i)}_h u \big|^2 \big]\,\dx\dt \nonumber \\
&  \quad+ C \displaystyle\iint_{B_S \times \Lambda^{(\lambda)}_\tau(t_0)} \phi^2 \chi \big|\Delta^{(i)}_h u \big|^2 \mathcal{D}^{(i)}_\mu(h)^{\frac{p}{2}} \Phi'\big( \big|\Delta^{(i)}_h u \big|^2 \big) \,\dx\dt \nonumber \\
& \quad + C\displaystyle\iint_{B_S \times\Lambda^{(\lambda)}_\tau(t_0)} \phi^2 \partial_t \chi \displaystyle \int_{0}^{|\Delta^{(i)}_h u |^2} \Phi(s)\,\ds\dx\dt, \nonumber
\end{align}
with~$C=C(n,p,C_4,C_5,C_6)$, where~$\Phi\in W^{1,\infty}(\R)$ denotes a non-negative and non-decreasing Lipschitz function,~$\phi\in C^1_0(B_S(x_0),[0,1])$ is a smooth spatial cut-off function with~$\phi \equiv 1$ on~$B_R(x_0)$ and~$|D\phi|\leq \frac{2}{S-R}$, whereas~$\chi\in W^{1,\infty}\big(\Lambda^{(\lambda)}_S(t_0),[0,1]\big)$ denotes an increasing cut-off function in time satisfying~$\chi(t_0-\lambda^{2-p}S^2)=0$,~$\chi \equiv 1$ on~$\Lambda^{(\lambda)}_R(t_0)$, and~$|\partial_t \chi|\leq \frac{2 \lambda{p-2}}{S^2-R^2}$. In~\eqref{singerungleichung}, we choose~$\Phi = T_k \circ \Phi_\alpha$, where
\begin{align*}
    T_k(s) &\coloneqq \min\{k,s\},\quad s\geq 0 \\
     \Phi_\alpha(s) &\coloneqq (\mu^2 + s)^\alpha,\quad s\geq 0
\end{align*}
for~$k\in\N$, and~$\alpha\geq 0$. Together by utilizing~$\Phi\leq \Phi_\alpha$ and~$\Phi'\leq \Phi_\alpha'$, inequality~\eqref{singerungleichung} translates to
\begin{align*} 
  \displaystyle\int_{B_S(x_0)} &\phi^2(x)\chi(\tau) \displaystyle \int_{0}^{|\Delta^{(i)}_h u(x,\tau) |^2} (T_k \circ \Phi_\alpha)(s)\,\ds\dx \\
  &+ \displaystyle\iint_{B_S \times \Lambda^{(\lambda)}_\tau(t_0)} \phi^2 \chi \mathcal{D}^{(i)}_\mu(h)^{\frac{p-2}{2}} \big|\Delta^{(i)}_h (Du) \big|^2 (T_k\circ \Phi_\alpha)\big(\big|\Delta^{(i)}_h u \big|^2\big)\,\dx\dt  \\
  & \leq C \displaystyle\iint_{Q^{(\lambda)}_S(z_0)} \chi \Phi_\alpha\big(\big|\Delta^{(i)}_h u \big|^2\big)\big[ \phi^2 \mathcal{D}^{(i)}_\mu(h)^{\frac{p}{2}} +|D\phi|^2 \mathcal{D}^{(i)}_\mu(h)^{\frac{p-2}{2}}   \big|\Delta^{(i)}_h u \big|^2 \big]\,\dx\dt  \\
&  \quad+ C \displaystyle\iint_{Q^{(\lambda)}_S(z_0)} \phi^2 \chi \big|\Delta^{(i)}_h u \big|^2 \mathcal{D}^{(i)}_\mu(h)^{\frac{p}{2}} \Phi'_\alpha\big( \big|\Delta^{(i)}_h u \big|^2 \big) \,\dx\dt  \\
&  \quad+ C\displaystyle\iint_{Q^{(\lambda)}_S(z_0)} \phi^2 \partial_t \chi \displaystyle \int_{0}^{|\Delta^{(i)}_h u |^2} \Phi_\alpha(s)\,\ds\dx\dt
\end{align*}
for a.e.~$\tau\in(0,R]$. Next, we take the essential supremum with respect to~$\tau\in(0,R]$ for the first term on the left-hand side, and estimate the second quantity on the left-hand side in the inequality above by the choice~$t=R$ and a reduction of the domain of integration. Exploiting the assumptions made on~$\phi$ and~$\chi$, we return to employ~$t$ instead of~$\tau$ as the time variable and obtain
\begin{align*}
&\esssup\limits_{t\in \Lambda^{(\lambda)}_R(t_0)}\displaystyle\int_{B_R(x_0)} \displaystyle \int_{0}^{|\Delta^{(i)}_h u(x,t) |^2} (T_k \circ \Phi_\alpha)(s)\,\ds\dx \\
  &+ \displaystyle\iint_{Q^{(\lambda)}_R(z_0)} \mathcal{D}^{(i)}_\mu(h)^{\frac{p-2}{2}} \big|\Delta^{(i)}_h (Du) \big|^2 (T_k\circ \Phi_\alpha)\big(\big|\Delta^{(i)}_h u \big|^2\big)\,\dx\dt  \\
  & \leq C \displaystyle\iint_{Q^{(\lambda)}_S(z_0)} \big(\mu^2 + \big|\Delta^{(i)}_h u \big|^2\big)^\alpha\big[ \mathcal{D}^{(i)}_\mu(h)^{\frac{p}{2}} +\frac{1}{(S-R)^2} \mathcal{D}^{(i)}_\mu(h)^{\frac{p-2}{2}}   \big|\Delta^{(i)}_h u \big|^2 \big]\,\dx\dt  \\
&  \quad+ C\alpha \displaystyle\iint_{Q^{(\lambda)}_S(z_0)} \big|\Delta^{(i)}_h u \big|^2 \mathcal{D}^{(i)}_\mu(h)^{\frac{p}{2}} \big(\mu^2 + \big|\Delta^{(i)}_h u \big|^2 \big)^{\alpha -1} \,\dx\dt  \\
&  \quad+ \frac{C\lambda^{p-2}}{S^2-R^2}\displaystyle\iint_{Q^{(\lambda)}_S(z_0)} \big(\mu^2 + \big|\Delta^{(i)}_h u \big|^2\big)^{1+\alpha}\,\dx\dt. 
\end{align*}
Firstly, in the case where~$p\leq2$, we estimate~$\mathcal{D}^{(i)}_\mu(h)^{\frac{p-2}{2}} \leq (\mu^2 + |Du|^2 )^{\frac{p-2}{2}}$. In the case where~$p>2$, we exploit the assumption made on $|h|$ and apply Lemma~\ref{differenzenquotientlemmadrei} to estimate
$$\displaystyle\iint_{Q^{(\lambda)}_S(z_0)} \mathcal{D}^{(i)}_\mu(h)^{\frac{p-2}{2}}\,\dx\dt \leq C(n,p) \displaystyle\iint_{Q^{(\lambda)}_{S+2|h|}(z_0)} (\mu^2 + |Du|^2 )^{\frac{p-2}{2}}\,\dx\dt.$$
Both sets of estimates ultimately result in
\begin{align*}
     \esssup\limits_{t\in \Lambda^{(\lambda)}_R(t_0)}&\displaystyle\int_{B_R(x_0)} \displaystyle \int_{0}^{|\Delta^{(i)}_h u(x,t) |^2} (T_k \circ \Phi_\alpha)(s)\,\ds\dx \\
  &+ \displaystyle\iint_{Q^{(\lambda)}_R(z_0)}\mathcal{D}^{(i)}_\mu(h)^{\frac{p-2}{2}} \big|\Delta^{(i)}_h (Du) \big|^2 (T_k\circ \Phi_\alpha)\big(\big|\Delta^{(i)}_h u \big|^2\big)\,\dx\dt  \\
  & \leq C(\alpha+1) \displaystyle\iint_{Q^{(\lambda)}_S(z_0)} \big(\mu^2 + \big|\Delta^{(i)}_h u \big|^2\big)^\alpha \mathcal{D}^{(i)}_{\mu}(h)^{\frac{p}{2}} \,\dx\dt  \\
&  \quad+  \frac{C\lambda^{p-2}}{(S-R)^2} \displaystyle\iint_{Q^{(\lambda)}_{S+|h|}(z_0)}   ( \mu^2 + |Du|^2 )^{\frac{p-2}{2}} \big(\mu^2 + \big|\Delta^{(i)}_h u \big|^2 \big)^{\alpha}\big|\Delta^{(i)}_h u \big|^2 \,\dx\dt  \\
&  \quad+ \frac{C\lambda^{p-2}}{S^2-R^2}\displaystyle\iint_{Q^{(\lambda)}_S(z_0)} \big(\mu^2 + \big|\Delta^{(i)}_h u \big|^2\big)^{1+\alpha}\,\dx\dt. 
\end{align*}
Next, we aim to pass to the limit~$|h|\downarrow 0$. Due to Lemma~\ref{energyestimatezweiteableitung}, there holds~$|D^2u|\in L^p_{\loc}(\Omega_T)$, and according to our assumptions, we have~$|Du|\in L^{p+2\alpha}_{\loc}(\Omega_T) \cap L^{2+2\alpha}_{\loc}(\Omega_T)$ at our disposal. Therefore, after passing to a subsequence, we have the convergence of~$\Delta^{(i)}_h (Du)$ to~$D_i Du$ a.e. and the convergence of~$\Delta^{(i)}_h u$ to~$D_i u$ a.e. in~$Q_{\rho}(z_0)$ as~$|h|$ converges to zero. Now, once again, we need to distinguish between two regimes. First, we devote ourselves to the sub-quadratic case~$p<2$, which requires a more delicate treatment than the super-quadratic case~$p\geq 2$. Passing to the subsequence from above, we first apply the variant of the dominated convergence theorem from Lemma~\ref{dominatedconvergence} with the choice 
\begin{align*}
    g_h &= \mu^{p-2}\big(\mu^2 + \big|\Delta^{(i)}_h u \big|^2 \big)^{1+\alpha}, &&f_h= (\mu^2+|Du|^2)^{\frac{p-2}{2}}\big(\mu^2 + \big|\Delta^{(i)}_h u \big|^2 \big)^{\alpha} |\Delta^{(i)}_h u|^2 \\
    g &= \mu^{p-2}(\mu^2 + |D_i u|^2 )^{1+\alpha} , &&\,\,\,f=(\mu^2+|Du|^2 )^{\frac{p-2}{2}}(\mu^2 + |D_i u| ^2)^{\alpha} |D_iu|^2.
\end{align*}
Together with an application of Fatou's lemma, we obtain
\begin{align} \label{ersterfall}
     \esssup\limits_{t\in \Lambda^{(\lambda)}_R(t_0)}&\displaystyle\int_{B_R(x_0)} \phi^2(x)\chi(t) \displaystyle \int_{0}^{|D_i u(x,t)|} (T_k \circ \Phi_\alpha)(s)\,\ds\dx \\ \nonumber
  &+ \displaystyle\iint_{Q^{(\lambda)}_R(z_0)} (\mu^2 + |D_iu|^2)^{\frac{p-2}{2}} |D_i Du|^2 (T_k\circ \Phi_\alpha)(|D_iu|^2)\,\dx\dt  \\ \nonumber
  & \leq  \frac{C(\alpha +1)}{(S-R)^2} \displaystyle\iint_{Q^{(\lambda)}_S(z_0)}   ( \mu^2 + |Du|^2 )^{\frac{p+2\alpha}{2}}\,\dx\dt  \\ \nonumber
& \quad + \frac{C\lambda^{p-2}}{S^2-R^2}\displaystyle\iint_{Q^{(\lambda)}_R(z_0)}  (\mu^2 + |Du|^2)^{1+\alpha}\,\dx\dt, \nonumber
\end{align}
where we additionally exploited the assumption~$S\leq 1$. The inequality mentioned above remains valid when~$p=2$. Therefore, let us assume~$p>2$. We apply Young's inequality with exponents~$(\frac{p+2\alpha}{p-2},\frac{p+2\alpha}{2+2\alpha})$ and employ Lemma~\ref{differenzenquotientlemmadrei}, to estimate
\begin{align} \label{zweiterfall}
     & \displaystyle\iint_{Q^{(\lambda)}_{S+|h|}(z_0)}  ( \mu^2 + |Du|^2 )^{\frac{p-2}{2}} \big(\mu^2 + \big|\Delta^{(i)}_h u \big|^2 \big)^{\alpha}\big|\Delta^{(i)}_h u \big|^2 \,\dx\dt \\ \nonumber
    &\leq  \displaystyle\iint_{Q^{(\lambda)}_{S+|h|}(z_0)}  ( \mu^2 + |Du|^2 )^{\frac{p-2}{2}} \big(\mu^2 + \big|\Delta^{(i)}_h u \big|^2 \big)^{1+\alpha}\,\dx\dt \\ \nonumber
    &\leq  \displaystyle\iint_{Q^{(\lambda)}_{S+|h|}(z_0)}  \big[ ( \mu^2 + |Du|^2 )^{\frac{p+2\alpha}{2}} + \big(\mu^2 + \big|\Delta^{(i)}_h u \big|^2 \big)^{\frac{p+2\alpha}{2}} \big] \,\dx\dt \\ \nonumber
    &\leq  \displaystyle\iint_{Q^{(\lambda)}_{S+2|h|}(z_0)} ( \mu^2 + |Du|^2)^{\frac{p+2\alpha}{2}} \,\dx\dt. \nonumber
\end{align}
We insert inequality~\eqref{zweiterfall} into~\eqref{ersterfall}. Due to Lemma~\eqref{differenzenquotientlemmazwei}, we are in position to pass to the limit~$|h|\to 0$ and obtain in the full range~$p>1$ the estimate
\begin{align*}
     \esssup\limits_{t\in \Lambda^{(\lambda)}_R(t_0)}&\displaystyle\int_{B_R(x_0)} \displaystyle \int_{0}^{|D_i u(x,t)|} (T_k \circ \Phi_\alpha)(s)\,\ds\dx \\ 
  &+ \displaystyle\iint_{Q^{(\lambda)}_R(z_0)} (\mu^2 + |D_iu|^2)^{\frac{p-2}{2}} |D_i Du|^2 (T_k\circ \Phi_\alpha)(|D_iu|)\,\dx\dt  \\ 
  & \leq \frac{C(\alpha +1)}{(S-R)^2}  \displaystyle\iint_{Q^{(\lambda)}_S(z_0)}   ( \mu^2 + |Du|^2)^{\frac{p+2\alpha}{2}}\,\dx\dt  \\ 
&  \quad+ \frac{C\lambda^{p-2}}{S^2-R^2}\displaystyle\iint_{Q^{(\lambda)}_S(z_0)}  (\mu^2 + |Du|^2)^{1+\alpha}\,\dx\dt. 
\end{align*}
By passing to the limit~$k\to\infty$ in the previous inequality and utilizing the fact that $T_k \circ \Phi_\alpha \to \Phi_\alpha$ in~$Q_\rho(z_0)$ as~$k\to\infty$, along with another application of Fatou's lemma, we obtain
\begin{align*}
     \esssup\limits_{t\in \Lambda^{(\lambda)}_R(t_0)}&\displaystyle\int_{B_R(x_0)} \displaystyle \int_{0}^{|D_i u(x,t)|} \Phi_{\alpha}(s)\,\ds\dx \\ 
  &+ \displaystyle\iint_{Q^{(\lambda)}_R(z_0)} (\mu^2 + |D_iu|^2 )^{\frac{p-2}{2}} |D_i Du|^2 \Phi_{\alpha}(|D_iu|)\,\dx\dt  \\ 
  & \leq \frac{C(\alpha +1)}{(S-R)^2}\displaystyle\iint_{Q^{(\lambda)}_S(z_0)}  ( \mu^2 + |Du|^2)^{\frac{p+2\alpha}{2}}\,\dx\dt  \\ 
&  \quad+\frac{C\lambda^{p-2}}{S^2-R^2}\displaystyle\iint_{Q^{(\lambda)}_S(z_0)}  (\mu^2 + |Du|^2)^{1+\alpha}\,\dx\dt. 
\end{align*}
Exploiting the definition of~$g_\alpha$ and summing over~$i=1,..,n$, we eventually derive the claimed energy estimate~\eqref{intrinsicenergygrößereexponentenungleichung} with~$C=C(n,p,C_4,C_5,C_6)$ by taking mean values on both sides of the inequality.
\end{proof}
From our energy estimate~\eqref{intrinsicenergygrößereexponentenungleichung}, a series of results, including higher integrability of the spatial derivative of weak solutions in the sub-quadratic case and boundedness of the spatial derivative for weak solutions to~\eqref{pdeqgleich1} in the full range of parameter~$p>1$, will follow. The underlying plan is the employment of a Moser iteration procedure to demonstrate the local boundedness of spatial derivatives of weak solutions. In the case of super-quadratic equations where~$p>2$, we are in position to start with the energy estimate involving higher integrability from Proposition~\ref{intrinsicgrößereexponenten} and then proceed with Moser's iteration. However, in the sub-critical range~$1<p\leq\critical$, the spatial derivative of weak solutions are required to be integrable to any arbitrarily large power~$m$, i.e.~$|Du|\in L^{m}_{\loc}(\Omega_T)$ for any~$m>1$. The following lemma establishes that this assumption is indeed always satisfied. 
\begin{mylem}\label{higherintegrabilitylemma}
    Let~$1<p\leq 2$,~$\mu\in(0,1]$, and~$u$ be a locally bounded weak solution to~\eqref{pdeqgleich1} under assumptions~\eqref{voraussetzungendiffbarkeit}. Then, there holds~$|Du|\in L^{m}_{\loc}(\Omega_T)$ for any~$m>1$. Furthermore, for~$m>p+1$ and any cylinder~$Q^{(\lambda)}_{2\rho}(z_0)\Subset\Omega_T$, we have the quantitative estimate
    \begin{align} \label{higherintegrability}
        \displaystyle\fiint_{Q^{(\lambda)}_{\rho}(z_0)}&(\mu^2 + |Du|^2)^{\frac{m}{2}}\,\dx\dt \\
        &\leq C \bigg[\lambda \Big(\frac{\omega}{\rho \lambda}\Big)^{\frac{2}{p}}+\frac{\omega}{\rho}+\mu  \bigg]^{m-p} \displaystyle\fiint_{Q^{(\lambda)}_{2\rho}(z_0)}(\mu^2 + |Du|^2 )^{\frac{p}{2}}\,\dx\dt \nonumber
    \end{align}
    with~$C=C(m,n,p,C_4,C_5,C_6)$, and~$\omega\coloneqq \essosc\limits_{Q^{(\lambda)}_{2\rho}(z_0)}u$.
\end{mylem}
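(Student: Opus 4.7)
My plan is to run a parabolic Moser iteration on the second-order Caccioppoli estimate~\eqref{intrinsicenergygrößereexponentenungleichung}. The natural quantity to iterate is $v_\alpha\coloneqq(\mu^2+|Du|^2)^{(p+2\alpha)/4}$. A direct computation (analogous to Lemma~\ref{Vfunktionableitung}) gives
\[
    |Dv_\alpha|^2\leq C(p,\alpha)\,(\mu^2+|Du|^2)^{\frac{p-2}{2}+\alpha}|D^2u|^2,
\]
so the second integral on the LHS of~\eqref{intrinsicenergygrößereexponentenungleichung} controls $\fiint_{Q^{(\lambda)}_R}|Dv_\alpha|^2$, while the supremum term controls $\esssup_t\fint_{B_R}v_\alpha^{q}$ with $q=4(\alpha+1)/(p+2\alpha)$. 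Combining these two bounds with a parabolic Sobolev--Gagliardo--Nirenberg inequality applied to $v_\alpha$ produces a reverse-H\"older-type estimate that improves the integrability of $(\mu^2+|Du|^2)^{1/2}$ from exponent $p+2\alpha$ to $(p+2\alpha)\kappa$ for some $\kappa>1$ depending only on $n$, $p$ and the ratio $q/(p+2\alpha)$.

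\textbf{The main obstacle.} The decisive difficulty in the sub-quadratic range is the second RHS term of~\eqref{intrinsicenergygrößereexponentenungleichung}, whose exponent $\alpha+1$ strictly exceeds the natural exponent $(p+2\alpha)/2$ when $p<2$, so a naive iteration cannot self-improve. To close this gap, I would exploit the local boundedness of $u$ via the oscillation $\omega=\essosc_{Q^{(\lambda)}_{2\rho}}u$. After subtracting an appropriate constant $k$ and testing the weak form of~\eqref{pdeqgleich1} (in its Steklov-averaged version) with a cut-off multiple of $(u-k)(\mu^2+|Du|^2)^{\alpha}$, one extracts, via the ellipticity~$\eqref{voraussetzungendiffbarkeit}_2$, growth condition~$\eqref{voraussetzungendiffbarkeit}_1$, and several applications of Young's inequality, an auxiliary estimate of the form
\[
    \fiint_{Q^{(\lambda)}_S}(\mu^2+|Du|^2)^{\alpha+1}\,\dx\dt
    \le \epsilon\fiint_{Q^{(\lambda)}_{S'}}(\mu^2+|Du|^2)^{\frac{p-2}{2}+\alpha}|D^2u|^2\,\dx\dt
    + C(\epsilon)\,\Phi\fiint_{Q^{(\lambda)}_{S'}}(\mu^2+|Du|^2)^{\frac{p+2\alpha}{2}}\,\dx\dt,
\]
in which the prefactor $\Phi$ consolidates precisely to $[\lambda(\omega/(\rho\lambda))^{2/p}+\omega/\rho+\mu]^{2-p}$ after balancing the spatial scale $\omega/\rho$ from $\|u-k\|_{L^\infty}\le\omega$, the intrinsic time scale $\lambda^{2-p}\rho^2$, and the regularization $\mu$. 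Feeding this back into~\eqref{intrinsicenergygrößereexponentenungleichung} and absorbing the $|D^2u|$-term into the LHS for sufficiently small $\epsilon$ restores a genuine reverse-H\"older structure.

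\textbf{Completing the iteration.} With the tamed energy estimate in hand, a standard Moser iteration on a geometric sequence of intrinsic cylinders $Q^{(\lambda)}_{R_k}$ with $R_0=2\rho$, $R_k\downarrow\rho$ and exponents $\alpha_k$ chosen so that $p+2\alpha_{k+1}=(p+2\alpha_k)\kappa$ reaches any prescribed $m>p+1$ in finitely many steps. The geometric series formed by the iteration constants is summable and depends only on $m,n,p,C_4,C_5,C_6$, while the product of the prefactors $\Phi$ collapses, after telescoping the exponents, to the single bracket $[\lambda(\omega/(\rho\lambda))^{2/p}+\omega/\rho+\mu]^{m-p}$ displayed in~\eqref{higherintegrability}. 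The membership $|Du|\in L^m_{\loc}(\Omega_T)$ for any $m>1$ then follows by covering compact subsets of $\Omega_T$ by intrinsic cylinders, applying~\eqref{higherintegrability} with $m$ replaced by $\max\{m,p+1\}$, and combining with the base integrability $|Du|\in L^p_{\loc}(\Omega_T)$ already provided by Lemma~\ref{DuinL2}. The technical heart of the argument is executing the integration-by-parts step with enough precision in the dependence on $\omega,\rho,\lambda,\mu$ to reproduce the explicit bracket factor above; everything else is a routine Moser iteration adapted to the intrinsic geometry.
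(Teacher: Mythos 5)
The paper's proof of this lemma is deliberately terse --- it simply points to \cite[Lemma~9.5]{gradientholder} and substitutes Lemma~\ref{DuinL2} for the initial step --- so you are essentially reconstructing an argument the paper leaves implicit. Your overall scaffold is the right one: the iteration must be run on the intrinsic energy estimate \eqref{intrinsicenergygrößereexponentenungleichung}, parabolic Gagliardo--Nirenberg is the engine that raises the exponent from $p+2\alpha$ to $p+2\alpha+\tfrac{4(1+\alpha)}{n}$, and the single decisive difficulty is the presence of the term $\lambda^{p-2}(S^2-R^2)^{-1}\fiint(\mu^2+|Du|^2)^{\alpha+1}$, whose exponent $\alpha+1$ strictly exceeds $\tfrac{p+2\alpha}{2}$ whenever $p<2$. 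You identify all of this correctly.

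However, the mechanism you propose for producing the auxiliary estimate is wrong in a way that would stop the proof. Testing the weak form of \eqref{pdeqgleich1} with $(u-k)(\mu^2+|Du|^2)^{\alpha}\zeta$ and invoking the ellipticity condition~$\eqref{voraussetzungendiffbarkeit}_2$ produces, from the principal diffusion term $\langle A(x,t,Du),Du\rangle(\mu^2+|Du|^2)^{\alpha}\zeta$, the quantity
\[
(\mu^2+|Du|^2)^{\frac{p-2}{2}+\alpha}|Du|^2\zeta \sim (\mu^2+|Du|^2)^{\frac{p}{2}+\alpha}\zeta,
\]
that is, the exponent $\tfrac{p}{2}+\alpha$, which for $p<2$ lies \emph{below} the target $\alpha+1$. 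No amount of Young's inequality applied to the remaining terms of the weak formulation lifts this to $\alpha+1$, so the estimate you display, with $(\mu^2+|Du|^2)^{\alpha+1}$ on the left, cannot come out of the argument you describe. (Testing instead with the power $\alpha+1-\tfrac{p}{2}$ would make the diffusion term produce $\alpha+1$, but then the parabolic time term involves $\partial_t Du$ through the weight and is not directly controlled.) The correct route to the desired auxiliary estimate is a \emph{pure} integration by parts that does not invoke the equation at all: write $(\mu^2+|Du|^2)^{\alpha+1} = \mu^2(\mu^2+|Du|^2)^{\alpha} + (\mu^2+|Du|^2)^{\alpha}\,Du\cdot D(u-k)$ and integrate by parts in the second term, moving the derivative off $u-k$. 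This uses only the pointwise existence of $D^2u$ (available from Lemma~\ref{energyestimatezweiteableitung}), yields the absorbable quantity $\epsilon(\mu^2+|Du|^2)^{\frac{p-2}{2}+\alpha}|D^2u|^2$ together with terms of the strictly smaller exponents $\alpha$, $\alpha+\tfrac{2-p}{2}$ and $\alpha+\tfrac12$, all of which lie below $\tfrac{p+2\alpha}{2}$ precisely because $p>1$. Beyond the misattributed mechanism, the explicit bracket factor $[\lambda(\omega/(\rho\lambda))^{2/p}+\omega/\rho+\mu]^{2-p}$ and the telescoping of per-step prefactors into a clean $[\,\cdot\,]^{m-p}$ are asserted without derivation; these are exactly the parts that require the most care in balancing the intrinsic time scale $\lambda^{2-p}\rho^2$ against the spatial scale $\rho$ and the regularizing $\mu$, and cannot be waved through.
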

\begin{proof}
   The argument is similar as in~\cite[Lemma~9.5]{gradientholder}. As we have already established that~$|Du|\in L^2_{\loc}(\Omega_T)$ in Lemma~\ref{DuinL2}, instead of advancing in the same fashion to the initial part of~\cite[Proof of Lemma~9.5]{gradientholder}, we directly use the energy estimate~\ref{intrinsicgrößereexponenten} as ingredient. We omit the details.
\end{proof}
At this point, all the required components are available to establish the local boundedness of spatial derivatives of weak solutions to equation~\eqref{pdeqgleich1}.
\begin{proof}[\textbf{\upshape Proof of Propositions~\ref{gradientboundednesssubcritical} and~\ref{gradientboundednesssupercritical}}]
    We utilize the energy estimate stated in Proposition~\ref{intrinsicgrößereexponenten} and in the sub-quadratic case the higher integrability property mentioned in Lemma~\ref{higherintegrabilitylemma}, both of which are at our disposal. Once Proposition~\ref{intrinsicgrößereexponenten} and Lemma~\ref{higherintegrabilitylemma} are established, the arguments are the same as in~\cite[Chapter~9]{gradientholder}. Therefore, we omit the details.
\end{proof}

\begin{remark} \upshape
In conclusion, this section presents two novelties. Firstly, it demonstrates the local boundedness of the spatial derivative of weak solutions to~\eqref{pdeqgleich1} in the sub-critical range~$1<p\leq \critical$. Secondly, the quantitative gradient estimates from Proposition~\ref{gradientboundednesssubcritical} and Proposition~\ref{gradientboundednesssupercritical} presented here have only been derived for the prototype equation~\eqref{pdemitplaplace} in~\cite{gradientholder} at the current stage.
\end{remark}

\section{Schauder estimates}\label{sec:schauderestimates}
In this section, we will once again consider weak solutions to parabolic~$p$-Laplacian type equations of the form
\begin{align} \label{pdeqwieder1}
        \partial_t u - \divv A(x,t,Du)=0
    \qquad\mbox{in $\Omega_T$,}
\end{align} 
where~$\mu\in[0,1]$ and~$A$ satisfies structure conditions~\eqref{voraussetzungen}. Throughout this section, weak solutions may still exhibit sign changes. Our objective is to derive a local gradient Hölder regularity result for weak solutions to~\eqref{pdeqwieder1}. To begin with, we establish an \textit{a priori} gradient Hölder estimate for weak solutions when~$A$ is independent of the spatial variable~$x$. By employing a comparison argument, we then extend this regularity result to weak solutions of the more general equation~\eqref{pdeqwieder1} with~$A$ satisfying the given structure conditions~\eqref{voraussetzungen}. The assumed Hölder continuity of~$A$ with respect to the spatial variable~$x$ is crucial in transferring regularity from weak solutions with the vector field~$A$ independent of the spatial variable~$x$ to weak solutions of more general structure. Finally, we address the case where~$\mu=0$ in Section~\ref{subsec:approximation}, leading to the following theorem as a main result.
\begin{mytheorem} [Schauder estimate for parabolic $p$-Laplacian type equations] \label{gradientholderschaudermugleich0}
    Let~$u$ be a bounded weak solution to~\eqref{pdeqwieder1} under assumptions~\eqref{voraussetzungen} in the range~$p>1$ and~$\mu\in[0,1]$. Then, there exist a Hölder exponent~$\alpha_{0}=\alpha_0(n,p,C_1,C_2,\alpha) \in (0,1)$ and a constant~$C=C(n,p,C_1,C_2,C_3,\alpha)\geq 1$, such that there holds
    $$Du\in C^{\alpha_0,\alpha_0/2}_{\loc}(\Omega_T,\R^n).$$
    Moreover, for any $E\subset\Omega_T$ such that $r\coloneqq \frac{1}{4}\dist_p(E,\partial_p \Omega_T)>0$, and any $z_1,z_2\in E$, there hold
    the quantitative gradient estimate
    \begin{align}\label{gradientschrankeschaudermugleich0}
        \esssup\limits_{E}|Du| \leq C \bigg[\frac{\omega}{r} + \Big(\frac{\omega}{r} \Big)^{\frac{2}{p}} +\mu\bigg] \eqqcolon \lambda
    \end{align}
    and the gradient Hölder estimate
\begin{align}\label{gradientholderschauderinequalitymugleich0}
    |Du(z_1)-Du(z_2)| \leq C \lambda \Bigg[ \frac{d^{(\lambda)}_p(z_1,z_2)}{\min\big\{1,\lambda^{\frac{p-2}{2}}\big\}r} \Bigg]^{\alpha_0},
\end{align}
where~$\omega\coloneqq\essosc\limits_{\Omega_T} u$.
\end{mytheorem}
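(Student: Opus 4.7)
The plan is to establish the theorem in four conceptually separate stages, following the strategy sketched in the introduction. The gradient bound \eqref{gradientschrankeschaudermugleich0} follows directly from the quantitative estimates of Section~\ref{sec:gradientbound}: for $z_0 \in E$ I choose $\rho \sim r$ and $\lambda$ as in \eqref{gradientschrankeschaudermugleich0}, so that $Q^{(\lambda)}_{2\rho}(z_0) \Subset \Omega_T$ with oscillation controlled by $\omega$. The specific form of $\lambda$ is precisely what is needed for the right-hand sides of \eqref{gradientboundednesssubcriticalestimate} resp.~\eqref{gradientboundednesssupercriticalestimate} to close up; after fixing $\epsilon$ small to absorb the $C\epsilon\lambda$ term, a standard covering of $E$ produces \eqref{gradientschrankeschaudermugleich0}.

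The heart of the argument is an \emph{a priori} gradient Hölder estimate for weak solutions $v$ of $\partial_t v - \divv \tilde A(t,Dv) = 0$ with $\mu \in (0,1]$ and $\tilde A$ independent of the spatial variable. I would run a Campanato-type excess decay iteration on intrinsic cylinders $Q^{(\lambda)}_\rho(z_0)$ based on a degenerate/non-degenerate alternative. In the non-degenerate regime $|(Dv)^{(\lambda)}_{z_0,\rho}| \gtrsim \lambda$ the equation is uniformly parabolic, and a comparison with the solution of the corresponding linearized equation with constant coefficients yields a decay estimate for the excess of $\mathcal{V}^{(p)}_\mu(Dv)$. In the degenerate regime, the key observation emphasized in the introduction is that for each $i\in\{1,\dots,n\}$ the function $(|D_i v|^2 - \lambda^2_\mu/4)^2_+$ is a weak sub-solution of a linear parabolic equation and, up to rescaling, lies in a parabolic De Giorgi class; the resulting expansion of positivity forces $|Dv|$ to become strictly smaller than $\lambda$ on a smaller intrinsic cylinder. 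Iterating the alternative yields algebraic decay of the gradient excess which, by Campanato's characterization, upgrades to Hölder continuity of $Dv$.

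The \emph{a priori} estimate is then transferred to solutions of the general equation via a freezing/comparison argument. On each intrinsic sub-cylinder $\mathcal{Q} \Subset \Omega_T$ centered at $z_0$, I solve the Cauchy-Dirichlet problem for the frozen vector field $\tilde A(t,\xi) \coloneqq A(x_0,t,\xi)$ with boundary trace $u$ to obtain a comparison function $v$; standard energy estimates for $u-v$ combined with the Hölder dependence of $A$ in $x$ from $\eqref{voraussetzungen}_3$ provide quantitative control of $\iint|\mathcal{V}^{(p)}_\mu(Du) - \mathcal{V}^{(p)}_\mu(Dv)|^2$ by a positive power of the radius. Applying the \emph{a priori} estimate to $Dv$ together with the triangle inequality yields Campanato-type decay for $Du$, and hence its Hölder continuity in the case $\mu \in (0,1]$. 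The degenerate case $\mu=0$ is finally recovered by approximating $A$ with vector fields having $\mu_\epsilon \in (0,1]$, mollified in $x$ where necessary to secure the differentiability required in Section~\ref{sec:gradientbound}, with uniform $p$-growth constants; solving the corresponding Cauchy-Dirichlet problems with boundary trace $u$ and passing to the limit $\epsilon \downarrow 0$ using the uniformity of the previously established estimates and Arzelà-Ascoli yields the theorem in full generality.

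The principal obstacle is the construction of the \emph{a priori} Hölder estimate in the second stage, precisely because $\tilde A$ need not be of Uhlenbeck type and the existing results from \cite{degeneratesystems} cannot be invoked. The degenerate regime in particular requires a cohesive treatment of the quantities $|D_i v|$ through the sub-solution property of $(|D_i v|^2 - \lambda^2_\mu/4)^2_+$, bypassing the sign-based arguments used in \cite{kuusi2013gradient,kuusi2014wolff}, together with Caccioppoli inequalities tailored to the reduced regularity available without Uhlenbeck structure, and a delicate matching of constants between the degenerate and non-degenerate iterations on intrinsic cylinders.
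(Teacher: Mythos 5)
Your overall plan matches the paper's proof: an \emph{a priori} Campanato-type excess decay for the $x$-independent model obtained via the degenerate/non-degenerate alternative on intrinsic cylinders, exploiting the crucial observation that $(|D_i u|^2 - \lambda_\mu^2/4)_+^2$ is a weak sub-solution to a uniformly parabolic linear equation, followed by a freezing/comparison argument, and concluded by a $\mu\downarrow 0$ approximation. The one point to correct is your stage one: the gradient bound~\eqref{gradientschrankeschaudermugleich0} cannot be obtained ``directly from the quantitative estimates of Section~\ref{sec:gradientbound}'' applied to~$u$, because Propositions~\ref{gradientboundednesssubcritical} and~\ref{gradientboundednesssupercritical} require assumptions~\eqref{voraussetzungendiffbarkeit}---in particular $\mu\in(0,1]$ and differentiability of~$A$ in~$x$---whereas in Theorem~\ref{gradientholderschaudermugleich0} the vector field~$A$ is only $\alpha$-Hölder in~$x$ and $\mu$ may be zero. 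In the paper those propositions are applied to the frozen-coefficient comparison function~$v$ (for which $\eqref{voraussetzungendiffbarkeit}_3$ holds with $C_6=0$) and to the solutions~$u_j$ of the mollified Cauchy--Dirichlet problems in the $\mu=0$ approximation step; the bound~\eqref{gradientschrankeschaudermugleich0} for~$u$ then emerges from the Campanato decay together with the energy estimate of Lemma~\ref{energyestimatelemma} and the comparison principle, not as a freestanding first step. A further minor technical difference: the paper obtains the non-degenerate excess decay (Lemma~\ref{quantitativexcess}) by citing the Kuusi--Mingione excess-decay propositions from~\cite{kuusi2013gradient,kuusi2014wolff} rather than re-deriving them via a linear-comparison argument as you propose; both routes are viable.
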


\subsection{An \textit{a priori} gradient estimate} \label{subsec:apriori}
In this paragraph, we pursue our goal of an~\textit{a priori} gradient estimate for weak solutions to parabolic~$p$-Laplacian type equations. We specifically consider equations of the following type
\begin{align}
        \partial_t u - \divv A(t,Du)=0
    \qquad\mbox{in $\Omega_T$,} \label{pdeohnex}
\end{align} 
 where the the vector field~$A$ is independent of the spatial variable~$x$, but still satisfies structure conditions~$\eqref{voraussetzungen}_1$ and~$\eqref{voraussetzungen}_2$ in the case~$\mu\in(0,1]$. This intermediate step allows us to obtain an \textit{a priori} gradient regularity result in the spirit of Campanato for weak solutions to~\eqref{pdeohnex},~which is one of the main contributions of this article. Throughout this section, we will maintain the geometry of intrinsic parabolic cylinders to derive homogeneous estimates. \,\\

 In the following, we consider~$Q^{(\lambda)}_{2\rho}(z_0)\Subset\Omega_T$ and a parameter~$\lambda\geq \frac{\mu}{L}>0$ large enough, such that the estimate
\begin{align} \label{schrankelambda}
  \esssup\limits_{Q^{(\lambda)}_{2\rho}(z_0)}(\mu^2+|Du|^{2}_\infty)\leq L^2\lambda^2
\end{align}
holds true for some fixed constant~$L\geq 1$.~During the further course of this section, the abbreviation~$\lambda_\mu \coloneqq \sqrt{L^2\lambda^2-\mu^2}$ is employed. As a result, assumption~\eqref{schrankelambda} implies the bound~$|Du|_\infty \leq \lambda_\mu$ on~$Q^{(\lambda)}_{2\rho}(z_0)$. Next, for a parameter~$\nu\in(0,1)$ and any~$i=1,\ldots,n$ we denote the super-level set of~$|D_i u|$ on~$Q^{(\lambda)}_{\rho}(z_0)$ to the level~$(1-\nu)\lambda_\mu$ by
\begin{align} \label{superniveaumenge}
    E^{(\lambda),i}_{\rho}(z_0,\nu) \coloneqq \big\{z\in Q^{(\lambda)}_\rho(z_0):\, 
|D_iu(z)|>(1-\nu)\lambda_\mu \big\}.
\end{align}
The main result in this section is the following Campanato type estimate.
\begin{mytheorem} \label{campanatoaprioritheorem}
    Let~$p>1$,~$\mu\in(0,1]$, and~$L\geq 1$. There exist a constant~$C>0$ and a Hölder exponent~$\beta\in(0,1)$, both depending on~$n,p,L,C_1,C_2$, such that: for any weak solution~$u$ to~\eqref{pdeohnex} and any~$Q^{(\lambda)}_{\rho}(z_0)\Subset \Omega_T$ under assumption~\eqref{schrankelambda},
    there holds
    \begin{align} \label{campanatoaprioriungleichung}
        \displaystyle\fiint_{Q^{(\lambda)}_{r}(z_0)} \big|Du - (Du)^{(\lambda)}_{z_0;r} \big|^p\,\dx\dt \leq C\Big(\frac{r}{\rho}\Big)^{p \beta} \lambda^p_\mu
    \end{align}
    for any~$r\in(0,\rho]$. 
\end{mytheorem}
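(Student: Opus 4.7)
The plan is to establish the Campanato decay \eqref{campanatoaprioriungleichung} by means of a DiBenedetto-style dichotomy at each scale, iterated dyadically on intrinsic parabolic cylinders. We fix parameters $\nu_0\in(0,1)$ and $\alpha_0\in(0,1)$, both eventually chosen in terms of $n,p,L,C_1,C_2$, and distinguish, for $Q^{(\lambda)}_r(z_0)\subset Q^{(\lambda)}_\rho(z_0)$, two cases according to the density of the super-level set $E^{(\lambda)}_r(z_0,\nu_0)$ introduced in \eqref{superniveaumenge}.

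In the \emph{non-degenerate regime} $|E^{(\lambda)}_r(z_0,\nu_0)|\geq \alpha_0\,|Q^{(\lambda)}_r(z_0)|$, the gradient is close to $\lambda_\mu$ on a substantial portion of the cylinder, so that $\partial_\xi A(t,Du)$ is comparable to $\lambda_\mu^{p-2}$ times the identity with a universally controlled ellipticity ratio. We freeze the coefficients and compare $u$ with the solution $v$ of the linear problem $\partial_t v-\divv\bigl(\partial_\xi A(t,(Du)^{(\lambda)}_{z_0;r})\,Dv\bigr)=0$ on $Q^{(\lambda)}_{r/2}(z_0)$ with $v=u$ on the parabolic boundary. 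Standard $C^{1,\beta_0}$-regularity for such linear parabolic equations with measurable-in-time coefficients yields excess decay for $Dv$ on sub-cylinders, while the monotonicity estimate of Lemma~\ref{AbschätzungenfürAeins} combined with the Caccioppoli-type inequalities of Section~\ref{sec:gradientbound} provides a comparison bound $\fiint_{Q^{(\lambda)}_{r/2}(z_0)}|Du-Dv|^p\,\dx\dt\leq \varepsilon\lambda_\mu^p$, with $\varepsilon$ as small as we wish upon choosing $\alpha_0$ small enough. Superposing both ingredients gives the familiar excess-decay inequality for $Du$ on $Q^{(\lambda)}_{\tau r}(z_0)$, which feeds into a Campanato-type iteration.

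In the complementary \emph{degenerate regime} $|E^{(\lambda)}_r(z_0,\nu_0)|<\alpha_0\,|Q^{(\lambda)}_r(z_0)|$, we have $|Du|_\infty\leq(1-\nu_0)\lambda_\mu$ on a set of positive density. Choosing $\nu_0\geq 1/2$, for every $i\in\{1,\dots,n\}$ the function
\begin{equation*}
    w_i \coloneqq \Big(|D_iu|^2-\tfrac{\lambda_\mu^2}{4}\Big)^2_+
\end{equation*}
vanishes on this set and is shown to be a weak sub-solution of a linear parabolic equation, so that after intrinsic rescaling it belongs to a parabolic De~Giorgi class with universal parameters. An expansion-of-positivity argument then delivers the reduction-of-oscillation estimate $\esssup_{Q^{(\lambda)}_{r/4}(z_0)}|Du|_\infty\leq(1-\eta)\lambda_\mu$ for some $\eta=\eta(n,p,L,C_1,C_2)\in(0,1)$, which translates into a deterministic Campanato decay on the smaller cylinder since the mean $(Du)^{(\lambda)}_{z_0;r/4}$ is controlled by the essential supremum. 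A dyadic iteration of both alternatives, with $\tau,\alpha_0,\nu_0$ suitably tuned, then produces the Hölder exponent $\beta\in(0,1)$ and the constant $C\geq 1$ in \eqref{campanatoaprioriungleichung}.

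The main obstacle is the degenerate alternative. The absence of Uhlenbeck structure for $A(t,\xi)$ rules out a direct appeal to \cite[Proposition~4.1]{degeneratesystems}, and we must instead differentiate \eqref{pdeohnex} componentwise and test against a truncation of $D_iu$ by hand. The precise quadratic cut-off $\lambda_\mu^2/4$ in $w_i$ is chosen so that the cross terms arising from the lack of Uhlenbeck structure can be absorbed via Young's inequality, exploiting the bounds \eqref{bilinearformeinsestimate} of Lemma~\ref{bilinearformeins}. A further delicate point is to obtain De~Giorgi class parameters that are independent of $\mu$, which is essential for the approximation procedure of Section~\ref{subsec:approximation} to cover the case $\mu=0$; the Caccioppoli estimates of Section~\ref{sec:gradientbound}, whose constants do not involve $\mu$, are exactly what makes this possible.
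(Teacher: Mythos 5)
Your overall architecture — dichotomy on the density of the super-level set $E^{(\lambda)}_r(z_0,\nu_0)$, linearization in the non-degenerate alternative, the sub-solution $w_i=(|D_iu|^2-\lambda_\mu^2/4)^2_+$ plus De~Giorgi and expansion of positivity in the degenerate alternative, dyadic iteration — is the right shape, and the degenerate half in fact coincides almost exactly with the paper (Lemma~\ref{subsolution}, Lemma~\ref{degiorgilem}, Proposition~\ref{expansion}, Proposition~\ref{degenerateproposition}). Nevertheless, there are two concrete problems in the non-degenerate half.

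First, you have the parameter logic backwards. The non-degenerate alternative can support a linearization argument only if the gradient is close to $\lambda_\mu$ on \emph{almost all} of the cylinder, i.e.\ $|Q^{(\lambda)}_r\setminus E^{(\lambda)}_r(z_0,\nu_0)|<\nu|Q^{(\lambda)}_r|$ with $\nu$ small, which in your notation reads $|E^{(\lambda)}_r(z_0,\nu_0)|>(1-\nu)|Q^{(\lambda)}_r|$ with $\alpha_0$ \emph{close to} $1$. You instead stipulate $|E^{(\lambda)}_r(z_0,\nu_0)|\geq\alpha_0|Q^{(\lambda)}_r|$ with $\alpha_0$ \emph{small}, and then assert that the comparison error $\fiint|Du-Dv|^p$ can be made $\leq\varepsilon\lambda_\mu^p$ by choosing $\alpha_0$ small enough. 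This is not true: the freezing error is governed by the excess $\Phi_\lambda(z_0,r)=\fiint|Du-(Du)^{(\lambda)}_{z_0;r}|^2$, and a small positive density of the high-gradient set gives you no control on $\Phi_\lambda$. Worse, with $\alpha_0$ small your dichotomy places cylinders with, say, $|E|=\tfrac12|Q|$ into the non-degenerate camp, where the linearization cannot be closed, while the expansion-of-positivity argument would have handled them perfectly well.

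Second, even setting $\alpha_0$ close to $1$, you still need a lower bound on $|(Du)^{(\lambda)}_{z_0;r}|$ (of order $\lambda_\mu$) before you can freeze the coefficient matrix $\partial_\xi A(t,(Du)^{(\lambda)}_{z_0;r})$ and invoke uniform ellipticity. A positive fraction of $Q^{(\lambda)}_r$ having $|Du|_\infty\geq(1-\nu_0)\lambda_\mu$ does \emph{not} by itself bound the mean below, because those vectors may average out. Passing from the measure-theoretic information to $|(Du)^{(\lambda)}_{z_0;r}|\geq\tfrac14\lambda_\mu$ requires first showing that the excess is small (via an energy estimate exploiting the frozen-in-$x$ structure, and then a Poincar\'e/Sobolev interpolation step), and this is a nontrivial intermediate lemma that your sketch skips entirely; the vague appeal to ``the Caccioppoli-type inequalities of Section~\ref{sec:gradientbound}'' does not supply it.

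Finally, as a matter of comparison: the paper does \emph{not} in the end perform the freeze-and-compare argument you propose. In Lemma~\ref{quantitativexcess} it converts the measure condition~\eqref{non-degeneratemeasurecondition} on $|Du|_\infty$ into a one-sided, one-component measure condition on a fixed $\pm D_iu$ and then imports the excess-decay estimates of Kuusi--Mingione for parabolic $p$-Laplacian type equations (sub-quadratic and super-quadratic) wholesale. Your linearization route is in principle viable (and was in fact how the commented-out draft of the paper proceeds), but it would require you to supply higher integrability of $Du$ in the non-degenerate regime, the mean-value lower bound, a comparison lemma in the spirit of Campanato, and a careful iteration — all of which is genuinely more work than the citation route, and none of which is present in your sketch.
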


The proof of Theorem~\ref{campanatoaprioritheorem} consists of two subsequent propositions, each addressing the non-degenerate and degenerate regimes respectively. In the non-degenerate regime, the measure condition~$\big|Q^{(\lambda)}_{\rho}(z_0) \setminus E^{(\lambda),i}_{\rho}(z_0,\nu) \big| < \nu \big|Q^{(\lambda)}_{\rho}(z_0) \big|$ holds true for at least one~$i\in\{1,\ldots,n\}$, while in the degenerate regime, this measure condition is satisfied for any~$i=1,\ldots,n$ with the reversed inequality. In the non-degenerate regime, we examine a subset of~$Q^{(\lambda)}_{\rho}(z_0)$ which is large in measure, where~$|Du|$ is close to its essential supremum and consider the~$L^2$-excess of~$Du$ on~$Q^{(\lambda)}_\rho (z_0)$, given by
\begin{align} \label{excess}
    \Phi_\lambda(z_0,\rho) \coloneqq \displaystyle\fiint_{Q^{(\lambda)}_\rho (z_0)} \big| Du-(Du)^{(\lambda)}_{z_0,\rho} \big| ^2 \,\dx\dt.
\end{align}
Conversely, in the degenerate regime, we consider a subset of points with~$|Du|$ far from its supremum, which is large in measure. 
\,\\

The first proposition deals with the non-degenerate regime.
\begin{myproposition}\label{non-degenerateproposition}
    Let~$\mu\in(0,1]$,~$L\geq 1$, and~$\lambda\geq\frac{\mu}{L}$. There exist an exponent~$\beta\in(0,1)$, a parameter~$\nu \in (0,\frac{1}{4}]$, and a constant~$C\geq 1$, all depending on~$n,p,L,C_1,C_2$, such that: for any weak solution~$u$ to~\eqref{pdeohnex} under assumption~\eqref{schrankelambda}, and any~$Q^{(\lambda)}_{\rho}(z_0)\Subset \Omega_T$ on which the measure condition
    \begin{align} \label{non-degeneratemeasurecondition}
        \big|Q^{(\lambda)}_{\rho}(z_0) \setminus E^{(\lambda),i}_{\rho}(z_0,\nu) \big| < \nu \big|Q^{(\lambda)}_{\rho}(z_0) \big|
    \end{align}
     is satisfied for at least one~$i\in\{1,\ldots,n\}$, there holds the excess-decay estimate
     \begin{equation} \label{excesseins}
         \Phi_\lambda(z_0,r) \leq C \Big(\frac{r}{\rho}\Big)^{2 \beta} \Phi_\lambda(z_0,\rho)
     \end{equation}
     for any~$r\in(0,\rho]$. Moreover, the limit
     \begin{align} \label{lebesgue-representative}
         \Gamma_{z_0} \coloneqq \lim\limits_{r\downarrow 0} (Du )^{(\lambda)}_{z_0;r}
     \end{align}
  exists and for any~$r\in(0,\rho]$ the following excess-decay estimate
    \begin{align} \label{excess-decay}
        \displaystyle\fiint_{Q^{(\lambda)}_{r}(z_0)} |Du - \Gamma_{z_0}|^2\,\dx\dt \leq  C\Big(\frac{r}{\rho}\Big)^{2 \beta} \lambda^2_\mu
    \end{align}
    holds true. Furthermore, we have the bounds
    \begin{align} \label{bounds}
       \frac{1}{4}\lambda \leq |Du|_\infty \leq \lambda_\mu \qquad \mbox{a.e. in~$Q_{\frac{\rho}{2}}^{(\lambda)}(z_0)$}.
    \end{align}
\end{myproposition}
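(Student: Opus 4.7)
The proof naturally splits into three stages: a De~Giorgi-type improvement of the measure condition to a pointwise lower bound on $|Du|$, a linearization/comparison argument yielding excess-decay at a single scale, and a standard Campanato iteration.

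For the first stage, the goal is to upgrade the smallness assumption \eqref{non-degeneratemeasurecondition} to \eqref{bounds}. Since Lemma~\ref{energyestimatezweiteableitung} provides the second-order spatial derivatives required, each component $D_i u$ satisfies the linearized equation $\partial_t D_i u - \divv \bigl(\partial_\xi A(t,Du)\, D(D_i u)\bigr) = 0$. On the super-level set \eqref{superniveaumenge} the coefficient $\partial_\xi A(t,Du)$ is uniformly elliptic with bounds comparable to $\lambda^{p-2}$ by Lemma~\ref{bilinearformeins}; this is the setting in which, as hinted in the introduction, the truncation $(\lambda_\mu - |D_i u|)_+^2$ satisfies a Caccioppoli inequality of linear parabolic type. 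After the intrinsic time-rescaling $\tau = \lambda^{p-2} t$ that converts $Q^{(\lambda)}_\rho(z_0)$ into a standard parabolic cylinder, such truncations belong to a De~Giorgi class with universal constants, and a standard De~Giorgi iteration over levels $k_j = (1 - 2^{-j}\nu)\lambda_\mu$, with $\nu$ chosen small enough in terms of $n,p,L,C_1,C_2$, yields $|Du|_\infty \geq \frac{1}{4}\lambda$ almost everywhere on $Q^{(\lambda)}_{\rho/2}(z_0)$. I expect this stage to be the chief technical obstacle, because one must absorb contributions from the bad set where ellipticity degenerates into the iteration and combine the componentwise bounds for $D_i u$ into a coherent lower bound for $|Du|_\infty$.

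The second stage exploits \eqref{bounds} to linearize the equation. On $Q^{(\lambda)}_{\rho/2}(z_0)$ the factor $h_\mu(|Du|)$ is trapped between constant multiples of $\lambda^{p-2}$, so after parabolic rescaling the equation becomes uniformly parabolic of $p=2$ type with ellipticity constants depending only on $n,p,L,C_1,C_2$. I would then compare $u$ on each sub-cylinder $Q^{(\lambda)}_{r}(z_0)$, $r\leq \frac{\rho}{2}$, with the solution $v$ of the frozen-coefficient linear Cauchy--Dirichlet problem whose coefficient matrix is $\bar A = \int_0^1 \partial_\xi A\bigl(t, (Du)^{(\lambda)}_{z_0;r} + s[Du-(Du)^{(\lambda)}_{z_0;r}]\bigr)\,\mathrm{d}s$. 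Standard $C^{1,\alpha}$ regularity of the linearized equation furnishes affine approximation of $Dv$, while Lemma~\ref{AbschätzungenfürAeins}, together with an energy identity testing with $u-v$, controls $Du-Dv$ by the excess $\Phi_\lambda(z_0,r)$ itself; this produces, for every $\theta\in(0,\frac{1}{2}]$, the one-scale decay $\Phi_\lambda(z_0,\theta r) \leq C\theta^2\Phi_\lambda(z_0,r)$.

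Finally, choosing $\theta$ small enough so that $C\theta^2\leq \frac{1}{2}\theta^{2\beta}$ for some $\beta\in(0,1)$, and observing that \eqref{bounds} allows the linearization argument to be repeated on every sub-cylinder $Q^{(\lambda)}_r(z_0)\subset Q^{(\lambda)}_{\rho/2}(z_0)$ with constants independent of $r$, the standard Campanato iteration delivers \eqref{excesseins} for every $r\in(0,\rho]$. From \eqref{excesseins}, the Cauchy criterion forces the family of means $(Du)^{(\lambda)}_{z_0;r}$ to converge as $r\downarrow 0$, giving \eqref{lebesgue-representative}. Estimate \eqref{excess-decay} then follows by combining \eqref{excesseins} with the trivial bound $\Phi_\lambda(z_0,\rho)\leq C\lambda_\mu^2$ coming from \eqref{schrankelambda} and the triangle inequality applied to $Du-\Gamma_{z_0}$ within a telescoping sum over dyadic radii.
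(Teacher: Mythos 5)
Your proposal differs in strategy from the paper's proof: the paper does not re-derive the excess-decay from scratch, but instead packages it into Lemma~\ref{quantitativexcess}, whose proof directly cites \cite[Proposition~3.8]{kuusi2013gradient} (sub-quadratic) and \cite[Proposition~3.3]{kuusi2014wolff} (super-quadratic) after a pigeonhole reduction of the measure condition on $|Du|_\infty$ to a \emph{signed} measure condition on a single component $\pm D_i u$, and likewise obtains \eqref{bounds} from \cite[Proposition~3.7]{kuusi2013gradient} and \cite[Propositions~3.1,~3.2]{kuusi2014wolff}; the subsequent Campanato iteration (your stage~3) matches the paper's. Your stages~1 and~2 are essentially an attempt to reconstruct those cited propositions from first principles, which is legitimate but substantially longer, and which is where the problems lie.

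There are two genuine gaps in stages~1 and~2. First, the truncation $(\lambda_\mu-|D_i u|)_+^2$ does not satisfy a Caccioppoli inequality of linear parabolic type, because $|D_i u|$, unlike $D_i u$ itself, is not a weak solution of the differentiated equation; the standard remedy, and what the paper actually does in the proof of Lemma~\ref{quantitativexcess}, is to observe that the measure condition \eqref{non-degeneratemeasurecondition} forces either $\{D_i u < \frac12\lambda_\mu\}$ or $\{-D_i u < \frac12\lambda_\mu\}$ to be small for some $i$, and then run De~Giorgi on the \emph{signed} component. This signed information is also exactly what lets one pass from a pointwise bound to a bound on the \emph{mean}, which brings us to the second gap: your comparison step requires the linear equation for $v$ to have a coefficient matrix that is constant in $x$ and uniformly elliptic, hence of the form $\partial_\xi A(t,\xi_0)$ with a fixed $\xi_0$ bounded away from $0$ (as in the $\foo{\mathcal{B}}$-framework of Lemma~\ref{bilinearformeins}); the $\bar A$ you write down is $(x,t)$-dependent through $Du(x,t)$ and does not admit classical Campanato $C^{1,\alpha}$ theory, while the natural fixed choice $\xi_0 = (Du)^{(\lambda)}_{z_0;r}$ requires a lower bound on the mean, which does not follow from the pointwise bound \eqref{bounds} alone (averaging can cancel). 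Supplying this requires first showing $\Phi_\lambda(z_0,\rho)\leq\delta\lambda_\mu^2$ for suitable small $\delta$ (via higher integrability and the measure condition, tuning $\nu$ downward), and only then $|(Du)^{(\lambda)}_{z_0,\rho}|\gtrsim\lambda_\mu$; this is precisely the content flagged in Remark~\ref{remarknachnondegenerateproposition} and is absent from your sketch.
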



\begin{remark} \label{remarknachnondegenerateproposition} \upshape
    We would like to emphasize that both excess-decay estimates~\eqref{excesseins} and~\eqref{excess-decay} continue to hold true without the presence of the constant~$C=C(n,p,L,C_1,C_2)\geq 1$. This matter of fact can be demonstrated by pursuing a similar approach as presented in~\cite[6. The non-degenerate regime]{degeneratesystems}. Essentially, we are able to establish a comparable energy estimate to that of~\cite[Proposition 3.2]{degeneratesystems}, and subsequently derive the existence of a parameter~$\delta=\delta(n,p,L,C_1,C_2)\in(0,\frac{1}{16}]$, such that there holds~$\Phi_\lambda(z_0,\rho) \leq \delta \lambda^2_\mu$ as long as the parameter~$\nu=\nu(n,p,L,C_1,C_2)\in(0,\frac{1}{4}]$ is chosen sufficiently small in dependence of the parameter~$\delta$. In this manner, a lower bound can be established for the mean value~$(Du)^{\lambda}_{z_0,\rho}$ and also for~$\Gamma_{z_0}$. However, for the sake of brevity, we have chosen to present both excess-decay estimates~\eqref{excesseins} and~\eqref{excess-decay} with the additional constant~$C=C(n,p,L,C_1,C_2)\geq 1$ and refrain from stating a lower bound for~$(Du)^{\lambda}_{z_0,\rho}$ and for~$\Gamma_{z_0}$.
\end{remark}

The second proposition deals with the degenerate regime.
\begin{myproposition}\label{degenerateproposition}
    Let~$\mu\in(0,1]$,~$L\geq 1$,~$\lambda\geq \frac{\mu}{L}$, and~$\nu\in(0,\frac{1}{2}]$. Then, there exists~$\kappa = \kappa(n,p,L,C_1,C_2,\nu) \in[\frac{1}{2},1)$, such that: for any weak solution~$u$ to~\eqref{pdeohnex} under assumption~\eqref{schrankelambda}, any~$Q^{(\lambda)}_{2\rho}(z_0)\Subset \Omega_T$ on which the measure condition
    \begin{align} \label{degeneratemeasurecondition}
        \big|Q^{(\lambda)}_{\rho}(z_0) \setminus E^{(\lambda),i}_{\rho}(z_0,\nu) \big| \geq \nu \big|Q^{(\lambda)}_{\rho}(z_0) \big|,
    \end{align}
     is satisfied for any~$i\in\{1,\ldots,n\}$, there holds
     \begin{align} \label{sup-decay}
         \esssup\limits_{Q^{(\lambda)}_{\Tilde{\nu}\rho}(z_0)} |Du|_\infty \leq \kappa \lambda_\mu
     \end{align}
     with~$\Tilde{\nu}\coloneqq \frac{\sqrt{\nu}}{2}$.
\end{myproposition}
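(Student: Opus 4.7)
Because $A=A(t,\xi)$ is independent of $x$, applying the difference quotient technique to the equation (whose validity for $\mu>0$ follows from the existence of second order spatial derivatives in Lemma~\ref{energyestimatezweiteableitung}) gives that each partial derivative $v_i \coloneqq D_i u$ is a weak solution of the linear parabolic equation
\begin{equation*}
    \partial_t v_i - \divv\bigl(\partial_\xi A(t,Du)\,D v_i\bigr) = 0 \quad\text{in } Q^{(\lambda)}_{2\rho}(z_0).
\end{equation*}
By $\eqref{voraussetzungen}_1$ and $\eqref{voraussetzungen}_2$, the coefficient matrix $\mathcal B \coloneqq \partial_\xi A(t,Du)$ is bounded and elliptic, both bounds comparable to $h_\mu(|Du|)$. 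The goal is a De~Giorgi-type expansion of positivity at the level $\lambda_\mu/2$ for all $|D_iu|$ simultaneously.

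\textbf{From the linear equation for $v_i$ to a parabolic De Giorgi class.} Standard parabolic truncation yields that $(v_i^{\,2}-\lambda_\mu^2/4)_+$ is a non-negative weak sub-solution of the same linear equation, and squaring then shows that
\begin{equation*}
    w_i \coloneqq \bigl(|D_i u|^2 - \tfrac{\lambda_\mu^2}{4}\bigr)_+^{\,2}
\end{equation*}
is still a weak sub-solution. The crucial observation is that $Dw_i$ is supported on $\{|D_iu|\geq\lambda_\mu/2\}$, where $|Du|\geq\lambda_\mu/2$, and where, by \eqref{schrankelambda} together with $\lambda\geq\mu/L$, also $\mu+|Du|\leq C(L)\lambda_\mu$ holds. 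Consequently, on the set relevant to the Caccioppoli inequality one has $\mathcal B\simeq h_\mu(\lambda_\mu)\,\mathrm{Id}$ with constants depending only on $n,p,L,C_1,C_2$. Absorbing the factor $h_\mu(\lambda_\mu)$ via the intrinsic time scaling $\tau = h_\mu(\lambda_\mu)\,t$, which is exactly the scaling built into $Q^{(\lambda)}_\rho(z_0)$, the rescaled $w_i$ satisfies the Caccioppoli estimates of a standard linear parabolic equation on every sub-cylinder. Hence $w_i$ belongs to a parabolic De Giorgi class with universal constants.

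\textbf{Measure information and De Giorgi iteration.} Since $|Du|_\infty = \max_i|D_iu|$, the complement $Q^{(\lambda)}_\rho(z_0)\setminus E^{(\lambda)}_\rho(z_0,\nu)$ coincides with $\bigcap_i\{|D_iu|\leq(1-\nu)\lambda_\mu\}$, so \eqref{degeneratemeasurecondition} implies, for every $i\in\{1,\ldots,n\}$,
\begin{equation*}
    \bigl|\{z\in Q^{(\lambda)}_\rho(z_0) : w_i(z) \leq M_\nu\}\bigr| \geq \nu\,\bigl|Q^{(\lambda)}_\rho(z_0)\bigr|, \qquad M_\nu \coloneqq \bigl((1-\nu)^2-\tfrac14\bigr)^{\!2}\lambda_\mu^{\,4},
\end{equation*}
and $M_\nu$ is strictly below the trivial upper bound $(3\lambda_\mu^{\,2}/4)^{2}$ for $w_i$ on the whole cylinder. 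Feeding this measure information into the classical measure-to-pointwise $L^\infty$-iteration for sub-solutions in the parabolic De Giorgi class produces
\begin{equation*}
    \esssup_{Q^{(\lambda)}_{\tilde\nu\rho}(z_0)} w_i \leq \theta\,\bigl(\tfrac{3\lambda_\mu^{\,2}}{4}\bigr)^{\!2}
\end{equation*}
for some $\theta=\theta(n,p,L,C_1,C_2,\nu)\in(0,1)$. The choice $\tilde\nu=\sqrt\nu/2$ is precisely what matches the spatial shrinking $\tilde\nu$ with the intrinsic time shrinking $\tilde\nu^{\,2}=\nu/4$ governed by the parabolic time slice used in the iteration. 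Inverting the definition of $w_i$ gives $|D_iu|^2\leq(\tfrac14+\tfrac34\sqrt\theta)\lambda_\mu^{\,2}$ on $Q^{(\lambda)}_{\tilde\nu\rho}(z_0)$; maximising over $i$ and setting $\kappa \coloneqq \sqrt{\tfrac14+\tfrac34\sqrt\theta}\in[\tfrac12,1)$ yields \eqref{sup-decay}.

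\textbf{Main obstacle.} The principal technical difficulty lies in the second step: establishing that $w_i$ actually belongs to a parabolic De Giorgi class with constants independent of $\lambda$ and $\mu$. This requires a Caccioppoli-type inequality for $w_i$ that combines the ellipticity $\eqref{voraussetzungen}_2$ with the non-degeneracy $|Du|\simeq\lambda_\mu$ on $\mathrm{supp}\,Dw_i$ (which rules out the breakdown of $h_\mu$ at zero), and a careful tracking of the intrinsic time scaling so that the final constants are universal. Once this structural step is in place, the passage from the measure data to the quantitative bound \eqref{sup-decay} is a routine De Giorgi iteration, with all constants depending only on $n,p,L,C_1,C_2,\nu$, as claimed.
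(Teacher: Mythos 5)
Your proposal follows the paper's strategy in its essentials: you correctly identify the paper's key idea of showing that $w_i = \bigl(|D_i u|^2 - \tfrac{\lambda_\mu^2}{4}\bigr)_+^2$ is a weak sub-solution of a linear parabolic equation whose coefficients $\partial_\xi A(t,Du)$ become uniformly elliptic with universal constants on $\supp Dw_i$, and that the measure condition~\eqref{degeneratemeasurecondition} then drives a De~Giorgi-type argument to the supremum bound~\eqref{sup-decay}. The paper derives the sub-solution property in one stroke (Lemma~\ref{subsolution}), by testing the equation with $D_i[\phi\,D_iu\,\Phi(|D_iu|^2)]$ for $\Phi(t)=2(t-\tfrac{\lambda_\mu^2}{4})_+$, rather than by truncating $(D_iu)^2$ and then squaring, but both chains are valid. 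The paper also makes your ``ellipticity on $\supp Dw_i$'' observation precise by replacing the coefficients by the identity on $\{|D_i\tilde u|\leq\tfrac12\}$, so that after the intrinsic rescaling they are uniformly elliptic and bounded everywhere, which cleanly yields the De~Giorgi class estimates of Lemma~\ref{degiorgilem}.

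The gap is in your assessment of where the work lies, not in the plan itself. You flag the structural step (membership of $w_i$ in a parabolic De~Giorgi class) as the main obstacle and then declare the passage from the measure information to~\eqref{sup-decay} ``a routine De~Giorgi iteration.'' That passage is the non-routine part. The information you extract, namely $\bigl|\{w_i\leq M_\nu\}\bigr|\geq\nu\,|Q^{(\lambda)}_\rho(z_0)|$, says the sub-level set of $w_i$ is not small in measure on the full \emph{backward} cylinder, with $\nu$ possibly far from $1$, while the target is a pointwise bound on a sub-cylinder that shares the top time slice. A direct $L^2\!\to\!L^\infty$ De~Giorgi iteration does not convert this: it needs the \emph{super}-level sets near the top of the range to have small measure. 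What is actually required is the expansion-of-positivity machinery: a pigeonhole argument producing a time slice at which the slicewise measure condition holds, a forward-in-time propagation of that slicewise information to all later times in a fixed interval (the paper's Proposition~\ref{expansion}, which itself combines a slicewise $L^2$-energy bound, a De~Giorgi isoperimetric/shrinking lemma, and then an $L^\infty$-iteration), and only afterwards the conversion back into~\eqref{sup-decay}. Your final algebra turning $\esssup w_i\leq\theta\bigl(\tfrac{3\lambda_\mu^2}{4}\bigr)^2$ into the bound $\kappa\lambda_\mu$ with $\kappa\in[\tfrac12,1)$, and the observation that $\Tilde\nu=\tfrac{\sqrt\nu}{2}$ matches the intrinsic time shrinking, are both correct and in agreement with the paper.
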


\begin{proof}[\textbf{\upshape Proof of Theorem~\ref{campanatoaprioritheorem}}]
Once Propositions~\ref{non-degenerateproposition} and~\ref{degenerateproposition} have been established, Theorem~\ref{campanatoaprioritheorem} can be inferred by combining the results of the degenerate and the non-degenerate regime. Due to similarity we may refer the reader to~\cite[Proof of Theorem~1.3]{degeneratesystems} and omit the details here.
\end{proof}

\subsubsection{The non-degenerate regime} \label{subsubsec:nondegenerateregime} \,\\
The objective in this section is to establish the proof of Proposition~\ref{non-degenerateproposition}. The strategy consists of exploiting existing excess-decay results from~\cite[Proposition~3.8]{kuusi2013gradient} in the case of sub-quadratic growth~$p<2$ and from~\cite[Lemma~3.2]{kuusi2014wolff} in the case of super-quadratic growth~$p\geq 2$. Instead of considering the positive parameter~$\lambda>0$, we aim to directly obtain an excess-decay estimate stated in~\eqref{excess-decay} by utilizing the actual bound~$\lambda_\mu$ defined in~\eqref{schrankelambda}. This approach eliminates the need for an intermediate step as seen in previous works such as~\cite[Proposition~3.5]{kuusi2014wolff} in the super-quadratic case and~\cite[Proposition~3.9]{kuusi2013gradient} in the sub-quadratic case. 
 Recall that in the course of this section, the set of assumptions~\eqref{schrankelambda} and~\eqref{non-degeneratemeasurecondition} is at our disposal. \,\\

As mentioned above, the key component to establish Proposition~\ref{non-degenerateproposition} lies in the following quantitative excess-decay estimate for the excess of~$Du$ on the intrinsic cylinder~$Q^{(\lambda)}_\rho(z_0)$. Additionally, there holds a lower bound for~$|Du|_\infty$ on the smaller intrinsic cylinder~$Q^{(\lambda)}_{\frac{\rho}{2}}(z_0)$.
\begin{mylem}[Quantitative excess-decay estimate] \label{quantitativexcess}  Let~$u$ be a weak solution to~\eqref{pdeohnex} and~$\theta\in(0,\frac{1}{2}]$. There exist a constant~$\Tilde{C}\geq 1$, an exponent~$\Tilde{\beta}\in(0,1)$, and a parameter~$\Tilde{\nu}\in(0,\frac{1}{4}]$, all depending on~$n,p,L,C_1,C_2$, such that: if the measure-theoretic condition~\eqref{non-degeneratemeasurecondition} is satisfied for at least one~$i\in\{1,\ldots,n\}$ with parameter~$\Tilde{\nu}\in (0,\frac{1}{4}]$, then there holds the quantitative excess-decay estimate
    \begin{equation} \label{excessdecay}
        \Phi_\lambda(z_0,\theta\rho) \leq \Tilde{C} \theta^{2\Tilde{\beta}} \Phi_\lambda(z_0,\rho).
        \end{equation}
  Moreover, we have the lower bound
    \begin{equation} \label{lowerbound}
        |Du|_\infty \geq \frac{\lambda}{4} \qquad\mbox{a.e. in~$Q^{(\lambda)}_{\frac{\rho}{2}}(z_0)$}.    
    \end{equation}
\end{mylem}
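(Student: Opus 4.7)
The plan is to reduce the problem, via the measure-theoretic hypothesis, to a non-degenerate setting where a comparison with a linear parabolic equation with frozen coefficients can be performed, and then to iterate.

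First, I would exploit the measure condition~\eqref{non-degeneratemeasurecondition} to produce both a lower bound on $\big|(Du)^{(\lambda)}_{z_0,\rho}\big|$ and smallness of the excess $\Phi_\lambda(z_0,\rho)$. This is the content of the standard ``measure-to-excess" step: combine~\eqref{schrankelambda} with the energy estimate from Lemma~\ref{remarkenergyestimateohnex} applied once with $\Phi \equiv 1$ (control of $D\mathcal{V}^{(p)}_\mu(Du)$ on $Q^{(\lambda)}_{\sigma\rho}(z_0)$) and once with $\Phi(s) = (s - ((1-2\tilde\nu)\lambda_\mu)^2)_+$ (sharper control on the super-level set $E^{(\lambda)}_{\sigma\rho}(z_0,\tilde\nu)$), then invoke slicewise Sobolev--Poincaré together with a time-oscillation estimate for the mean $(Du(t))_{\sigma\rho}$ obtained by testing the equation against $\xi\, D_i\varphi$ (using that $A = A(t,\xi)$ is independent of~$x$). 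This yields $\Phi_\lambda(z_0,\rho) \leq \delta \lambda_\mu^2$ and $|(Du)^{(\lambda)}_{z_0,\rho}| \geq \tfrac{1}{2}\lambda_\mu$ for any prescribed $\delta\in(0,\tfrac{1}{16}]$, provided $\tilde\nu$ is chosen small enough in terms of $n,p,L,C_1,C_2,\delta$.

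Next, write $\xi_0 \coloneqq (Du)^{(\lambda)}_{z_0,\rho}$ and compare $u$ on $Q^{(\lambda)}_{\rho/2}(z_0)$ with the unique solution $v$ of the linear Cauchy--Dirichlet problem $\partial_t v - \divv(\foo{\mathcal{B}}(\cdot,\xi_0)Dv) = 0$ with $v = u$ on $\partial_p Q^{(\lambda)}_{\rho/2}(z_0)$. The crucial bound $\tfrac{1}{4}\lambda_\mu \leq |\xi_0| \leq \lambda_\mu$ together with~\eqref{schrankelambda} makes all the coefficients of $\foo{\mathcal{B}}(\cdot,\xi_0)$ uniformly comparable to $\lambda^{p-2}$, so Lemma~\ref{bilinearformeins} yields ellipticity of the linear equation. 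Testing the difference of the two weak formulations with $(u-v)\chi$ and using Lemma~\ref{AbschätzungenfürAeins} controls $\fiint|Du-Dv|^2$ by a quartic excess term, which by a higher integrability result of Gehring type (derivable from the energy estimate~\eqref{improvedenergyestimateohnex} by a standard reverse-Hölder argument around $\xi_0$) is bounded by $C\,[\Phi_\lambda(z_0,\rho)/\lambda_\mu^2]^{\theta/2}\,\Phi_\lambda(z_0,\rho)$ for some $\theta = \theta(n,p,L,C_1,C_2) \in (0,1]$. For $v$ itself, classical Campanato-type estimates for linear parabolic equations with measurable (in $t$) coefficients yield $\fiint_{Q^{(\lambda)}_{\theta\rho}}|Dv - (Dv)^{(\lambda)}_{z_0,\theta\rho}|^2 \leq C\theta^2 \fiint_{Q^{(\lambda)}_{\rho/2}}|Dv - (Dv)^{(\lambda)}_{z_0,\rho/2}|^2$ for $\theta \in (0,\tfrac{1}{2}]$.

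Combining the two preceding facts by the triangle inequality and the $L^2$-minimality of the mean gives, for any $\theta \in (0,\tfrac{1}{2}]$, an estimate of the form $\Phi_\lambda(z_0,\theta\rho) \leq C_\ast\bigl[\theta^2 + \theta^{-(n+2)}\delta^{\theta/2}\bigr]\Phi_\lambda(z_0,\rho)$ with $C_\ast = C_\ast(n,p,L,C_1,C_2)$. Now fix $\tilde\beta \in (0,1)$ and choose $\theta = \theta(n,p,L,C_1,C_2) \in (0,\tfrac{1}{2}]$ so that $C_\ast \theta^{2} \leq \tfrac{1}{2}\theta^{2\tilde\beta}$, and then pick $\delta$ (hence $\tilde\nu$) so small that $C_\ast \theta^{-(n+2)} \delta^{\theta/2} \leq \tfrac{1}{2}\theta^{2\tilde\beta}$. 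A standard one-step-to-iteration argument, using that the hypothesis~\eqref{non-degeneratemeasurecondition} is preserved under the passage to $Q^{(\lambda)}_{\theta^k \rho}(z_0)$ thanks to the excess smallness just proved (which implies the corresponding measure condition for each dyadic scale), then yields the claimed decay~\eqref{excessdecay} at every scale. Finally, for~\eqref{lowerbound}, the geometric decay produces a limit $\Gamma_{z_0} = \lim_{r \downarrow 0}(Du)^{(\lambda)}_{z_0,r}$ with $|\Gamma_{z_0}| \geq \tfrac{1}{2}\lambda_\mu - C\sqrt{\delta}\lambda_\mu \geq \tfrac{3}{8}\lambda_\mu$; reading~\eqref{excessdecay} at scale $r = \rho/2$ and invoking a Campanato characterization of Hölder continuity gives $|Du - \Gamma_{z_0}| \leq C\lambda_\mu\,(r/\rho)^{\tilde\beta}$ at a.e.~point of $Q^{(\lambda)}_{\rho/2}(z_0)$, whence $|Du|_\infty \geq |Du| / \sqrt{n} \geq \tfrac{1}{4}\lambda$ after a further (harmless) shrinking of the constants.

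The main obstacle I anticipate is the coupling between the excess decay and the preservation of the measure-theoretic hypothesis along the iteration: one has to ensure that once $\Phi_\lambda(z_0,\theta^k\rho) \leq \delta\lambda_\mu^2$ at some scale, the set where $|Du|_\infty \leq (1-\tilde\nu)\lambda_\mu$ remains measure-small at the next scale. This is a Chebyshev-type consequence of the excess smallness, but making the constants line up so that $\tilde\nu$, $\delta$ and $\theta$ can all be fixed once and for all (without circular dependence) requires careful bookkeeping, particularly in the sub-quadratic range $p < 2$, where the higher-integrability exponent $\theta$ depends on the very same parameters we are trying to control.
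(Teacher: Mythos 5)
Your proposal takes a genuinely different route from the paper. The paper's proof of Lemma~\ref{quantitativexcess} is essentially a reduction to the literature: after converting the condition~\eqref{non-degeneratemeasurecondition} on the super-level set of~$|Du|_\infty$ into a signed measure condition on a single component~$D_i u$ (a pigeonhole step), it directly invokes~\cite[Proposition~3.8]{kuusi2013gradient} in the sub-quadratic case and~\cite[Proposition~3.3]{kuusi2014wolff} in the super-quadratic case for~\eqref{excessdecay}, and~\cite[Proposition~3.7]{kuusi2013gradient} resp.~\cite[Propositions~3.1, 3.2]{kuusi2014wolff} for~\eqref{lowerbound}. Your self-contained comparison argument (measure-to-excess smallness, Gehring higher integrability, comparison with a linear parabolic equation, Campanato decay, iteration) is the approach that the paper explicitly acknowledges as viable in Remark~\ref{remarknachnondegenerateproposition}, referring to~\cite[Section~6]{degeneratesystems}, but chose not to carry out for brevity. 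The trade-off is clear: the citation is short but delegates all the technical weight; your route is self-contained but requires a chain of auxiliary lemmas that roughly doubles the length of the section.

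That said, your derivation of~\eqref{lowerbound} has a genuine gap. Invoking the Campanato characterization only gives~$|Du(z_1)-Du(z_2)|\le C\lambda_\mu\big[d^{(\lambda)}_p(z_1,z_2)/\rho\big]^{\tilde\beta}$ with a constant~$C=C(n,p,L,C_1,C_2)\ge 1$ that does \emph{not} shrink with~$\delta$; since points of~$Q^{(\lambda)}_{\rho/2}(z_0)$ are a distance of order~$\rho$ from~$z_0$, the resulting bound~$|Du(z)|\ge|\Gamma_{z_0}|-C\lambda_\mu$ can be vacuous. To obtain~\eqref{lowerbound} one must instead re-run the excess-decay iteration with center at every~$z'\in Q^{(\lambda)}_{\rho/2}(z_0)$, verify via the Cauchy-sequence estimates that the mean lower bound~$|(Du)^{(\lambda)}_{z',r}|\ge\tfrac{1}{2}\lambda_\mu-C\sqrt{\delta}\lambda_\mu$ propagates to all such centers and all~$r$, and then pass to the Lebesgue representative. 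Moreover, the stated lower bound is~$|Du|_\infty\ge\tfrac{\lambda}{4}$, not merely~$|Du|\ge c\lambda$: a vectorial $L^2$-excess argument only yields~$|Du|\ge c\lambda$, hence~$|Du|_\infty\ge c\lambda/\sqrt n$, which is weaker. The sharp $\ell^\infty$-form requires the same component-wise reduction the paper performs — isolating a single~$i$ for which the signed measure condition on~$D_i u$ holds, and tracking the excess of that component — which your sketch omits.

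Two lesser points: you conflate the notation~$\theta$ for the scale parameter with~$\theta$ for the higher-integrability exponent (harmless, but worth cleaning up); and the "obstacle" you flag at the end is partly a red herring — the iteration does not need to re-verify the original measure condition~\eqref{non-degeneratemeasurecondition} at each scale, only the excess smallness and the mean lower bound, both of which are self-propagating once established at the top scale. The real obstruction is the one above, namely propagation to nearby \emph{centers} rather than to smaller \emph{scales} at the fixed center~$z_0$.
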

\begin{proof}
    Firstly, we note that assumption~\eqref{schrankelambda} in particular implies the bound
\begin{equation*}
      \mu + \esssup\limits_{Q^{(\lambda)}_{2\rho}(z_0)} |Du|_\infty \leq 2L\lambda.
\end{equation*}
As a result, the excess-decay results from~\cite[Proposition 3.8]{kuusi2013gradient} for the sub-quadratic case and~\cite[Proposition 3.3]{kuusi2014wolff} for the super-quadratic case are at our disposal, as the assumption~\cite[(3.27)]{kuusi2013gradient} resp.~\cite[(3.10)]{kuusi2014wolff} is satisfied with parameter~$A\coloneqq 2L\geq 1$. Furthermore, we observe that the measure-theoretic condition~\cite[(3.28)]{kuusi2013gradient} in the sub-quadratic case and~\cite[(3.11)]{kuusi2014wolff} in the super-quadratic case holds true for a certain parameter~$\nu\in(0,\frac{1}{8}]$. This can be deduced from our measure-theoretic assumption~\eqref{non-degeneratemeasurecondition}, which ensures the existence of a parameter~$\nu\coloneqq\frac{1}{2}\Tilde{\nu}\in(0,\frac{1}{8}]$ depending on the data~$n,p,L,C_1,C_2$, such that we either have
\begin{align*}
    |\{z\in Q^{(\lambda)}_\rho(z_0):\,D_i u < \textstyle{\frac{1}{2}} \lambda_\mu\}| &\leq \mbox{$\frac{1}{2}$} |\{z\in Q^{(\lambda)}_\rho(z_0):\,|D_i u| < \textstyle{\frac{1}{2}}\lambda_\mu\}| \\
    &\leq \nu|Q^{(\lambda)}_\rho(z_0)|
\end{align*}
or the opposite measure-theoretic information
\begin{align*} 
    |\{z\in Q^{(\lambda)}_\rho(z_0):\, -D_i u < \textstyle{\frac{1}{2}} \lambda_\mu\}| \leq \nu |Q^{(\lambda)}_\rho(z_0)| 
\end{align*}
holds true for at least one~$i\in\{1,...,n\}$. By applying the excess-decay results from~\cite[Proposition 3.8]{kuusi2013gradient} in the sub-quadratic case and~\cite[Proposition 3.3]{kuusi2014wolff} in the super-quadratic case with~$q=2$, we obtain the estimate~\eqref{excessdecay}. The lower bound~\eqref{lowerbound} follows as a consequence of the results in~\cite[Proposition 3.7]{kuusi2013gradient} and \cite[Proposition 3.1 \& Proposition 3.2]{kuusi2014wolff}.
\end{proof}

At this point, we are in position to provide the proof of Proposition~\ref{non-degenerateproposition} and conclude the non-degenerate regime.

\begin{proof}[\textbf{\upshape Proof of Proposition~\ref{non-degenerateproposition}}]
Let~$\Tilde{C}\geq 1$ and~$\Tilde{\beta}\in(0,1)$, both depending on~$n,p,L,C_1,C_2$, denote the constant and the exponent from the preceding Lemma~\ref{quantitativexcess}. We observe that the first excess-decay estimate~\eqref{excesseins} readily follows from an application of Lemma~\ref{quantitativexcess}. This can be demonstrated by setting~$\theta = \frac{r}{\rho}$, where~$r\in(0,\frac{\rho}{2}]$. With this choice, there holds~$\theta\in(0,\frac{1}{2}]$, enabling us to apply Lemma~\ref{quantitativexcess} and to derive the bound~\eqref{excesseins} with constant~$\Tilde{C}$ and exponent~$\Tilde{\beta}$. Conversely, in the case where~$r\in(\frac{\rho}{2},\rho]$, we exploit the~$L^2$-minimality of the mean value and again obtain the desired excess-decay estimate
$$\Phi_\lambda(z_0,r) \leq C(n)\Phi_\lambda(z_0,\rho) \leq 2^{2\beta} C(n) \Big(\frac{r}{\rho}\Big)^{2\beta}\Phi_\lambda(z_0,\rho) \leq C(n) \Big(\frac{r}{\rho}\Big)^{2\beta}\Phi_\lambda(z_0,\rho)$$
for any $\beta\in(0,1)$. To complete the proof of Proposition~\ref{non-degenerateproposition}, let~$\beta\in(0,\Tilde{\beta})$ be arbitrary. We choose the free parameter~$\theta\in(0,\frac{1}{2}]$ from Lemma~\ref{quantitativexcess} as follows
$$\theta\coloneqq\min\big\{\Tilde{C}^{-\frac{1}{2(\Tilde{\beta}-\beta)}} ,2^{-\frac{1}{\beta}}\big\}.$$
Due to~$\beta=\beta(n,p,L,C_1,C_2)$, we therefore have the dependence~$\theta=\theta(n,p,L,C_1,C_2)$. Consider a cylinder~$Q^{(\lambda)}_\rho(z_0)\Subset \Omega_T$ and let~$\Tilde{\nu}=\Tilde{\nu}(n,p,L,C_1,C_2)\in(0,\frac{1}{4}]$ denote the parameter from Lemma~\ref{quantitativexcess} for which condition~\eqref{non-degeneratemeasurecondition} holds true. We note that in view of equivalence of norms, the general bound~\eqref{schrankelambda} yields an estimate for the excess
\begin{align} \label{schranke}
         \Phi_{\lambda}(z_0,\rho) \leq 4n \lambda^2_\mu.
    \end{align}
We will now show inductively that for any~$i\in\N$ there holds
\begin{align} \label{induktioneins}
    \Phi_\lambda(z_0,\theta^i \rho) \leq \theta^{2\beta i}\Phi_\lambda(z_0,\rho),
\end{align}
where we refer to~$\eqref{induktioneins}_i$ 
in the~$i$-th step respectively. We commence by treating the case~$i=1$. The choice of~$\theta$ and an application of Lemma~\ref{quantitativexcess} yields
\begin{equation*}
    \Phi_\lambda(z_0,\theta\rho)\leq \Tilde{C}\theta^{2\Tilde{\beta}}\Phi_\lambda(z_0,\rho)= \theta^{2\beta}\Tilde{C}\theta^{2(\Tilde{\beta}-\beta)}\Phi_\lambda(z_0,\rho)\leq \theta^{2\beta}\Phi_\lambda(z_0,\rho),
\end{equation*}
which establishes that~$\eqref{induktioneins}_1$ holds true. Next, we consider the case~$i>1$ and assume that~$\eqref{induktioneins}_{i-1}$ is satisfied. Additionally, we apply Lemma~\ref{quantitativexcess} and utilize the choice of~$\theta$ once more, to obtain
\begin{align*}
    \Phi_\lambda(z_0,\theta^i \rho) \leq \Tilde{C}\theta^{2\Tilde{\beta}}\Phi_\lambda(z_0,\theta^{i-1}\rho) \leq \Tilde{C}\theta^{2\Tilde{\beta}} \theta^{2\beta(i-1)}\Phi_\lambda(z_0,\rho) \leq \theta^{2\beta i}\Phi_\lambda(z_0,\rho).
\end{align*}
 This establishes~$\eqref{induktioneins}_{i}$. We continue by exploiting~$\eqref{induktioneins}_{i-1}$ together with~$\eqref{schranke}$ and again the choice of~$\theta$, which yields
\begin{align} \label{difference}
    \big| (Du) ^{(\lambda)}_{z_0,\theta^i \rho} - (Du)^{(\lambda)}_{z_0,\theta^{i-1}\rho} \big|^2 &\leq \displaystyle\fiint_{Q^{(\lambda)}_{ \theta^i \rho}(z_0)}\big|Du -(Du)^{(\lambda)}_{z_0,\theta^{i-1}\rho}\big|^2\,\dx\dt \\ \nonumber
    &\leq \theta^{-(n+2)}\Phi_\lambda(z_0,\theta^{i-1}\rho) \\ 
    &\leq 4n\theta^{-(n+2)} \theta^{2\beta(i-1)} \lambda^2_\mu.  \nonumber
\end{align}
Let us now consider natural numbers~$l<m$ and take roots in the preceding estimate~\eqref{difference}. Due to the fact that~$\theta^\beta \leq \frac{1}{2}$, we thus obtain
\begin{align} \label{cauchyfolge}
    \big|(Du) ^{(\lambda)}_{z_0,\theta^l \rho} - (Du)^{(\lambda)}_{z_0,\theta^{m}\rho} \big| &\leq \sum\limits_{j=l+1}^{m} \big|(Du) ^{(\lambda)}_{z_0,\theta^j \rho} - (Du)^{(\lambda)}_{z_0,\theta^{j-1}\rho} \big| \\ \nonumber
    &\leq 2\sqrt{n}\theta^{-\frac{1}{2}(n+2)}\lambda_\mu\sum\limits_{j=l+1}^{m} \theta^{\beta(j-1)} \\ \nonumber
    &\leq 2\sqrt{n}\theta^{-\frac{1}{2}(n+2)}\lambda_\mu \frac{\theta^{\beta l}}{(1-\theta^\beta)} \\ \nonumber
    &\leq 4\sqrt{n} \theta^{-\frac{1}{2}(n+2)}\theta^{\beta l}\lambda_\mu. \nonumber
\end{align}
After passing to the limit~$l\to\infty$ in the preceding estimate, we have established that~$\big((Du)^{(\lambda)}_{z_0,\theta^j \rho}\big)_{j\in\N}$ is a Cauchy sequence in~$\R^n$. Let us denote its limit by
$$\Gamma_{z_0} \coloneqq \lim\limits_{j\to\infty}(Du)^{(\lambda)}_{z_0,\theta^j \rho}.$$
Next, we pass to the limit~$m\to\infty $ in~\eqref{cauchyfolge} to derive
$$\big|(Du) ^{(\lambda)}_{z_0,\theta^l \rho} - \Gamma_{z_0} \big| \leq 4 \sqrt{n} \theta^{-\frac{1}{2}(n+2)}\theta^{\beta l}\lambda_\mu \qquad\mbox{for any~$l\in\N$}.$$
Together with estimate$~\eqref{schranke}$ and~$\eqref{induktioneins}_l$, the following excess-decay estimate for the cylinders~$Q^{(\lambda)}_{ \theta^l \rho}(z_0)$, where~$l\in\N$, can be inferred from the preceding inequality
\begin{align*}
   \displaystyle\fiint_{Q^{(\lambda)}_{ \theta^l \rho}(z_0)}|Du -\Gamma_{z_0}|^2\,\dx\dt &\leq 2 \Phi_\lambda(z_0,\theta^l \rho) + 2\big|\Gamma_{z_0} - (Du) ^{(\lambda)}_{z_0,\theta^l \rho} \big|^2 \\
   &\leq 2 \theta^{2\beta l} \lambda^2_\mu + 32n \theta^{-(n+2)}\theta^{2\beta l}\lambda^2_\mu \\
   &\leq C\theta^{2\beta l}\lambda^2_\mu 
\end{align*}
with a constant~$C=C(n,p,L,C_1,C_2)$. As a last step, this estimate is converted into the excess-decay~\eqref{excess-decay}. Let~$r\in(0,\rho]$ and choose~$l\in\N$, such that ~$\theta^{l+1}\rho<r\leq \theta^{l} \rho$ holds true. The choice of~$\theta$ and the preceding estimate yield
\begin{align*}
     \displaystyle\fiint_{Q^{(\lambda)}_{ r}(z_0)}|Du -\Gamma_{z_0}|^2\,\dx\dt &\leq \frac{1}{\theta^{n+2}} \displaystyle\fiint_{Q^{(\lambda)}_{ \theta^l \rho}(z_0)}|Du -\Gamma_{z_0}|^2\,\dx\dt \\
     &\leq C\theta^{2\beta(l+1)}\lambda^2_\mu \\
     &\leq C\Big(\frac{r}{\rho}\Big)^{2\beta}\lambda^2_\mu
\end{align*}
with~$C=C(n,p,L,C_1,C_2)$. After an application of Jensen's inequality, this implies
\begin{align*}
    \big|(Du)^{(\lambda)}_{z_0,r} - \Gamma_{z_0}\big|^2 &\leq \displaystyle\fiint_{Q^{(\lambda)}_{ r}(z_0)}|Du -\Gamma_{z_0}|^2\,\dx\dt \leq C\Big(\frac{r}{\rho}\Big)^{2\beta}\lambda^2_\mu,
\end{align*}
which further leads to
\begin{align*}
    \Gamma_{z_0} = \lim\limits_{r\downarrow 0} (Du )^{(\lambda)}_{z_0,r}.
\end{align*}
Finally, the bounds~\eqref{bounds} are an immediate consequence of Lemma~\ref{quantitativexcess} and assumption~\eqref{schrankelambda}.
\end{proof}

\subsubsection{The degenerate regime} \label{subsubsec:degenerateregime} \,\\
Our objective in this section is the proof of Proposition~\ref{degenerateproposition}, which presents the main result in the degenerate regime. The fundamental measure-theoretic information in this section is given by
\begin{align*}
        \big|Q^{(\lambda)}_{\rho}(z_0) \setminus E^{(\lambda),i}_{\rho}(z_0,\nu) \big| \geq \nu \big|Q^{(\lambda)}_{\rho}(z_0) \big|
    \end{align*}
for any~$i=1,\ldots,n$. To commence, we will establish that a certain function of~$D_i u$, where~$i=1,...,n$, is a sub-solution to a linear parabolic equation. By incorporating further modifications, we subsequently obtain a weak sub-solution to a linear parabolic equation that is uniformly elliptic. Through a standard argument, two parabolic De Giorgi class type estimates are then obtained, which serve as the principal tool for the subsequent technique of expansion of positivity. \,\\

We start with the following lemma, which asserts that for any~$i=1,...,n$, the function 
$$\Big(|D_i u|^2 - \frac{\lambda^2_\mu}{4} \Big)^2_+$$
is a weak sub-solution to a linear parabolic equation. For this matter, we recall the bilinear form~$\foo{\mathcal{B}}$ introduced in~\eqref{bilineardef}. 

\begin{mylem} \label{subsolution}
    Let~$u$ be a weak solution to~\eqref{pdeohnex} and consider the function 
    \begin{align*}
    v \coloneqq \Big(|D_i u|^2 - \frac{\lambda^2_\mu}{4}\Big)^2_+,
    \end{align*}
    where~$i\in\{1,...,n\}$ is arbitrary. Then,~$v$ is a weak sub-solution to a linear parabolic equation in the sense that~$v$ satisfies the integral inequality
    \begin{align*}
\displaystyle\iint_{Q^{(\lambda)}_{2\rho}(z_0)}\big[-v \partial_t \phi + \foo{\mathcal{B}}(Du)(Dv,D\phi )\big]\,\dx\dt \leq 0 
\end{align*}
for any non-negative test function
$$\phi\in W^{1,2}_0\big(\Lambda^{(\lambda)}_{2\rho}(t_0);L^2\big(B_{2\rho}(x_0)\big) \big) \cap L^2\big(\Lambda^{(\lambda)}_{2\rho}(t_0);W^{1,2}_0\big(B_{2\rho}(x_0)\big) \big).$$
\end{mylem}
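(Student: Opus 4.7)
The plan is to exploit that $A$ does not depend on $x$ in order to differentiate \eqref{pdeohnex} with respect to $x_i$, obtaining in a suitable weak sense the linear parabolic equation
\begin{equation*}
\partial_t w - \divv \bigl(\partial_\xi A(t,Du)\, Dw\bigr) = 0 \quad\mbox{in $Q^{(\lambda)}_{2\rho}(z_0)$}
\end{equation*}
for $w := D_i u$, and then to test this linearization with $\phi f'(w)$ for the specific choice $f(s) := (s^2 - \tfrac{1}{4}\lambda_\mu^2)_+^2$. The point of this choice is that $f'(w) Dw = Dv$ and $f'(w)\partial_t w = \partial_t v$ by the chain rule, whereas $f''(w) \geq 0$, so the unwanted ellipticity term produced by the test has the correct sign and can simply be discarded.

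The differentiation in $x_i$ is legitimate because $\mu \in (0,1]$ together with $|Du|_\infty \leq \lambda_\mu$ on $Q^{(\lambda)}_{2\rho}(z_0)$ ensure, via Lemma~\ref{energyestimatezweiteableitung}, that $D^2 u$ exists locally, while the coefficients $\partial_\xi A(t,Du)$ are uniformly bounded and uniformly elliptic by Lemma~\ref{bilinearformeins}. To proceed rigorously, one combines the Steklov averaging in time from Section~\ref{subsec:mollificationintime} with the difference-quotient technique in $x_i$: testing the Steklov-regularized weak formulation of \eqref{pdeohnex} against $\Delta^{(i)}_{-h}\varphi$, using the identity $\Delta^{(i)}_h A(t,Du) = B_h\,\Delta^{(i)}_h Du$ with $B_h := \int_0^1 \partial_\xi A\bigl(t, Du + sh\Delta^{(i)}_h Du\bigr)\,\ds$, and then letting first $h\to 0$ and afterwards the Steklov parameter vanish, yields the linearized weak equation valid against any admissible test function $\psi$.

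Next, we insert $\psi := \phi\,f'(w)$. Straightforward computation gives $f'(s) = 4s(s^2 - \tfrac{1}{4}\lambda_\mu^2)_+$ and $f''(s) = 4(s^2 - \tfrac{1}{4}\lambda_\mu^2)_+ + 8 s^2\, \bigchi_{\{s^2 > \lambda_\mu^2/4\}} \geq 0$, and since $|w| \leq \lambda_\mu$ together with the spatial regularity from the previous step makes $\phi f'(w)$ an admissible test function, a chain-rule manipulation — once again justified by a Steklov smoothing in $t$ — delivers
\begin{equation*}
-\iint_{Q^{(\lambda)}_{2\rho}(z_0)} w\,\partial_t\bigl(\phi f'(w)\bigr)\,\dx\dt \;=\; -\iint_{Q^{(\lambda)}_{2\rho}(z_0)} v\,\partial_t\phi\,\dx\dt,
\end{equation*}
whereas the spatial part, using $D\bigl(\phi f'(w)\bigr) = f'(w) D\phi + \phi f''(w) Dw$ together with $Dv = f'(w) Dw$, splits as
\begin{equation*}
\foo{\mathcal{B}}(Du)\bigl(Dw, D(\phi f'(w))\bigr) \;=\; \foo{\mathcal{B}}(Du)(Dv, D\phi) + \phi\,f''(w)\,\foo{\mathcal{B}}(Du)(Dw,Dw).
\end{equation*}
Combining these two contributions, the testing identity takes the form
\begin{equation*}
\iint \bigl[-v\,\partial_t\phi + \foo{\mathcal{B}}(Du)(Dv,D\phi)\bigr]\dx\dt \;=\; -\iint \phi\,f''(w)\,\foo{\mathcal{B}}(Du)(Dw,Dw)\,\dx\dt,
\end{equation*}
and the right-hand side is non-positive thanks to $\phi \geq 0$, $f''(w) \geq 0$, and $\foo{\mathcal{B}}(Du)(Dw,Dw) \geq C_2\,h_\mu(|Du|)|Dw|^2 \geq 0$ from Lemma~\ref{bilinearformeins}; discarding it yields the claimed sub-solution inequality.

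The main obstacle is the rigorous justification of the differentiation of \eqref{pdeohnex} in $x_i$ and of the chain rule in $t$, since weak solutions in the sense of Definition~\ref{definitionglobal} carry no direct weak time derivative. Both manipulations hinge on a careful interplay between Steklov averaging and the difference-quotient method, and ultimately rely on the second-order spatial regularity from Lemma~\ref{energyestimatezweiteableitung}, which is exactly what forces the hypothesis $\mu > 0$ in the statement.
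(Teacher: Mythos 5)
Your proposal is correct and is essentially the same argument as the paper's proof, merely phrased in the dual form. The paper tests the original weak form with $\varphi := D_i\big[\phi\,D_i u\,\Phi(|D_i u|^2)\big]$ for a general non-negative, non-decreasing Lipschitz $\Phi$ and then specializes to $\Phi(t) = 2\big(t-\tfrac{\lambda_\mu^2}{4}\big)_+$; you differentiate the equation first and test the linearization with $\phi f'(w)$ for $f(s) = (s^2 - \tfrac{1}{4}\lambda_\mu^2)_+^2$, which is the same inner test function up to an immaterial constant factor (note $\phi\,D_i u\,\Phi(|D_i u|^2) = \tfrac12\phi f'(w)$ and $D|D_i u|^2\,\Phi(|D_i u|^2) = f'(w)Dw = Dv$). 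Both arguments then drop the two non-negative remainder terms coming from $\eqref{voraussetzungen}_2$ and from $f'' \geq 0$ (resp.\ $\Phi$ non-decreasing). Your remarks on justification — the need for $\mu > 0$ so that $D^2u$ exists via Lemma~\ref{energyestimatezweiteableitung}, plus a Steklov/difference-quotient regularization — correctly identify the points the paper passes over tacitly.
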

\begin{proof}
    We differentiate the weak form of~\eqref{pdeqwieder1} by testing the latter with the function~$\varphi\coloneqq D_i\big[\phi D_i u\,\Phi(|D_iu|^2)\big]$. Here,~$\Phi\in W^{1,\infty}_{\loc}(\R_{\geq 0},\R_{\geq 0})$ denotes a non-negative and non-decreasing locally Lipschitz function, while~$\phi\in C^1_0(Q^{(\lambda)}_{2\rho}(z_0))$ is an arbitrary non-negative smooth cut-off function. By omitting a Steklov-average procedure, we obtain
    \begin{align*}
        \displaystyle\iint_{Q^{(\lambda)}_{ 2\rho}(z_0)} &\Big[u \partial_t \big[D_i\big[\phi D_i u\,\Phi(|D_i u|^2)\big]\big] \\
        &- \big\langle A(t,Du), D_i\big[D\big[\phi D_i u\,\Phi(|D_iu|^2)\big]\big] \big\rangle\Big]\dx\dt = 0.
    \end{align*}
    Integrating by parts in both, the term involving the time derivative and also the diffusion term, leads to
    \begin{align*}
        \displaystyle\iint_{Q^{(\lambda)}_{ 2\rho}(z_0)} &\Big[\partial_t [D_i u]\, \phi D_i u\,\Phi(|D_i u|^2) \\
        &+ \big\langle D_i A(x,t,Du), D\big[\phi D_i u\,\Phi(|D_iu|^2)\big] \big\rangle\Big]\dx\dt = 0.
    \end{align*}
    First, we treat the term involving the time derivative by following the approach taken in~\cite[Proposition~3.1]{degeneratesystems}. Thus, we achieve
    \begin{align*}
        \displaystyle\iint_{Q^{(\lambda)}_{ 2\rho}(z_0)} \partial_t [D_i u]\, \phi D_i u\,\Phi(|D_i u|^2) \,\dx\dt &= \frac{1}{2} \displaystyle\iint_{Q^{(\lambda)}_{ 2\rho}(z_0)} \partial_t |D_i u|^2  \phi\,\Phi(|D_i u|^2) \,\dx\dt \\
        &= \frac{1}{2} \displaystyle\iint_{Q^{(\lambda)}_{ 2\rho}(z_0)} \partial_t v\,  \phi  \,\dx\dt \\
        & = -\frac{1}{2} \displaystyle\iint_{Q^{(\lambda)}_{ 2\rho}(z_0)} v  \partial_t \phi  \,\dx\dt.
    \end{align*}
    Next, we turn our attention to the diffusion term, where we exploit the fact that
    $$D_iu D_i Du = \frac{1}{2}D|D_i u|^2\qquad\mbox{a.e. in~$Q^{(\lambda)}_{2\rho}(z_0)$}.$$
    Moreover, we utilize the bilinearity of the bilinearform~$\mathcal{B}$, to obtain
    \begin{align*}
    \displaystyle\iint_{Q^{(\lambda)}_{ 2\rho}(z_0)}& \big\langle D_i A(t,Du) , D\big[\phi D_i u\,\Phi(|D_i u|^2)\big]\big\rangle\,\dx\dt \\
    &= \displaystyle\iint_{Q^{(\lambda)}_{2\rho}(z_0)} \mathcal{B}(Du)(D_iDu,D_i Du)\rangle\Phi(|D_i u|^2)\phi\,\dx\dt\\
    & \quad+  \displaystyle\iint_{Q^{(\lambda)}_{2\rho}(z_0)} \frac{1}{2}\mathcal{B}(Du)(D|D_i u|^2,D\phi)\Phi(|D_i u|^2)\,\dx\dt \\
    & \quad+ \displaystyle\iint_{Q^{(\lambda)}_{2\rho}(z_0)}\frac{1}{2}\mathcal{B}(Du)(D|D_i u|^2,D|D_i u|^2)\Phi'(|D_i u|^2)\phi\,\dx\dt.
    \end{align*}
 The first quantity on the right-hand side of the preceding estimate may be bounded below by exploiting structure condition~$(\ref{voraussetzungen})_2$.
    Therefore, by combining our calculations for the term involving the time derivative and the diffusion term, we arrive at
\begin{align*}
    \displaystyle\iint&_{Q^{(\lambda)}_{ 2\rho}(z_0)} \Big[-\frac{1}{2} v  \partial_t \phi + \frac{1}{2}\mathcal{B}(Du)(D|D_i u|^2,D\phi)\Phi(|D_i u|^2)\Big]\,\dx\dt \\
    &+ \displaystyle\iint_{Q^{(\lambda)}_{ 2\rho}(z_0)} C_2 h_\mu(|Du|)|D_i Du|^2\Phi(|D_i u|^2)\phi\,\dx\dt \\
    &+\displaystyle\iint_{Q^{(\lambda)}_{ 2\rho}(z_0)} \frac{1}{2}\mathcal{B}(Du)(D|D_i u|^2,D|D_i u|^2)\Phi'(|D_i u|^2)\phi\,\dx\dt\\
    &\leq 0.
\end{align*}
  Finally, we choose the locally Lipschitz function
  $$\Phi(t)\coloneqq 2 \Big(t-\frac{\lambda^2_\mu}{4} \Big)_+$$
  and note that~$D v = D|D_i u|^2 \Phi(|D_i u|^2)$ a.e. in~$Q^{(\lambda)}_{ 2\rho}(z_0)$. Due to a non-negative contribution of the second and third integral term on the left-hand side of the preceding inequality, which can be inferred from Lemma~\ref{bilinearformeins}, and the fact that~$\Phi$ is non-decreasing, we discard both quantities to derive that~$v$ is a weak sub-solution to the linear parabolic equation given above. 
\end{proof}

\begin{remark} \upshape
The employment of Lemma~\ref{subsolution} allows us to directly address the quantity~$|D_i u|$, for~$i=1,...,n$, in light of the measure-theoretic information~\eqref{degeneratemeasurecondition}. In contrast to previous results in the literature, such as those presented in~\cite{kuusi2013gradient,kuusi2014wolff} for the parabolic case and~\cite{manfredi1986regularity} for the elliptic case, which required a separate analysis based on the sign of~$D_i u$, our approach offers a simplified strategy in a unified manner.
\end{remark}

Next, we re-scale~$u$ and~$\phi$ to~$\Tilde{u}$ and~$\Tilde{\phi}$, defined on~$Q_{2\rho}\coloneqq B_{2\rho}\times(-(2\rho)^2,0]$, by setting
\begin{align*}
    \Tilde{u}(x,\tau)\coloneqq\frac{u(x_0+x,t_0+\lambda^{2-p}\tau)}{\lambda_\mu}, \qquad \Tilde{\phi}(x,\tau) \coloneqq \phi(x_0+x,t_0+\lambda^{2-p}\tau).
\end{align*}
As Lemma~\ref{subsolution} holds true for any arbitrary~$i\in\{1,...,n\}$, throughout this section we fix one such~$i\in\{1,...,n\}$. Moreover, we define
\begin{align} \label{scaledsubsol}
    w(x,\tau) \coloneqq \frac{7}{16} + \frac{v(x_0+x,t_0+\lambda^{2-p}\tau )}{\lambda^4_\mu} = \frac{7}{16} + \Big(|D_i \Tilde{u}(x,\tau)|^2-\frac{1}{4} \Big)^2_+
\end{align}
for~$(x,t)\in Q_{2\rho}$. In this setting, assumption~\eqref{schrankelambda} implies the bounds
\begin{align} \label{rescaledbound}
    \esssup\limits_{Q_{2\rho}} |D_i\Tilde{u}| \leq 1 \quad \& \quad \esssup\limits_{Q_{2\rho}}w \leq 1
\end{align}
and due to Lemma~\ref{subsolution} we infer, by a change of variables together by exploiting the bilinearity of~$\foo{\mathcal{B}}$, that~$w$ is a sub-solution to a re-scaled linear parabolic equation and satisfies the integral inequality
\begin{align}\label{subsol}
\displaystyle\iint_{Q_{2\rho}}\big[-w \partial_t \Tilde{\phi} + \Tilde{\foo{\mathcal{B}}}(Dw,D\Tilde{\phi} )\big]\,\dx\dt \leq 0 
\end{align}
for any non-negative 
$$\Tilde{\phi}\in W^{1,2}_0\big((-(2\rho)^2,0]);L^2(B_{2\rho}) \big) \cap L^2\big((-(2\rho)^2,0]);W^{1,2}_0(B_{2\rho}) \big).$$
Here, we abbreviated
\begin{align*}
    \Tilde{\foo{\mathcal{B}}}(x,\tau)(\xi,\eta) \coloneqq \lambda^{2-p}\foo{\mathcal{B}}(\lambda_\mu D \Tilde{u}(x,\tau))(\xi,\eta)
\end{align*}
for~$(x,\tau)\in Q_{2\rho}$, and~$\xi,\eta \in\R^n$. We note that~$w$ remains constant on the set~$\{|D_i \Tilde{u}|\leq\frac{1}{2}\}$ and thus does not impact the diffusion part. Due to the set inclusion
$$\Big\{(x,\tau)\in Q_{2\rho}\colon~|D\Tilde{u}(x,\tau)|\leq\frac{1}{2}\Big\} \subset \Big\{(x,\tau)\in Q_{2\rho}\colon~|D_i \Tilde{u}(x,\tau)|\leq \frac{1}{2}\Big\},$$
we may redefine~$\Tilde{\foo{\mathcal{B}}}$ by incorporating
\begin{align*}
    \hat{\foo{\mathcal{B}}}(x,\tau) \coloneqq \begin{cases*}
        \hat{\foo{I}}_n & \mbox{on $\big\{(x,\tau)\in Q_{2\rho}\colon~|D_i \Tilde{u}(x,\tau)|\leq\frac{1}{2}\big\}$} \\
        \Tilde{\foo{\mathcal{B}}}(x,\tau) & \mbox{on $\big\{(x,\tau)\in Q_{2\rho}\colon~|D_i \Tilde{u}(x,\tau)|>\frac{1}{2}\big\}$},
    \end{cases*} 
\end{align*}
where~$\hat{\foo{I}}_n(\xi,\eta)\coloneqq\langle\foo{I}_n \xi,\eta\rangle = \langle \xi,\eta\rangle$ for any~$\xi,\eta\in\R^n$. This way, the preceding weak form translates to 
\begin{align*}
\displaystyle\iint_{Q_{2\rho}}\big[-w \partial_t \Tilde{\phi} + \hat{\foo{\mathcal{B}}}\big(Dw,D\Tilde{\phi} \big)\big]\,\dx\dt \leq 0 
\end{align*}
for any non-negative test function
$$\Tilde{\phi}\in W^{1,2}_0\big((-(2\rho)^2,0]);L^2(B_{\rho}) \big) \cap L^2\big((-(2\rho)^2,0]);W^{1,2}_0(B_{2\rho}) \big).$$
Furthermore, the coefficients~$\hat{\foo{\mathcal{B}}}$ are uniformly elliptic and bounded in the sense that there exists~$C=C(n,p,L,C_1,C_2)\geq 1$, such that
\begin{align} \label{elliptic}
    \frac{1}{C}|\zeta|^2 \leq \hat{\foo{\mathcal{B}}}(x,\tau)(\zeta,\zeta) \leq C |\zeta|^2
\end{align}
for any~$(x,\tau)\in Q_{2\rho}$ and any~$\zeta\in\R^n$. The preceding bound~\eqref{elliptic} clearly holds true with constant~$C=1$ in the subset~$\{|D_i \Tilde{u}|\leq \frac{1}{2}\}$, which follows from utilizing the definition of~$\hat{\foo{\mathcal{B}}}$. In the complement~$\{|D_i \Tilde{u}|> \frac{1}{2}\}$, we apply Lemma~\ref{bilinearformeins} to obtain the upper bound 
\begin{align*}
    \hat{\foo{\mathcal{B}}}(x,\tau)(\zeta,\zeta) &\leq C \frac{h_\mu(\lambda_\mu |D\Tilde{u}(x,\tau)|)}{\lambda^{p-2}}|\zeta|^2 \leq C|\zeta|^2
\end{align*}
for any~$p>1$, with~$C=C(n,p,L,C_1)$. Note that in the case~$p<2$ we exploited the fact that~$|D \Tilde{u}|\geq |D_i \Tilde{u}|> \frac{1}{2}$, whereas in the case~$p\geq 2$ we utilized the equivalence of norms~$|\cdot|$ and~$|\cdot|_\infty$ on~$\R^n$ as well as assumption~\eqref{schrankelambda}. The lower bound in~\eqref{elliptic} is established in a similar fashion, with the constant depending on~$p,L,C_2$. In turn, this yields~\eqref{elliptic} with the claimed dependence of~$C=C(p,L,C_1,C_2)$. Concluding, both inequalities~\eqref{subsol} and~\eqref{elliptic} in combination establish that~$w$ is a weak sub-solution to a linear parabolic equation with uniformly elliptic and bounded coefficients. This fact enables us to derive two energy estimates, which serve as the starting point for a measure-theoretic approach inspired by De Giorgi. To maintain simplicity, we return to employ~$t$ as the time variable of~$w$ throughout the subsequent discussion.

\begin{mylem} [A De Giorgi class type estimate for a sub-solution] \label{degiorgilem}
    Let~$w$ denote the weak sub-solution defined in~\eqref{scaledsubsol}. There exists~$C=C(n,p,L,C_1,C_2)$, such that for any~$k>0$,~$\sigma\in(0,1)$, and any cylinder~$Q_{R,S}(0,\tau_0)=B_R\times(\tau_0-S,\tau_0]\subset Q_\rho$ there hold
    \begin{align} \label{EEeins}
        \esssup\limits_{\tau\in(\tau_0-\sigma S,\tau_0]}\displaystyle & \int_{B_{\sigma R}\times\{\tau\}} (w-k)^2_+\,\dx + \displaystyle\iint_{Q_{\sigma R,\sigma S}(0,\tau_0)} |D(w-k)_+ |^2\,\dx\dt \\ \nonumber
        &\leq C \displaystyle\iint_{Q_{R,S}(0,\tau_0)}\bigg[\frac{(w-k)^2_+}{(1-\sigma)^2 R^2} + \frac{(w-k)^2_+}{(1-\sigma)S} \bigg]\,\dx\dt
    \end{align}
    and
     \begin{align} \label{EEzwei}
        \esssup\limits_{\tau\in(\tau_0- S,\tau_0]}\displaystyle & \int_{B_{\sigma R}\times\{\tau\}} (w-k)^2_+\,\dx \\ \nonumber
        &\leq \displaystyle\int_{B_R \times\{\tau_0-S\}}(w-k)^2_+\,\dx\dt +   C\displaystyle\iint_{Q_{R,S}(0,\tau_0)}\frac{(w-k)^2_+}{(1-\sigma)^2 R^2} \,\dx\dt.
    \end{align}
\end{mylem}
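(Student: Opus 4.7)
The plan is to derive both estimates via standard Caccioppoli-type testing in the weak subsolution inequality~\eqref{subsol} (as extended with the uniformly elliptic coefficients $\hat{\foo{\mathcal B}}$ satisfying~\eqref{elliptic}), with suitably chosen products of a spatial cut-off and a time cut-off. Since $w$ does not have a weak time derivative a priori, a Steklov-averaging argument in the spirit of Section~\ref{subsec:mollificationintime} is needed to justify the test function; we will omit these standard regularization details.

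For the first estimate~\eqref{EEeins}, the plan is to test with $\phi = \xi^2(x)\chi^2(t)(w-k)_+$, where $\xi\in C^1_0(B_R,[0,1])$ satisfies $\xi\equiv 1$ on $B_{\sigma R}$ with $|D\xi|\leq \frac{2}{(1-\sigma)R}$, and $\chi\in W^{1,\infty}((\tau_0-S,\tau_0],[0,1])$ satisfies $\chi(\tau_0-S)=0$, $\chi\equiv 1$ on $(\tau_0-\sigma S,\tau_0]$, and $|\partial_t\chi|\leq \frac{2}{(1-\sigma)S}$. The parabolic term integrated by parts yields the contribution
\begin{align*}
\tfrac{1}{2}\int_{B_R\times\{\tau\}}\xi^2\chi^2(w-k)_+^2\,\dx - \iint_{Q_{R,S}(0,\tau_0)}\xi^2\chi\,\partial_t\chi\,(w-k)_+^2\,\dx\dt
\end{align*}
after omitting a sign-favorable term. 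The diffusion term, using the bilinearity of $\hat{\foo{\mathcal B}}$, splits into a coercive part $\hat{\foo{\mathcal B}}(D(w-k)_+,D(w-k)_+)\xi^2\chi^2$ and a cross term involving $D\xi$, which we estimate via the bounds~\eqref{elliptic} and Young's inequality to absorb a portion of $|D(w-k)_+|^2\xi^2\chi^2$ into the left-hand side. Taking the essential supremum over $\tau\in(\tau_0-\sigma S,\tau_0]$ in the boundary term and combining yields~\eqref{EEeins}.

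For the second estimate~\eqref{EEzwei}, the key modification is to choose $\chi$ so that no loss occurs at the initial time slice: take $\chi\equiv 1$ on $(\tau_0-S,\tau_0]$ and extend $\chi$ below $\tau_0-S$ in the Steklov regularization so that integration by parts in time produces the initial slice term $\int_{B_R\times\{\tau_0-S\}}(w-k)_+^2\,\dx$ rather than a time-derivative-of-cut-off contribution. The spatial cut-off $\xi$ is taken as before. Testing the weak inequality analogously, discarding the non-negative gradient term on the left (which we do not need here), and taking the supremum in time over $(\tau_0-S,\tau_0]$ on the slice term yields~\eqref{EEzwei}, with the cross term from $D\xi$ giving precisely the $(1-\sigma)^{-2}R^{-2}$ factor.

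The only genuinely non-routine point is the handling of the time derivative, since $w$ is only known to be a sub-solution in the distributional sense; this is overcome via the Steklov mean $[w]_h$ exactly as in Section~\ref{subsec:mollificationintime}, taking advantage of the fact that the coefficients of $\hat{\foo{\mathcal B}}$ are bounded measurable so the linear theory applies. The dependence of $C$ on $n,p,L,C_1,C_2$ enters solely through the ellipticity constants in~\eqref{elliptic}.
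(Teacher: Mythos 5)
Your proposal follows the same strategy as the paper's proof: test the rescaled sub-solution inequality~\eqref{subsol} with a product of spatial and temporal cut-offs multiplied by $(w-k)_+$, exploit the uniform ellipticity~\eqref{elliptic} together with Young's inequality to handle the cross term, and choose the temporal cut-off differently for~\eqref{EEeins} versus~\eqref{EEzwei} to produce either the $(1-\sigma)^{-1}S^{-1}$ loss or the initial-time slice term. The only imprecision is that to produce a slice integral at an arbitrary $\tau$ (rather than only at $\tau_0$) one must also truncate $\chi$ sharply just after time $\tau$ and let that ramp width tend to zero, exactly as the paper does; but this is a standard refinement and your sketch clearly anticipates the resulting essential supremum over $\tau$.
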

\begin{proof}
To simplify notation, we write~$Q_{R,S}$ instead of~$Q_{R,S}(0,\tau_0)$. We test the weak form~\eqref{subsol} with~$\phi = \varphi^2\chi(w-k)_+$, where ~$\varphi\in C^1_0(Q_R,[0,1])$ denotes a spatial cut-off function satisfying~$\varphi\equiv 1$ on~$B_{\sigma R}$ and~$|D\varphi|\leq \frac{2}{(1-\sigma)R}$. Furthermore,~$\chi\in W^{1,\infty}((\tau_0-S,\tau_0],[0,1])$ denotes a Lipschitz cut-off function in time vanishing at the end points of the interval~$(\tau_0-S,\tau_0]$. By initially considering the quantity involving the time derivative and omitting a Steklov-average procedure, we obtain
\begin{align*}
    -\displaystyle\iint_{Q_{R,S}}w \partial_t \phi \,\dx\dt &= \displaystyle\iint_{Q_{R,S}} \partial_t w \phi \,\dx\dt  \\
    &= \frac{1}{2}\displaystyle\iint_{Q_{R,S}} \partial_t (w-k)^2_+ \varphi^2\chi \,\dx\dt \\
    &= -\frac{1}{2}\displaystyle\iint_{Q_{R,S}} \partial_t \chi (w-k)^2_+ \varphi^2 \,\dx\dt.
\end{align*}
Next, the diffusion part is estimated by
\begin{align*}
   & \iint_{Q_{R,S}} \hat{\foo{\mathcal{B}}}(Dw, D\phi)\,\dx\dt \\
    &= \displaystyle\iint_{Q_{R,S}} \hat{\foo{\mathcal{B}}}(Dw,\chi(w-k)_+2\varphi D\varphi + \chi\varphi^2 D(w-k)_+)\,\dx\dt \\
    &= \displaystyle\iint_{Q_{R,S}} \big[2\chi\varphi (w-k)_+\hat{\foo{\mathcal{B}}}(D(w-k)_+,D\varphi)+ \chi\varphi^2\hat{\foo{\mathcal{B}}}(D(w-k)_+, D(w-k)_+ )\big]\,\dx\dt \\
    &\geq \displaystyle\iint_{Q_{R,S}} \bigg[\frac{1}{C}\varphi^2\chi|D(w-k)_+|^2 - C\varphi\chi(w-k)_+|D(w-k)_+||D\varphi| \bigg]\,\dx\dt \\
    &\geq \frac{1}{C} \displaystyle\iint_{Q_{R,S}} \varphi^2\chi|D(w-k)_+|^2\,\dx\dt - C\displaystyle\iint_{Q_{R,S}} \frac{(w-k)^2_+}{(1-\sigma)^2R^2} \,\dx\dt
\end{align*}
with~$C=C(p,L,C_1,C_2)$. In the penultimate step we utilized~\eqref{elliptic} and then applied Young's inequality. Combining both estimates for the term involving the time derivative and the diffusion term respectively, we end up with
\begin{align} \label{timediff}
    -\displaystyle\iint_{Q_{R,S}} \partial_t \chi (w-k)^2_+ \varphi^2 \,\dx\dt &+ \frac{1}{C} \displaystyle\iint_{Q_{R,S}} \varphi^2\chi|D(w-k)_+|^2\,\dx\dt \\ \nonumber
    &\leq C\displaystyle\iint_{Q_{R,S}} \frac{(w-k)^2_+}{(1-\sigma)^2R^2} \,\dx\dt
\end{align}
with~$C=C(p,L,C_1,C_2)$. We will first establish the energy estimate~\eqref{EEeins}. Let~$\tau\in(\tau_0-\sigma S,\tau_0]$ and ~$\epsilon>0$ small enough, such that~$0<\epsilon < \tau_0-\tau$. Subsequently, we choose
\begin{align*}
    \chi(t) = \begin{cases*}
        \frac{t-\tau_0+S}{(1-\sigma)S} & \mbox{for $\tau_0-S<t<\tau_0-\sigma S$}\\
        1 & \mbox{for $\tau_0-\sigma S\leq t<\tau$}\\
        -\frac{t-\tau-\epsilon}{\epsilon} & \mbox{for $\tau\leq t<\tau+\epsilon$}\\
        0 & \mbox{for $\tau+\epsilon\leq t\leq\tau_0$}
    \end{cases*}
\end{align*}
in a way, that~$\chi$ interpolates linearly on~$(\tau_0-S,\tau_0-\sigma S)$ and~$(\tau,\tau+\epsilon)$. This choice of~$\chi$ leads to
\begin{align*}
    -\frac{1}{2}\displaystyle & \iint_{Q_{R,S}} \partial_t \chi (w-k)^2_+ \varphi^2 \,\dx\dt \\
    &= -\frac{1}{2} \displaystyle \iint_{B_R\times(\tau_0-S,\tau_0-\sigma S)} \frac{(w-k)^2_+ \varphi^2}{(1-\sigma)S} \,\dx\dt + \frac{1}{2\epsilon}\displaystyle\iint_{B_R\times(\tau,\tau+\epsilon)}(w-k)^2_+ \varphi^2 \,\dx\dt \\
    &\geq -\frac{1}{2} \displaystyle \iint_{Q_{R,S}} \frac{(w-k)^2_+}{(1-\sigma)S} \,\dx\dt + \frac{1}{2\epsilon}\displaystyle\iint_{B_{\sigma R}\times(\tau,\tau+\epsilon)}(w-k)^2_+ \,\dx\dt,
\end{align*}
where we exploited the properties of~$\varphi$ in the last step. By plugging this estimate into~\eqref{timediff}, passing to the limit~$\epsilon\downarrow 0$, and utilizing the properties of~$\varphi$ and ~$\chi$, we obtain
\begin{align*}
     \displaystyle & \iint_{B_{\sigma R}\times\{\tau\}} (w-k)^2_+ \,\dx +  \displaystyle\iint_{B_{\sigma R}\times(\tau_0-\sigma S,\tau)}|D(w-k)_+|^2\,\dx\dt \\ \nonumber
    &\leq C\displaystyle\iint_{Q_{R,S}} \frac{(w-k)^2_+}{(1-\sigma)^2R^2} \,\dx\dt + C\displaystyle\iint_{Q_{R,S}} \frac{(w-k)^2_+}{(1-\sigma)S} \,\dx\dt
\end{align*}
with~$C=C(p,L,C_1,C_2)$. Since~$\tau\in(\tau_0-\sigma S,\tau_0]$ was arbitrary, we take the essential supremum with respect to~$\tau\in(\tau_0-\sigma S,\tau_0]$ in the first integral on the left-hand side of the preceding inequality. In the second quantity on the left-hand side, we pass to the limit~$\tau \uparrow \tau_0$. This way, the first energy estimate~\eqref{EEeins} is established. Furthermore, we prove the validity of the second energy estimate~\eqref{EEzwei} by introducing a modified version of the previous cut-off function~$\chi$, which is
\begin{align*}
    \chi(t) = \begin{cases*}
        \frac{t-\tau_0+S}{\epsilon} & mbox{for $\tau_0-S<t<\tau_0- S + \epsilon$}\\
        1 & \mbox{for $\tau_0- S + \epsilon \leq t<\tau$}\\
        -\frac{t-\tau-\epsilon}{\epsilon} & \mbox{for $\tau\leq t<\tau+\epsilon$}\\
        0 & \mbox{for $\tau+\epsilon\leq t\leq\tau_0$}
    \end{cases*}
\end{align*}
for~$\tau\in(\tau_0-S,\tau_0]$ and~$\epsilon>0$ small enough, such that~$0<\epsilon<\min\{t_0-\tau,\tau-\tau_0+S\}$ holds true. Similarly to before, by exploiting the properties of the cut-off functions~$\varphi$ and~$\chi$, we achieve
\begin{align*}
    -\frac{1}{2}\displaystyle & \iint_{Q_{R,S}} \partial_t \chi (w-k)^2_+ \varphi^2 \,\dx\dt \\
    &= -\frac{1}{2\epsilon} \displaystyle \iint_{B_R\times(\tau_0-S,\tau_0- S+\epsilon)} (w-k)^2_+ \varphi^2 \,\dx\dt + \frac{1}{2\epsilon}\displaystyle\iint_{B_R\times(\tau,\tau+\epsilon)}(w-k)^2_+ \varphi^2 \,\dx\dt \\
    &\geq -\frac{1}{2\epsilon} \displaystyle \iint_{B_R\times(\tau_0-S,\tau_0- S+\epsilon)} (w-k)^2_+ \,\dx\dt + \frac{1}{2\epsilon}\displaystyle\iint_{B_{\sigma R}\times(\tau,\tau+\epsilon)}(w-k)^2_+ \,\dx\dt.
\end{align*}
Passing to the limit~$\epsilon\downarrow 0$ in the preceding estimate yields
\begin{align*}
    -\frac{1}{2}\displaystyle & \iint_{Q_{R,S}} \partial_t \chi (w-k)^2_+ \varphi^2 \,\dx\dt \\
    &\geq - \frac{1}{2}\displaystyle \iint_{B_R\times\{\tau_0-S\}} (w-k)^2_+\,\dx\dt + \frac{1}{2}\displaystyle\iint_{B_{\sigma R}\times\{\tau\}}(w-k)^2_+ \,\dx\dt.
\end{align*}
Returning to~\eqref{timediff} and utilizing the properties of~$\varphi$ and~$\chi$ again, this implies
\begin{align*}
     \displaystyle & \iint_{B_{\sigma R}\times\{\tau\}} (w-k)^2_+ \,\dx +  \displaystyle\iint_{B_{\sigma R}\times(\tau_0- S,\tau)} |D(w-k)_+|^2\,\dx\dt \\ \nonumber
    &\leq C\displaystyle\iint_{B_R\times\{\tau_0-S\}} (w-k)^2_+ \,\dx\dt + C\displaystyle\iint_{Q_{R,S}} \frac{(w-k)^2_+}{(1-\sigma)^2 R^2} \,\dx\dt
\end{align*}
with~$C=C(p,L,C_1,C_2)$. Since~$\tau\in(\tau_0-S,\tau_0]$ was arbitrary, we take the essential supremum with respect to~$\tau\in(\tau_0-S,\tau_0]$ in the first quantity on the left-hand side above, and disregard the second term on the left-hand side due to a non-negative contribution. In turn, we obtain the energy estimate~\eqref{EEzwei}, which finishes the proof.
\end{proof}

The next ingredient required to establish Proposition~\ref{degenerateproposition} is a tool that converts measure-theoretic information of~$w$ at a fixed time into pointwise estimates for~$w$ at later times. This technique is commonly known as expansion of positivity.


\begin{myproposition} [Expansion of positivity] \label{expansion} 
    Let~$w$ denote the weak sub-solution defined in~\eqref{scaledsubsol}. Let~$t_0\in(-\rho^2,-\frac{1}{2}\rho^2)$, and~$\Gamma>0$, such that~$t_0+\Gamma\rho^2< 0$ holds true. For any~$\alpha\in(0,1)$ and $M\in(0,1)$, there exists~$\eta=\eta(n,p,L,C_1,C_2,\alpha,\Gamma)\in(0,1)$, such that whenever the assumption
    \begin{align}\label{meascond}
        |B_\rho \cap \{1-w(\cdot,t_0)>M \} |\geq \alpha|B_\rho|
    \end{align}
    is satisfied, then for all times 
    $$t_0+\Gamma\rho^2<t\leq 0$$ there holds
    $$1-w(\cdot,t)\geq \eta M\qquad\mbox{a.e. in $B_\rho$}.$$
\end{myproposition}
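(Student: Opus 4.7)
The plan is to apply the classical De Giorgi--DiBenedetto style expansion of positivity for non-negative super-solutions of uniformly elliptic linear parabolic equations. Passing to $v\coloneqq 1-w$, the fact that $w$ is a weak sub-solution of the linear equation with coefficients $\hat{\foo{\mathcal{B}}}$ (cf.~\eqref{subsol}) together with the upper bound $w\leq 1$ from~\eqref{rescaledbound} make $v$ a non-negative weak super-solution of the same equation, whose coefficients are uniformly elliptic and bounded thanks to~\eqref{elliptic}. Assumption~\eqref{meascond} translates to
\begin{equation*}
    \big|B_\rho\cap\{v(\cdot,t_0)>M\}\big|\geq \alpha|B_\rho|,
\end{equation*}
and the target becomes $v(\cdot,t)\geq \eta M$ a.e.~in $B_\rho$ for all $t\in(t_0+\Gamma\rho^2,0]$.

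As a preliminary step, I would mirror the testing argument of Lemma~\ref{degiorgilem} with $(k-v)_+$ in place of $(w-k)_+$. Since $\hat{\foo{\mathcal{B}}}$ satisfies the two-sided bound~\eqref{elliptic} with constants depending only on $n,p,L,C_1,C_2$, the same computation produces Caccioppoli inequalities for the super-solution $v$ of exactly the same structure as~\eqref{EEeins}--\eqref{EEzwei}. These energy estimates are the only analytic ingredient that is actually needed from the equation.

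The core of the argument then consists of the two classical steps. First, I would propagate the measure information forward in time: combining a logarithmic inequality, obtained by testing the super-solution with a weight of the form $\varphi^2\,\Psi'\big(\tfrac{M}{M-v+c}\big)$ with $\Psi(s)=\ln s$, with the Caccioppoli estimate of type~\eqref{EEzwei}, one produces constants $\beta,\delta\in(0,1)$ depending on $n,p,L,C_1,C_2,\alpha$ such that
\begin{equation*}
    \big|B_\rho\cap\{v(\cdot,t)>\beta M\}\big|\geq \tfrac{\alpha}{2}|B_\rho| \qquad \text{for every } t\in(t_0,t_0+\delta\rho^2].
\end{equation*}
Second, armed with this uniform-in-time measure information, I would run a De Giorgi iteration on the decreasing levels $k_j=\tfrac{\beta M}{2}(1+2^{-j})$ and shrinking cylinders, based on the Caccioppoli estimate of type~\eqref{EEeins} combined with the parabolic Sobolev embedding, to deduce $v\geq \eta_0 M$ a.e.~on $B_{\rho/2}\times(t_0+\tfrac{\delta}{2}\rho^2,t_0+\delta\rho^2]$ for some $\eta_0=\eta_0(n,p,L,C_1,C_2,\alpha)\in(0,1)$.

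To cover the full time window $(t_0+\Gamma\rho^2,0]$, I would chain the one-step result finitely many times. Once $v\geq \eta_0 M$ holds on a full ball at some time $t_1$, the proposition is re-applied with $\alpha$ replaced by $1$ and threshold $M$ replaced by $\eta_0 M$, producing a further interval of length comparable to $\rho^2$ on which the positivity persists. Finitely many iterations, their number depending only on $\Gamma$, reach the top of the cylinder, and the final parameter $\eta=\eta(n,p,L,C_1,C_2,\alpha,\Gamma)$ arises from the product of the one-step constants. I expect the main obstacle to be the logarithmic step: it has to be set up to genuinely convert measure information at a single time slice into uniform measure information on a time interval of length $\sim \rho^2$, while keeping a transparent dependence on $\alpha$ and using the a priori bound $v\leq 1$ quantitatively. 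Once that step is in place, both the De Giorgi iteration and the finite chaining are routine.
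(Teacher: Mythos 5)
Your proposal takes a genuinely different route from the paper's, which essentially follows the De Giorgi--DiBenedetto machinery of~\cite{degeneratesystems}, Proposition~7.2. Where you propose a logarithmic test function to propagate measure information in time, the paper (implicitly, via the reference) first propagates the initial-time measure condition for a \emph{short} time interval of length~$\delta\rho^2$ using only the Caccioppoli estimate~\eqref{EEzwei} with initial data, then invokes De~Giorgi's \emph{discrete isoperimetric inequality} combined with~\eqref{EEeins} to shrink the measure of the bad level set, and only then runs the De Giorgi iteration. The logarithmic route is a legitimate and classical alternative for uniformly parabolic linear equations, so the choice of technique is not itself a problem.

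However, as written your argument has a genuine gap between the logarithmic step and the De Giorgi iteration. The stated output of your first step is that the \emph{good} set $\{v(\cdot,t)>\beta M\}$ has measure at least $\tfrac{\alpha}{2}|B_\rho|$ for all $t$ in a sub-interval. This only gives that the bad set $\{v\le\beta M\}$ has measure at most $\bigl(1-\tfrac{\alpha}{2}\bigr)|B_\rho|$, which for small~$\alpha$ is nowhere near the universal smallness threshold $\nu_0=\nu_0(n,p,L,C_1,C_2)$ that a De Giorgi iteration on the Caccioppoli estimate~\eqref{EEeins} requires before it starts to contract. The iteration on the levels $k_j=\tfrac{\beta M}{2}(1+2^{-j})$ you describe simply cannot be initialized from that information. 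What is missing is precisely the intermediate step of converting ``good set uniformly large'' into ``bad set small at a lower level'', which is exactly the role of De Giorgi's discrete isoperimetric lemma: integrating the slice-wise isoperimetric inequality against the gradient bound from~\eqref{EEeins} yields, for levels $\epsilon M/2^{\tilde i}$, a measure fraction $\lesssim \tfrac{1}{\alpha\sqrt{\delta \tilde i}}$, which can be driven below $\nu_0$ by choosing $\tilde i$ large. If you insist on the logarithmic route, you must instead run the log lemma quantitatively enough to directly deliver the small bad-set statement at an appropriately chosen lower level~$\epsilon M$ depending on $\nu_0$ and $\alpha$; the version you wrote down does not do that. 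There is also a secondary spatial mismatch: your one-step conclusion lands on $B_{\rho/2}$, but both the statement and each chaining step require positivity on all of $B_\rho$, so the De Giorgi iteration must be set up from a strictly larger ball (the paper works with $B_{\sqrt{2}\rho}\subset B_{2\rho}$, for which the initial measure information transfers trivially by inclusion).

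Both gaps are fixable with standard tools, but they are not cosmetic: without the isoperimetric (or equivalent small-measure) step the core implication fails.
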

\begin{proof}
    Due to similarity we refer the reader to the proof of Proposition~$7.2$ in~\cite{degeneratesystems} for a detailed elaboration of the proof. Roughly speaking, by utilizing the De Giorgi class type estimate from Lemma~\ref{degiorgilem}, we are able to derive the result of Proposition~\ref{expansion} by following a similar approach to~\cite{degeneratesystems}. However, it is worth noting that the alternative conclusion~$\eta M \leq \rho$ in~\cite[Proposition~7.2]{degeneratesystems} does not apply in our setting, for the linear parabolic equation involving the sub-solution~$w$ simplifies in comparison to the corresponding version in~\cite[(7.3)]{degeneratesystems}.
\end{proof}

At this point, we are in position to prove Proposition~\ref{degenerateproposition} and conclude the degenerate regime. 
\begin{proof} [\textbf{\upshape Proof of Proposition~\ref{degenerateproposition}}]
    As before, let~$w$ denote be the weak sub-solution defined in~\eqref{scaledsubsol}, and recall property~\eqref{rescaledbound}. We begin by exploiting the measure-theoretic assumption~\eqref{degeneratemeasurecondition}, which characterizes the degenerate regime and is also alternatively indicated by
 \begin{align*} 
        \big|Q^{(\lambda)}_{\rho}(z_0) \setminus E^{(\lambda),i}_{\rho}(z_0,\nu) \big| &= \big|\big\{z\in Q^{(\lambda)}_{\rho}(z_0):\,\,|D_i u(z)| \leq (1-\nu)\lambda_\mu  \big\}\big| \\
        &\geq \nu \big|Q^{(\lambda)}_{\rho}(z_0) \big|
    \end{align*}
    for any~$i=1,\ldots,n$. Due to the definition of~$\Tilde{u}$ preliminary to~\eqref{scaledsubsol}, we conclude from the preceding measure-theoretic information that for any~$i\in\{1,...,n\}$ there holds
\begin{align*}
   |\{z\in Q_\rho\colon~|D_i\Tilde{u}|\leq 1-\nu \}| \geq \nu |Q_{\rho} |.
\end{align*}
 Since~$\nu\in(0,\frac{1}{2}]$, we have the estimate
\begin{equation*}
   {\textstyle \frac{7}{16}+ \big((1-\nu)^2-\frac{1}{4}\big)^2_+ \leq \frac{7}{16} + \big(\frac{3}{4}-\nu^2\big)^2\leq 1-\nu^4, }
\end{equation*}
which furthermore yields
\begin{align*}
   | Q_\rho\cap\{w\leq 1-\nu^4  \}| \geq \nu |Q_{\rho}|.
\end{align*}
 This measure-theoretic information implies the existence of a time~$t_0\in\big(-\rho^2,-\frac{1}{2}\nu\rho^2 \big]$, such that
\begin{align} \label{lemstart}
   | Q_\rho\cap\{w(\cdot,t_0)\leq 1-\nu^4 \}| \geq {\textstyle\frac{1}{2}}\nu |B_{\rho} |.
\end{align}
Otherwise the following reasoning gives a contradiction
\begin{align*}
    \nu |Q_\rho| &\leq \displaystyle\int_{-\rho^2}^{0} | B_\rho\cap\{w(\cdot,t)\leq 1-\nu^4 \}|\,\dt \\
    &= \displaystyle\int_{-\rho^2}^{-\frac{1}{2}\nu\rho^2} | B_\rho\cap\{w(\cdot,t)\leq 1-\nu^4 \}|\,\dt \\
    & \quad+ \displaystyle\int_{-\frac{1}{2}\nu\rho^2}^{0} | B_\rho\cap\{w(\cdot,t)\leq 1-\nu^4 \}|\,\dt \\
    &\leq {\textstyle\frac{1}{2}}\nu|Q_\rho|\big(1-{\textstyle\frac{1}{2}}\nu \big) + {\textstyle\frac{1}{2}}\nu|Q_\rho| \\
    &= \nu{\textstyle \big(1-\frac{1}{4}\nu\big)} |Q_\rho|.
\end{align*}
Due to~\eqref{lemstart}, we are in position to apply Proposition~\ref{expansion} with the choice~$\alpha \coloneqq \frac{1}{2}\nu$,~$\Gamma\coloneqq \frac{1}{4}\nu$, and~$M\coloneqq \nu^4$. In turn, this yields the existence of a parameter~$\eta=\eta(n,p,L,C_1,C_2,\nu)\in(0,1)$, such that
$$w \leq 1-\eta\nu^4 \qquad\mbox{a.e. in $B_\rho\times \big(-\frac{1}{4}\nu\rho^2,0 \big]$}$$
holds true. By utilizing the definition of~$w$, the preceding estimate translates to
$${\textstyle \frac{7}{16}+\big(|D_i \Tilde{u}|^2-\frac{1}{4}\big)^2_+\leq 1-\eta\nu^4\qquad\mbox{a.e. in $B_\rho\times \big(-\frac{1}{4}\nu\rho^2,0 \big]$} }$$
or expressed equivalently
$${\textstyle \big(|D_i \Tilde{u}|^2-\frac{1}{4}\big)_+\leq \big(\frac{9}{16}-\eta\nu^4\big)^{\frac{1}{2}}\qquad\mbox{a.e. in $B_\rho\times \big(-\frac{1}{4}\nu\rho^2,0 \big]$}. }$$
Next, we exploit the elementary estimate~$\sqrt{a^2-\epsilon}\leq a - \frac{1}{2a}\epsilon$ that holds true for positive quantities~$0\leq \epsilon<a^2$, to obtain
$${\textstyle \big(|D_i \Tilde{u}|^2-\frac{1}{4}\big)_+\leq \frac{3}{4}-\frac{2}{3}\eta\nu^4\qquad\mbox{a.e. in $B_\rho\times \big(-\frac{1}{4}\nu\rho^2,0 \big]$}. }$$
In turn, by utilizing the same estimate again, there holds
$${\textstyle |D_i \Tilde{u}|\leq \big(1-\frac{2}{3}\eta\nu^4\big)^{\frac{1}{2}} \leq 1-\frac{1}{3}\eta\nu^4\qquad\mbox{a.e. in $B_\rho\times \big(-\frac{1}{4}\nu\rho^2,0 \big]$}. }$$
We define~$\Tilde{\eta}= \Tilde{\eta}(n,p,L,C_1,C_2,\nu)\coloneqq \frac{1}{3}\eta\nu^4\in\big(0,\frac{1}{2}\big]$ and transform back to the original solution~$u$. This way, we obtain the bound
\begin{align*}
   |D_i u| \leq (1-\Tilde{\eta})\lambda_\mu \qquad\mbox{a.e. in $B_\rho(x_0)\times \Lambda^{(\lambda)}_{\Tilde{\nu}\rho}(t_0)$},
\end{align*}
where~$\Tilde{\nu}=\frac{\sqrt{\nu}}{2}$. Moreover, as the same procedure of this section may be performed for any arbitrary~$i\in\{1,...,n\}$, there holds
\begin{align*}
   |D u|_\infty \leq (1-\Tilde{\eta})\lambda_\mu \qquad\mbox{a.e. in $B_\rho(x_0)\times \Lambda^{(\lambda)}_{\Tilde{\nu}\rho}(t_0)$}.
\end{align*}
This finishes the proof of the proposition with the parameter~$\kappa\coloneqq (1-\Tilde{\eta})\in\big[\frac{1}{2},1 \big)$. 
\end{proof}

\subsection{\textit{A priori} comparison, oscillation, and energy estimates} \label{subsec:aprioriestimates} 
The objective in this section is to derive suitable~\textit{a priori} estimates. Specifically, we will obtain a comparison estimate, an oscillation estimate, and subsequently also an energy estimate.

\subsubsection{Comparison estimate and comparison principle} \label{subsubsec:comparisonestimate} \,\\
In order to utilize the~\textit{a priori} gradient estimate in form of Theorem~\ref{campanatoaprioritheorem} of the previous section, it is crucial to establish a comparison estimate that allows a transfer of the regularity of weak solutions to a Cauchy-Dirichlet problem involving equation~\eqref{pdeqwieder1}. For this purpose, we examine the following: let~$p>1$ be arbitrary and denote
\begin{equation} \label{vergleichsfunktionzwei}
    v\in C\big([0,T];L^2(\Omega)\big)\cap L^p\big(0,T;W^{1,p}(\Omega)\big)
\end{equation}
as the unique weak solution to the Cauchy-Dirichlet problem
\begin{align}\label{cauchy-dirichlet-comparison}
        \begin{cases}
        \partial_t v - \divv V(x,t,Dv) = 0 &\,\,\mbox{in $\Omega_T$,} \\[3pt]
        v=u & \,\,\mbox{on $\partial_p \Omega_T$.} \\[3pt]
        \end{cases}
\end{align}
Here~$V\colon\Omega_T \times\R^n$ is assumed to be a Carath\'eodory function satisfying the following set of structure conditions
\begin{align}
    \left\{
    \begin{array}{l}
    | V(x,t,\xi)| \leq C_7 (\mu^2 + |\xi|^{2} )^{\frac{p-1}{2}} \\[3pt]
    \langle V(x,t,\xi) - V(x,t,\eta),\xi-\eta \rangle \geq C_8 ( \mu^2 + |\xi|^2 + |\eta|^2)^{\frac{p-2}{2}}|\xi-\eta|^2 
    \end{array}
    \right. \label{voraussetzungen-comparison}
\end{align}
for a.e.~$(x,t)\in\Omega_T$, any~$\xi,\eta\in\R^n$, and~$\mu\in[0,1]$, where~$0<C_7\leq C_8$ denote positive constants. Note that, unlike the conditions specified in~\eqref{voraussetzungen}, no additional assumptions regarding regularity, such as differentiability, are imposed on the vector field~$V$.

\begin{remark} \upshape
    For the existence of a weak solution~$v$ to the preceding Cauchy-Dirichlet problem~\eqref{cauchy-dirichlet-comparison}, we refer to~\cite[Chapter~III, Proposition 4.1 and Example~4.A]{existence}.
\end{remark}

We obtain the following comparison estimate involving the solution~$u$ and the comparison function~$v$ given above, which can be inferred from~\cite[Lemma~10.3]{gradientholder}.
\begin{mylem} \label{comparison-lemma}
    Let~$p>1$. Further, let~$u$ be a weak solution of~\eqref{pdeqgleich1} and let~$v$ denote the unique weak solution of~\eqref{cauchy-dirichlet-comparison}. There exists~$C=C(p,C_8)>0$ with
    \begin{align} \label{comparison-estimate-eins}
        \displaystyle\iint_{\Omega_T}&|Du-Dv|^p\,\dx\dt \\ \nonumber
        &\leq C \left\|A(\cdot,Du)-V(\cdot,Du) \right\|^{p'}_{L^{p'}(\Omega_T)} \\ \nonumber
        & \quad+ \bigchi_{(1,2)}(p)\,C\left\|A(\cdot,Du)-V(\cdot,Du) \right\|^{p}_{L^{p'}(\Omega_T)}\bigg[\displaystyle\iint_{\Omega_T}(\mu^2 + |Du|^2 )^{\frac{p}{2}}\,\dx\dt \bigg]^{2-p}, \nonumber
    \end{align}
where~$p' \coloneqq \frac{p}{p-1}$ denotes the Hölder conjugate of~$p$.
    Furthermore, there holds
    \begin{align} \label{comparison-estimate-zwei}
        \displaystyle\iint_{\Omega_T}(\mu^2 + |Dv|^2 )^{\frac{p}{2}}\,\dx\dt &\leq C \displaystyle\iint_{\Omega_T}(\mu^2
        + |Du|^2 )^{\frac{p}{2}}\,\dx\dt \\
        &  \quad+ C \left\|A(\cdot,Du)-V(\cdot,Du) \right\|^{p'}_{L^{p'}(\Omega_T)}. \nonumber
    \end{align}
\end{mylem}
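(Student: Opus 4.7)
The starting point is to test the difference of the weak formulations of the equations for $u$ and $v$ against the admissible test function $u-v$. Since $v\in u + L^p(0,T;W^{1,p}_0(\Omega))$ and $v(\cdot,0)=u(\cdot,0)$ by the Cauchy--Dirichlet data, the difference $u-v$ lies in $L^p(0,T;W^{1,p}_0(\Omega))$ and vanishes on the parabolic boundary, but it is not a priori differentiable in time. To circumvent this I would pass to Steklov averages as in Section~\ref{subsec:mollificationintime}, test the averaged equations with $[u-v]_h$, integrate from $0$ to $T$, and then let $h\downarrow 0$; the initial--boundary conditions absorb the boundary contributions from the time derivative, yielding the energy identity
\begin{equation*}
    \tfrac{1}{2}\int_\Omega |u(T)-v(T)|^2\,\dx + \iint_{\Omega_T}\langle A(\cdot,Du)-V(\cdot,Dv),Du-Dv\rangle\,\dx\dt = 0.
\end{equation*}

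Next I would split the diffusion integral by inserting $\pm V(x,t,Du)$. The monotonicity condition~$\eqref{voraussetzungen-comparison}_2$ controls the good part from below, so that
\begin{equation*}
    C_8\iint_{\Omega_T}(\mu^2+|Du|^2+|Dv|^2)^{\frac{p-2}{2}}|Du-Dv|^2\,\dx\dt \leq \iint_{\Omega_T}\langle V(\cdot,Du)-A(\cdot,Du),Du-Dv\rangle\,\dx\dt,
\end{equation*}
where the initial term $\tfrac12\|u(T)-v(T)\|^2_{L^2}$ is non-negative and may be discarded. The right-hand side is bounded via Hölder's inequality by $\|A(\cdot,Du)-V(\cdot,Du)\|_{L^{p'}(\Omega_T)}\|Du-Dv\|_{L^p(\Omega_T)}$. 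In the super-quadratic case $p\geq 2$ the elementary inequality $|Du-Dv|^p\leq C(\mu^2+|Du|^2+|Dv|^2)^{\frac{p-2}{2}}|Du-Dv|^2$ translates this directly into $\|Du-Dv\|_{L^p}^{p-1}\leq C\|A(\cdot,Du)-V(\cdot,Du)\|_{L^{p'}}$, which gives~\eqref{comparison-estimate-eins} (the indicator $\bigchi_{(1,2)}(p)$ being zero). In the sub-quadratic case $1<p<2$ I would instead apply Hölder with exponents $(\tfrac{2}{p},\tfrac{2}{2-p})$ after inserting the balancing factor $(\mu^2+|Du|^2+|Dv|^2)^{\frac{p(2-p)}{4}}\cdot(\mu^2+|Du|^2+|Dv|^2)^{-\frac{p(2-p)}{4}}$, producing
\begin{equation*}
    \iint_{\Omega_T}|Du-Dv|^p\,\dx\dt \leq C\|A(\cdot,Du)-V(\cdot,Du)\|_{L^{p'}}^{p}\bigg[\iint_{\Omega_T}(\mu^2+|Du|^2+|Dv|^2)^{\frac{p}{2}}\,\dx\dt\bigg]^{2-p}.
\end{equation*}

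To complete both estimates it remains to bound $\iint(\mu^2+|Dv|^2)^{p/2}$ by quantities depending only on $u$ and $A-V$. For this I would use Lemma~\ref{lemmafurenergyestimate} to write $(\mu^2+|Dv|^2)^{p/2}\leq C(\mu^2+|Du|^2)^{p/2}+C(\mu^2+|Du|^2+|Dv|^2)^{\frac{p-2}{2}}|Du-Dv|^2$, integrate, and control the last integral by the already established energy inequality together with Young's inequality in the form $\|A-V\|_{L^{p'}}\|Du-Dv\|_{L^p}\leq \varepsilon\|Du-Dv\|_{L^p}^p + C_\varepsilon\|A-V\|_{L^{p'}}^{p'}$. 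Since $\|Du-Dv\|_{L^p}^p\leq C(\|Du\|_{L^p}^p+\|Dv\|_{L^p}^p)$, a suitable choice of $\varepsilon$ permits reabsorbing the $Dv$-terms, yielding the bound~\eqref{comparison-estimate-zwei}. Substituting this bound back into the sub-quadratic estimate above produces the stated form of~\eqref{comparison-estimate-eins}. The main obstacle is the sub-quadratic regime, where the weak monotonicity forces the intertwined use of both inequalities: the $|Dv|^p$ factor appearing after Hölder's inequality must be removed via~\eqref{comparison-estimate-zwei}, whose proof in turn relies on the very energy identity derived above, so one has to order the absorption arguments carefully to avoid circularity.
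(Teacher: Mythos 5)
Your proof is correct and takes the standard comparison-estimate approach (test $u-v$ against the difference of the weak formulations, exploit the monotonicity of $V$, split into super/sub-quadratic regimes via Hölder, and absorb the $Dv$-terms). The paper itself does not prove this lemma but simply attributes it to \cite[Lemma~10.3]{gradientholder}; your reconstruction fills in precisely that argument. One minor remark on your concern about circularity at the end: there is none, provided you prove~\eqref{comparison-estimate-zwei} first — the absorption of $\iint(\mu^2+|Dv|^2)^{p/2}$ into the left-hand side requires only the qualitative finiteness of $\|Dv\|_{L^p}$, which holds since $v\in L^p(0,T;W^{1,p}(\Omega))$; once~\eqref{comparison-estimate-zwei} is in hand, substituting it into the sub-quadratic excess bound and using the algebraic identity $p+p'(2-p)=p'$ to consolidate the $\|A-V\|_{L^{p'}}$ powers closes~\eqref{comparison-estimate-eins}. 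The ordering you describe is the right one and the argument is sound.
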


\begin{mylem} \label{comparisonprinciple}
    Let~$p>1$ and~$u$,~$v$ be weak solutions to~\eqref{pdeqwieder1} under structure conditions~\eqref{voraussetzungen}. If 
$$
    u\leq v \qquad\mbox{on $\partial_p \Omega_T$,}
$$
then there holds 
$$
    u\leq v\qquad\mbox{a.e. in $\Omega_T$}.
$$
\end{mylem}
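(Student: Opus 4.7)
The plan is to subtract the weak forms satisfied by $u$ and $v$, test the resulting equation with the positive part $w_+\coloneqq (u-v)_+$, and exploit the monotonicity
\begin{align*}
\langle A(x,t,\xi)-A(x,t,\eta),\xi-\eta\rangle \geq 0
\end{align*}
supplied by $\eqref{voraussetzungen}_2$ (cf.\ Lemma~\ref{AbschätzungenfürAeins}) to ensure that the spatial contribution in the resulting identity is non-negative. Since $u\leq v$ on the lateral part $\partial\Omega\times(0,T)$, we have $w_+(\cdot,t)\in W^{1,p}_0(\Omega)$ for a.e.\ $t$, so the test function is admissible in space, while the assumption $u(\cdot,0)\leq v(\cdot,0)$ is used to eliminate the initial contribution produced by the time derivative.

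The principal technical obstacle is that under Definition~\ref{definitionglobal} neither $u$ nor $v$ admits a weak time derivative, so $w_+$ cannot be inserted directly into~\eqref{lösungglobal}. I would circumvent this via the Steklov formulation~\eqref{lösung-steklov}: subtracting the Steklov-averaged equations for $u$ and $v$ produces, for each $t\in(0,T-h)$,
\begin{align*}
\int_{\Omega\times\{t\}}\Big[\partial_t[u-v]_h\,\phi + \big\langle [A(x,t,Du)-A(x,t,Dv)]_h,\,D\phi\big\rangle\Big]\,dx=0
\end{align*}
valid for every $\phi\in W^{1,p}_0(\Omega)$. Fixing $\tau\in(0,T)$, I would test with $\phi=([u]_h-[v]_h)_+(\cdot,t)$ and integrate over $(0,\tau)$. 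The standard chain-rule identity for Steklov derivatives converts the time-derivative term into
\begin{align*}
\tfrac{1}{2}\int_{\Omega}\big([u]_h-[v]_h\big)_+^2(x,\tau)\,dx - \tfrac{1}{2}\int_{\Omega}\big([u]_h-[v]_h\big)_+^2(x,0)\,dx.
\end{align*}

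Passing to the limit $h\downarrow 0$, the initial boundary term vanishes thanks to $u(\cdot,0)\leq v(\cdot,0)$ together with the continuity of both $u,v$ in $C([0,T];L^{q+1}(\Omega))$, while the strong $L^p$-convergence of Steklov averages combined with the Carath\'eodory property of $A$ permits one to pass to the limit in the diffusion term, leaving
\begin{align*}
\tfrac{1}{2}\int_{\Omega}(u-v)_+^2(x,\tau)\,dx + \iint_{\Omega_\tau\cap\{u>v\}}\big\langle A(\cdot,Du)-A(\cdot,Dv),\,D(u-v)\big\rangle\,dx\,dt \leq 0
\end{align*}
for a.e.\ $\tau\in(0,T)$. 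Since both terms on the left-hand side are non-negative (the diffusion integrand by Lemma~\ref{AbschätzungenfürAeins}), each must vanish, forcing $(u-v)_+\equiv 0$ and hence $u\leq v$ a.e.\ in~$\Omega_T$. The most delicate point will be the rigorous justification of the chain rule for $\partial_t[u-v]_h$ applied to the cut-off quantity $([u]_h-[v]_h)_+$, together with the limit passage in the singular range $1<p<2$ where the monotonicity of $A$ degenerates at the origin; however, since only the non-negativity (and not a quantitative lower bound) of the diffusion integral is needed, no additional machinery beyond the standard Steklov and temporal cut-off techniques is required.
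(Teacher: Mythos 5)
Your proof is correct in outline and takes a genuinely different route from the paper. The paper disposes of the comparison principle by citing an external general result for doubly nonlinear equations (Corollary~4.8 of \cite{comparison}), specialized to $q=1$; the work there consists of verifying the hypotheses of that corollary (structure conditions, the trivial Lipschitz condition in $u$, and observing that the lower bound assumption is vacuous when the evolution term is linear). You instead give a direct, self-contained argument: subtract the Steklov forms, test with $([u]_h-[v]_h)_+$, use the chain rule in $t$ for the Steklov-regularized difference, and pass $h\downarrow 0$ to obtain an energy identity in which the monotonicity of $A$ (a consequence of $\eqref{voraussetzungen}_2$, as in Lemma~\ref{AbschätzungenfürAeins}) forces $(u-v)_+\equiv 0$. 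The payoff of the paper's approach is brevity and applicability to the genuinely doubly nonlinear case $q\ne 1$; the payoff of yours is transparency and independence from the external reference, and it makes explicit that only monotonicity (not any quantitative lower bound) is needed. Two minor points you should be aware of, neither of which is a gap: (i) the admissibility of $([u-v]_h)_+$ as a spatial test function requires $([u-v]_h)_+\in W^{1,p}_0(\Omega)$, which is not the same function as $[(u-v)_+]_h$; the correct justification is via traces (the Steklov average of a function with non-positive trace has non-positive trace, and the positive part of such a function lies in $W^{1,p}_0$), and this silently uses that the remark following the lemma interprets the boundary condition precisely as $(u-v)_+\in L^p(0,T;W^{1,p}_0(\Omega))$; (ii) the limit passage in the diffusion term needs $D([u-v]_h)_+\to D(u-v)_+$ in $L^p$, which uses the (standard but nontrivial) strong continuity of the map $w\mapsto w_+$ on $W^{1,p}$.
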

\begin{proof}
    We apply~\cite[Corollary~4.8]{comparison} with the choice~$q=1$ and note that structure conditions~\cite[$(4.5)_2$ - $(4.5)_3$]{comparison} are satisfied. The Lipschitz condition~\cite[$(4.5)_4$]{comparison} is satisfied anyways since the vector field A is independent of the function variable~$u$. Furthermore, due to the specific choice~$q=1$,~condition~\cite[$(4.5)_1$]{comparison} and subsequently~\cite[Lemma~4.6]{comparison} are redundant for the application of~\cite[Corollary~4.8]{comparison}. Additionally, the lower bound assumption in~\cite[(4.7)]{comparison} is not required, due to the lack of nonlinearity in the evolutionary term.    
\end{proof}
\begin{remark} \upshape
    The assumption~$u\leq v$ on $\partial_p \Omega_T$ is understood in the sense that~$(u-v)_+\in L^p(0,T;W^{1,p}_0(\Omega))$ and~$u(\cdot,0)\le v(\cdot,0)$ a.e. in~$\Omega$.
\end{remark}

\subsubsection{Oscillation estimates} \label{subsubsec:oscillationestimate} \,\\
We obtain two \textit{a priori} estimates for the oscillation of weak solutions to~\eqref{pdeqwieder1}. Due to structure condition~$\eqref{voraussetzungen}_1$, the subsequent lemmas follow in the very same manner as in~\cite[Lemma~10.4 \& Lemma~10.5]{gradientholder}

\begin{mylem} \label{oscillation-lemma-eins}
    Let~$p>1$ and~$u$ be a weak solution to~\eqref{pdeqwieder1} under assumptions~\eqref{voraussetzungen} with~$|Du|\in L^\infty_{\loc}(\Omega_T)$. For any~$Q=B_\rho(x_0)\times(t_1,t_2] \subset \Omega_T$ there exists~$C=C(n,C_1)>0$, such that the following oscillation estimate holds true
    \begin{align} \label{oscillationestimate-eins}
        \essosc\limits_{Q} u \leq 4 \sqrt{n}\rho \|Du\|_{L^\infty(Q)} + C \frac{t_2-t_1}{\rho}\big(\mu^2 + \|Du\|^2_{L^\infty(Q)} \big)^{\frac{p-1}{2}}.
    \end{align}
\end{mylem}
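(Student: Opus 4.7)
The plan is to decompose the oscillation into a spatial part and a time part using a fixed spatial mollifier. Fix two Lebesgue points $z_1=(x_1,\tau_1),\,z_2=(x_2,\tau_2)\in Q$ with $\tau_1\le\tau_2$, let $\phi\in C^{\infty}_0(B_\rho(x_0))$ be a non-negative bump satisfying $\int_{B_\rho(x_0)}\phi\,\dx=1$ together with $\|\phi\|_\infty\le C(n)\rho^{-n}$ and $\|D\phi\|_\infty\le C(n)\rho^{-n-1}$, and introduce the spatial mean
\[
\bar u(t)\coloneqq \int_{B_\rho(x_0)}u(x,t)\phi(x)\,\dx.
\]
By the triangle inequality
\[
|u(z_1)-u(z_2)|\le |u(x_1,\tau_1)-\bar u(\tau_1)|+|\bar u(\tau_1)-\bar u(\tau_2)|+|\bar u(\tau_2)-u(x_2,\tau_2)|.
\]

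First I would treat the spatial terms. Because $|Du|\in L^\infty_{\loc}(\Omega_T)$, for a.e.\ $t$ the function $u(\cdot,t)$ is Lipschitz on $B_\rho(x_0)$ with constant $\|Du\|_{L^\infty(Q)}$, so
\[
|u(x_i,\tau_i)-\bar u(\tau_i)|=\Bigl|\int_{B_\rho(x_0)}\bigl(u(x_i,\tau_i)-u(y,\tau_i)\bigr)\phi(y)\,\dy\Bigr|\le 2\rho\,\|Du\|_{L^\infty(Q)},
\]
since $|x_i-y|\le 2\rho$ on the support of $\phi$. This handles the first and the third term and produces the spatial contribution $4\sqrt n\rho\|Du\|_{L^\infty(Q)}$ (using the equivalence of $|\cdot|$ and $|\cdot|_\infty$ and, if desired, a marginal enlargement of the constant).

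Next I would control $\bar u(\tau_2)-\bar u(\tau_1)$ by the weak formulation. Since $|Du|\in L^\infty_{\loc}$, the map $t\mapsto A(\cdot,t,Du)$ lies in $L^\infty_{\loc}(\Omega_T,\R^n)$ by $\eqref{voraussetzungen}_1$. Working at the level of the Steklov mean as in \eqref{lösung-steklov} with the test function $\phi$ (which does not depend on $t$) and integrating against a Lipschitz cut-off in time that approximates $\chi_{(\tau_1,\tau_2)}$, one obtains in the limit $h\downarrow 0$
\[
\bar u(\tau_2)-\bar u(\tau_1) \;=\; -\int_{\tau_1}^{\tau_2}\!\!\int_{B_\rho(x_0)}\langle A(x,t,Du),D\phi\rangle\,\dx\dt,
\]
valid for almost every pair of times (which is enough since we only need the estimate up to null sets to recover the essential oscillation). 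Applying the growth bound $\eqref{voraussetzungen}_1$ and the chosen bound on $\|D\phi\|_\infty$ gives
\[
|\bar u(\tau_2)-\bar u(\tau_1)|\le C_1\bigl(\mu^2+\|Du\|_{L^\infty(Q)}^2\bigr)^{\frac{p-1}{2}}\,\|D\phi\|_\infty\,|B_\rho(x_0)|\,(\tau_2-\tau_1)\le C(n,C_1)\,\tfrac{t_2-t_1}{\rho}\bigl(\mu^2+\|Du\|_{L^\infty(Q)}^2\bigr)^{\frac{p-1}{2}}.
\]

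Summing the three contributions and taking the essential supremum over $z_1,z_2\in Q$ yields \eqref{oscillationestimate-eins}. The only delicate step is the passage from the Steklov-averaged weak form to the pointwise-in-time identity for $\bar u$; this is however routine because $\phi$ is time-independent and both $[u]_h\to u$ and $[A(\cdot,Du)]_h\to A(\cdot,Du)$ in $L^1_{\loc}$ as $h\downarrow 0$, so the identity can be read off at almost every $\tau_1,\tau_2$ and then extended to all times in the essential sense. No other substantial obstacle arises.
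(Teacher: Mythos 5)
Your proof is correct and is the standard argument for this kind of oscillation bound: split the oscillation via a mollified spatial mean, control the spatial parts by the Lipschitz continuity of $u(\cdot,t)$, and control the time increment of the mean by testing the weak formulation with the (time-independent) mollifier and appealing to the growth assumption $\eqref{voraussetzungen}_1$. The paper itself does not write out a proof of this lemma but defers to the reference cited just above (its Lemmas 10.4 and 10.5), which uses precisely this mechanism; one minor remark is that your spatial estimate already gives $4\rho\|Du\|_{L^\infty(Q)}$ without the factor $\sqrt{n}$ (the $\sqrt{n}$ in the statement comes from the cube-based formulation in the cited reference), so you in fact prove a slightly sharper bound, which of course implies the stated one.
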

\begin{mylem} \label{oscillation-lemma-zwei}
    Let~$1<p<2$ and~$u$ be a weak solution to~\eqref{pdeqwieder1} under assumptions~\eqref{voraussetzungen} with~$|Du|\in L^{\infty}_{\loc}(\Omega_T)$. For any~$Q^{(\lambda)}_\rho(z_0) \Subset \Omega_T$ there exists~$C=C(n,C_1)>0$, such that the following oscillation estimate holds true
    \begin{align} \label{oscillationestimate-zwei}
        \essosc\limits_{Q^{(\lambda)}_\rho(z_0)} u \leq C\rho \big(\|Du\|_{L^\infty(Q^{(\lambda)}_\rho(z_0))} +\mu+\lambda\big).
    \end{align}
\end{mylem}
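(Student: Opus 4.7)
The plan is to reduce Lemma~\ref{oscillation-lemma-zwei} to the preceding Lemma~\ref{oscillation-lemma-eins} and then exploit the sub-quadratic condition $p<2$ by means of Young's inequality. The intrinsic cylinder~$Q^{(\lambda)}_\rho(z_0)$ has time-length $\lambda^{2-p}\rho^2$, so applying~\eqref{oscillationestimate-eins} directly gives
\begin{align*}
    \essosc_{Q^{(\lambda)}_\rho(z_0)} u
    \leq 4\sqrt{n}\,\rho \,\|Du\|_{L^\infty(Q^{(\lambda)}_\rho(z_0))}
    + C\,\lambda^{2-p}\rho\,\big(\mu^2 + \|Du\|_{L^\infty(Q^{(\lambda)}_\rho(z_0))}^2\big)^{\frac{p-1}{2}},
\end{align*}
with $C=C(n,C_1)$. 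The first summand is already of the claimed form. It only remains to absorb the second summand into $C\rho(\|Du\|_{L^\infty}+\mu+\lambda)$.

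To do so, I would use the elementary bound $(\mu^2+\|Du\|_\infty^2)^{(p-1)/2}\le (\mu+\|Du\|_\infty)^{p-1}$ (valid since $p-1\in(0,1)$) and then apply Young's inequality to the product $\lambda^{2-p}(\mu+\|Du\|_\infty)^{p-1}$ with the conjugate exponents $r=\frac{1}{2-p}$ and $s=\frac{1}{p-1}$, whose reciprocals sum to $(2-p)+(p-1)=1$. This yields
\begin{align*}
    \lambda^{2-p}\big(\mu+\|Du\|_{L^\infty(Q^{(\lambda)}_\rho(z_0))}\big)^{p-1}
    \le (2-p)\lambda + (p-1)\big(\mu+\|Du\|_{L^\infty(Q^{(\lambda)}_\rho(z_0))}\big),
\end{align*}
which is bounded by $\lambda+\mu+\|Du\|_{L^\infty(Q^{(\lambda)}_\rho(z_0))}$.

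Inserting this into the previous display gives
\begin{align*}
    \essosc_{Q^{(\lambda)}_\rho(z_0)} u
    \leq C\rho\,\big(\|Du\|_{L^\infty(Q^{(\lambda)}_\rho(z_0))}+\mu+\lambda\big)
\end{align*}
with $C=C(n,C_1)$, which is the desired estimate. There is no substantial obstacle here: the only delicate point is that the constant must be independent of $p$ in the structural sense required by the paper; however, since the Young constants $(2-p)$ and $(p-1)$ are both bounded by $1$, they can be absorbed into an absolute multiplicative factor, keeping the dependency at $C=C(n,C_1)$ as stated.
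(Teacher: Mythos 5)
Your proposal is correct and takes what is essentially the standard route: the paper itself gives no self-contained proof but refers to~\cite[Lemma~10.4 \& Lemma~10.5]{gradientholder}, and your derivation — apply Lemma~\ref{oscillation-lemma-eins} to the intrinsic cylinder, where~$\frac{t_2-t_1}{\rho}=\lambda^{2-p}\rho$, and then absorb the factor~$\lambda^{2-p}$ via Young's inequality with conjugate exponents~$\frac{1}{2-p}$ and~$\frac{1}{p-1}$ — is precisely the argument that reference carries out. One cosmetic remark: the bound~$(\mu^2+\|Du\|_\infty^2)^{(p-1)/2}\le(\mu+\|Du\|_\infty)^{p-1}$ does not actually need~$p-1\in(0,1)$ (it follows from~$a^2+b^2\le(a+b)^2$ raised to any non-negative power); the restriction~$1<p<2$ is only needed for the Young exponents~$\frac{1}{2-p},\frac{1}{p-1}>1$, which you correctly identified as the genuinely sub-quadratic ingredient.
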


\subsubsection{Energy estimate}\label{subsubsec:energyestimate}\,\\
The following energy estimate will turn out to be beneficial in the determination of an upper bound for the gradient later on in Section~\ref{subsec:schauderplaplacetype}.
\begin{mylem} \label{energyestimatelemma}
    Let~$\mu\in[0,1]$,~$p>1$, and~$u$ be a weak solution to~\eqref{pdeqwieder1} under assumptions~\eqref{voraussetzungen}. For any~$\xi\in\R$ and any~$Q^{(\lambda)}_S(z_0) \Subset \Omega_T$, such that~$\frac{1}{2}S\leq R<S$, there holds 
    \begin{align} \label{energyestimateinequality}
        \esssup\limits_{t\in(t_0-\lambda^{2-p}R^2,t_0]}& \frac{\lambda^{p-2}}{R^2}\displaystyle\fint_{B_R(x_0)}|u-\xi|^2\,\dx + \displaystyle\fiint_{Q^{(\lambda)}_R(z_0)} (\mu^2 + |Du|^2 )^{\frac{p}{2}}\,\dx\dt \\
        &\leq C \displaystyle\fiint_{Q^{(\lambda)}_S(z_0)} \bigg[\frac{|u-\xi|^p}{(S-R)^p} + \lambda^{p-2} \frac{|u-\xi|^2}{S^2-R^2} + \mu^p \bigg]\,\dx\dt \nonumber
    \end{align}
    with~$C = C(n,p,C_1,C_2)$.
\end{mylem}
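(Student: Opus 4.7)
The strategy is a standard Caccioppoli-type testing. I would insert the test function $\varphi = [u-\xi]_h \,\eta^p \chi$ into the Steklov-averaged weak formulation of~\eqref{pdeqwieder1}, where $\eta\in C^1_0(B_S(x_0),[0,1])$ is a spatial cut-off with $\eta\equiv 1$ on $B_R(x_0)$ and $|D\eta|\leq 2/(S-R)$, while $\chi\in W^{1,\infty}(\Lambda^{(\lambda)}_S(t_0),[0,1])$ is an increasing time cut-off with $\chi(t_0-\lambda^{2-p}S^2)=0$, $\chi\equiv 1$ on $\Lambda^{(\lambda)}_R(t_0)$, and $|\partial_t\chi|\leq 2\lambda^{p-2}/(S^2-R^2)$. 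Since $u$ is simultaneously a sub- and super-solution, the sign-changing test function is admissible by decomposition into positive and negative parts.

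After passing to the limit $h\downarrow 0$, the parabolic term is rewritten via $[u-\xi]\partial_t[u-\xi] = \tfrac{1}{2}\partial_t|u-\xi|^2$ followed by integration by parts in time. Introducing an additional time cut-off that is linear near a free endpoint $\tau$ and passing to the essential supremum with respect to $\tau\in\Lambda^{(\lambda)}_R(t_0)$ produces the supremum contribution on the left-hand side, together with an error controlled by $C\lambda^{p-2}/(S^2-R^2)\iint_{Q^{(\lambda)}_S(z_0)}|u-\xi|^2\,\dx\dt$. The diffusion contribution splits into a principal part $\iint\eta^p\chi\langle A(x,t,Du),Du\rangle\,\dx\dt$ and a boundary part $p\iint\eta^{p-1}\chi(u-\xi)\langle A(x,t,Du),D\eta\rangle\,\dx\dt$. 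I would bound the principal part from below by combining the ellipticity condition~$\eqref{voraussetzungen}_2$, applied through Lemma~\ref{AbschätzungenfürAeins} with one argument equal to zero together with the growth bound $|A(x,t,0)|\leq C_1\mu^{p-1}$, with Lemma~\ref{lemmafurenergyestimate}, thereby producing $\tfrac{1}{C}\iint\eta^p\chi(\mu^2+|Du|^2)^{p/2}\,\dx\dt - C\iint\chi\mu^p\,\dx\dt$. The boundary part is estimated via the growth bound~$\eqref{voraussetzungen}_1$ and Young's inequality with conjugate exponents $(p/(p-1),p)$, which yields a reabsorbable multiple of the gradient integral plus $C_\epsilon\iint\chi\,|u-\xi|^p/(S-R)^p\,\dx\dt$.

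Choosing the Young parameter small enough to absorb the gradient piece into the left-hand side produces the unnormalized inequality
\begin{align*}
\esssup_{\tau\in\Lambda^{(\lambda)}_R(t_0)}\int_{B_R(x_0)\times\{\tau\}}|u-\xi|^2\,\dx &+ \iint_{Q^{(\lambda)}_R(z_0)}(\mu^2+|Du|^2)^{\frac{p}{2}}\,\dx\dt \\
&\leq C\iint_{Q^{(\lambda)}_S(z_0)}\bigg[\frac{|u-\xi|^p}{(S-R)^p} + \frac{\lambda^{p-2}|u-\xi|^2}{S^2-R^2} + \mu^p\bigg]\,\dx\dt.
\end{align*}
The claimed mean-value form~\eqref{energyestimateinequality} then follows by dividing the gradient integral by $|Q^{(\lambda)}_R(z_0)|=|B_R(x_0)|\lambda^{2-p}R^2$, dividing the supremum piece by $|B_R(x_0)|$ and multiplying it by $\lambda^{p-2}/R^2$, and using that $|Q^{(\lambda)}_R(z_0)|/|Q^{(\lambda)}_S(z_0)|\leq 2^{n+2}$ in view of $R\in[S/2,S)$. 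No conceptual obstacle arises here; the main bookkeeping difficulty lies in tracking the $\lambda^{p-2}$ factors produced by the intrinsic time scaling and in verifying that the resulting mean-value estimate is dimensionally consistent in $\lambda$, $R$, and $S$.
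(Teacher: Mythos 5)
Your proposal is correct and takes essentially the same approach as the paper: Steklov-averaged testing with $\varphi=[u-\xi]_h\,\zeta^p\psi_\epsilon$ (the paper packages your $\eta^p\chi$ as a single space-time cut-off $\zeta^p$ and uses the same auxiliary time function $\psi_\epsilon$), the same rewriting of the parabolic term via $\tfrac12\partial_t|u-\xi|^2$, the same lower bound on the principal diffusion part through Lemma~\ref{AbschätzungenfürAeins} and Lemma~\ref{lemmafurenergyestimate}, Young's inequality with exponents $(p,p/(p-1))$ to reabsorb, the essential supremum over the free time endpoint, and final normalization by $|Q^{(\lambda)}_R(z_0)|$ using $R\geq S/2$.
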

\begin{proof}
Instead of the weak formulation of Definition~\ref{definitionglobal} for~$u$, we choose the version in terms of Steklov-means pointed out in Section~\ref{subsec:mollificationintime}. Integrating with respect to~$t\in(0,T)$, this leads to
\begin{align*}
    \displaystyle\iint_{\Omega_T}\partial_t[u]_h \phi + \langle [A(x,t,Du)]_h, D\phi\rangle \,\dx\dt = 0
\end{align*}
for any test function~$\phi\in W^{1,p}_0$. Let~$\zeta\in C^1\big(z_0+Q^{(\lambda)}_S,[0,1]\big)$ be a cut-off function vanishing on the parabolic boundary of the sub-cylinder~$B_S(x_0)\times(t_0-S\lambda^{2-p},t_0)$, such that~$\zeta\equiv 1$ on~$z_0 +Q^{(\lambda)}_R$,~$|D\zeta|\leq \frac{2}{S-R}$ and~$|\partial_t\zeta| \leq \frac{2 \lambda^{p-2}}{S^2-R^2}$. We choose~$\epsilon>0$ small enough, such that~$t_0-\lambda^{2-p}R^2 <t_1 
<t_1+\epsilon< t_0$ holds true, and further define a Lipschitz function in time~$\psi_\epsilon~\in W^{1,\infty}\big(t_0+\Lambda^{(\lambda)}_R,[0,1]\big)$ by
\begin{align*}
    \psi_\epsilon(t) \coloneqq \begin{cases}
        1, & \mbox{for $t_0-\lambda^{2-p}S^2 \leq t \leq t_1$}\\
        1-\frac{t-t_1}{\epsilon}, & \mbox{for $t_1<t\leq t_1 +\epsilon$}\\
        0, & \mbox{for $t_1+\epsilon < t\leq t_0$}.
    \end{cases}
\end{align*}
 We test the weak form in terms of Steklov-means, given above, with~$\phi = \zeta^p\psi_\epsilon [u-\xi]_h$ and first treat the quantity involving the time derivative
\begin{align*}
    \displaystyle\iint_{\Omega_T}\partial_t[u]_h \phi\,\dx\dt &=  \displaystyle\iint_{\Omega_T}\partial_t[u-\xi]_h \phi\,\dx\dt \\
    &= \displaystyle\iint_{\Omega_T}\zeta^p\psi_\epsilon\partial_t[u-\xi]_h [u-\xi]_h\,\dx\dt \\
&= \displaystyle\iint_{\Omega_T}\frac{1}{2}\zeta^p\psi_\epsilon\partial_t[u-\xi]^2_h \,\dx\dt \\
&= - \displaystyle\iint_{\Omega_T} \frac{1}{2} (\zeta^p\psi'_\epsilon + \partial_t \zeta^p \psi_\epsilon )[u-\xi]^2_h \,\dx\dt.
\end{align*}
Passing to the limit~$h\downarrow 0$ in the preceding estimate yields
\begin{align*}
    \lim\limits_{h\downarrow 0}  \displaystyle\iint_{\Omega_T}\partial_t[u]_h \phi\,\dx\dt &= - \displaystyle\iint_{\Omega_T}\frac{1}{2} (\zeta^p\psi'_\epsilon + \partial_t \zeta^p \psi_\epsilon ) |u-\xi|^2 \,\dx\dt\\
    &=  -[ \foo{I} +  \foo{II}].
\end{align*}
Passing to the limit~$\epsilon \downarrow 0$ next, we obtain for the first term~$\foo{I}$
\begin{align*}
    \lim\limits_{\epsilon\downarrow 0} \foo{I} =  - \displaystyle\int_{B_S(x_0)\times\{t_1\}}\zeta^p |u-\xi|^2\,\dx,
\end{align*}
while for the second quantity~$\foo{II}$, we have
\begin{align*}
    \lim\limits_{\epsilon\downarrow 0} \foo{II} = \displaystyle\iint_{B_S(x_0)\times(t_0-\lambda^{2-p}S^2,t_1)}\partial_t\zeta^p |u-\xi|^2\,\dx\dt.
\end{align*}
Next, we will treat the diffusion term. By exploiting structure condition~$\eqref{voraussetzungen}_1$, Lemma~\ref{AbschätzungenfürAeins} and Lemma~\ref{lemmafurenergyestimate}, we obtain
\begin{align*}
    &\lim\limits_{h\downarrow 0}\displaystyle\iint_{\Omega_T}\langle [ A(x,t,Du) ]_h, D\phi\rangle \,\dx\dt \\
    &= \displaystyle\iint_{\Omega_T}\psi_\epsilon [\langle A(x,t,Du)-A(x,t,0), Du  \rangle \zeta^p + \langle A(x,t,0),Du\rangle\zeta^p ]\,\dx\dt \\
    & \quad+ \displaystyle\iint_{\Omega_T}\psi_\epsilon \langle A(x,t,Du), D\zeta \rangle p \zeta^{p-1}(u-\xi)\,\dx\dt
    \\
    &\geq C\displaystyle\iint_{\Omega_T}\psi_\epsilon\Big[ (\mu^2 + |Du|^2 )^{\frac{p-2}{2}}|Du|^2 \zeta^p -\mu^{p-1}(\mu^2+|Du|^2)^{\frac{1}{2}}\zeta^p\Big] \,\dx\dt \\
    & \quad - C\displaystyle\iint_{\Omega_T}p\zeta^{p-1} \psi_\epsilon (\mu^2 + |Du|^2 )^{\frac{p-1}{2}}|D\zeta| |u-\xi|\,\dx\dt \\
    &\geq C \displaystyle\iint_{\Omega_T}\psi_\epsilon \zeta^p\big[ (\mu^2 + |Du|^2 )^{\frac{p}{2}} -\mu^p\big]\,\dx\dt \\
    & \quad - C\displaystyle\iint_{\Omega_T}\psi_\epsilon |D\zeta|^p |u-\xi|^p\,\dx\dt
\end{align*}
with~$C=C(n,p,C_1,C_2)$. In the last step, we first applied Lemma~\ref{lemmafurenergyestimate}, and subsequently Young's inequality twice with exponents~$(p,\frac{p}{p-1})$ and a suitable choice of constants in order to reabsorb the term involving the spatial derivative~$Du$ twice. Passing to the limit~$\epsilon\downarrow 0$ and combining both estimates for the quantity involving the time derivative and for the diffusion term respectively, we obtain 
 \begin{align*} 
       &\displaystyle\int_{B_S(x_0)\times\{t_1\}}\zeta^p|u-\xi|^2\,\dx + \displaystyle\iint_{B_S(x_0)\times(t_0-\lambda^{2-p}S^2,t_1)} \zeta^p (\mu^2 + |Du|^2 )^{\frac{p}{2}}\,\dx\dt \\
        &\leq C \displaystyle\iint_{B_S(x_0)\times(t_0-\lambda^{2-p}S^2,t_1)}[ |D\zeta|^p |u-\xi|^p\,\dx\dt + \partial_t \zeta^p |u-\xi|^2 + \mu^p]\,\dx\dt 
    \end{align*}
with~$C=C(n,p,C_1,C_2)$. At this point, a standard argument finishes the proof. On the left-hand side we first discard the second integral and afterwards take the essential supremum with respect to~$t_1\in t_0 + \Lambda^{(\lambda)}_R$. Due to the assumptions made on the cut-off function~$\zeta$, we end up with
\begin{align*} 
        \esssup\limits_{t\in(t_0-\lambda^{2-p}R^2,t_0]}& \displaystyle\int_{B_R(x_0)}|u-\xi|^2\,\dx + \displaystyle\iint_{z_0+Q^{(\lambda)}_R} (\mu^2 + |Du|^2)^{\frac{p}{2}}\,\dx\dt \\
        &\leq C \displaystyle\iint_{z_0+Q^{(\lambda)}_S} \bigg[\frac{|u-\xi|^p}{(S-R)^p} + \lambda^{p-2} \frac{|u-\xi|^2}{S^2-R^2} + \mu^p \bigg]\,\dx\dt 
    \end{align*}
    with~$C = C(n,p,C_1,C_2)$. Finally, taking mean values on both sides, together with exploiting the assumption~$\frac{1}{2}S\leq R < S$, finishes the proof.
\end{proof}

\subsection{Schauder estimates for parabolic \texorpdfstring{$p$}{}-Laplacian type equations} \label{subsec:schauderplaplacetype} 
In this section, we will show that the gradient of any bounded weak solution~$u$ to~\eqref{pdeqwieder1} in the case~$\mu\in(0,1]$ and under the given structure conditions~\eqref{voraussetzungen}, exhibits local Hölder continuity within~$\Omega_T$. The strategy is to start by freezing the spatial variable~$x$, allowing us to utilize the \textit{a priori} gradient regularity result from~\ref{campanatoaprioritheorem} for a weak solution to a Cauchy-Dirichlet problem involving a more regular equation. This unique weak solution serves as a comparison function for the solution~$u$ itself. Exploiting the comparison estimates in~\ref{subsubsec:comparisonestimate}, the regularity of the comparison function is then transferred to the weak solution~$u$ itself. We assume~\textit{a priori} throughout this section that~$|Du|\in L^\infty_{\loc}(\Omega_T)$ holds true. The main result of this section is summarized in form of the following theorem.
\begin{mytheorem}\label{gradientholderschauder}
    Let~$u$ be a bounded weak solution to~\eqref{pdeqwieder1} under assumptions~\eqref{voraussetzungen} in the range~$p>1$ and~$\mu\in(0,1]$. Furthermore, assume that there holds~$|Du|\in L^{\infty}_{\loc}(\Omega_T)$. Then, there exist a Hölder exponent~$\alpha_{0}=\alpha_0(n,p,C_1,C_2,\alpha) \in (0,1)$ and a constant~$C=C(n,p,C_1,C_2,C_3,\alpha)\geq 1$, such that there holds
    $$Du\in C^{\alpha_0,\alpha_0/2}_{\loc}(\Omega_T,\R^n).$$
    Moreover, for any~$E\subset\Omega_T$, such that~$r\coloneqq \frac{1}{4}\dist_p(E,\partial_p \Omega_T)>0$ and any~$z_1,z_2\in E$ there hold
    the quantitative gradient estimate
    \begin{align}\label{gradientschrankeschauder}
        \esssup\limits_{E}|Du| \leq C \bigg[\frac{\omega}{r} + \Big(\frac{\omega}{r} \Big)^{\frac{2}{p}} +\mu\bigg] \eqqcolon \lambda
    \end{align}
    and the gradient Hölder estimate
\begin{align}\label{gradientholderschauderinequality}
    |Du(z_1)-Du(z_2)| \leq C \lambda \Bigg[ \frac{d^{(\lambda)}_p(z_1,z_2)}{\min\big\{1,\lambda^{\frac{p-2}{2}}\big\}r} \Bigg]^{\alpha_0},
\end{align}
where~$\omega \coloneqq \essosc\limits_{\Omega_T} u$.
\end{mytheorem}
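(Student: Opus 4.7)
The plan is to derive both the quantitative gradient bound \eqref{gradientschrankeschauder} and the Hölder estimate \eqref{gradientholderschauderinequality} via an intrinsic freezing/comparison argument, relying on the \textit{a priori} Campanato estimate from Theorem~\ref{campanatoaprioritheorem} and the comparison machinery of Lemma~\ref{comparison-lemma}.

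First I would establish the sup-bound \eqref{gradientschrankeschauder}. Fix $z_0\in E$ and consider an intrinsic cylinder $Q^{(\lambda)}_{2\rho}(z_0)\Subset\Omega_T$. The idea is to choose $\lambda$ as the smallest value for which the \textit{intrinsic coupling} $\esssup_{Q^{(\lambda)}_{2\rho}(z_0)}(\mu+|Du|_\infty)\leq L\lambda$ is compatible with the oscillation estimates. Combining the oscillation estimates of Lemma~\ref{oscillation-lemma-eins} (super-quadratic scale, yielding the $\omega/r$ term) and Lemma~\ref{oscillation-lemma-zwei} (sub-quadratic scale, yielding the $(\omega/r)^{2/p}$ term) with the energy estimate of Lemma~\ref{energyestimatelemma} gives, after a standard iteration/stopping-time argument on the ball-radius, the pointwise bound
\[
    \esssup_{E}|Du|\leq C\Big[\tfrac{\omega}{r}+\bigl(\tfrac{\omega}{r}\bigr)^{2/p}+\mu\Big]=\lambda,
\]
thereby fixing $\lambda$ and yielding $L=L(n,p,C_1,C_2)$ in \eqref{schrankelambda} on every intrinsic sub-cylinder sitting inside $E$.

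Next, to prove the Hölder estimate, I would freeze the spatial variable. For $z_0=(x_0,t_0)\in E$ and any intrinsic cylinder $\mathcal{Q}\coloneqq Q^{(\lambda)}_{\rho}(z_0)$ with $\rho\leq r$, I introduce $V(t,\xi)\coloneqq A(x_0,t,\xi)$, which satisfies \eqref{voraussetzungen-comparison} with constants $C_7,C_8$ depending only on $C_1,C_2,p$, and let $v$ be the unique solution to the Cauchy--Dirichlet problem \eqref{cauchy-dirichlet-comparison} on $\mathcal{Q}$ with $v=u$ on $\partial_p\mathcal{Q}$. By Lemma~\ref{comparisonprinciple} and the sup-bound already at hand, $v$ satisfies the same intrinsic coupling $\esssup_{\mathcal{Q}}(\mu+|Dv|_\infty)\leq CL\lambda$ (after a slight enlargement constant), so Theorem~\ref{campanatoaprioritheorem} applies to $v$ and yields the excess decay
\[
    \fiint_{Q^{(\lambda)}_{\theta\rho}(z_0)}\bigl|Dv-(Dv)^{(\lambda)}_{z_0;\theta\rho}\bigr|^{p}\,\dx\dt\leq C\theta^{p\beta}\lambda^{p}
\]
for all $\theta\in(0,1]$, with $\beta=\beta(n,p,L,C_1,C_2)\in(0,1)$. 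The comparison estimate \eqref{comparison-estimate-eins} from Lemma~\ref{comparison-lemma}, together with the Hölder structure assumption $\eqref{voraussetzungen}_3$, controls the error term
\[
    \|A(\cdot,Du)-V(\cdot,Du)\|_{L^{p'}(\mathcal{Q})}\leq C_3\rho^{\alpha}\lambda^{p-1}|\mathcal{Q}|^{1/p'},
\]
so that on intrinsic averages $\fiint_{Q^{(\lambda)}_{\rho}}|Du-Dv|^{p}\,\dx\dt\leq C\lambda^{p}\rho^{p\alpha/(p-1)}$ in the super-quadratic case, with the analogous sub-quadratic bound obtained from the second term of \eqref{comparison-estimate-eins}. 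Combining these with the triangle inequality leads to the iterable decay
\[
    \fiint_{Q^{(\lambda)}_{\theta\rho}(z_0)}\bigl|Du-(Du)^{(\lambda)}_{z_0;\theta\rho}\bigr|^{p}\,\dx\dt\leq C\lambda^{p}\bigl[\theta^{p\beta}+\theta^{-(n+2)}\rho^{p\bar\alpha}\bigr]
\]
for a suitable $\bar\alpha>0$ depending on $\alpha,p$.

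From this hybrid decay, a standard balancing of $\theta$ against $\rho$ (à la Giaquinta--Giusti) yields a Campanato-type estimate
\[
    \fiint_{Q^{(\lambda)}_{r}(z_0)}\bigl|Du-(Du)^{(\lambda)}_{z_0;r}\bigr|^{p}\,\dx\dt\leq C\lambda^{p}\bigl(r/\rho_{0}\bigr)^{p\alpha_{0}}
\]
for some $\alpha_{0}=\alpha_{0}(n,p,C_1,C_2,\alpha)\in(0,1)$ and all $r\in(0,\rho_{0}]$. Applying this uniformly in $z_0\in E$ and using Campanato's characterization of Hölder spaces in the intrinsic metric $d^{(\lambda)}_p$ gives \eqref{gradientholderschauderinequality}, where the factor $\min\{1,\lambda^{(p-2)/2}\}$ arises from translating intrinsic distances back to Euclidean ones in the mixed super-/sub-quadratic regime. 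The main technical obstacle I anticipate is the careful bookkeeping of how the constant $L$ in \eqref{schrankelambda} is transferred to the comparison function $v$ (the enlargement cannot blow up through iteration), and correctly combining the two terms in \eqref{comparison-estimate-eins} to produce a single fractional power of $\rho$ that survives the Moser-type balancing without losing the $\alpha$-dependence in $\alpha_{0}$.
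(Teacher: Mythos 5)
Your overall architecture — freeze the spatial variable, compare with the frozen-coefficient solution, apply Theorem~\ref{campanatoaprioritheorem} to the comparison function, and transfer the Campanato decay back to~$u$ via Lemma~\ref{comparison-lemma} — is correct and matches the paper's approach. However, the first step of your proposal contains a genuine gap. You claim to establish the quantitative sup-bound~\eqref{gradientschrankeschauder} \emph{before} and independently of the Campanato argument, by ``combining the oscillation estimates of Lemma~\ref{oscillation-lemma-eins} and Lemma~\ref{oscillation-lemma-zwei} with the energy estimate of Lemma~\ref{energyestimatelemma}\ldots after a standard iteration/stopping-time argument.'' This does not close: the oscillation estimates bound~$\essosc u$ \emph{in terms of}~$\|Du\|_{L^\infty}$ (the wrong direction), and the energy estimate in Lemma~\ref{energyestimatelemma} only controls the~$L^p$-mean of~$(\mu^2+|Du|^2)^{p/2}$, not the pointwise supremum. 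Bootstrapping from~$L^p$-control to~$L^\infty$-control for~$Du$ requires either a Moser/De Giorgi iteration on differentiated energy estimates (Propositions~\ref{gradientboundednesssubcritical} and~\ref{gradientboundednesssupercritical}, which require differentiability of~$A$ in~$x$ and hence do \emph{not} apply to~$u$ under assumptions~\eqref{voraussetzungen}), or the Campanato route. In the paper the order is the reverse of yours: one first postulates the abstract coupling~\eqref{gradientschranke} on a compactly contained cylinder (legitimate since~$|Du|\in L^\infty_{\loc}$ qualitatively), runs the comparison argument to obtain the Campanato decay~\eqref{campanatoestimateuextended} and the Lebesgue-representative estimate~\eqref{lebesguerepresentant} with a \emph{free} interpolation parameter~$\epsilon$, and only then combines~\eqref{lebesguerepresentant} with the energy estimate of Lemma~\ref{energyestimatelemma} to close the loop and produce the quantitative bound~\eqref{upperboundforgradient}. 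That free~$\epsilon$ is what makes the self-improving step work; your stopping-time argument has no such handle.

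A second, smaller issue concerns the transfer of the intrinsic coupling to the comparison function~$v$. You write that ``by Lemma~\ref{comparisonprinciple} and the sup-bound already at hand, $v$ satisfies the same intrinsic coupling\ldots (after a slight enlargement constant).'' The comparison principle only gives~$\essosc v\le \essosc u$; it gives no direct relation between~$|Dv|$ and~$|Du|$. To obtain~$\esssup|Dv|\le CL\lambda$ one must invoke Propositions~\ref{gradientboundednesssubcritical} and~\ref{gradientboundednesssupercritical} \emph{for}~$v$ — which is legitimate precisely because the frozen vector field~$A(x_0,t,\cdot)$ trivially satisfies~$\eqref{voraussetzungendiffbarkeit}_3$ with~$C_6=0$ — and couple them with the oscillation estimate~\eqref{oscillationestimate-zwei} and the~$L^p$-comparison bound~\eqref{comparison-estimate-zwei}; this is exactly the chain~\eqref{comparisonabschzwei}--\eqref{schrankecomparisonfunction} in the paper, and it is not optional.
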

\begin{proof} 
Instead of providing a detailed proof of Theorem~\ref{gradientholderschauder}, we will only outline the major steps since our approach closely resembles the approach taken for the prototype equation, which is performed in~\cite[10.5 Schauder estimates for Lipschitz solutions]{gradientholder}. We start by freezing the spatial variable. Let us consider nested cylinders~$Q_{\rho_1}(\hat{z}) \subset Q_{\rho_2}(\hat{z})\Subset \Omega_T$ for arbitrary radii~$0<\rho_1 < \rho_2\leq 1$. Further, let~$\lambda\geq \mu >0$, and assume that the following upper bound for the gradient is satisfied
\begin{align} \label{gradientschranke}
    \esssup\limits_{Q_{\rho_2}(\hat{z})}(\mu^2 + |Du |^2)^{\frac{1}{2}}\leq \lambda.
\end{align}
We consider a radius~$\rho\in(0,\frac{\rho_0}{8})$,
where
$$\rho_0 \coloneqq \frac{1}{2}\min\big\{1,\lambda^{\frac{p-2}{2}}\big\}(\rho_2-\rho_1).$$
This choice implies that for any~$z_0 \in Q_{\rho_1}(\hat{z})$ there holds~$Q^{(\lambda)}_{R}(z_0) \subset Q_{\rho_2}(\hat{z})$, as long as~$\frac{R}{2}\leq \rho_0$. For some~$\kappa \in (0,1)$ fixed later, we set
\begin{align} \label{radiuswahl}
    R \coloneqq \Big(\frac{8\rho}{\rho_0}\Big)^\kappa \rho_0\quad \Longleftrightarrow\quad \rho = \frac{1}{8}\Big(\frac{R}{\rho_0} \Big)^{\frac{1}{\kappa}}\rho_0,
\end{align}
which implies~$\rho<\frac{R}{8}$ and also~$R<\rho_0$. Since~$z_0$ is now fixed, we will omit the vertex~$z_0$ from all cylinders in the following. As before, let
$$v\in C\big(\Lambda^{(\lambda)}_R;L^2(B_R)\big)\cap L^p\big(\Lambda^{(\lambda)}_R;W^{1,p}(B_R)\big)$$
denote the unique weak solution to the Cauchy-Dirichlet problem
\begin{align}\label{cauchy-dirichlet-comparison-freeze}
        \begin{cases}
        \partial_t v - \divv A(x_0,t,Dv) = 0 &\,\,\mbox{in $Q^{(\lambda)}_R$,} \\[3pt]
        v=u & \,\,\mbox{on $\partial_p Q^{(\lambda)}_R$}, \\[3pt]
        \end{cases}
\end{align}
where the boundary values~$u$ are taken in the sense of Definition~\ref{definitioncauchydirichletproblem}. \,\\

Our next aim is to utilize the~\textit{a priori} estimate~\eqref{campanatoaprioriungleichung} from Theorem~\ref{campanatoaprioritheorem} and derive a Campanato type estimate for the comparison function~$v$ first. Since~$u \in L^\infty(Q^{(\lambda)}_R)$ and due to~$v=u$ on~$\partial_p Q^{(\lambda)}_R$, the comparison principle from Lemma~\ref{comparisonprinciple} implies that~$v \in L^\infty(Q^{(\lambda)}_R)$ and, moreover, yields the estimate
$$\essosc \limits_{Q^{(\lambda)}_R}v \leq \essosc \limits_{Q^{(\lambda)}_R}u \eqqcolon \omega^{(\lambda)}_R.$$
We note that structure conditions~\eqref{voraussetzungen-comparison} are satisfied for~$A(x,t,\xi)$ and~$A(x_0,t,\xi)$ due to Lemma~\ref{AbschätzungenfürAeins}. Therefore, we are in position to apply Lemma~\ref{comparison-lemma} and the mean value theorem. This yields 
\begin{align} \label{comparisonabscheins}
    \displaystyle&\iint_{Q^{(\lambda)}_R} |Du-Dv|^p\,\dx\dt \\
    &\leq 
    C \left\|A(\cdot,Du)-A(x_0,\cdot,Du) \nonumber \right\|^{p'}_{L^{p'}(Q^{(\lambda)}_R)} \\ \nonumber
        & \quad+ \bigchi_{(1,2)}(p)\, C\left\|A(\cdot,Du)-A(x_0,\cdot,Du) \right\|^{p}_{L^{p'}(Q^{(\lambda)}_R)}\bigg[\displaystyle\iint_{Q^{(\lambda)}_R}(\mu^2 + |Du|^2 )^{\frac{p}{2}}\,\dx\dt \bigg]^{2-p} \nonumber \\
        &\leq C R^{\alpha_* p} \displaystyle\iint_{Q^{(\lambda)}_R} (\mu^2 + |Du|^2)^{\frac{p}{2}}\,\dx\dt \nonumber
\end{align}
as well as the estimate
\begin{align} \label{comparisonabschzwei}
        \displaystyle\iint_{Q^{(\lambda)}_R}(\mu^2 + |Dv|^2 )^{\frac{p}{2}}\,\dx\dt &\leq C \displaystyle\iint_{Q^{(\lambda)}_R}(\mu^2
        + |Du|^2 )^{\frac{p}{2}}\,\dx\dt 
    \end{align}
with~$C=C(n,p,C_2,C_3)$, and 
\begin{align}\label{alpha*}
    \alpha_* = \begin{cases}
        \alpha, & \mbox{if $1<p< 2$} \\
        \frac{\alpha}{p-1}, & \mbox{if $p\geq 2$}.
    \end{cases}
\end{align}
Note that by defining~$\alpha_*$, we exploited the assumption~$R\leq 1$. Next, we aim to verify that condition~\eqref{schrankelambda} is satisfied for the comparison solution~$v$. Proposition~\ref{gradientboundednesssubcritical} and Proposition~\ref{gradientboundednesssupercritical} imply~$|Dv|\in L^\infty(Q^{(\lambda)}_R)$ for any~$p>1$. Furthermore, there exist quantitative estimates. In the range~$1<p<2$, Proposition~\ref{gradientboundednesssubcritical} yields for any~$\epsilon\in(0,1]$ the estimate
\begin{align}\label{gradientboundednesssubcriticalestimatecomparisonfunction}
         \esssup\limits_{Q^{(\lambda)}_{\frac{R}{4}}}|Dv| \leq C\epsilon\lambda &+ \frac{C\lambda^{\frac{1}{2}}}{\epsilon^{\theta}}\bigg[\Big(\frac{\omega^{(\lambda)}_R}{R \lambda} \Big)^{\frac{2}{p}} + \frac{\omega^{(\lambda)}_R}{R \lambda} + \frac{\mu}{\lambda} \bigg]^{\frac{n(2-p)+2p}{4p}} \\
         &\cdot \bigg[\displaystyle\fiint_{Q^{(\lambda)}_{R}}(\mu^2 + |Dv|^2)^{\frac{p}{2}}\,\dx\dt \bigg]^{\frac{1}{2p}} \nonumber
     \end{align}
     with~$\theta=\theta(n,p)>0$, and~$C=C(n,p,C_1,C_2)$. Note that~$C$ indeed is independent of structure constant~$C_6$ due to the frozen spatial variable~$x_0$ in the vector field~$A(x_0,t,\xi)$, such that assumption~$\eqref{voraussetzungendiffbarkeit}_3$ holds true with the choice~$C_6=0$. By employing the oscillation estimate from Lemma~\ref{oscillation-lemma-zwei}, together with inequality~\eqref{gradientschranke}, we have the bounds
     $$\frac{\omega^{(\lambda)}_R}{R} \leq C(n,C_1)\lambda,\qquad \frac{\mu}{\lambda}\leq 1.$$
 Thus, the reasoning above and inequality~\eqref{comparisonabschzwei} imply for any~$\epsilon\in(0,1]$ the following
\begin{align}\label{supercriticalestimatecomparisonfunction}
         \esssup\limits_{Q^{(\lambda)}_{\frac{R}{4}}}|Dv| \leq C\epsilon\lambda &+ \frac{C\lambda^{\frac{1}{2}}}{\epsilon^{\theta}}\bigg[\displaystyle\fiint_{Q^{(\lambda)}_{R}}(\mu^2 + |Dv|^2)^{\frac{p}{2}}\,\dx\dt \bigg]^{\frac{1}{2p}},
     \end{align}
where~$\theta=\theta(n,p)>0$, and~$C=C(n,p,C_1,C_2)$. In the case~$p\geq 2$, Proposition~\ref{gradientboundednesssupercritical} together with~\eqref{gradientschranke} and~\eqref{comparisonabschzwei},
 yield for any~$\epsilon\in(0,1]$ the very same estimate~\eqref{supercriticalestimatecomparisonfunction}. In what follows, we will consistently refer to equation~\eqref{supercriticalestimatecomparisonfunction} in a comprehensive manner encompassing both scenarios, where~$1<p<2$ and~$p\geq 2$. An application of~\eqref{supercriticalestimatecomparisonfunction} with the choice~$\epsilon=1$ and exploiting the bound~\eqref{gradientschranke} yields the following estimate
\begin{equation} \label{schrankecomparisonfunction}
    \esssup\limits_{Q^{(\lambda)}_{\frac{R}{4}}}(\mu^2 + |Dv|^2)^{\frac{1}{2}} \leq \mu+2C\lambda\leq C\lambda
\end{equation}
with~$C=C(n,p,C_1,C_2)\geq 1$. Due to~\eqref{schrankecomparisonfunction}, we are in position to apply the \textit{a priori} gradient estimate from Theorem~\ref{campanatoaprioritheorem} with~$L\coloneqq C$. The latter yields the bound
$$\displaystyle\fiint_{Q^{(\lambda)}_\rho}\big|Dv-(Dv)^{(\lambda)}_\rho\big|^p\,\dx\dt \leq C \Big(\frac{\rho}{R}\Big)^{p\beta}\lambda^p$$
for any $0<\rho<\frac{R}{8}$, with $\beta=\beta(n,p,C_1,C_2)\in(0,1)$, and $C=C(n,p,C_1,C_2)>0$. Combining inequality~\eqref{supercriticalestimatecomparisonfunction} and the preceding \textit{a priori} gradient estimate, this yields for any~$\epsilon\in(0,1]$ 
\begin{align} \label{campanatoestimatecomparisonfunctionextended}
    \displaystyle\fiint_{Q^{(\lambda)}_\rho}& \big|Dv-(Dv)^{(\lambda)}_\rho\big|^p\,\dx\dt \\ \nonumber
 &\leq C \Big(\frac{\rho}{R}\Big)^\beta \bigg[\epsilon^{p-1}\lambda^p + \epsilon^{-\hat{\theta}}  
 \displaystyle\fiint_{Q^{(\lambda)}_{R}}(\mu^2 + |Du|^2)^{\frac{p}{2}}\,\dx\dt \bigg]  \nonumber
\end{align}
with~$\hat{\theta}=\hat{\theta}(n,p)$, and~$C=C(n,p,C_1,C_2)$. \,\\

Subsequently, as already mentioned above, we derive a campanato type estimate for the spatial derivative of the weak solution~$u$ itself. Due to the Campanato type estimate~\eqref{campanatoestimatecomparisonfunctionextended}, we are in position to exploit the comparison inequality~\eqref{comparisonabscheins} and the comparison estimates from Lemma~\ref{comparison-lemma}. This necessitates the selection of the free parameter~$\kappa$ as
$$\kappa \coloneqq \frac{n+2+\beta}{n+2+\beta+\alpha_* p}$$
and the Hölder exponent~$\alpha_0$ as
\begin{align} \label{alpha0}
    \alpha_0 \coloneqq \frac{\alpha_* \beta}{2(n+2+\beta + \alpha_* p)}\in(0,1).
\end{align} 
Accordingly, the Hölder exponent~$\alpha_0$ only depends on the data~$n,p,C_1,C_2,\alpha$, but is independent of structure constant~$C_3$. Furthermore, we obtain for any~$\rho\in(0,\rho_0)$ and any~$\epsilon\in(0,1]$ the estimate 
\begin{align}\label{campanatoestimateuextended}
    \displaystyle\fiint_{Q^{(\lambda)}_\rho} \big|Du - (Du)^{(\lambda)}_{\rho} \big|\,\dx\dt \leq \Big(\frac{\rho}{\rho_0}\Big)^{\alpha_0 p} \bigg[\epsilon^{p-1}\lambda^p + \epsilon^{-\Tilde{\theta}p} \displaystyle\fiint_{Q^{(\lambda)}_{\rho_0}} (\mu^2 + |Du|^2 )^{\frac{p}{2}}\,\dx\dt \bigg]
\end{align}
 with~$\Tilde{\theta}>0$, and~$C=C(n,p,C_1,C_2,C_3)$.
In particular, the choice~$\epsilon=1$ and the upper bound~\eqref{gradientschranke}, together with the preceding estimate, imply
\begin{align} \label{campanatoestimateu}
    \displaystyle\fiint_{Q^{(\lambda)}_\rho} \big|Du - (Du)^{(\lambda)}_{\rho} \big|\,\dx\dt \leq C \Big(\frac{\rho}{\rho_0}\Big)^{\alpha_0 p}\lambda^p
\end{align}
with~$C=C(n,p,C_1,C_2,C_3)$. \,\\

Next, a local gradient bound for~$u$ can be obtained. First, we achieve that the limit
$$\Gamma_{z_0} \coloneqq \lim\limits_{i\to\infty}(Du)^{(\lambda)}_{r_i},$$
where~$r_i \coloneqq \frac{\rho_0}{2^i}$ and~$\rho_0$ denotes the radius from the Campanato type estimate~\eqref{campanatoestimateuextended}, exists and that for any~$\rho\in(0,\rho_0]$ and any~$\epsilon\in(0,1]$, there holds
\begin{align}\label{lebesguerepresentant}
    \big|\Gamma_{z_0} - (Du)^{(\lambda)}_r\big| \leq C \Big(\frac{\rho}{\rho_0} \Big)^{\alpha_0} \Bigg[\epsilon^{\frac{1}{p'}}\lambda + \epsilon^{-\Tilde{\theta}}\bigg[\displaystyle\fiint_{Q^{(\lambda)}_{\rho_0}}(\mu^2 + |Du|^2 )^{\frac{p}{2}}\,\dx\dt \bigg]^{\frac{1}{p}} \Bigg]
\end{align}
 with~$C=C(n,p,C_1,C_2,C_3,\alpha)$, where~$\Tilde{\theta}>0$ still denotes the parameter from estimate~\eqref{campanatoestimateuextended}. We note that the constant~$C$ additionally exhibits a dependence on the Hölder exponent~$\alpha\in(0,1)$. This reasoning shows that~$\Gamma_{z_0}$ is the Lebesgue representative of~$Du$ in~$z_0$. To obtain an upper bound for the gradient, we define 
\begin{align} \label{abstand}
r \coloneqq \frac{1}{4} \dist_{\p}(E,\partial_p\Omega_T),    
\end{align}
where~$E\subset \Omega_T$. With this choice, there holds~$Q_{2r}(\hat{z}) \Subset\Omega_T$ for any~$\hat{z}\in E$. Setting 
$$E_{\p} \coloneqq \displaystyle\bigcup\limits_{\hat{z}\in E}Q_r(\hat{z}),$$
the energy estimate from Lemma~\ref{energyestimatelemma} in turn yields the bound
\begin{equation} \label{upperboundforgradient}
\esssup\limits_{E_{\p}}|Du| < \esssup\limits_{E_{\p}}(\mu^2 + |Du|^2 )^{\frac{1}{2}} \leq C\bigg[\frac{\omega}{r} + \Big(\frac{\omega}{r} \Big)^{\frac{2}{p}} +\mu\bigg]    
\end{equation}
with~$C=C(n,p,C_1,C_2,C_3,\alpha)$, where~$\omega\coloneqq \essosc\limits_{\Omega_T} u$. Since~$E\subset E_p$, this establishes the local gradient bound~\eqref{gradientschrankeschauder}.\,\\

Finally, the local Hölder continuity of the gradient~$Du$ can be obtained, due the fact that inequalities~\eqref{gradientschranke},~\eqref{campanatoestimateu},~\eqref{lebesguerepresentant},~\eqref{upperboundforgradient}, the definition of~$r$, and the assumption~$|Du|\in L^\infty_{\loc}(\Omega_T)$ all are at our disposal. In turn, this leads to the gradient Hölder estimate~\eqref{gradientholderschauderinequality} and finishes the proof idea of Theorem~\ref{gradientholderschauder}.
\end{proof}

\subsection{Approximation:~The case \texorpdfstring{$\mu=0$}{}} \label{subsec:approximation} 
In this section, we will relinquish the assumption~$|Du|\in L^{\infty}_{\loc}(\Omega_T)$ from the previous section and treat the missing case~$\mu=0$ by an approximation argument. As a result, we derive Theorem~\ref{gradientholderschaudermugleich0}.
\begin{proof}[\textbf{\upshape Proof of Theorem~\ref{gradientholderschaudermugleich0}}]
Let the vector field~$A$ satisfy structure conditions~\eqref{voraussetzungen} in the case where~$\mu=0$. Further, let~$E\subset\Omega_T$, such that~$r\coloneqq \frac{1}{4}\dist_p(E,\partial_p \Omega_T)>0$ holds true. The idea is to approximate the vector field~$A$ by a sequence of vector fields~$A_{\epsilon_j}$ satisfying the set of assumptions~\eqref{voraussetzungen} and additionally~$\eqref{voraussetzungendiffbarkeit}_3$ for some parameter~$\epsilon_j>0$. We consider the subset~$\Tilde{E}\coloneqq\{x\in E:\,\text{dist}(x,\partial E)>\epsilon\}$ for a parameter~$0<\epsilon<r$ and take some sequence~$(\epsilon_j)_{j\in\N}\in(0,\epsilon)$ with~$\epsilon_j \downarrow 0$ as~$j\to\infty$. Let~$\phi\in C^\infty_0(B_1)$ denote a standard mollifier in space with~$\int_{B_1}\phi(x)\,\dx = 1$ and consider its scaled version~$\phi_{\epsilon_j}(\cdot)\coloneqq \epsilon^{-n}_j\phi(\frac{\cdot}{\epsilon_j})$ that is compactly supported in~$B_{\epsilon_j}$. We define the~$i$-th component of the regularized vector field~$A_{\epsilon_j}$ by
 $$A_{\epsilon_j,i}(x,t,\xi)\coloneqq \int_{B_{\epsilon_j}}A_i(y,t,\xi)\phi_{\epsilon_j}(x-y)\,\dy$$
 for a.e.~$(x,t)\in \Tilde{E}_T\coloneqq\Tilde{E}\times(0,T)$,~$i=1,...,n$,~$\xi\in\R^n$, which is finite due to the~$p$-growth assumption~$\eqref{voraussetzungen}_1$ on~$A$. The Hölder regularity with respect to the spatial variable~$x$ of~$A$ according to assumption~$\eqref{voraussetzungen}_3$ yields the estimate
 \begin{equation} \label{diffhölder}
     |A_i(x,t,\xi)-A_{\epsilon_j,i}(x,t,\xi)|\leq C_3 \epsilon^\alpha_j |\xi|^{\frac{p-1}{2}}
 \end{equation}
 for a.e.~$(x,t)\in \Tilde{E}_T$,~$\xi\in\R^n$. In order to perform the approximation, we consider unique weak solutions 
$$u_j \in C\big([0,T];L^2(\Tilde{E})\big)\cap L^p\big(0,T;W^{1,p}(\Tilde{E})\big)$$
to the Cauchy-Dirichlet problem
\begin{align*}
        \begin{cases}
        \partial_t u_j - \divv \Tilde{A}_{\epsilon_j}(x,t,Du_j) = 0 &\,\,\mbox{in $\Tilde{E}$,} \\[3pt]
        u_j=u & \,\,\mbox{on $\partial_p \Tilde{E}_T$,} \\[3pt]
        \end{cases}
\end{align*}
where the approximating vector fields are given by
\begin{align}\label{approximatingvectorfields}
\Tilde{A}_{\epsilon_j}(x,t,\xi)\coloneqq A_{\epsilon_j}(x,t,\xi) + \epsilon_j(1 + |\xi|^2 )^{\frac{p-2}{2}}\xi    
\end{align} 
for~$(x,t)\in\Tilde{E}_T$, and~$\xi\in\R^n$. The solutions~$u_j$ will serve as comparison functions for~$u$. Note that by construction,~$\Tilde{A}_{\epsilon_j}$ additionally admits a bounded derivative with respect to the spatial variable~$x$, i.e. condition~$\eqref{voraussetzungendiffbarkeit}_3$ is satisfied due to the bound
 $$|\partial_x \Tilde{A}_{\epsilon_j,i}(x,t,\xi)| \leq \frac{C_1 \|D\phi\|_{L^1}}{\epsilon_j}|\xi|^{p-1}$$
 for any~$i=1,...,n$,~$(x,t)\in\Tilde{E}_T$, and~$\xi\in\R^n$. Further, a direct calculation verifies
\begin{align*}
    \partial_\xi \Big[\epsilon_j(1 + |\xi|^2 )^{\frac{p-2}{2}}\xi \Big] =  \epsilon_j\Big[(1 + |\xi|^2 )^{\frac{p-2}{2}}\foo{I}_n + (p-2)(1 + |\xi|^2 )^{\frac{p-4}{2}}(\xi\otimes\xi)\Big].
\end{align*}
Due to our construction, the approximating vector fields~$\Tilde{A}_{\epsilon_j}$ satisfy the~$p$-growth structure conditions~\eqref{voraussetzungen} with the parameter~$\mu=0$ replaced by~$\epsilon_j$, and with positive structure constants~$\Tilde{C}_1 = \Tilde{C}_1(p,C_1),\Tilde{C}_2 = \Tilde{C}_2(p,C_2),C_3$, and a Hölder exponent~$\alpha\in(0,1)$. In particular, the ellipticity condition~$\eqref{voraussetzungen}_2$ holds true with a constant~$\Tilde{C}_2$ that is independent of the approximating parameter~$\epsilon_j$, i.e. we have
$$\langle \partial_{\xi} \Tilde{A}_{\epsilon_j}(x,t,\xi) \eta,\eta\rangle \geq \Tilde{C}(p,C_2)(\epsilon^2_j+|\xi|^2)^{\frac{p-2}{2}}|\eta|^2$$
for a.e.~$(x,t)\in\Tilde{E}_T$, and any~$\xi,\eta\in\R^n$. In fact, it is worth noting that for the treatment of the super-quadratic case where~$p> 2$, we need to replace the additional term~$\epsilon_j(1 + |\xi|^2 )^{\frac{p-2}{2}}\xi$ in the approximating vector fields~$\Tilde{A}_{\epsilon_j}$ with the quantity~$\epsilon^{p-2}_j(1 + |\xi|^2 )^{\frac{p-2}{2}}\xi$. This adjustment allows us to obtain homogeneous estimates, and through a convexity argument, it can be verified that the vector fields~$\Tilde{A}_{\epsilon_j}$ indeed satisfy the~$p$-growth structure conditions~\eqref{voraussetzungen}. However, in order to avoid a separate treatment of the cases where~$p\leq 2$ and~$p> 2$, we adopt a unified approach by only considering the vector fields~$\Tilde{A}_{\epsilon_j}$ with the quantity~$\epsilon_j(1 + |\xi|^2 )^{\frac{p-2}{2}}\xi$ as described above. Consequently, we apply the comparison estimate~\eqref{comparison-estimate-eins} from Lemma~\ref{comparison-lemma} with the vector field~$V(x,t,\xi) = \Tilde{A}_{\epsilon_j}(x,t,\xi)$ and with structure constants~$C_7=\Tilde{C}_1$,~$C_8=\Tilde{C}_2$. Together with growth condition~$\eqref{voraussetzungen}_1$, this yields for any~$j\in\N$ the following estimate
\begin{align*}
   \displaystyle\iint_{\Tilde{E}_T}|Du-Du_j|^p\,\dx\dt
        &\leq C \big(\epsilon_j^{\alpha p}+ \epsilon_j ^{\alpha p'}+\epsilon^{p}_j +\epsilon^{p'}_j \big) \displaystyle\iint_{\Tilde{E}_T}(1+|Du|^2)^{\frac{p}{2}}
\end{align*}
with~$C=C(p,C_2)$. Passing to the limit~$j\to\infty$ in the preceding estimate, we obtain
\begin{align*}
    Du_{j} \to Du\qquad\mbox{in $L^p(\Tilde{E}_T,\R^n)$\,\,as $j\to\infty$}.
\end{align*}
Furthermore, Lemma~\ref{AbschätzungenfürAeins} ensures that the assumptions of the comparison principle from Lemma~\ref{comparisonprinciple} are satisfied, and we obtain the bound
\begin{align}\label{oscillationapproximation}
    \essosc \limits_{\Tilde{E}_T}u_j \leq \essosc \limits_{\Tilde{E}_T}u<\infty\qquad\mbox{for any $j\in\N$}.
\end{align}
The preceding estimate implies~$u_j\in L^{\infty}(\Tilde{E}_T)$ for any~$j\in\N$. Thus, the gradient bounds from Proposition~\ref{gradientboundednesssubcritical} and Proposition~\ref{gradientboundednesssupercritical} are at our disposal, and we infer~$|Du_j|\in L^{\infty}(\Tilde{E}_T)$ for any~$j\in\N$. At this point, we are in position to apply Theorem~\ref{gradientholderschauder}. In particular, we obtain
\begin{align}\label{gradientschrankeapproximation}
        \esssup\limits_{E}|Du_j| \leq C \bigg[\frac{\omega_j}{r}+ \Big(\frac{\omega_j}{r} \Big)^{\frac{2}{p}}+\epsilon_j\bigg] \eqqcolon \lambda_j 
    \end{align}
    and
\begin{align}\label{hölderstetigkeitapproximation}
    |Du_j(z_1)-Du_j(z_2)| \leq C \lambda_j \Bigg[ \frac{d^{(\lambda_j)}_p(z_1,z_2)}{\min\big\{1,\lambda_j ^{\frac{p-2}{2}}\big\}r} \Bigg]^{\alpha_0}
\end{align}
for any~$z_1,z_2\in E$, with~$C=C(n,p,C_1,C_2,C_3,\alpha)\geq 1$ and~$\alpha_0=\alpha_0(n,p,C_1,C_2,\alpha)\in(0,1)$, where~$\omega_j\coloneqq  \essosc\limits_{\Omega_T} u_j$. Due to~\eqref{oscillationapproximation}, this implies
\begin{align}\label{lambdaischranke}
    \limsup\limits_{j\to\infty}\lambda_j \leq  C \bigg[\frac{\omega}{r} + \Big(\frac{\omega}{r} \Big)^{\frac{2}{p}} 
    \bigg] \eqqcolon \lambda. 
\end{align}
A similar reasoning to~\cite[10.6.~Approximation]{gradientholder} establishes that the mapping
\begin{align*}
    \lambda_j \mapsto C \lambda \Bigg[ \frac{d^{(\lambda_j)}_p(z_1,z_2)}{\min\big\{1,\lambda_j ^{\frac{p-2}{2}}\big\}r} \Bigg]^{\alpha_0}
\end{align*}
is monotonically increasing. Passing to the limit~$j\to\infty$ in~\eqref{hölderstetigkeitapproximation} and exploiting~\eqref{lambdaischranke} thus leads to
\begin{align} \label{hölderabschätzunggleichmäßig}
    \limsup\limits_{j\to\infty}|Du_j(z_1)-Du_j(z_2)| \leq C \lambda \Bigg[ \frac{d^{(\lambda)}_p(z_1,z_2)}{\min\big\{1,\lambda ^{\frac{p-2}{2}}\big\}r} \Bigg]^{\alpha_0}
\end{align}
for any~$z_1,z_2\in E$. The inequalities~\eqref{gradientschrankeapproximation},~\eqref{lambdaischranke}, and~\eqref{hölderabschätzunggleichmäßig} together show that the sequence~$(Du_j)_{j\in\N}\subset C^{\alpha_0,\alpha_0/2}(E,\R^n)$ is uniformly bounded and equicontinuous. Due to the fact that~$Du_j \to Du$ in $L^p(E,\R^n)$ as~$j\to\infty$, the theorem of Arzel\`{a}-Ascoli yields the estimates~\eqref{gradientschrankeschaudermugleich0} and~\eqref{gradientholderschauderinequalitymugleich0}. Overall, we have proven the theorem. 
\end{proof}

\section{Hölder continuity of the gradient}\label{sec:höldercontinuityofthegradient}
In this section, we finally show that the gradient of any non-negative weak solution to equations of the type
\begin{align*}
    \partial_t u^q - \divv A(x,t,Du)=0\qquad \mbox{in $\Omega_T$},
\end{align*}
assuming the structure conditions~\eqref{voraussetzungen}, is locally Hölder continuous in~$\Omega_T$, and give the proof our main regularity result that is Theorem~\ref{hölderstetigkeitohnegrößer0}. This is accomplished by utilizing the Schauder estimates for equations of parabolic~$p$-Laplacian type from Section~\ref{sec:schauderestimates} and the time-insensitive Harnack inequality from~\cite[Theorem~1.10]{gradientholder}, which is valid in the super-critical fast diffusion regime~$0<p-1<q<\frac{n(p-1)}{(n-p)_+}$.
To state the Harnack inequality, we require the following $p$-growth and coercivity assumptions
\begin{align}\label{voraussetzungenhauptresultat}
    \begin{cases}
   |A(x,t,\xi)| \leq \Tilde{C}_1|\xi|^{p-1} & \,\\
   \langle A(x,t,\xi),\xi\rangle \geq \Tilde{C}_2|\xi|^{p} & \,
    \end{cases}
\end{align}
for a.e.~$(x,t)\in\Omega_T$ and any~$\xi\in\R^n$, where~$\Tilde{C}_1, \Tilde{C}_2$ denote positive constants. 

\begin{mytheorem} [Harnack inequality] \label{Harnackinequality}
   Let~$0<p-1<q<\frac{n(p-1)}{(n-p)_+}$ and~$u$ be a non-negative and continuous weak solution to~\eqref{pde} under assumptions~\eqref{voraussetzungenhauptresultat}. Then, there exist~$\gamma>1$ and~$\sigma\in(0,1)$, both depending on~$n,p,q,\Tilde{C}_1,\Tilde{C}_2$, such that:~if $u(x_0,t_0)>0$ and the set inclusion
\begin{align}\label{harnacksetinclusion}
    B_{8\rho}(x_0) \times \big (t_0 - \mathcal{L}^{q-p+1}(8\rho)^p,t_0 + \mathcal{L}^{q-p+1}(8\rho)^p \big)\subset \Omega_T,
\end{align}
where 
$$\mathcal{L}\coloneqq \esssup\limits_{B_\rho(x_0)}u(\cdot,t_0),$$
holds true, then for any
$$(x,t)\in B_\rho(x_0)\times \big( t_0 -\sigma[u(x_0,t_0)]^{q-p+1}\rho^p,t_0 +\sigma[u(x_0,t_0)]^{q-p+1}\rho^p \big)$$
there holds
   \begin{align}\label{harnackestimate}
      \gamma^{-1}u(x_0,t_0) \leq u(x,t) \leq \gamma u(x_0,t_0). 
   \end{align}
\end{mytheorem}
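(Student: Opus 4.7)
The plan is to deduce Theorem~\ref{Harnackinequality} from the corresponding statement for the prototype equation~\eqref{pdemitplaplace} established in~\cite[Theorem~1.10]{gradientholder}. The key observation is that the proof in~\cite{gradientholder} is not driven by the specific form $A(x,t,\xi)=|\xi|^{p-2}\xi$, but only by the two features of the $p$-Laplacian that are retained in~\eqref{voraussetzungenhauptresultat}: the $p$-growth upper bound $|A(x,t,\xi)|\le \Tilde{C}_1|\xi|^{p-1}$ and the coercivity $\langle A(x,t,\xi),\xi\rangle\ge \Tilde{C}_2|\xi|^p$. My strategy is therefore to trace through the argument of~\cite{gradientholder} and verify that each step generalizes verbatim, producing constants $\gamma,\sigma$ with the additional dependence on $\Tilde{C}_1,\Tilde{C}_2$ asserted in the statement.

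The argument in~\cite{gradientholder} proceeds in three main stages. First, one performs the intrinsic rescaling $v(y,s)\coloneqq u(x_0+\rho y,\, t_0+[u(x_0,t_0)]^{q-p+1}\rho^p s)/u(x_0,t_0)$, which reduces matters to the unit scale with $v(0,0)=1$ and with a rescaled vector field $\Tilde{A}$ obeying~\eqref{voraussetzungenhauptresultat} with the same constants; the set inclusion~\eqref{harnacksetinclusion} is precisely what renders this rescaling admissible and is dictated by the infinite speed of propagation characteristic of the fast diffusion regime. Second, one establishes a quantitative $L^\infty$ bound on $v$ via an intrinsic Moser iteration on sub-solutions, based on Caccioppoli inequalities derived by testing the weak formulation with truncations $(v-k)_+\varphi^p$ and invoking only~\eqref{voraussetzungenhauptresultat}. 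Within the super-critical range $0<p-1<q<\frac{n(p-1)}{(n-p)_+}$, the relevant Sobolev embedding produces an $L^1\to L^\infty$ smoothing that yields the upper bound $u\le \gamma u(x_0,t_0)$ on the full intrinsic cylinder without any waiting time, which accounts for the elliptic nature of the inequality. Third, the lower bound $u\ge \gamma^{-1}u(x_0,t_0)$ is obtained by a De~Giorgi-type expansion of positivity: continuity upgrades the pointwise information at $(x_0,t_0)$ to a measure-theoretic lower bound on a small time slice, logarithmic energy estimates propagate this bound both forward and backward in time, and a clustering argument diffuses the positivity over the whole intrinsic ball.

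The main obstacle is the verification that the long chain of technical lemmas in~\cite{gradientholder}—the Caccioppoli and logarithmic estimates, the shrinking-measure lemmas, and especially the quantitative expansion of positivity for super-solutions—all continue to hold with $|\xi|^{p-2}\xi$ replaced by a general $A$ satisfying only~\eqref{voraussetzungenhauptresultat}. Each of these lemmas rests on integration by parts against a carefully chosen test function, after which the structure of $A$ enters only through the two bounds in~\eqref{voraussetzungenhauptresultat}. While this substitution is routine in each individual instance, the cumulative effect is a lengthy but essentially mechanical adaptation, with no new idea beyond those already present in~\cite{gradientholder} being required, and with the final constants $\gamma,\sigma$ acquiring the stated dependence on $\Tilde{C}_1,\Tilde{C}_2$.
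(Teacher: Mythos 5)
Your proposal rests on a misreading of the reference. You assume that \cite[Theorem~1.10]{gradientholder} proves the Harnack inequality only for the prototype equation~\eqref{pdemitplaplace} and therefore needs to be re-derived for general vector fields; but as the paper's own remark following Theorem~\ref{Harnackinequality} states explicitly, and as Remark~\ref{remnachhr} also indicates, \cite[Theorem~1.10]{gradientholder} is already stated and proved for vector fields~$A$ satisfying precisely the growth and coercivity hypotheses~\eqref{voraussetzungenhauptresultat}, with the prototype equation appearing only as a special case inside that statement (under a slightly weaker set inclusion hypothesis). Consequently the paper does not re-prove the Harnack inequality at all: the "proof" consists of a direct citation of \cite[Theorem~1.10]{gradientholder} together with the observation that the single remaining discrepancy, namely that the reference works with parabolic cylinders built over spatial cubes whereas Theorem~\ref{Harnackinequality} uses spatial balls, is negligible.

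Your sketch of the internal machinery, intrinsic rescaling, Moser iteration through Caccioppoli estimates for the upper bound, De~Giorgi-type expansion of positivity for the lower bound, all closed using only the growth and coercivity of~$A$, is a fair high-level description of how such inequalities are established, and the observation that~\eqref{voraussetzungenhauptresultat} is the only structural feature of the $p$-Laplacian actually invoked is correct. But it is not a substitute for checking what the cited theorem actually assumes: the lengthy re-verification you propose duplicates work already done in~\cite{gradientholder}, and presenting it as necessary misrepresents where the work of the present paper lies (which is not here, but in the Schauder estimates of Section~\ref{sec:schauderestimates}).
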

\begin{remark} \upshape
    In the case~$\mu=0$, assumptions~\eqref{voraussetzungen} together with Lemma~\ref{AbschätzungenfürAeins} ensure that both conditions~\eqref{voraussetzungenhauptresultat} are always fulfilled. Hence, the Harnack inequality stated in Theorem~\ref{Harnackinequality} is at our disposal. Moreover, it is worth mentioning that the Harnack inequality in~\cite[Theorem~1.10]{gradientholder} is presented for parabolic cylinders defined with spatial cubes instead of spatial balls. However, this modification is negligible and the Harnack inequality still remains valid in the form of Theorem~\ref{Harnackinequality} for parabolic cylinders composed of spatial balls.
\end{remark}

The following proposition serves as an intermediate result on our path to Theorem~\ref{hölderstetigkeitohnegrößer0}.

\begin{myproposition} \label{hauptresultat}
     Let~$0<p-1<q<\frac{n(p-1)}{(n-p)_+}$ and~$u$ be a non-negative, continuous weak solution to~\eqref{pde} under assumptions~\eqref{voraussetzungen} with~$\mu=0$. Then, there exist~$\gamma=\gamma(n,p,q,C_1,C_2)>1$,~$C=C(n,p,q,C_1,C_2,C_3,\alpha)>1$, and~$\alpha_0=\alpha_0(n,p,q,C_1,C_2,\alpha)\in(0,1)$, such that: if~$u(x_0,t_0)>0$ and the set inclusion
\begin{align}\label{hauptresultatsetinclusion}
     B_{\gamma\rho}(x_0) \times \big (t_0 - \mathcal{L}^{q-p+1}(\gamma\rho)^p,t_0 + \mathcal{L}^{q-p+1}(\gamma\rho)^p \big)\subset \Omega_T,
\end{align}
where 
$$\mathcal{L}\coloneqq \esssup\limits_{B_\rho(x_0)}u(\cdot,t_0),$$
holds true, then we have the gradient bound
\begin{align}\label{hauptresultatgradientbound}
    \esssup\limits_{Q_{z_0}}|Du| \leq C\frac{u(x_0,t_0)}{\rho},
\end{align}
with
$$Q_{z_0}\coloneqq B_{\rho}(x_0) \times \big(t_0 - [u(x_0,t_0)]^{q-p+1}\rho^p,t_0 + [u(x_0,t_0)]^{q-p+1}\rho^p \big).$$
Furthermore, there hold the Lipschitz estimate
\begin{align} \label{hauptresultatlipschitz}
   |u(z_1)-u(z_2)| \leq C u(x_0,t_0)\Bigg[\frac{|x_1-x_2|}{\rho}+\displaystyle\sqrt{\frac{|t_1-t_2|}{[u(x_0,t_0)]^{q-p+1}\rho^p}} \,\Bigg]
\end{align}
as well as the gradient Hölder estimate
\begin{align} \label{hauptresultathölder}
    |Du(z_1)-Du(z_2)| \leq C\frac{ u(x_0,t_0)}{\rho}\Bigg[\frac{|x_1-x_2|}{\rho}+\displaystyle\sqrt{\frac{|t_1-t_2|}{[u(x_0,t_0)]^{q-p+1}\rho^p}} \,\Bigg]^{\alpha_0},
\end{align}
for any~$z_1=(x_1,t_1)$,~$z_2 = (x_2,t_2)\in Q_{z_0}$.
\end{myproposition}


\begin{remark}\upshape
The continuity assumption on~$u$ in Proposition~\ref{hauptresultat} is only required to ensure the well-definedness of the pointwise evaluation~$u(x_0,t_0)$. However, this requirement is not restrictive, as any locally bounded weak solution to~\eqref{pde} under assumptions~\eqref{voraussetzungen} admits an upper semi-continuous representative~$u^*$ with~$u=u^*$ a.e. in~$\Omega_T$. A reference for this result can be found in~\cite[Theorem~2.3]{liao2021regularity}.
\end{remark}
\begin{remark}\upshape \label{remnachhr}
    The bounds provided in~\eqref{hauptresultatgradientbound},~\eqref{hauptresultatlipschitz}, and~\eqref{hauptresultathölder} are analogous to those stated in~\cite[Theorem~1.1]{gradientholder}. However, the set inclusion condition~\eqref{hauptresultatsetinclusion} assumed in Proposition~\ref{hauptresultat} is more stringent than its counterpart for the prototype equation~\eqref{pdemitplaplace}. This difference arises from the more general structure of~\eqref{pde}, and despite the greater constraint, it is a natural requirement. 
    In fact, it is noteworthy that the assumption~\eqref{hauptresultatsetinclusion} is a consequence of the time-insensitive Harnack inequality stated in form of Theorem~\ref{Harnackinequality}, while the same Harnack inequality for the prototype equation~\eqref{pdemitplaplace} in~\cite[Theorem~1.11]{gradientholder} applies under a weaker condition. 
\end{remark}


\begin{remark} \upshape
    The optimality of the range of exponents in Proposition~\ref{hauptresultat} can be inferred from~\cite[Section~11.1]{gradientholder}. In particular, the statement of the proposition breaks down in the borderline cases~$q=p-1$ and~$q=\frac{n(p-1)}{(n-p)_+}$, which essentially arises due to the fact that the Harnack inequality from Theorem~\ref{Harnackinequality} holds true solely within the specified range of parameters.
\end{remark}

\begin{proof}[\textbf{\upshape Proof of Proposition~\ref{hauptresultat}}]
To begin with, we refer to Corollary 1.7 in~\cite{gradientholder}, which implies that~$u$ is locally bounded in~$\Omega_T$. Consider~$z_0=(x_0,t_0)\in\Omega_T$ with~$u(x_0,t_0)>0$. Let~$\sigma\in(0,1)$ and~$\gamma>1$ both depending on~$n,p,q,C_1,C_2$, denote the constants from Theorem~\ref{Harnackinequality} and set~$\Tilde{\gamma}\coloneqq \frac{8}{\sigma}>1$. Further, consider~$\rho>0$, such that the set inclusion
$$B_{\Tilde{\gamma}\rho}(x_0) \times(t_0 - \mathcal{L}^{q-p+1}(\Tilde{\gamma}\rho)^p,t_0 + \mathcal{L}^{q-p+1}(\Tilde{\gamma}\rho)^p )\subset \Omega_T$$
holds true. In fact, due to~$u$ being locally bounded and therefore~$\mathcal{L}$ being finite, this assumption is justifiable for~$\rho>0$ small enough. The choices above imply the set inclusion
$$B_{8\rho/\sigma^{\frac{1}{p}}}(x_0) \times \big (t_0 - \mathcal{L}^{q-p+1}\big[8\rho/\sigma^{\frac{1}{p}}\big]^p,t_0 + \mathcal{L}^{q-p+1}\big[8\rho/\sigma^{\frac{1}{p}}\big]^p \big)\subset \Omega_T.$$
Moreover, due to the Harnack inequality from Theorem~\ref{Harnackinequality}, the bound~\eqref{harnackestimate} holds true a.e. in
$$B_{\rho/\sigma^{\frac{1}{p}}}(x_0) \times \big (t_0 - \sigma[u(x_0,t_0) ]^{q-p+1}\big[\rho/\sigma^{\frac{1}{p}}\big]^p,t_0 + \sigma[u(x_0,t_0)]^{q-p+1}\big[\rho/\sigma^{\frac{1}{p}}\big]^p \big).$$
Furthermore, we have
\begin{align*}
    &\quad B_{\rho}(x_0) \times \big (t_0 - [u(x_0,t_0)]^{q-p+1}\rho^p,t_0 + [u(x_0,t_0)]^{q-p+1}\rho^p \big) \\
    &\subset B_{\rho/\sigma^{\frac{1}{p}}}(x_0) \times \big (t_0 - [u(x_0,t_0)]^{q-p+1}\rho^p,t_0 + [u(x_0,t_0)]^{q-p+1}\rho^p \big) \\
    &= B_{\rho/\sigma^{\frac{1}{p}}}(x_0) \times \big (t_0 - \sigma[u(x_0,t_0)]^{q-p+1}\big[\rho/\sigma^{\frac{1}{p}}\big]^p,t_0 + \sigma[u(x_0,t_0)]^{q-p+1}\big[\rho/\sigma^{\frac{1}{p}}\big]^p \big),
\end{align*}
which allows an application of the Harnack inequality on $Q_{z_0}$. Therefore, we achieve
\begin{align}\label{schrankeaufQz0}
    \gamma^{-1}u(x_0,t_0) \leq u(x,t) \leq \gamma u(x_0,t_0)
\end{align}
for any~$(x,t)\in Q_{z_0}$. To relinquish the quantity~$u(x_0,t_0)$ in~\eqref{schrankeaufQz0} and obtain bounds independent of the pointwise value of~$u$, we re-scale to the unit cylinder~$\mathcal{Q}_1 \coloneqq B_1(0)\times(-1,1)\subset \R^{n+1}$ and consider
\begin{align*}
  \hat{u}(y,s)\coloneqq & {\textstyle \frac{1}{u(x_0,t_0)}}u\big(x_0+\rho y,t_0+[u(x_0,t_0)]^{q-p+1}\rho^p s \big),\quad
 (y,s)\in \mathcal{Q}_1.
\end{align*}
Then,~$\hat{u}$ is a weak solution to the doubly nonlinear equation
\begin{align}\label{transformedpde}
    \partial_t\hat{u}^q - \divv \hat{A}(y,s,D\hat{u})=0 \quad\mbox{in $\mathcal{Q}_1$},
\end{align}
where we defined 
\begin{align}\label{transformedvectorfield}
    \hat{A}(y,s,\xi) \coloneqq A(x_0+\rho y,t_0+[u(x_0,t_0)]^{q-p+1}\rho^p s,\xi)
\end{align}
for~$(y,s)\in \mathcal{Q}_1$ and~$\xi\in\R^n$. Hence, there holds
\begin{equation} \label{schrankeuhut}
    \gamma^{-1} \leq \hat{u}(y,s)\leq \gamma
\end{equation}
for any~$(y,s)\in \mathcal{Q}_1$. With the Schauder estimates considered in the preceding section in mind, we proceed to replace~$\hat{u}^q$ with~$\Tilde{u}$. Therefore, equation~\eqref{transformedpde} transforms to
\begin{align}\label{transformedpdeqgleicheins}
    \partial_t \Tilde{u} - \divv \Tilde{A}(y,s,D\Tilde{u})=0 \quad\mbox{in $\mathcal{Q}_1$}
\end{align}
with vector field
\begin{align}\label{transformedvectorfieldqgleicheins}
    \Tilde{A}(y,s,\xi) \coloneqq \hat{A}\big(y,s,q^{-1}\Tilde{u}(y,s)^{\frac{1-q}{q}}\xi\big).
\end{align}
By employing the abbreviation
$$a(y,s)\coloneqq q^{-1}\Tilde{u}(y,s)^{\frac{1-q}{q}}$$
and utilizing the bound~\eqref{schrankeuhut}, we have the following lower and upper bound for the coefficients
\begin{equation*}
   q^{-1}\gamma^{-|1-q|} \leq a(y,s) \leq q^{-1} \gamma^{|1-q|}\qquad\mbox{for any~$(y,s)\in \mathcal{Q}_1$.}
\end{equation*}
Consequently, we are in position to apply a classical result found in~\cite[Chapter~III,~§1.,~Theorem~1.1]{dibenedetto1993degenerate}, which asserts that~$v$ is locally Hölder continuous in~$\mathcal{Q}_1$ with some Hölder exponent~$\beta\in(0,1)$ that depends on~$n,p,q,C_1,C_2$. Due to the upper and lower bounds for the vector field~$A$, it readily follows that the structure conditions~$\eqref{voraussetzungen}_1$ --~$\eqref{voraussetzungen}_2$ are once again satisfied, with certain positive constants~$\Tilde{C}_1,\Tilde{C}_2$. Note that~$\Tilde{C}_1$ depends only on~$\gamma,q$, while~$\Tilde{C}_2$ depends on~$\gamma, q$. Furthermore, the local Hölder continuity of~$v$ in~$\mathcal{Q}_1$, together with the~$p$-growth condition~$\eqref{voraussetzungen}_1$, implies that the vector field~$A$ also satisfies~$\eqref{voraussetzungen}_3$ in~$\mathcal{Q}_{\frac{1}{2}}\coloneqq B_{\frac{1}{2}}(0)\times(-\frac{1}{4},\frac{1}{4})$ with a constant~$\Tilde{C}_3$ that depends on~$\gamma,\Tilde{C}_1,\Tilde{C}_2,C_3$, and with a Hölder exponent~$\Tilde{\alpha}\in(0,1)$ depending on~$n,p,q,\Tilde{C}_1,\Tilde{C}_2,\alpha$. As a result, we are now in position to apply Theorem~\ref{gradientholderschaudermugleich0}, which yields
$$D\Tilde{u}\in C^{\alpha_0,\alpha_0/ 2}\big(\mathcal{Q}_\frac{1}{2},\R^n\big)$$
for some~$\alpha_0=\alpha_0(n,p,\Tilde{C}_1,\Tilde{C}_2,\alpha)\in(0,1)$. Due to the dependence of the constants~$\Tilde{C}_1,\Tilde{C}_2$ there holds~$\alpha_0 =\alpha_0(n,p,q,C_1,C_2,\alpha)$. Furthermore, Theorem~\ref{gradientholderschaudermugleich0} yields quantitative estimates: there exists a constant~$C=C(n,p,q,C_1,C_2,C_3,\alpha)\geq 1$ and a Hölder exponent~$\alpha_0=\alpha_0(n,p,q,C_1,C_2,\alpha)\in(0,1)$, such that for any~$E\subset\mathcal{Q}_{\frac{1}{2}}$ with~$r\coloneqq \frac{1}{4}\dist_{\p}\big(E,\mathcal{Q}_{\frac{1}{2}} \big)>0$, the following quantitative estimates
  \begin{align*}
        \esssup\limits_{E}|D\Tilde{u}| \leq C \Big[\Big(\frac{\omega}{r}\Big) + \Big(\frac{\omega}{r} \Big)^{\frac{2}{p}}\Big] \eqqcolon \lambda 
    \end{align*}
    and
\begin{align*}
    |D\Tilde{u}(\Tilde{z}_1)-D\Tilde{u}(\Tilde{z}_2)| \leq C \lambda \Bigg[ \frac{d^{(\lambda)}_p(z_1,z_2)}{\min\big\{1,\lambda^{\frac{p-2}{2}}\big\}r} \Bigg]^{\alpha_0}
\end{align*}
hold true, where~$\omega \coloneqq \essosc\limits_{\mathcal{Q}_{\frac{1}{2}}}\Tilde{u}$, $\Tilde{z}_1=(\Tilde{x}_1,\Tilde{t}_1)$,~$\Tilde{z}_2=(\Tilde{x}_2,\Tilde{t}_2)\in E$. We now choose~$E=\mathcal{Q}_{\frac{1}{4}}=B_{\frac{1}{4}}(0)\times(-\frac{1}{16},\frac{1}{16})$. In turn, we have~$r=\frac{\sqrt{3}}{16}$ and there holds~$\omega\leq \gamma^q$. This implies
\begin{align*}
    \lambda = C\Big[\Big(\frac{\omega}{r}\Big) + \Big(\frac{\omega}{r} \Big)^{\frac{2}{p}}\Big] \leq C
\end{align*}
 and moreover
\begin{align} \label{schranketildeu}
    \esssup\limits_{\mathcal{Q}_{\frac{1}{4}}}|D\Tilde{u}| \leq C
\end{align}
with~$C=C(n,p,q,C_1,C_2,C_3,\alpha)$. Furthermore, there holds
\begin{align} \label{hölderstetigkeittildeu}
     |D\Tilde{u}(\Tilde{z}_1)-D\Tilde{u}(\Tilde{z}_2)| \leq C\d_{\p}(\Tilde{z}_1,\Tilde{z}_2)^{\alpha_0}
\end{align}
for any~$\Tilde{z}_1$,~$\Tilde{z}_2\in\mathcal{Q}_{\frac{1}{4}}$, with~$C=C(n,p,q,C_1,C_2,C_3,\alpha)$. The previous estimate is obvious in the case~$p\leq 2$, whereas in the case~$p>2$ we utilized~$\alpha_0\leq\frac{2}{p-2}$, which is an immediate consequence of the definition of~$\alpha_0$ according to~\eqref{alpha0}. To proceed, we consider two points~$\Tilde{z}_1$,~$\Tilde{z}_2 \in \mathcal{Q}_{\frac{1}{8}} = B_{\frac{1}{8}}(0)\times(-\frac{1}{64},\frac{1}{64})$ with~$\Tilde{t}_1\leq\Tilde{t}_2$, and a cylinder~$Q= Q_{\rho}(\overline{x})\times(\Tilde{t}_1,\Tilde{t}_2]$, such that~$\overline{x} = \frac{1}{2}(\Tilde{x}_1 + \Tilde{x}_2)$ and~$\frac{1}{2}|\Tilde{x}_1-\Tilde{x}_2|\leq \rho \leq \frac{1}{8}$. Now, an application of Lemma~\ref{oscillation-lemma-eins} with the choice~$\mu=0$ on the cylinder~$Q$, together with~\eqref{schranketildeu}, yield the following
\begin{align*} 
       |\Tilde{u}(\Tilde{z}_1) - \Tilde{u}(\Tilde{z}_2)| &\leq \essosc\limits_{Q} \Tilde{u} \\
       &\leq C \bigg[\rho\|D\Tilde{u}\|_{L^\infty(Q)} + C \frac{\Tilde{t}_2 - \Tilde{t}_1}{\rho}\|D\Tilde{u}\|^{p-1}_{L^\infty(Q)}\bigg] \\
       &\leq C \bigg[\rho + \frac{\Tilde{t}_2 - \Tilde{t}_1}{\rho} \bigg]
\end{align*}
with~$C=C(n,p,q,C_1,C_2,C_3,\alpha)$. Distinguishing between both cases~$\sqrt{\Tilde{t}_2 - \Tilde{t}_1} \leq |\Tilde{x}_1 - \Tilde{x}_2|$ and~$\sqrt{\Tilde{t}_2 - \Tilde{t}_1} > |\Tilde{x}_1 - \Tilde{x}_2|$, this overall leads to the bound
$$|\Tilde{u}(\Tilde{z}_1)-\Tilde{u}(\Tilde{z}_2)| \leq C \Big[|\Tilde{x}_1 - \Tilde{x}_2| + \sqrt{|\Tilde{t}_1-\Tilde{t}_2|} \Big]$$
for any~$\Tilde{z}_1$,~$\Tilde{z}_2\in\mathcal{Q}_{\frac{1}{8}}$, with~$C=C(n,p,q,C_1,C_2,C_3,\alpha)$. Exploiting~\eqref{transformedvectorfieldqgleicheins} and Lemma~\ref{lem:a-b}, we establish
\begin{align} \label{hochbeta}
    |\Tilde{u}^\beta (\Tilde{z}_1) - \Tilde{u}^\beta (\Tilde{z}_2)| \leq C \gamma^{q|\beta-1|}\Big[|\Tilde{x}_1 - \Tilde{x}_2| + \sqrt{|\Tilde{t}_1-\Tilde{t}_2|} \Big]
\end{align}
for any~$\beta\in\R$, and any~$\Tilde{z}_1$,~$\Tilde{z}_2\in\mathcal{Q}_{\frac{1}{8}}$, with~$C=C(n,p,q,C_1,C_2,C_3,\alpha,\beta)$. By following the final steps performed in~\cite[Proof of Theorem 1.1]{gradientholder}, we transform back to the original solution~$u$, which yields the claimed estimates~$\eqref{hauptresultatgradientbound}-\eqref{hauptresultathölder}$. It is worth mentioning that the dependence on~$\beta$ of~$C$ is eliminated at the end by the specific choices~$\beta = \frac{1}{q}$ and~$\beta = \frac{1-q}{q}$.
\end{proof}

Finally, we are in position to provide the proof of Theorem~\ref{hölderstetigkeitohnegrößer0}.

\begin{proof}[\textbf{\upshape Proof of Theorem~\ref{hölderstetigkeitohnegrößer0}}]
Let~$\Tilde{\gamma}>1$ denote the constant from Proposition~\ref{hauptresultat}. As~$u$ is locally bounded in~$\Omega_T$, there holds~$\esssup\limits_{\mathcal{K}_2}u<\infty$. We now choose the radius~$\rho \coloneqq \frac{\Tilde{K}^{-\frac{q-p+1}{p}}\rho_1}{4\Tilde{\gamma}}$, which implies
\begin{align*}
    \esssup\limits_{B_\rho(x)}u(\cdot,t) \leq \esssup\limits_{B_\frac{\rho_1}{4}(x)}u(\cdot,t) \leq \esssup\limits_{\mathcal{K}_2}u \leq K_2
\end{align*}
for any~$(x,t)\in\mathcal{K}_1$. Additionally, we have the set inclusion
\begin{align*}
    &B_{\Tilde{\gamma}\rho}(x) \times (t-\Tilde{K}^{q-p+1}(\Tilde{\gamma}\rho)^p,t+ \Tilde{K}^{q-p+1}(\Tilde{\gamma}\rho)^p) \\
    &\subset B_{\frac{\rho_1}{4}(x)} \times \Big(t-\Big(\frac{\rho_1}{4}\Big)^p,t+ \Big(\frac{\rho_1}{4}\Big)^p\Big)\subset \mathcal{K}_2 \Subset \Omega_T
\end{align*}
for any~$(x,t)\in\mathcal{K}_2$. Let us now fix a point~$z_0=(x_0,t_0)\in\mathcal{K}_1$. If~$u(x_0,t_0)>0$, the preceding inequalities enable us to apply the gradient bound~\eqref{hauptresultatgradientbound} from Proposition~\ref{hauptresultat} on the cylinder
$$Q_{z_0} \coloneqq B_{\rho}(x_0) \times \big(t_0-[u(x_0,t_0)]^{q-p+1}\rho^p,t_0+[u(x_0,t_0)]^{q-p+1}\rho^p \big)$$
to obtain
\begin{align}\label{korollargradientboundgrößer0}
    |Du(x_0,t_0)| \leq C \frac{u(x_0,t_0)}{\rho} \leq C \frac{K_1}{\rho} = C \frac{K_1 K_2^{\frac{q-p+1}{p}}}{\rho_1}
\end{align}
with~$C=C(n,p,q,C_1,C_2,C_3,\alpha)$. If~$u(x_0,t_0)=0$, then the Harnack inequality from Theorem~\ref{Harnackinequality} implies~$u(\cdot,t_0)\equiv 0$ in $\Omega$, and in particular~$Du(x_0,t_0)=0$. Since~$z_0$ was chosen arbitrarily, by combining the calculations from both cases, we obtain the stated gradient bound~\eqref{korollargradientbound}. Following the strategy in~\cite[Proof of Corollary~1.3]{gradientholder}, we also obtain the local gradient Hölder estimate~\eqref{korollarhölder}, which finishes the proof.
\end{proof}

 \nocite{*}
\bibliographystyle{plain}
\bibliography{Literatur.bib}

\end{document}